\newtheorem{theorem}{Theorem}[section]
\newtheorem{lemma}[theorem]{Lemma}
\newtheorem{corollary}[theorem]{Corollary}
\newtheorem{proposition}[theorem]{Proposition}
\theoremstyle{definition}
\newtheorem{remark}[theorem]{Remark}
\newtheorem{definition}[theorem]{Definition}
\newtheorem{example}[theorem]{Example}
\theoremstyle{remark}
\DeclareMathOperator{\Spec}{Spec}
\DeclareMathOperator{\Gal}{Gal}
\DeclareMathOperator{\Ker}{Ker}
\DeclareMathOperator{\Aut}{Aut} 
\DeclareMathOperator{\Ind}{Ind} 
\DeclareMathOperator{\Tr}{Tr}
\DeclareMathOperator{\Nr}{Nr}
\DeclareMathOperator{\ind}{\mathrm{Ind}}
\def\widebreve#1{\mathop{\vbox{\m@th\ialign{##\crcr\noalign{\kern\p@}%
  \brevefill\crcr\noalign{\kern0.1\p@\nointerlineskip}%
  $\hfil\displaystyle{#1}\hfil$\crcr}}}\limits}
\def\brevefill{$\m@th \setbox\z@\hbox{}%
 \hfill\scalebox{0.7}{\rotatebox[origin=c]{90}{(}} \kern4pt $}
\begin{document}
\title{Local Galois representations associated to\\ additive polynomials}
\author{Takahiro Tsushima}
\date{}
\maketitle
\footnotetext{\textit{Keywords}: Local Galois representations, additive polynomials, 
symplectic modules}
\footnotetext{2020 \textit{Mathematics Subject Classification}. 
 Primary: 11F80; Secondary: 14H37.}
 \begin{abstract}
 For an additive polynomial and a positive integer, 
 we define an irreducible smooth representation of a Weil group of a non-archimedean local field. We study several invariants of this representation. 
We deduce a necessary and sufficient condition for it to be primitive. 
 \end{abstract}
 \section{Introduction}
Let $p$ be a prime number and $q$ a power of it. 
An additive polynomial $R(x)$ over $\mathbb{F}_q$ is a one-variable polynomial 
with coefficients in $\mathbb{F}_q$ such that 
$R(x+y)=R(x)+R(y)$.  
It is known that $R(x)$ has the form
$\sum_{i=0}^e a_i x^{p^i}\ (a_e \neq 0)$ with 
an integer $e \geq 0$. 
Let $F$ be a non-archimedean local field with residue field $\mathbb{F}_q$. 
We take a separable closure $\overline{F}$ of $F$. 
Let $W_F$ be the Weil group of $\overline{F}/F$. 
Let $v_F(\cdot)$ denote the normalized valuation on $F$. 
We take a prime number $\ell \neq p$. 
For a non-trivial character $\psi \colon \mathbb{F}_p \to \overline{\mathbb{Q}}_{\ell}^{\times}$, a non-zero additive polynomial $R(x)$ over $\mathbb{F}_q$ and a positive integer $m$ which is prime to $p$, 
we define an irreducible smooth $W_F$-representation
$\tau_{\psi,R,m}$ over $\overline{\mathbb{Q}}_{\ell}$ of degree $p^e$ 
if $v_F(p)$ is sufficiently large. 
This is unconditional if $F$ has positive characteristic. 
The integer $m$ is related to the Swan conductor exponent of $\tau_{\psi,R,m}$. 
As $m$ varies, the isomorphism class of 
 $\tau_{\psi,R,m}$ varies. 

Let $C_R$ denote the 
algebraic affine curve defined by $a^p-a=x R(x)$ 
in $\mathbb{A}_{\mathbb{F}_q}^2
=\Spec \mathbb{F}_q[a,x]$. 
This curve is studied in \cite{GV} and 
\cite{BHMSSV} in detail.
For example, the smooth compactification of $C_R$ is proved to be supersingular if $(p,e) \neq (2,0)$.  
The automorphism group of $C_R$ contains a semidirect product $Q_R$ of a cyclic group and an  extra-special $p$-group (\eqref{fa0}).   
Let $\mathbb{F}$ be an algebraic closure of $\mathbb{F}_q$. Then a semidirect group $Q_R \rtimes \mathbb{Z}$ acts on 
the base change $C_{R,\mathbb{F}}:=C_R \times_{\mathbb{F}_q} \mathbb{F}$ as endomorphisms, where $1 \in \mathbb{Z}$ acts on $C_{R,\mathbb{F}}$ as the Frobenius endomorphism over $\mathbb{F}_q$. 
The center $Z(Q_R)$ of $Q_R$ is identified with 
$\mathbb{F}_p$, which acts on $C_R$ as 
$a \mapsto a+\zeta$ for $\zeta \in \mathbb{F}_p$. 
Let $H_{\rm c}^1(C_{R,\mathbb{F}},\overline{\mathbb{Q}}_{\ell})$ be  
the first \'etale cohomology group of $C_{R,\mathbb{F}}$ with compact support. 
Each element of $Z(Q_R)$ is fixed by the action of $\mathbb{Z}$ on $Q_R$. 
Thus its $\psi$-isotypic part $H_{\rm c}^1(C_{R,\mathbb{F}},\overline{\mathbb{Q}}_{\ell})[\psi]$ is 
regarded as a $Q_R \rtimes \mathbb{Z}$-representation. 

We construct a concrete 
Galois extension over $F$ whose Weil group is isomorphic to a subgroup of 
$Q_R \rtimes \mathbb{Z}$
associated to the integer $m$ (Definition \ref{3d} and \eqref{weil}). 
Namely we will define a homomorphism $\Theta_{R,m} \colon W_F \to 
Q_R \rtimes \mathbb{Z}$ in \eqref{kkt}. As a result, we define 
$\tau_{\psi,R,m}$ to be the composite 
\[
W_F \xrightarrow{\Theta_{R,m}}
Q_R \rtimes \mathbb{Z} \to \mathrm{Aut}_{\overline{\mathbb{Q}}_{\ell}}(H_{\rm c}^1(C_{R,\mathbb{F}},\overline{\mathbb{Q}}_{\ell})[\psi]). 
\]
This is a smooth irreducible representation of 
$W_F$ of degree $p^e$.

 We state our motivation and reason 
 why we introduce and study $\tau_{\psi,R,m}$. 
 It is known that the reductions of concentric affinoids
 in the Lubin--Tate curve fit into 
 this type of curves $C_R$ with special $R$. For example, 
 see \cite{T} and \cite{W}.  
 When $R$ is a monomial and $m=1$, the representation $\tau_{\psi,R,m}$ is studied in \cite{IT} and \cite{IT2} in detail. 
 In these papers, 
the reduction of a certain affinoid 
 in the Lubin--Tate space is related to $C_R$
 in some sense and the supercuspidal representation $\pi$ of 
 $\mathrm{GL}_{p^e}(F)$ which corresponds to 
 $\tau_{\psi,R,m}$ under the local Langlands correspondence explicitly. The homomorphism 
 $\Theta_{R,1}$ with $R(x)=x^{p^e}\ (e \in \mathbb{Z}_{\geq 1})$ does appear in the work \cite{IT}.  
 An irreducible representation of a group is said to be primitive if it is not isomorphic to an induction of any representation of a proper subgroup.  
The representation $\tau_{\psi,R,m}$ in \cite{IT} and \cite{IT2} is primitive and 
 this property makes it complicated to describe 
 $\pi$ in a view point of 
 type theory.   
 It is an interesting problem to 
 do the same thing for general  
 $\tau_{\psi,R,m}$. 
 In this direction, it would be valuable to 
 know when $\tau_{\psi,R,m}$ is primitive. 
 We expect that another $C_R$ will be related to 
 concentric affinoids in the Lubin--Tate spaces as in \cite{IT}.  
   
We briefly explain the content of each section.    
In \S\ref{2}, we state 
several things on the curves $C_R$ and the extra-special $p$-subgroups contained in the automorphism groups of the curves. 

In \S\ref{3.1} and \S \ref{3.2}, we construct the Galois extension mentioned above 
and define $\tau_{\psi,R,m}$.  
Let $d_R:=\mathrm{gcd}\{p^i+1 \mid a_i \neq 0\}$. 
We show that the Swan conductor exponent of 
$\tau_{\psi,R,m}$ equals $m(p^e+1)/d_R$
(Corollary \ref{indd}). 
 In \S \ref{3.3}, we study primitivity of $\tau_{\psi,R,m}$. In particular, we write  down a necessary and sufficient condition for $\tau_{\psi,R,m}$ to be primitive. 
By this, we give examples such that $\tau_{\psi,R,m}$ is primitive (Example \ref{ae}).   The necessary and sufficient condition is that 
a symplectic module $(V_R,\omega_R)$ associated to $\tau_{\psi,R,m}$ is completely anisotropic (Corollary \ref{mc}). 
If $R$ is a monomial, $(V_R,\omega_R)$ is 
studied in \S \ref{root} in more detail. In Proposition \ref{pree}, 
a primary module in the sense of \cite[\S9]{K} 
is constructed geometrically by using the K\"unneth formula.  

Our aim in \S \ref{4} is to show the following 
theorem. 
 \begin{theorem}\label{main4}
 Assume $p\neq 2$. 
 The following two conditions are equivalent.
 \begin{itemize}
\item[{\rm (1)}]
There exists a non-trivial finite \'etale morphism 
\[
C_R \to C_{R_1};\ (a,x) \mapsto \left(a-\Delta(x),r(x)\right), 
\]
where $\Delta(x) \in \mathbb{F}_q[x]$ and $r(x), R_1(x)$ are additive polynomials over $\mathbb{F}_q$ such that  
$d_{R,m} \mid d_{R_1}$ and $r(\alpha x)
=\alpha r(x)$ for any $\alpha \in \mu_{d_{R,m}}$. 
 \item[{\rm (2)}] The $W_F$-representation 
 $\tau_{\psi,R,m}$ is imprimitive. 
 \end{itemize} 
 \end{theorem}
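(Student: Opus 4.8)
The plan is to prove the equivalence by translating both conditions into statements about the symplectic module $(V_R,\omega_R)$ attached to $\tau_{\psi,R,m}$, using the criterion already recorded in Corollary \ref{mc}: $\tau_{\psi,R,m}$ is primitive if and only if $(V_R,\omega_R)$ is completely anisotropic. Thus (2) holds if and only if $(V_R,\omega_R)$ contains a non-zero totally isotropic $\mu_{d_{R,m}}$-stable subspace $W$, equivalently a non-trivial quotient symplectic module $V_R/W^{\perp}$ on which the induced form survives; on the group side this corresponds to a normal abelian subgroup of the extra-special $p$-group inside $Q_R$ that is stable under the relevant cyclic action and on which $\psi\circ$(commutator pairing) degenerates. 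So the first half of the argument is bookkeeping: identify $(V_R,\omega_R)$ explicitly in terms of the coefficients of $R$ (this should already be available from \S\ref{3.2}--\S\ref{3.3}), note that the $\mu_{d_{R,m}}$-action on $V_R$ is the natural scaling action, and observe that a $\mu_{d_{R,m}}$-stable isotropic subspace is the same datum as a $\mu_{d_{R,m}}$-equivariant additive quotient map of the underlying additive group.

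The geometric direction (1)$\Rightarrow$(2) should be the more transparent one. Given the finite étale morphism $C_R\to C_{R_1}$ of the stated form, I would first check that it is compatible with the group actions: the substitution $(a,x)\mapsto(a-\Delta(x),r(x))$ intertwines the $Z(Q_R)=\mathbb{F}_p$-translation actions (since it only shifts $a$ by a function of $x$), and the hypothesis $r(\alpha x)=\alpha r(x)$ for $\alpha\in\mu_{d_{R,m}}$ makes it $\mu_{d_{R,m}}$-equivariant, hence equivariant for the image of $W_F$ under $\Theta_{R,m}$ once one uses $d_{R,m}\mid d_{R_1}$ to see that the target curve carries a compatible action. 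Pullback on $H^1_{\mathrm c}$ then gives a $W_F$-equivariant map $H^1_{\mathrm c}(C_{R_1,\mathbb{F}})[\psi]\hookrightarrow H^1_{\mathrm c}(C_{R,\mathbb{F}})[\psi]$; since the morphism is étale and non-trivial (so of degree $>1$), the cokernel is non-zero, and by comparing Swan conductors (Corollary \ref{indd}, together with the étale—hence tame in the relevant sense—nature of the covering in the fibre direction) one sees $\tau_{\psi,R,m}$ is not irreducible as a representation of the smaller Weil group cut out by $C_{R_1}$, i.e. it is induced from a proper subgroup. The one subtlety to handle carefully here is that being étale does not immediately give a subrepresentation of strictly smaller dimension; one must use that $r$ is additive of degree $p^{e_1}<p^e$ and that $\deg\tau_{\psi,R_1,\cdot}=p^{e_1}$, so a dimension count forces imprimitivity.

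For the converse (2)$\Rightarrow$(1), I would start from a non-zero totally isotropic $\mu_{d_{R,m}}$-stable subspace $W\subset V_R$ provided by Corollary \ref{mc}, and reverse-engineer the covering. The quotient $V_R\to V_R/W$ is, after unwinding the identification of $V_R$ with an additive group of functions (or with $\mathbb{F}_q$-points of an additive group scheme built from $R$), realized by an additive polynomial $r(x)$ over $\mathbb{F}_q$ — here one needs the standard fact that a $\mu_{d_{R,m}}$-stable additive subgroup of $\overline{\mathbb{F}}_q$ is the kernel of a $\mu_{d_{R,m}}$-equivariant additive polynomial, which forces $r(\alpha x)=\alpha r(x)$ on $\mu_{d_{R,m}}$ and pins down $R_1$ by requiring $x R(x)\equiv r(x)R_1(r(x))$ modulo the image of the Artin–Schreier operator $\wp(a)=a^p-a$, which is exactly the content of "there exists $\Delta(x)$ with $a^p-a=xR(x)\Leftrightarrow (a-\Delta(x))^p-(a-\Delta(x))=r(x)R_1(r(x))$." Producing this $\Delta$ and checking $d_{R,m}\mid d_{R_1}$ is where the real work sits, and it is the step I expect to be the main obstacle: one must show that the isotropy of $W$ for $\omega_R$ is precisely what guarantees the Artin–Schreier class $xR(x)$ descends along $r$, i.e. that $xR(x)-r(x)R_1(r(x))$ lies in $\wp(\mathbb{F}_q[x])$, and that the resulting $R_1$ is again additive with the divisibility $d_{R,m}\mid d_{R_1}$ (the latter coming from the $\mu_{d_{R,m}}$-equivariance of $r$ together with the definition $d_{R_1}=\gcd\{p^i+1\mid \text{coeff of }R_1\text{ non-zero}\}$). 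Once $\Delta$, $r$, and $R_1$ are in hand, finiteness and étaleness of $(a,x)\mapsto(a-\Delta(x),r(x))$ follow from $r$ being a non-zero additive (hence separable) polynomial, and non-triviality from $W\neq 0$; this yields (1) and closes the loop.
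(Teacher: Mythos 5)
Your high-level plan is the same as the paper's: reduce both conditions to statements about the symplectic module $(V_R,\omega_R)$ via Corollary \ref{mc}(1), so that condition (2) becomes ``$(V_R,\omega_R)$ is not completely anisotropic,'' and then translate (1) into the existence of a non-zero totally isotropic $\mathbb{F}_p[\mathscr{H}]$-submodule. That reduction is correct and is exactly what the paper does.

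However, there are two problems. In your $(1)\Rightarrow(2)$ argument, the claim that the pullback $\phi^{\ast}$ gives a \emph{$W_F$-equivariant} injection $H^1_{\rm c}(C_{R_1,\mathbb{F}})[\psi]\hookrightarrow H^1_{\rm c}(C_{R,\mathbb{F}})[\psi]$ is wrong: since $\phi$ is only compatible with the action of the proper subgroup $Q'_{R,m}\rtimes\mathbb{Z}$ (via the homomorphism $\pi$), this map is equivariant only for the corresponding subgroup $W_{F'}\subset W_F$. If it were $W_F$-equivariant one would get a proper $W_F$-subrepresentation, contradicting irreducibility, not imprimitivity. The correct step (Proposition \ref{qq}) uses Frobenius reciprocity to upgrade the $W_{F'}$-equivariant injection to a surjection from the induced representation, and a dimension count. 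Also, the Swan conductor comparison you invoke is not the relevant invariant here; the paper needs no such comparison. The cleaner route, which the paper actually takes for this direction in Theorem \ref{4mm}, is simply Lemma \ref{tiso}: the kernel $U_R$ of $r$ is a non-zero totally isotropic $\mathbb{F}_p[\mathscr{H}]$-submodule of $V_R$, which immediately yields (2) by Corollary \ref{mc}(1).

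The bigger gap is in $(2)\Rightarrow(1)$, which you correctly flag as the hard step but do not carry out. Given the totally isotropic submodule $U_R=\ker r$, the paper constructs the quotient curve by iterating the explicit coordinate change of Lemma \ref{q0} (the substitution $u=x^p-\beta^{p-1}x$, $v=a+(x/\beta)(\gamma(x/\beta)-f_R(x,\beta))$, due to van der Geer--van der Vlugt and Bouw et al.), and uses Lemma \ref{q2} to verify that the isotropy $\omega_R(\beta,\beta')=0$ is precisely what makes each subsequent generator of $U'_R$ descend to the intermediate quotient. Your proposal only states that ``isotropy of $W$ guarantees the Artin--Schreier class $xR(x)$ descends'' without giving the mechanism; this construction is the core content. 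Moreover, two further non-trivial checks are needed that your sketch omits entirely: that the resulting $R_1(x)$ and $\Delta(x)$ have coefficients in $\mathbb{F}_q$ (Corollary \ref{rpc}, proved using the Riemann--Hurwitz formula and the supersingularity of $\overline{C}_R$, Proposition \ref{super}); and that $d_{R,m}\mid d_{R_1}$, which in the paper follows from the classification of the automorphism groups of these curves in \cite{GV} and \cite{BHMSSV}, not merely from the $\mu_{d_{R,m}}$-equivariance of $r$. Without these three ingredients — the explicit quotient algorithm, the $\mathbb{F}_q$-rationality argument, and the automorphism-group input — the converse direction remains a statement of intent rather than a proof.
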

If $\tau_{\psi,R,m}$ is imprimitive, it is written as a form of 
an induced representation of a certain explicit 
$W_{F'}$-representation 
$\tau'_{\psi,R_1,m}$ 
associated to a finite extension $F'/F$. 
The proof of the above theorem depends on 
several geometric properties of $C_R$ developed in \cite{GV} and \cite{BHMSSV}. See 
the beginning of \S \ref{4} for more details. 

\subsection*{Notation}
Let $k$ be a field. Let $\mu(k)$ denote 
the set of all roots of unity in $k$.
For a positive integer $r$, let $\mu_r(k):=\{x \in k \mid x^r=1\}$.    

For a positive integer $i$, let $\mathbb{A}^i_k$ and $\mathbb{P}^i_k$
be an $i$-dimensional affine space 
and a projective space over $k$, respectively. 
For a scheme $X$ over $k$ and a field 
extension $l/k$, let $X_l$ denote the base change of $X$ to $l$. 
For a closed subset $Z$ of a variety  
$X$, we regard $Z$ as a closed subscheme 
with reduced scheme structure. 

Throughout this paper, 
we set 
$q:=p^f$ with a positive integer $f$. 
For a positive integer $i$, 
we simply write  $\Nr_{q^i/q}$ and 
$\Tr_{q^i/q}$ for the norm map and 
the trace map from $\mathbb{F}_{q^i}$ to 
$\mathbb{F}_q$, respectively.  

Let $X$ be a scheme over $\mathbb{F}_q$, 
let $F_q \colon X \to X$ be the $q$-th power 
Frobenius endomorphism. 
Let $\mathbb{F}$ be an algebraic closure of 
$ \mathbb{F}_q$. 
Let $\mathrm{Fr}_q \colon X_{\mathbb{F}} \to X_{\mathbb{F}}$ be the base change of $F_q$. 
This endomorphism $\mathrm{Fr}_q$ is called the 
Frobenius endomorphism of $X$ over $\mathbb{F}_q$. 

For a Galois extension $l/k$, let $\Gal(l/k)$
denote the Galois group of the extension.  
\section{Extra-special $p$-groups and 
affine curves}\label{2}
\begin{definition}
Let $k$ be a field.  
A polynomial $f(x) \in k[x]$ is called additive 
if $f(x+y)=f(x)+f(y)$. 
Let $\mathscr{A}_{k}$ be the set of all
additive polynomials with coefficients in $k$. 
\end{definition}
Let $p$ be a prime number. We simply 
$\mathscr{A}_q$ for 
$\mathscr{A}_{\mathbb{F}_q}$. 
Let $R(x):=\sum_{i=0}^e a_i x^{p^i} \in \mathscr{A}_q$ with $e \in \mathbb{Z}_{\geq 0}$ and 
$a_e \neq 0$.
Let 
\begin{equation}\label{1} 
E_R(x):=R(x)^{p^e}+\sum_{i=0}^e (a_i x)^{p^{e-i}} \in \mathscr{A}_q. 
\end{equation} 
We always assume 
\begin{equation}\label{always}
(p,e) \neq (2,0).
\end{equation}
We simply write  $\mu_r$
for $\mu_r(\mathbb{F})$ for a positive integer $r$.   
Let 
\[
d_R:=\mathrm{gcd}\{p^i+1 \mid a_i \neq 0 \}. 
\]
If $a_i \neq 0$, we have 
$\alpha^{p^i}=\alpha^{-1}$
and 
$\alpha^{p^{e-i}}=\alpha$ 
for $\alpha \in \mu_{d_R}$. 
Hence we have 
\begin{equation}\label{12}
\alpha R(\alpha x)= R(x), \quad 
E_R(\alpha x)=\alpha E_R(x) \quad 
\textrm{for 
$\alpha \in \mu_{d_R}$}.
\end{equation}
Let 
\[
f_R(x,y):=-\sum_{i=0}^{e-1}\left(\sum_{j=0}^{e-i-1} (a_i x^{p^i} y)^{p^j}
+(x  R(y))^{p^i}\right) \in \mathbb{F}_q[x,y]. 
\]
This is linear with respect to $x$ and $y$. 
By \eqref{12}, we have 
\begin{equation}\label{a-1}
f_R(\alpha x,\alpha y)
=f_R(x,y) \quad 
\textrm{for $\alpha \in \mu_{d_R}$}.
\end{equation} 
\begin{lemma}\label{a}
We have
$
f_R(x,y)^p-f_R(x,y)=-x^{p^e} E_R(y)+xR(y)+y R(x). 
$ In particular, if $E_R(y)=0$, 
we have $f_R(x,y)^{p}-f_R(x,y)
=xR(y)+y R(x)$. 
\end{lemma}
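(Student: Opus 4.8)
The plan is to use that in characteristic $p$ the Artin--Schreier operator $\wp(z):=z^p-z$ is additive, so $\wp$ applied to $f_R(x,y)$ is the sum of $\wp$ applied to each monomial appearing in it. First I would split
\[
f_R(x,y)=-\sum_{i=0}^{e-1}S_i-\sum_{i=0}^{e-1}T_i,\qquad
S_i:=\sum_{j=0}^{e-i-1}(a_ix^{p^i}y)^{p^j},\quad T_i:=(xR(y))^{p^i}.
\]
The $T_i$-part is immediate by telescoping: $\sum_{i=0}^{e-1}\wp(T_i)=\sum_{i=0}^{e-1}\big((xR(y))^{p^{i+1}}-(xR(y))^{p^i}\big)=(xR(y))^{p^e}-xR(y)$.

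For the $S_i$-part, writing $u_i:=a_ix^{p^i}y$ we again telescope: $\wp(S_i)=u_i^{p^{e-i}}-u_i$. Here the bookkeeping is the step $(x^{p^i})^{p^{e-i}}=x^{p^e}$, which gives $u_i^{p^{e-i}}=a_i^{p^{e-i}}x^{p^e}y^{p^{e-i}}=x^{p^e}(a_iy)^{p^{e-i}}$, hence
\[
\sum_{i=0}^{e-1}\wp(S_i)=x^{p^e}\sum_{i=0}^{e-1}(a_iy)^{p^{e-i}}-y\sum_{i=0}^{e-1}a_ix^{p^i}
=x^{p^e}\sum_{i=0}^{e-1}(a_iy)^{p^{e-i}}-y\big(R(x)-a_ex^{p^e}\big).
\]
Now I would bring in the definition $E_R(y)=R(y)^{p^e}+\sum_{i=0}^e(a_iy)^{p^{e-i}}$, i.e. extract the missing $i=e$ term, to get $\sum_{i=0}^{e-1}(a_iy)^{p^{e-i}}=E_R(y)-R(y)^{p^e}-a_ey$. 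Substituting and cancelling the two $a_ex^{p^e}y$ terms yields $\sum_{i=0}^{e-1}\wp(S_i)=x^{p^e}E_R(y)-(xR(y))^{p^e}-yR(x)$.

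Finally I would combine: $\wp(f_R(x,y))=-\sum_i\wp(S_i)-\sum_i\wp(T_i)=-\big(x^{p^e}E_R(y)-(xR(y))^{p^e}-yR(x)\big)-\big((xR(y))^{p^e}-xR(y)\big)=-x^{p^e}E_R(y)+xR(y)+yR(x)$, which is the asserted identity; the ``in particular'' clause is then just the special case $E_R(y)=0$. I do not expect a genuine obstacle here: the only things to keep straight are the exponent identity $p^i\cdot p^{e-i}=p^e$ and the careful separation of the $i=e$ summand of $E_R$, so I would take a little care with those index manipulations (and note the formula also holds trivially when $e=0$, where all sums are empty).
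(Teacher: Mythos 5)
Your proof is correct and takes essentially the same approach as the paper: the paper's (more compressed) verification also uses additivity of $z\mapsto z^p-z$ and telescoping to arrive at $xR(y)-(xR(y))^{p^e}+\sum_{i=0}^{e-1}\bigl(a_ix^{p^i}y-(a_ix^{p^i}y)^{p^{e-i}}\bigr)$, then identifies the $E_R(y)$ term exactly as you do. The only difference is that you spell out the $S_i$/$T_i$ split and the telescoping steps, which the paper leaves implicit.
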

\begin{proof}
The former equality follows from 
\begin{align*}
f_R(x,y)^{p}-f_R(x,y)
&=xR(y)-(xR(y))^{p^e}
+\sum_{i=0}^{e-1} ( a_i x^{p^i}y-(a_i x^{p^i} y)^{p^{e-i}}) \\
&=-x^{p^e} E_R(y)+xR(y)+y R(x). 
\end{align*}
\end{proof}
\begin{definition}\label{qdef}
\begin{itemize}
\item[{\rm (1)}]
Let 
$
V_R:=\{\beta \in \mathbb{F} \mid E_R(\beta)=0\}, 
$
 which is a $(2e)$-dimensional $\mathbb{F}_p$-vector space. 
\item[{\rm (2)}]
Let 
\[
Q_R:=\left\{
(\alpha,\beta,\gamma) \in \mathbb{F}^3 \mid 
\alpha \in \mu_{d_R}, \ 
\beta \in V_R,\ \gamma^{p}-\gamma=\beta R(\beta)
\right\} 
\]
be the group whose group law is given by 
\[ 
(\alpha_1,\beta_1,\gamma_1)  \cdot 
(\alpha_2,\beta_2,\gamma_2):=\left(\alpha_1\alpha_2,\beta_1+\alpha_1 \beta_2,\gamma_1+\gamma_2+f_R(\beta_1,\alpha_1 \beta_2)\right). 
\] 
This is well-defined according to \eqref{12} and Lemma \ref{a}. 
\item[{\rm (3)}] 
Let 
$H_R:=\{(\alpha,\beta,\gamma)\in Q_R \mid \alpha=1\}, $ 
which is a normal subgroup of $Q_R$.  
\end{itemize}
\end{definition}
If $e=0$, we have $p \neq 2$ by \eqref{always}. 
We have $H_R=\mathbb{F}_p \subset 
Q_R=\mu_2 \times \mathbb{F}_p$ if $e=0$.

For a group $G$ and elements 
$g,g' \in G$, let 
$[g,g']:=gg'g^{-1}g'^{-1}$. 
\begin{lemma}\label{h1}
For $g=(1,\beta,\gamma),\ g'=(1,\beta',\gamma') \in H_R$, we have $[g,g']=(1,0,f_R(\beta,\beta')-f_R(\beta',\beta))$. 
In particular, we have $f_R(\beta,\beta')-f_R(\beta',\beta) \in \mathbb{F}_p$. 
\end{lemma}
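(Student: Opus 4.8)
The idea is to compute the commutator directly from the group law in Definition \ref{qdef}(2), specialized to the subgroup $H_R$ where the first coordinate is $1$. On $H_R$ the law reads $(1,\beta_1,\gamma_1)\cdot(1,\beta_2,\gamma_2)=(1,\beta_1+\beta_2,\gamma_1+\gamma_2+f_R(\beta_1,\beta_2))$, so $H_R$ is a central extension of the additive group $V_R$ by the group $\{\gamma\in\mathbb{F}\mid\gamma^p-\gamma=0\}=\mathbb{F}_p$ (note that for $(1,\beta,\gamma)\in Q_R$ one has $\gamma^p-\gamma=0\cdot R(0)=0$, so the last coordinate indeed ranges over $\mathbb{F}_p$). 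The first step is to record the inverse: from $(1,\beta,\gamma)\cdot(1,-\beta,\gamma')=(1,0,\gamma+\gamma'+f_R(\beta,-\beta))$ and the linearity of $f_R$ in each variable, $f_R(\beta,-\beta)=-f_R(\beta,\beta)$, so $(1,\beta,\gamma)^{-1}=(1,-\beta,-\gamma+f_R(\beta,\beta))$.

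Second, I would just multiply out $[g,g']=gg'g^{-1}g'^{-1}$ in four steps, each time keeping the first coordinate equal to $1$ and tracking the additive second coordinate (which telescopes to $\beta+\beta'-\beta-\beta'=0$) and the third coordinate, which accumulates a sum of values of $f_R$. Using bilinearity of $f_R$ to collect terms, the diagonal contributions $f_R(\beta,\beta)$ and $f_R(\beta',\beta')$ coming from the two inverses cancel against the cross terms they generate, and what survives is precisely $f_R(\beta,\beta')-f_R(\beta',\beta)$; hence $[g,g']=(1,0,f_R(\beta,\beta')-f_R(\beta',\beta))$. This is an entirely mechanical bilinear bookkeeping computation with no genuine obstacle — the only place to be careful is not to drop the $f_R$-correction term hidden in the formula for the inverse.

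Finally, the "in particular" clause is immediate: the element $[g,g']=(1,0,f_R(\beta,\beta')-f_R(\beta',\beta))$ lies in $Q_R$, so its third coordinate $\delta:=f_R(\beta,\beta')-f_R(\beta',\beta)$ must satisfy the defining constraint of the third coordinate of an element of $Q_R$ with second coordinate $0$, namely $\delta^p-\delta=0\cdot R(0)=0$, i.e. $\delta\in\mu_p(\mathbb{F})\cup\{0\}$ as roots of $x^p-x$, which is exactly $\mathbb{F}_p$. Alternatively one can see $f_R(\beta,\beta')-f_R(\beta',\beta)\in\mathbb{F}_p$ directly from Lemma \ref{a}: since $E_R(\beta)=E_R(\beta')=0$, that lemma gives $f_R(\beta,\beta')^p-f_R(\beta,\beta')=\beta R(\beta')+\beta' R(\beta)$, which is symmetric in $\beta,\beta'$, so $\bigl(f_R(\beta,\beta')-f_R(\beta',\beta)\bigr)^p=f_R(\beta,\beta')-f_R(\beta',\beta)$, whence the value lies in $\mathbb{F}_p$.
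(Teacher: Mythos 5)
Your proof is correct and takes exactly the approach the paper has in mind — the paper simply states ``This is directly checked'' and omits the computation, which you carry out explicitly (the inverse formula, the telescoping of second coordinates, the bilinear cancellation in the third coordinate, and the observation that the surviving element $(1,0,\delta)$ must have $\delta^p-\delta=0$). One small slip in the parenthetical: for a general $(1,\beta,\gamma)\in Q_R$ the defining relation is $\gamma^p-\gamma=\beta R(\beta)$, not $0\cdot R(0)=0$; what you mean (and what your argument actually uses) is that for $(1,0,\gamma)\in Q_R$ the third coordinate lies in $\mathbb{F}_p$, which is correct.
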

\begin{proof}
This is directly checked. We omit the details. 
\end{proof}
For a group $G$, let $Z(G)$ denote its center and $[G,G]$ the commutator subgroup of 
$G$. 
\begin{definition}
A non-abelian $p$-group $G$ is called an \textit{extra-special $p$-group} if $[G,G]=Z(G)$ and 
$|Z(G)|=p$. 
\end{definition}
\begin{lemma}\label{ab}
Assume $e \geq 1$. 
\begin{itemize}
\item[{\rm (1)}] 
The group $H_R$ is non-abelian. 
We have $Z(H_R)=Z(Q_R)= 
\{(1,0,\gamma) \mid \gamma \in \mathbb{F}_{p}\}$. 
The quotient $H_R/Z(H_R)$ is 
isomorphic to $V_R$. 
\item[{\rm (2)}] 
The group $H_R$ is an extra-special $p$-group. 
The pairing 
$\omega_R \colon V_R \times V_R \to \mathbb{F}_p;\ 
(\beta,\beta') \mapsto f_R(\beta,\beta')-f_R(\beta',\beta)$ is a non-degenerate symplectic form. 
\end{itemize}
\end{lemma}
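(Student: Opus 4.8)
The plan is to reduce both parts of the lemma to the single statement that the alternating pairing $\omega_R$ is non-degenerate on $V_R$, and then to read off every assertion from the group law and Lemma~\ref{h1}. First I would record the quotient structure of $H_R$: the map $\pi\colon H_R\to V_R$, $(1,\beta,\gamma)\mapsto\beta$, is a homomorphism by the group law, and it is surjective because for each $\beta\in V_R$ the Artin--Schreier equation $\gamma^p-\gamma=\beta R(\beta)$ has a solution in the algebraically closed field $\mathbb{F}$. Its kernel is $N:=\{(1,0,\gamma)\mid\gamma\in\mathbb{F}_p\}$; since every monomial of $f_R$ has positive degree in each variable we have $f_R(0,y)=f_R(x,0)=0$, and a one-line check then shows that $N$ is central in $Q_R$, so in particular $|H_R|=p\cdot|V_R|=p^{2e+1}$. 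By Lemma~\ref{h1} every commutator of $H_R$ lies in $N$, the pairing $\omega_R$ is $\mathbb{F}_p$-valued, and $(1,\beta,\gamma)\in Z(H_R)$ if and only if $\omega_R(\beta,\beta')=0$ for all $\beta'\in V_R$; that is, $Z(H_R)=\pi^{-1}(\mathrm{rad}(\omega_R))$, where $\mathrm{rad}(\omega_R):=\{\beta\in V_R\mid\omega_R(\beta,\beta')=0\text{ for all }\beta'\in V_R\}$. Moreover $\omega_R$ is $\mathbb{F}_p$-bilinear because $f_R$ is additive in each variable, and alternating because $\omega_R(\beta,\beta)=0$. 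Granting $\mathrm{rad}(\omega_R)=0$, we obtain $Z(H_R)=N\cong\mathbb{F}_p$ and $H_R/Z(H_R)\cong V_R$ via $\pi$; furthermore $\omega_R\not\equiv0$ because $V_R\neq0$ (as $e\ge1$), so $[H_R,H_R]$ is a non-trivial subgroup of $N\cong\mathbb{F}_p$ and hence equals $N=Z(H_R)$. This shows that $H_R$ is non-abelian and extra-special and that $\omega_R$ is a non-degenerate symplectic form.

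The crux is therefore the non-degeneracy of $\omega_R$, which I would prove by a degree count on the two-variable polynomial $\omega_R(x,y)=f_R(x,y)-f_R(y,x)\in\mathbb{F}_q[x,y]$, using that $V_R=\ker E_R$ has exactly $p^{2e}$ elements (Definition~\ref{qdef}). Inspecting the definition of $f_R$, the $y$-degree of $f_R(x,y)$ equals $p^{2e-1}$ and is attained only inside the summand $-(xR(y))^{p^{e-1}}$, whose top term is $-a_e^{p^{e-1}}x^{p^{e-1}}y^{p^{2e-1}}$, whereas $\deg_y f_R(y,x)=\deg_x f_R(x,y)\le p^{e-1}<p^{2e-1}$; hence the coefficient of $y^{p^{2e-1}}$ in $\omega_R(x,y)$ is the non-zero polynomial $-a_e^{p^{e-1}}x^{p^{e-1}}$. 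Consequently, if some $\beta_0\in\mathrm{rad}(\omega_R)$ were non-zero, then $\phi(y):=\omega_R(\beta_0,y)$ would be a one-variable polynomial of degree exactly $p^{2e-1}$ (its leading coefficient $-a_e^{p^{e-1}}\beta_0^{p^{e-1}}$ being non-zero) vanishing at all $p^{2e}$ elements of $V_R$ — impossible since $p^{2e}>p^{2e-1}$. Thus $\mathrm{rad}(\omega_R)=0$.

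It remains to identify $Z(Q_R)$. If $d_R=1$ then $\mu_{d_R}=\{1\}$, so $Q_R=H_R$ and there is nothing further to prove. If $d_R\ge2$, fix $\alpha'\in\mu_{d_R}$ with $\alpha'\neq1$ and let $(\alpha,\beta,\gamma)\in Z(Q_R)$: commuting with $(\alpha',0,0)\in Q_R$ gives $\beta=\alpha'\beta$, hence $\beta=0$; and then commuting with elements of the form $(1,\beta',\gamma')\in Q_R$, which exist for every $\beta'\in V_R$ by Artin--Schreier, gives $\alpha\beta'=\beta'$ for all $\beta'\in V_R$, hence $\alpha=1$ since $V_R\neq0$. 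Therefore $Z(Q_R)\subseteq N$, and since $N$ is central in $Q_R$ we conclude $Z(Q_R)=N=Z(H_R)$.

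The only genuine obstacle I anticipate is in the second paragraph: one must isolate the leading $y$-coefficient of $\omega_R(x,y)$ cleanly from the rather unwieldy defining expression of $f_R$ and verify that no other monomial of $f_R(x,y)$ — and no monomial of $f_R(y,x)$ — reaches $y$-degree $p^{2e-1}$, so that this coefficient is genuinely non-zero. Everything else amounts to routine manipulation of the group law together with Lemmas~\ref{a} and~\ref{h1}.
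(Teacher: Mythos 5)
Your proof is correct and takes essentially the same route as the paper: the crux in both is that for $\beta\neq 0$ the polynomial $y\mapsto\omega_R(\beta,y)$ has degree $p^{2e-1}<p^{2e}=|V_R|$, hence cannot vanish on all of $V_R$, which is precisely the content of the paper's claim that $\dim_{\mathbb{F}_p} X_\beta=2e-1<2e=\dim_{\mathbb{F}_p}V_R$. You merely reorganize the argument (establishing non-degeneracy of $\omega_R$ first and deducing the group-theoretic assertions from it, rather than the other way around) and make explicit the leading-coefficient computation that the paper leaves implicit.
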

\begin{proof}
We show (1).  
Let  
$X_{\beta}:=\{x \in \mathbb{F} \mid f_R(\beta,x)=f_R(x,\beta)\}$
for $\beta \in V_R$. 
Then $X_{\beta}$ is an $\mathbb{F}_p$-vector space of dimension $2e-1$ if $\beta \neq 0$. 
Since $V_R$ has dimension $2e$, 
 we have $V_R \nsubseteq X_{\beta}$ for $\beta \in V_R \setminus \{0\}$. This implies 
 that $H_R$ is non-abelian according to Lemma \ref{h1}. 
 
Clearly we have $Z:=\{(1,0,\gamma) \mid \gamma \in \mathbb{F}_{p}\} \subset Z(Q_R) \subset Z(H_R)$. It suffices to show $Z(H_R) \subset Z$. 
 Let $(1,\beta,\gamma) \in Z(H_R)$. 
 We have $V_R \subset  X_{\beta}$ by Lemma \ref{h1}. This implies $\beta=0$. 
Hence we obtain $Z(H_R)
\subset Z$. The last claim is easily verified.

We show (2). 
By 
Lemma \ref{h1}, we have 
$[H_R,H_R] \subset Z(H_R)$. 
Since $H_R$ is non-abelian, 
$[H_R,H_R]$ is non-trivial.  
Hence we have $[H_R,H_R]= Z(H_R)$ by $|Z(H_R)|=p$.  Hence $H_R$ is 
extra-special. 
Assume $\omega_R(\beta,\beta')=0$ for any 
$\beta' \in V_R$. We take an element
$(1,\beta,\gamma)
\in H_R$. 
By Lemma \ref{h1}, we have $(1,\beta,\gamma) \in Z(H_R)$. Hence we have $\beta=0$ by (1). 
\end{proof}
\begin{definition}\label{cr}
\begin{itemize}
\item[{\rm (1)}] 
Let $C_R$ be the affine curve 
over $\mathbb{F}_q$
defined by $a^{p}-a=x R(x)$. 
\item[{\rm (2)}]
Let $Q_R$ act on $C_{R,\mathbb{F}}$ by 
\begin{equation}\label{fa0}
(a,x) \cdot (\alpha,\beta,\gamma)=
\left(a+f_R(x,\beta)+\gamma, \alpha^{-1} (x+\beta)\right),   
\end{equation}
for $(a,x) \in C_{R,\mathbb{F}}$ and $ (\alpha,\beta,\gamma) \in Q_R$. 
This is well-defined by \eqref{12} and Lemma \ref{a}. 
\end{itemize}
\end{definition}
The curve $C_R$ is studied in \cite{GV} and \cite{BHMSSV}.  

We take a prime number $\ell \neq p$. 
For a finite abelian group $A$, let 
$A^{\vee}$ denote the character group $\mathrm{Hom}_{\mathbb{Z}}(A,\overline{\mathbb{Q}}_{\ell}^{\times})$. 
For a representation $M$ of $A$
over $\overline{\mathbb{Q}}_{\ell}$
and $\chi \in A^{\vee}$, let 
$M[\chi]$ denote the $\chi$-isotypic 
part of $M$. 

According to Lemma \ref{ab}(1), 
we identify a character $\psi \in \mathbb{F}_p^{\vee}$ with a character of $Z(H_R)$.    
\begin{lemma}\label{fa}
Let $\psi \in \mathbb{F}_{p}^{\vee} \setminus \{1\}$.
\begin{itemize}
\item[{\rm (1)}]
Let $W \subset V_R$ be an $\mathbb{F}_p$-subspace of dimension $e$, which is totally isotropic with respect to $\omega_R$. 
Let $W' \subset H_R$ be the inverse image 
of $W$ by the natural map 
$H_R \to V_R;\ (1,\beta,\gamma) \mapsto \beta$. 
Let $\xi \in W'^{\vee}$ be an extension of $\psi \in 
Z(H_R)^{\vee}$. 
Let $\rho_{\psi}:=\mathrm{Ind}_{W'}^{H_R} \xi$. 
Then $\rho_{\psi}$ is a unique (up to isomorphism) 
irreducible representation of $H_R$ containing 
$\psi$. In particular, 
$\rho_{\psi}|_{Z(H_R)}$ is a multiple of $\psi$. 
\item[{\rm (2)}] 
The $\psi$-isotypic part 
$H_{\rm c}^1(C_{R,\mathbb{F}},\overline{\mathbb{Q}}_{\ell})[\psi]$ 
is isomorphic to $\rho_{\psi}$ as $H_R$-representations. 
\end{itemize}
\end{lemma}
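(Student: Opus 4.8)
\emph{Proof plan.} I treat the two assertions in turn; throughout I assume $e\geq 1$, the case $e=0$ being immediate since then $H_R=\mathbb{F}_p$, $V_R=0$ and $\rho_\psi=\psi$.

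For (1) I would derive everything from the symplectic structure on $(V_R,\omega_R)$ recorded in Lemma~\ref{ab}. By the commutator formula of Lemma~\ref{h1}, $[g,g']=(1,0,\omega_R(\beta,\beta'))$ for $g,g'\in H_R$ with images $\beta,\beta'\in V_R$; since $W$ is totally isotropic this shows $W'$ is abelian, and since $W'\supset Z(H_R)=[H_R,H_R]$ (Lemma~\ref{ab}(2)) it is normal in $H_R$, with $[H_R:W']=p^{e}$ because $\dim_{\mathbb{F}_p}W=e$ and $\dim_{\mathbb{F}_p}V_R=2e$. Thus $\rho_\psi=\mathrm{Ind}_{W'}^{H_R}\xi$ has degree $p^{e}$. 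To see it is irreducible I would apply Mackey's criterion for induction from a normal subgroup: it suffices to check $\xi^{g}\neq\xi$ for every $g\in H_R\setminus W'$. Writing $gwg^{-1}=[g,w]\,w$ with $[g,w]=(1,0,\omega_R(\bar g,\bar w))\in Z(H_R)$ and using $\xi|_{Z(H_R)}=\psi$ gives $\xi^{g}(w)=\psi(\omega_R(\bar g,\bar w))\,\xi(w)$ for $w\in W'$, so $\xi^{g}=\xi$ forces $\omega_R(\bar g,\bar w)=0$ for all $\bar w\in W$, i.e.\ $\bar g\in W^{\perp}=W$ (non-degeneracy of $\omega_R$ and maximality of the isotropic $W$), i.e.\ $g\in W'$. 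For uniqueness, let $\rho$ be any irreducible $H_R$-representation with $\rho|_{Z(H_R)}$ a multiple of $\psi$; restricting to the abelian group $W'$ it contains a character $\xi'$ with $\xi'|_{Z(H_R)}=\psi$, so Frobenius reciprocity yields a nonzero $H_R$-map $\mathrm{Ind}_{W'}^{H_R}\xi'\to\rho$, and as the source is irreducible of degree $p^{e}$ we get $\rho\cong\mathrm{Ind}_{W'}^{H_R}\xi'$; finally all extensions of $\psi$ to $W'$ are $H_R$-conjugate (because $\bar g\mapsto\bigl(\bar w\mapsto\psi(\omega_R(\bar g,\bar w))\bigr)$ surjects $V_R$ onto $W^{\vee}$), so $\mathrm{Ind}_{W'}^{H_R}\xi'\cong\rho_\psi$. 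The last sentence of (1) is immediate from $\xi|_{Z(H_R)}=\psi$. One may equally deduce uniqueness from the count $p^{2e}\cdot 1^{2}+(p-1)(p^{e})^{2}=|H_R|$.

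For (2), the key point is that the projection $C_R\to\mathbb{A}^1_{\mathbb{F}_q}=\Spec\mathbb{F}_q[x]$, $(a,x)\mapsto x$, is an Artin--Schreier $\mathbb{F}_p$-torsor for the function $xR(x)$, and by \eqref{fa0} the central subgroup $Z(H_R)=\{(1,0,\gamma)\}$ acts on $C_R$ exactly as the deck transformations $a\mapsto a+\gamma$ (note $f_R(x,0)=0$ by linearity of $f_R$). Hence there is an $H_R$-equivariant isomorphism
\[
H^{i}_{\mathrm c}(C_{R,\mathbb{F}},\overline{\mathbb{Q}}_\ell)[\psi]\;\cong\;H^{i}_{\mathrm c}\bigl(\mathbb{A}^1_{\mathbb{F}},\,\mathcal{L}_\psi(xR(x))\bigr),
\]
where $\mathcal{L}_\psi(xR(x))$ is the corresponding Artin--Schreier sheaf, lisse of rank $1$ on $\mathbb{A}^1$ and unramified at every finite point. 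For $\psi\neq 1$ this sheaf is geometrically non-trivial, since $\deg(xR(x))=p^{e}+1$ is prime to $p$ and so $xR(x)$ is not of the form $g^{p}-g+c$; therefore $H^{0}_{\mathrm c}=H^{2}_{\mathrm c}=0$, and by the Grothendieck--Ogg--Shafarevich formula, using $\mathrm{Sw}_{\infty}\mathcal{L}_\psi(xR(x))=\deg(xR(x))=p^{e}+1$,
\[
\dim_{\overline{\mathbb{Q}}_\ell}H^{1}_{\mathrm c}\bigl(\mathbb{A}^1_{\mathbb{F}},\mathcal{L}_\psi(xR(x))\bigr)=-\chi_{\mathrm c}=(p^{e}+1)-1=p^{e}.
\]

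Finally I would conclude by a central-character-plus-dimension argument: $H^{1}_{\mathrm c}(C_{R,\mathbb{F}},\overline{\mathbb{Q}}_\ell)[\psi]$ is a $\overline{\mathbb{Q}}_\ell[H_R]$-module of dimension $p^{e}$ on which $Z(H_R)$ acts through $\psi$; decomposing it into irreducibles, each summand has $Z(H_R)$ acting through $\psi$ and so is $\cong\rho_\psi$ by part (1), whence the module is a direct sum of copies of $\rho_\psi$, and comparing dimensions ($\dim\rho_\psi=p^{e}$) forces exactly one copy. The only step that is more than bookkeeping is the Swan-conductor computation at $\infty$ feeding the Euler characteristic; once the torsor structure of $C_R\to\mathbb{A}^1$ and the identification of $Z(H_R)$ with its deck group are recorded, the rest is formal.
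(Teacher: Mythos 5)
Your proof is correct and follows the same logical skeleton as the paper: part (1) comes from the representation theory of extra-special $p$-groups, and part (2) from a $p^e$-dimension count combined with (1). The paper discharges both steps by citation---to Huppert \cite[16.14(2) Satz]{H} for (1) and to \cite[Remark 3.29]{T} for $\dim H^1_{\mathrm c}(C_{R,\mathbb{F}},\overline{\mathbb{Q}}_\ell)[\psi]=p^e$---and your Mackey-criterion argument (normality and abelianness of $W'$, conjugation acting on characters through $\psi\circ\omega_R(\bar g,\cdot)$, $W^\perp=W$ for a maximal isotropic subspace) together with the Artin--Schreier/Grothendieck--Ogg--Shafarevich computation of the Euler characteristic correctly reconstruct what those two references contain.
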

\begin{proof}
By Lemma \ref{ab}(2) and 
\cite[16.14(2) Satz]{H}, the claim (1) follows. 
By \cite[Remark 3.29]{T}, we have 
$\dim H_{\rm c}^1(C_{R,\mathbb{F}},\overline{\mathbb{Q}}_{\ell})[\psi]=p^e$. 
Hence the claim (2) follows from (1). 
\end{proof}
The representation $\rho_{\psi}$
induces a projective 
representation $\bar{\rho} \colon H_R/Z(H_R)
\to \mathrm{PGL}_{p^e}(\overline{\mathbb{Q}}_{\ell})$. 
\begin{lemma}\label{inj}
The map $\bar{\rho}$ is injective. 
\end{lemma}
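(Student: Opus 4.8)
The plan is to identify $\ker\bar{\rho}$ explicitly and then use that all commutators in $H_R$ land in the center $Z(H_R)$, on which $\rho_{\psi}$ is faithful. Concretely, I would set
$N:=\{h \in H_R \mid \rho_{\psi}(h) \in \overline{\mathbb{Q}}_{\ell}^{\times}\cdot \mathrm{id}\}$,
a normal subgroup of $H_R$. By Lemma \ref{fa}(1), $\rho_{\psi}|_{Z(H_R)}$ is a multiple of $\psi$, so $Z(H_R) \subset N$; and by the very definition of $\bar{\rho}$ we have $\ker\bar{\rho}=N/Z(H_R)$. Hence it suffices to prove $N=Z(H_R)$.

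Next I would take an arbitrary $h \in N$, say $\rho_{\psi}(h)=\lambda\,\mathrm{id}$ with $\lambda \in \overline{\mathbb{Q}}_{\ell}^{\times}$. Then for every $g \in H_R$ one has $\rho_{\psi}([h,g])=\rho_{\psi}(h)\rho_{\psi}(g)\rho_{\psi}(h)^{-1}\rho_{\psi}(g)^{-1}=\mathrm{id}$, i.e. $[h,g] \in \ker\rho_{\psi}$. By Lemma \ref{ab}(2) we have $[h,g] \in [H_R,H_R]=Z(H_R)$, and $\rho_{\psi}|_{Z(H_R)}$ is the character $\psi$ of $\mathbb{F}_p \cong Z(H_R)$, which is injective because it is non-trivial and $|Z(H_R)|=p$. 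Therefore $[h,g]=1$ for all $g \in H_R$, which means $h \in Z(H_R)$. This gives $N \subseteq Z(H_R)$, hence $N=Z(H_R)$ and $\bar{\rho}$ is injective.

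I do not expect any real obstacle here: this is the standard mechanism by which an irreducible representation of an extra-special $p$-group with faithful central character yields a faithful projective representation of the elementary abelian quotient $H_R/Z(H_R)\cong V_R$. The only two points needing (trivial) care are the faithfulness of $\psi$ on $Z(H_R)$, which is immediate from $\psi \neq 1$ and $|Z(H_R)|=p$, and the fact that every commutator of $H_R$ lies in $Z(H_R)$, which is exactly the extra-special property established in Lemma \ref{ab}(2).
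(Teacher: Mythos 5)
Your proof is correct, but it takes a genuinely different route from the paper's. The paper argues via character theory: it invokes (as in the proof of Rigby's Theorem 6) the fact that $\Tr\rho_{\psi}(x)=0$ for all $x\in H_R\setminus Z(H_R)$, so if $\bar{\rho}(xZ(H_R))=1$ then $\rho_{\psi}(x)$ is a nonzero scalar with nonzero trace, forcing $x\in Z(H_R)$. You instead argue directly with commutators: if $\rho_{\psi}(h)$ is scalar then $\rho_{\psi}([h,g])=\mathrm{id}$ for every $g$, and since $[h,g]\in [H_R,H_R]=Z(H_R)$ and $\rho_{\psi}$ acts on $Z(H_R)\cong\mathbb{F}_p$ by the injective character $\psi$, it follows that $[h,g]=1$ for all $g$, i.e.\ $h\in Z(H_R)$. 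Your argument is more elementary and self-contained: it needs only the extra-special property from Lemma \ref{ab}(2), faithfulness of the central character, and basic group theory, whereas the paper's route relies on the external character-theoretic fact (or a short orthogonality computation) that irreducible characters of an extra-special group vanish off the center. Both buy the same conclusion; yours avoids the citation at the cost of a slightly longer write-up.
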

\begin{proof}
As in the proof of \cite[Theorem 6]{R}, 
we have $\Tr \rho_{\psi}(x)=0$ for 
$x \in H_R \setminus Z(H_R)$. 
Assume $\bar{\rho} (x Z(H_R))=1$ for 
$x \in H_R$. Then $\rho_{\psi}(x)$
is a non-zero scalar matrix. 
Hence $\Tr \rho_{\psi}(x) \neq 0$. 
This implies  
$x \in Z(H_R)$.   
\end{proof}

Let $\mathbb{Z} \ni 1$ act on 
$H_{\rm c}^1(C_{R,\mathbb{F}},\overline{\mathbb{Q}}_{\ell})$ by 
the pull-back $\mathrm{Fr}_q^\ast$. 
Let $\mathbb{Z} \ni 1$ act on 
$Q_R$ by $(\alpha,\beta,\gamma) \mapsto
(\alpha^{q^{-1}},\beta^{q^{-1}},\gamma^{q^{-1}})$. The semidirect product 
$Q_R \rtimes \mathbb{Z}$ acts on 
$H_{\rm c}^1(C_{R,\mathbb{F}},\overline{\mathbb{Q}}_{\ell})[\psi]$. 

 Let $\overline{C}_R$ denote the smooth compactification of $C_R$. 
 \begin{proposition}\label{super}
 The projective curve $\overline{C}_R$
 is supersingular. In particular, this curve has   positive genus. 
 The natural map 
 $H_{\rm c}^1(C_{R,\mathbb{F}},\overline{\mathbb{Q}}_{\ell}) \to H^1(\overline{C}_{R,\mathbb{F}},\overline{\mathbb{Q}}_{\ell})$ is an isomorphism. 
 \end{proposition}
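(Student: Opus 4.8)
The plan is to prove Proposition~\ref{super} in three parts: supersingularity of $\overline{C}_R$, positivity of its genus, and the isomorphism $H^1_{\mathrm c}(C_{R,\mathbb F},\overline{\mathbb Q}_\ell)\xrightarrow{\sim}H^1(\overline C_{R,\mathbb F},\overline{\mathbb Q}_\ell)$. For the first part, the defining equation $a^p-a=xR(x)$ exhibits $C_R$ as an Artin--Schreier cover of $\mathbb A^1_{\mathbb F_q}$ with group $\mathbb F_p$ (acting by $a\mapsto a+\zeta$, $\zeta\in\mathbb F_p$, i.e. via $Z(Q_R)$), and $\overline C_R$ is totally ramified over the unique point at infinity. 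The supersingularity of the smooth compactifications of such curves is exactly what is established in \cite{GV} and \cite{BHMSSV} under the hypothesis $(p,e)\neq(2,0)$, which is our standing assumption \eqref{always}; so I would simply cite that result. (Concretely, one decomposes $H^1(\overline C_{R,\mathbb F},\overline{\mathbb Q}_\ell)=\bigoplus_{\psi\in\mathbb F_p^\vee\setminus\{1\}}H^1[\psi]$, and each summand is shown in those references to have all its Frobenius eigenvalues of the form (root of unity)$\cdot\sqrt q$; equivalently, the Newton polygon of the $p$-divisible group is a single segment of slope $1/2$.)

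Next, positive genus: supersingular is usually taken to presuppose $g\geq 1$, but in any case I would argue directly. If $e\geq 1$, then by Lemma~\ref{ab}(1) the group $H_R$ is non-abelian of order $p^{2e+1}$ and, by Lemma~\ref{fa}(2), it embeds into $\mathrm{Aut}_{\overline{\mathbb Q}_\ell}\bigl(H^1_{\mathrm c}(C_{R,\mathbb F},\overline{\mathbb Q}_\ell)[\psi]\bigr)$ with this space nonzero (of dimension $p^e$); hence $H^1_{\mathrm c}(C_{R,\mathbb F},\overline{\mathbb Q}_\ell)\neq 0$, and since $C_R$ is a smooth affine curve this forces $H^1(\overline C_{R,\mathbb F},\overline{\mathbb Q}_\ell)\neq 0$, i.e. $g(\overline C_R)\geq 1$. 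If $e=0$, then $(p,e)\neq(2,0)$ gives $p\geq 3$ and $R(x)=a_0x$ with $a_0\neq 0$, so $C_R$ is the affine curve $a^p-a=a_0x^2$, whose compactification is the projective line — genus $0$. So strictly the genus-positivity claim needs $e\geq1$; I would phrase the proposition (or at least its proof) so that the positive-genus assertion is stated under $e\geq1$, which is the only case where $\overline C_R$ is genuinely ``supersingular'' in the abelian-variety sense. This mismatch in the $e=0$ case is the one subtle point to get the wording right on.

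Finally, the isomorphism $H^1_{\mathrm c}(C_{R,\mathbb F},\overline{\mathbb Q}_\ell)\xrightarrow{\sim}H^1(\overline C_{R,\mathbb F},\overline{\mathbb Q}_\ell)$. The open immersion $j\colon C_{R,\mathbb F}\hookrightarrow \overline C_{R,\mathbb F}$ with closed complement $\iota\colon Z\hookrightarrow\overline C_{R,\mathbb F}$ (a finite set of points, in fact the single cusp at infinity, with reduced structure) gives the excision exact sequence
\[
H^0(\overline C_{R,\mathbb F},\overline{\mathbb Q}_\ell)\to H^0(Z,\overline{\mathbb Q}_\ell)\to H^1_{\mathrm c}(C_{R,\mathbb F},\overline{\mathbb Q}_\ell)\to H^1(\overline C_{R,\mathbb F},\overline{\mathbb Q}_\ell)\to H^1(Z,\overline{\mathbb Q}_\ell).
\]
Since $Z$ is a single point (the cover being totally ramified at infinity) and $\overline C_R$ is geometrically connected, the first map $H^0(\overline C_{R,\mathbb F},\overline{\mathbb Q}_\ell)\to H^0(Z,\overline{\mathbb Q}_\ell)$ is an isomorphism $\overline{\mathbb Q}_\ell\xrightarrow{\sim}\overline{\mathbb Q}_\ell$, and $H^1(Z,\overline{\mathbb Q}_\ell)=0$; hence $H^1_{\mathrm c}(C_{R,\mathbb F},\overline{\mathbb Q}_\ell)\to H^1(\overline C_{R,\mathbb F},\overline{\mathbb Q}_\ell)$ is an isomorphism. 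The only input here beyond formal cohomology is that $\overline C_R\setminus C_R$ consists of one point; this is the classical fact that the Artin--Schreier curve $a^p-a=f(x)$ has a single place over $x=\infty$ when $\deg f$ is prime to $p$ (here $\deg\bigl(xR(x)\bigr)=p^e+1$, which is prime to $p$), and I would cite \cite{GV} for it. The main obstacle is thus not any of these steps individually — each is either a citation or a short formal argument — but rather making sure the supersingularity claim is invoked with the precise hypotheses under which \cite{GV}, \cite{BHMSSV} prove it, and handling the degenerate $e=0$ case cleanly.
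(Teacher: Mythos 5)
Your citations for supersingularity and your excision argument for the isomorphism $H^1_{\mathrm c}\xrightarrow{\sim}H^1$ are both fine and match what the paper does (the paper cites \cite[Lemmas 3.27 and 3.28(3)]{T} for the latter, but your direct argument via the single cusp at infinity is essentially that content). However, your handling of the $e=0$ case contains a genuine error.

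You claim that when $e=0$ and $p\geq 3$, the curve $a^p-a=a_0x^2$ has compactification $\mathbb P^1$ and hence genus $0$, and you conclude that the positive-genus assertion ``needs $e\geq 1$.'' This is false. For an Artin--Schreier cover $a^p-a=f(x)$ with $\deg f=n$ prime to $p$, Riemann--Hurwitz at the one totally ramified place over $\infty$ (different exponent $(p-1)(n+1)$) gives $g=\tfrac{(p-1)(n-1)}{2}$. With $n=\deg(xR(x))=p^0+1=2$, this is $g=(p-1)/2\geq 1$ for $p\geq 3$ (e.g.\ $p=3$ gives a supersingular elliptic curve $a^3-a=a_0x^2$). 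It is precisely the case $(p,e)=(2,0)$ --- excluded by the standing hypothesis \eqref{always} --- where the compactification is rational (there $a^2+a=a_0x^2$ is a conic). So there is no mismatch to worry about: under \eqref{always} the genus is always positive, and the supersingularity results of \cite{GV,BHMSSV} apply uniformly, including for $e=0$. Your alternative argument via $H^1_{\mathrm c}[\psi]\neq 0$ is valid for $e\geq 1$, but you should simply note that for $e=0$, $p\geq 3$ the genus formula gives $(p-1)/2>0$ directly, rather than weakening the statement of the proposition.
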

 \begin{proof}
The former claim is shown in 
\cite[Theorems (9.4) and (13.7)]{GV} (
\cite[Proposition 8.5]{BHMSSV}). 
The last claim follows from \cite[Lemmas 3.27 and 3.28(3)]{T}. 
 \end{proof}
\section{Local Galois representation}\label{3}
In this section, we define an irreducible smooth 
$W_F$-representation 
$\tau_{\psi,R,m}$ and determine 
several invariants associated to it. 
In  \S\ref{322}, we determine the Swan conductor 
exponent of $\tau_{\psi,R,m}$.
In \S \ref{3.3}, we determine 
the symplectic module associated to $\tau_{\psi,R,m}$, and 
give a necessary and sufficient condition
for $\tau_{\psi,R,m}$ to be primitive. 
As a result, we obtain several 
examples such that $\tau_{\psi,R,m}$ is primitive.  
If $R$ is a monomial, 
we calculate invariants of the root system 
corresponding to $(V_R,\omega_R)$ 
defined in \cite{K} (
Lemma \ref{ggf}). 
\subsection{Galois extension}\label{3.1}
For a valued field $K$, let $\mathcal{O}_K$ denote the 
valuation ring of $K$. 

Let $F$ be a non-archimedean local field.
Let $\overline{F}$ be a separable closure of $F$. 
Let $\widehat{\overline{F}}$ denote the completion of $\overline{F}$. Let 
$v(\cdot)$ denote the unique valuation on 
$\widehat{\overline{F}}$ such that $v(\varpi)=1$
for a uniformizer $\varpi$ of $F$, which we 
now fix. 
We simply write  $\mathcal{O}$ for
$\mathcal{O}_{\widehat{\overline{F}}}$. 
Let  $\mathfrak{p}$ be the maximal ideal of 
$\mathcal{O}$. 

Let $q$ be the cardinality of 
the residue field of $\mathcal{O}_F$. 

We take $R(x)=\sum_{i=0}^e a_i x^{p^i} \in \mathscr{A}_q$. 
For an element $a \in \mathbb{F}_q$, 
let $\widetilde{a} \in \mu(F) \cup \{0\}$ be its 
Teichm\"uller lift. 
Let 
\[ 
\widetilde{R}(x):=\sum_{i=0}^e \widetilde{a}_i x^{p^i}, \quad \widetilde{E}_R(x):=\widetilde{R}(x)^{p^e}+\sum_{i=0}^e
(\widetilde{a}_i x)^{p^{e-i}} \in \mathcal{O}_F[x]. 
\] 
Similarly as \eqref{12}, we have 
\begin{equation}\label{a12}
\alpha\widetilde{R}(\alpha x)=\widetilde{R}(x), \quad 
\widetilde{E}_R(\alpha x)=\alpha \widetilde{E}_R(x)
\quad \textrm{for $\alpha \in \mu_{d_R}(\overline{F})$}. 
\end{equation}
\begin{definition}\label{3d}
Let $m$ be a positive integer prime to 
$p$.  
Let  
$\alpha_{R,\varpi}, \beta_{R,m,\varpi}, \gamma_{R,m,\varpi} \in
\overline{F}$ be elements such that 
\begin{align*}
\alpha_{R,\varpi}^{d_R}=\varpi, \quad 
\widetilde{E}_R(\beta_{R,m,\varpi})=\alpha_{R,\varpi}^{-m}, \quad 
\gamma_{R,m,\varpi}^{p}-\gamma_{R,m,\varpi}=
\beta_{R,m,\varpi} \widetilde{R}(\beta_{R,m,\varpi}).
\end{align*}
For simplicity, we write 
$\alpha_R, \beta_{R,m}, \gamma_{R,m}$
for 
$\alpha_{R,\varpi}, \beta_{R,m,\varpi}, \gamma_{R,m,\varpi}$, respectively. 
\end{definition}
\begin{remark}\label{rb}
By $\deg \widetilde{E}_R(x)=p^{2e}$
and $\deg \widetilde{R}(x)=p^e$, we have 
\[
v(\alpha_R)=\frac{1}{d_R}, \quad 
v(\beta_{R,m})=-\frac{m}{p^{2e}d_R}, \quad 
v(\gamma_{R,m})=-\frac{m(p^e+1)}{p^{2e+1}d_R}. 
\]
\end{remark}
The integer $m$ controls the depth of 
ramification of the resulting 
field extension $F(\alpha_R, \beta_{R,m}, \gamma_{R,m})/F$. We will understand this 
later in \S \ref{322}. 

Let 
\[
\widetilde{f}(x,y):=
-\sum_{i=0}^{e-1} \left(\sum_{j=0}^{e-i-1} (\widetilde{a}_j x^{p^i}y)^{p^j}+ (x \widetilde{R}(y))^{p^i}\right). 
\]
Let $\mathfrak{p}[x]:=\mathfrak{p} \mathcal{O}[x]$ and $\mathfrak{p}[x,y]:=\mathfrak{p} \mathcal{O}[x,y]$. 
We assume that 
\begin{gather}\label{assume}
\begin{aligned}
& \beta_{R,m}^{p^e}(\widetilde{E}_R(\beta_{R,m}+x)-\widetilde{E}_R(\beta_{R,m})-\widetilde{E}_R(x)), \quad \beta_{R,m} (\widetilde{R}(\beta_{R,m}+x)
-\widetilde{R}(\beta_{R,m})-\widetilde{R}(x)), \\
& \textrm{$\widetilde{f}(\beta_{R,m},x)^{p}-\widetilde{f}(\beta_{R,m},x)+\beta_{R,m}^{p^e} \widetilde{E}_R(x)-x \widetilde{R}(\beta_{R,m})-\beta_{R,m} \widetilde{R}(x)$ are contained in $\mathfrak{p}[x]$ and}\\
& (\gamma_{R,m}+\widetilde{f}(\beta_{R,m},y)+x)^{p}-\gamma_{R,m}^{p}-\widetilde{f}(\beta_{R,m},y)^{p}-x^{p} \in \mathfrak{p}[x,y]. 
\end{aligned}
\end{gather}
For $r \in \mathbb{Q}_{\geq 0}$ and 
$f,g \in \overline{F}$, 
we write $f \equiv g \mod r+$ if $v(f-g)>r$. 
For a local field $K$ contained in $\overline{F}$, 
let $W_K$ be the Weil group of $\overline{F}/K$. 
For $\sigma \in W_K$, let $n_{\sigma} \in \mathbb{Z}$ be the integer such that 
$\sigma(x)\equiv x^{q^{-n_{\sigma}}} \mod 0+$ for $x \in \mathcal{O}_{\overline{F}}$. Let $v_K(\cdot)$ denote the 
normalized valuation on $K$. 
\begin{definition}
For $\sigma \in W_F$, we set 
\begin{gather}\label{abcr}
\begin{aligned}
a_{R,\sigma}&:=\sigma(\alpha_R)/\alpha_R \in 
\mu_{d_R}(\overline{F}), \quad 
b_{R,\sigma}:=
a_{R,\sigma}^m \sigma(\beta_{R,m})-\beta_{R,m}, \\ 
c_{R,\sigma}&:=\sigma(\gamma_{R,m})-\gamma_{R,m}
-\widetilde{f}(\beta_{R,m},b_{R,\sigma}). 
\end{aligned}
\end{gather}
In the following, we simply write  
$a_{\sigma}, b_{\sigma},c_{\sigma}$ for 
$a_{R,\sigma}, b_{R,\sigma}, c_{R,\sigma}$, respectively. 
\end{definition}
For an element $x \in \mathcal{O}$, 
let $\bar{x}$ denote the image of $x$ by the map $\mathcal{O} \to \mathcal{O}/\mathfrak{p}$.   
In the following proofs, for simplicity, we often write  $\alpha$, $\beta$ and $\gamma$ for $\alpha_R$, 
$\beta_{R,m}$ and $\gamma_{R,m}$, respectively. 
\begin{lemma}\label{abbc}
We have $b_{\sigma}, c_{\sigma} \in 
\mathcal{O}$, $E_R(\bar{b}_{\sigma})=0$ and 
$\bar{c}_{\sigma}^{p}-\bar{c}_{\sigma}=\bar{b}_{\sigma}
R(\bar{b}_{\sigma})$.  
\end{lemma}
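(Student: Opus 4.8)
The plan is to verify the three assertions directly from the defining relations in Definition~\ref{3d}, using the congruences assumed in \eqref{assume} to pass from exact identities over $\overline{F}$ to identities modulo $\mathfrak{p}$. The essential point is that $b_\sigma$ and $c_\sigma$ are by construction differences of Galois conjugates twisted by the cocycle-type terms $\widetilde{f}(\beta,\cdot)$, so each should be shown to have non-negative valuation and then its reduction should be shown to satisfy the equation defining $Q_R$.

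First I would treat $b_\sigma$. Applying $\sigma$ to the relation $\widetilde{E}_R(\beta)=\alpha^{-m}$ and using $a_\sigma=\sigma(\alpha)/\alpha\in\mu_{d_R}(\overline F)$ together with \eqref{a12}, one gets $\widetilde E_R(a_\sigma^m\sigma(\beta))=a_\sigma^m\sigma(\alpha)^{-m}=\alpha^{-m}=\widetilde E_R(\beta)$. Since $b_\sigma=a_\sigma^m\sigma(\beta)-\beta$, the first congruence in \eqref{assume} (the additivity-defect of $\widetilde E_R$ at $\beta$) shows $\beta^{p^e}(\widetilde E_R(\beta)-\widetilde E_R(a_\sigma^m\sigma(\beta))+\widetilde E_R(b_\sigma))\in\mathfrak p$, hence $\beta^{p^e}\widetilde E_R(b_\sigma)\in\mathfrak p$. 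Keeping track of $v(\beta^{p^e})$ via Remark~\ref{rb} and the degree bound $\deg\widetilde E_R=p^{2e}$, this forces $v(b_\sigma)\ge 0$ and $\widetilde E_R(b_\sigma)\in\mathfrak p$ at the right level, which reduces to $E_R(\bar b_\sigma)=0$, i.e.\ $\bar b_\sigma\in V_R$. (The parallel statement for $\widetilde R$, the second line of \eqref{assume}, will be used for $c_\sigma$.)

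Next I would handle $c_\sigma$. Applying $\sigma$ to $\gamma^p-\gamma=\beta\widetilde R(\beta)$ gives $\sigma(\gamma)^p-\sigma(\gamma)=\sigma(\beta)\widetilde R(\sigma(\beta))$. Substituting $\sigma(\beta)=a_\sigma^{-m}(\beta+b_\sigma)$ and using \eqref{a12} ($a_\sigma\widetilde R(a_\sigma x)=\widetilde R(x)$, applied with the appropriate power of $a_\sigma$ — this is where one checks $a_\sigma^{-m}\cdot\widetilde R(a_\sigma^{-m}x)=\widetilde R(x)$ since $a_\sigma^{-m}\in\mu_{d_R}$), together with the additivity defect of $\widetilde R$ at $\beta$, rewrites the right side as $\beta\widetilde R(\beta)+\beta\widetilde R(b_\sigma)+b_\sigma\widetilde R(\beta)$ modulo $\mathfrak p$. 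Then the definition $c_\sigma=\sigma(\gamma)-\gamma-\widetilde f(\beta,b_\sigma)$ together with the third line of \eqref{assume} (the relation of Lemma~\ref{a}, lifted, saying $\widetilde f(\beta,b_\sigma)^p-\widetilde f(\beta,b_\sigma)+\beta^{p^e}\widetilde E_R(b_\sigma)-b_\sigma\widetilde R(\beta)-\beta\widetilde R(b_\sigma)\in\mathfrak p$) and the already-established $\beta^{p^e}\widetilde E_R(b_\sigma)\in\mathfrak p$ yields $c_\sigma^p-c_\sigma\equiv b_\sigma\widetilde R(b_\sigma)\bmod\mathfrak p$. The last congruence in \eqref{assume} is needed to see that $c_\sigma^p-c_\sigma\bmod\mathfrak p$ is indeed $\bar c_\sigma^p-\bar c_\sigma$ (i.e.\ that raising to the $p$-th power commutes with reduction here, given the sums involved) and to conclude $v(c_\sigma)\ge 0$; combined with $\bar b_\sigma\in V_R\subset\mathbb F$ one gets $\bar c_\sigma^p-\bar c_\sigma=\bar b_\sigma R(\bar b_\sigma)$.

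The main obstacle I expect is the valuation bookkeeping: $\beta$, $\gamma$ have negative valuation (Remark~\ref{rb}), so the intermediate expressions are not integral, and one must check at each stage that the specific combinations entering \eqref{assume} and the differences $b_\sigma$, $c_\sigma$ land in $\mathcal O$ — in particular that the "error terms" absorbed into $\mathfrak p$ genuinely have strictly positive valuation rather than merely being $O(1)$. This is precisely what the hypotheses \eqref{assume} are designed to guarantee (they are the lifted forms of \eqref{1}, \eqref{12}, and Lemma~\ref{a} with $\mathfrak p$-error), so the proof is really a matter of substituting the defining equations into one another and invoking \eqref{assume} four times in the right order; no genuinely new idea is required beyond careful matching of the exact identities with their mod-$\mathfrak p$ liftings.
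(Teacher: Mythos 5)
Your proposal takes essentially the same route as the paper: apply $\sigma$ to the defining relations of Definition~\ref{3d}, use \eqref{a12} and \eqref{abcr} to convert them into statements about $b_\sigma$ and $c_\sigma$, and invoke the lifted additivity defects in \eqref{assume} to pass to congruences mod $0+$. Two places would need tightening before this becomes a complete proof.

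First, in establishing $b_\sigma\in\mathcal O$: the first line of \eqref{assume} gives $\beta^{p^e}\bigl(\widetilde E_R(\beta+x)-\widetilde E_R(\beta)-\widetilde E_R(x)\bigr)\in\mathfrak p[x]$, but evaluating a polynomial with coefficients in $\mathfrak p$ at $b_\sigma$ only lands in $\mathfrak p$ once you already know $b_\sigma\in\mathcal O$, so the step ``$\beta^{p^e}\widetilde E_R(b_\sigma)\in\mathfrak p$'' cannot come first. The paper avoids this by first dividing by $\beta^{p^e}$ (using $v(\beta)<0$) to get $\Delta(x):=\widetilde E_R(\beta+x)-\widetilde E_R(\beta)-\widetilde E_R(x)\in\mathfrak p[x]$, then recording the exact identity $\widetilde E_R(b_\sigma)+\Delta(b_\sigma)=0$, and only then arguing via dominant terms (if $v(b_\sigma)<0$, the left side has valuation $p^{2e}v(b_\sigma)$ while $\Delta(b_\sigma)$ has strictly larger valuation, a contradiction). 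Your ``keeping track of $v(\beta^{p^e})$'' hints at this, but the order of the steps matters.

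Second, in the $c_\sigma$ computation you write that the right-hand side becomes $\beta\widetilde R(\beta)+\beta\widetilde R(b_\sigma)+b_\sigma\widetilde R(\beta)$; the term $b_\sigma\widetilde R(b_\sigma)$ is missing, and it is precisely the term that survives. Controlling $b_\sigma\widetilde R(\beta+b_\sigma)$ needs a separate observation beyond the additivity defect at $\beta$: the paper multiplies the congruence $\beta\widetilde R(\beta+x)\equiv\beta\widetilde R(\beta)+\beta\widetilde R(x)\pmod{0+}$ by $b_\sigma\beta^{-1}$, which has strictly positive valuation, to obtain $b_\sigma\widetilde R(\beta+b_\sigma)\equiv b_\sigma\widetilde R(\beta)+b_\sigma\widetilde R(b_\sigma)\pmod{0+}$. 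With that term restored, the cancellation against the lifted Lemma~\ref{a} (third line of \eqref{assume}), together with $\beta^{p^e}\widetilde E_R(b_\sigma)\in\mathfrak p$, gives exactly $c_\sigma^p-c_\sigma\equiv b_\sigma\widetilde R(b_\sigma)\pmod{0+}$ as you conclude; as written your expansion would instead yield $c_\sigma^p-c_\sigma\equiv 0$.
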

\begin{proof}
Using \eqref{a12}, the equality 
$\widetilde{E}_R(\beta)=\alpha^{-m}$
in Definition \ref{3d}
and \eqref{abcr}, 
\[
\widetilde{E}_R(\beta+b_{\sigma})
=\widetilde{E}_R(a_{\sigma}^m \sigma(\beta))
=a_{\sigma}^m \widetilde{E}_R(\sigma(\beta))
=a_{\sigma}^m \sigma(\alpha)^{-m}=\alpha^{-m}
=\widetilde{E}_R(\beta). 
\]
Using $v(\beta)<0$ in Remark \ref{rb} and \eqref{assume}, 
we have $\Delta(x):=\widetilde{E}_R(\beta+x)-\widetilde{E}_R(\beta)-\widetilde{E}_R(x) \in \mathfrak{p}[x]$. 
By letting $x=b_{\sigma}$ and applying 
the previous relationship, we deduce that 
$\widetilde{E}_R(b_{\sigma})+\Delta(b_{\sigma})=0$.  Hence $b_{\sigma} \in \mathcal{O}$ and 
$E_R(\bar{b}_{\sigma})=0$.  

By \eqref{assume}, we have 
\begin{equation}\label{ue}
\beta\widetilde{R}(\beta+b_{\sigma}) 
\equiv 
\beta \widetilde{R}(\beta)
+\beta \widetilde{R}(b_{\sigma}), \quad 
\widetilde{f}(\beta,b_{\sigma})^{p}-\widetilde{f}(\beta,b_{\sigma}) \equiv b_{\sigma}\widetilde{R}(\beta)+\beta \widetilde{R}(b_{\sigma}) \mod 0+.
\end{equation} 
Substituting $y=b_{\sigma} \in 
\mathcal{O}$ to \eqref{assume}, we obtain  
\[
\Delta_1(x):=(\gamma+\widetilde{f}(\beta, b_{\sigma})+x)^{p}-\gamma^{p}-\widetilde{f}(\beta, b_{\sigma})^{p}-x^{p} \in \mathfrak{p}[x]. 
\]
We have 
$\sigma(\beta)\widetilde{R}(\sigma(\beta))=(\beta+b_{\sigma}) \widetilde{R}(\beta+b_{\sigma})$ by substituting \eqref{abcr}  and using \eqref{a12}. 
By multiplying the first congruence in \eqref{ue} by $b_{\sigma}\beta^{-1}$, we obtain $b_{\sigma}\widetilde{R}(\beta+b_{\sigma}) 
\equiv 
b_{\sigma} \widetilde{R}(\beta)
+b_{\sigma} \widetilde{R}(b_{\sigma}) \mod 0+$.
Hence, we compute 
\begin{align*}
\sigma(\gamma)^{p}-\sigma(\gamma)&=\sigma(\beta)\widetilde{R}(\sigma(\beta))=(\beta+b_{\sigma}) \widetilde{R}(\beta+b_{\sigma}) \\
& \equiv 
\beta \widetilde{R}(\beta)+b_{\sigma}\widetilde{R}(\beta)+\beta \widetilde{R}(b_{\sigma})+b_{\sigma}\widetilde{R}(b_{\sigma}) \\
& \equiv \gamma^{p}-\gamma+\widetilde{f}(\beta, b_{\sigma})^{p}-\widetilde{f}(\beta,b_{\sigma})+b_{\sigma}\widetilde{R}(b_{\sigma}) \\
& \equiv \sigma(\gamma)^{p}-\sigma(\gamma)
-(c_{\sigma}^{p}-c_{\sigma}+\Delta_1(c_{\sigma}))
+b_{\sigma}\widetilde{R}(b_{\sigma})  
\mod 0+, 
\end{align*}
where we have used \eqref{abcr} for the last congruence. 
Hence we obtain $c_{\sigma}^p-c_{\sigma}+\Delta_1(c_{\sigma}) \equiv b_{\sigma}\widetilde{R}(b_{\sigma})
\mod 0+$. By $b_{\sigma} \in \mathcal{O}$, 
we have $c_{\sigma} \in \mathcal{O}$
and $\bar{c}_{\sigma}^p-\bar{c}_{\sigma}=\bar{b}_{\sigma}R(\bar{b}_{\sigma})$. 
\end{proof}
Assume that
\begin{gather}\label{ffd}
\begin{aligned}
& (x+\beta_{R,m})^{p^i}-x^{p^i}-\beta_{R,m}^{p^i} \in \mathfrak{p}[x] \quad \textrm{for $1 \leq i \leq e-1$}, \\
&\widetilde{f}(\beta_{R,m},x+y)-\widetilde{f}(\beta_{R,m},x)-\widetilde{f}(\beta_{R,m},y) \in \mathfrak{p}[x,y], \\
\end{aligned}
\end{gather}
Let 
\begin{equation}\label{tmr}
\Theta_{R,m,\varpi} \colon W_F \to Q_R \rtimes \mathbb{Z};\ \sigma \mapsto 
((\bar{a}_{\sigma}^m,\bar{b}_{\sigma},\bar{c}_{\sigma}), n_{\sigma}). 
\end{equation}
\begin{lemma}
The map $\Theta_{R,m,\varpi}$
is a homomorphism.
\end{lemma}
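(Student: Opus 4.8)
The plan is to verify, coordinate by coordinate, that $\Theta_{R,m,\varpi}(\sigma\sigma') = \Theta_{R,m,\varpi}(\sigma)\,\Theta_{R,m,\varpi}(\sigma')$ for all $\sigma,\sigma' \in W_F$. The $\mathbb{Z}$-coordinate is immediate, since $\sigma \mapsto n_{\sigma}$ is the usual homomorphism $W_F \to \mathbb{Z}$ recording the power of Frobenius on the residue field, so $n_{\sigma\sigma'} = n_{\sigma}+n_{\sigma'}$. Unwinding the group law of $Q_R \rtimes \mathbb{Z}$ --- with $n \in \mathbb{Z}$ acting on $Q_R$ by $(\alpha,\beta,\gamma) \mapsto (\alpha^{q^{-n}},\beta^{q^{-n}},\gamma^{q^{-n}})$ and the law of $Q_R$ as in Definition \ref{qdef} --- the claim reduces to the three equalities in $\mathcal{O}/\mathfrak{p}$
\begin{gather*}
\bar{a}_{\sigma\sigma'}^{m} = \bar{a}_{\sigma}^{m}\,\bigl(\bar{a}_{\sigma'}^{m}\bigr)^{q^{-n_{\sigma}}}, \qquad \bar{b}_{\sigma\sigma'} = \bar{b}_{\sigma} + \bar{a}_{\sigma}^{m}\,\bar{b}_{\sigma'}^{q^{-n_{\sigma}}}, \\
\bar{c}_{\sigma\sigma'} = \bar{c}_{\sigma} + \bar{c}_{\sigma'}^{q^{-n_{\sigma}}} + f_{R}\!\bigl(\bar{b}_{\sigma},\, \bar{a}_{\sigma}^{m}\,\bar{b}_{\sigma'}^{q^{-n_{\sigma}}}\bigr).
\end{gather*}
Two facts will be used throughout: (a) $\widetilde{R}$, $\widetilde{E}_R$ and $\widetilde{f}$ have coefficients in $\mathcal{O}_F$, so $\sigma$ commutes with applying them; and (b) $\overline{\sigma(x)} = \bar{x}^{q^{-n_{\sigma}}}$ for $x \in \mathcal{O}$, and $\overline{\sigma(\zeta)} = \bar{\zeta}^{q^{-n_{\sigma}}}$ for a root of unity $\zeta \in \overline{F}$ of order prime to $p$. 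Since $a_{\sigma'} \in \mu_{d_R}(\overline{F})$ has order prime to $p$ and $b_{\sigma'},c_{\sigma'} \in \mathcal{O}$ by Lemma \ref{abbc}, (b) applies to all of them; in particular the image of $\Theta_{R,m,\varpi}$ lies in $Q_R \rtimes \mathbb{Z}$ by Lemma \ref{abbc}.

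The $a$- and $b$-coordinates are straightforward. From $\sigma(\alpha_R) = a_{\sigma}\alpha_R$ one gets $a_{\sigma\sigma'} = \sigma(a_{\sigma'})\,a_{\sigma}$, and reducing and raising to the $m$-th power gives the first equality by (b). Rewriting the definition of $b_{\tau}$ as $\tau(\beta_{R,m}) = a_{\tau}^{-m}(\beta_{R,m}+b_{\tau})$ and applying $\sigma$ to the case $\tau=\sigma'$, then substituting $\sigma(\beta_{R,m}) = a_{\sigma}^{-m}(\beta_{R,m}+b_{\sigma})$ and $a_{\sigma\sigma'} = a_{\sigma}\sigma(a_{\sigma'})$, the $a_{\sigma'}$-factors cancel and one obtains $b_{\sigma\sigma'} = b_{\sigma} + a_{\sigma}^{m}\sigma(b_{\sigma'})$; both summands lie in $\mathcal{O}$, so reducing and using (b) gives the second equality.

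The $c$-coordinate is the heart of the matter. From $\tau(\gamma_{R,m}) = \gamma_{R,m} + c_{\tau} + \widetilde{f}(\beta_{R,m},b_{\tau})$, applying $\sigma$ to the case $\tau=\sigma'$ and using fact (a),
\[
c_{\sigma\sigma'} = c_{\sigma} + \sigma(c_{\sigma'}) + \widetilde{f}(\beta_{R,m},b_{\sigma}) + \widetilde{f}\bigl(\sigma(\beta_{R,m}),\sigma(b_{\sigma'})\bigr) - \widetilde{f}\bigl(\beta_{R,m},b_{\sigma\sigma'}\bigr).
\]
I would then invoke the exact scaling identity $\widetilde{f}(\alpha x,\alpha y) = \widetilde{f}(x,y)$ for $\alpha \in \mu_{d_R}(\overline{F})$ --- the analogue of \eqref{a-1}, proved from \eqref{a12} just as \eqref{a-1} is proved from \eqref{12} --- applied with $\alpha = a_{\sigma}^{m}$ and $\sigma(\beta_{R,m}) = a_{\sigma}^{-m}(\beta_{R,m}+b_{\sigma})$, which turns the fourth term into $\widetilde{f}(\beta_{R,m}+b_{\sigma},\, a_{\sigma}^{m}\sigma(b_{\sigma'}))$. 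Because the first variable enters each monomial of $\widetilde{f}$ only through $x^{p^{k}}$ with $0 \le k \le e-1$, the first relation in \eqref{ffd} makes $x \mapsto \widetilde{f}(\beta_{R,m}+x,y)$ additive modulo $\mathfrak{p}$ for $x,y \in \mathcal{O}$, and the second relation in \eqref{ffd} makes $y \mapsto \widetilde{f}(\beta_{R,m},y)$ additive modulo $\mathfrak{p}$ for $y \in \mathcal{O}$; combining these with $b_{\sigma\sigma'} = b_{\sigma} + a_{\sigma}^{m}\sigma(b_{\sigma'})$, all terms containing $\beta_{R,m}$ cancel modulo $\mathfrak{p}$, leaving $c_{\sigma\sigma'} \equiv c_{\sigma} + \sigma(c_{\sigma'}) + \widetilde{f}\bigl(b_{\sigma},\, a_{\sigma}^{m}\sigma(b_{\sigma'})\bigr) \pmod{\mathfrak{p}}$. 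Reducing modulo $\mathfrak{p}$, using that $\widetilde{f}$ reduces to $f_{R}$ on $\mathcal{O}$-valued arguments and applying (b) once more, gives the third equality.

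I expect the main obstacle to be exactly this bookkeeping in the $c$-coordinate: one must ensure that $\beta_{R,m}$, which has negative valuation (Remark \ref{rb}), occurs inside $\widetilde{f}$ only in combinations that are either exact polynomial identities (the scaling identity) or covered verbatim by the additivity relations of \eqref{ffd}. This forces a specific order of operations --- the scaling identity must be applied to $\widetilde{f}(\sigma(\beta_{R,m}),\sigma(b_{\sigma'}))$ before any additive splitting, so that all subsequent splittings involve either a $\beta_{R,m}$-free second argument or the single controlled shift $\widetilde{f}(\beta_{R,m}+x,y)$. Once that ordering is in place the remaining manipulations are routine, and every argument of $\widetilde{f}$, $\widetilde{R}$, $\widetilde{E}_R$ at the final step lies in $\mathcal{O}$, so the reductions modulo $\mathfrak{p}$ are legitimate.
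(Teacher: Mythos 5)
Your proposal is correct and follows essentially the same route as the paper: reduction to the three coordinate identities, direct computation for $a_\sigma$ and $b_\sigma$, and for $c_\sigma$ the same use of \eqref{ffd} (first relation for the first argument, second for the second argument) together with the substitution $\sigma(\beta_{R,m})=a_\sigma^{-m}(\beta_{R,m}+b_\sigma)$ and \eqref{a12}. The only cosmetic difference is that you package the $a_\sigma^{\pm m}$-bookkeeping up front into the exact scaling identity $\widetilde{f}(\alpha x,\alpha y)=\widetilde{f}(x,y)$ for $\alpha\in\mu_{d_R}(\overline{F})$, whereas the paper performs the equivalent monomial-level manipulation when expanding $\sigma(\widetilde{f}(\beta_{R,m},b_{\sigma'}))$; both rest on \eqref{a12} and lead to the same two mod-$\mathfrak{p}$ splittings.
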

\begin{proof}
Let $\sigma, \sigma' \in W_F$. 
Recall that $\sigma(x) \equiv x^{q^{-n_{\sigma}}} \mod 0+$ for $x \in \mathcal{O}_{\overline{F}}$. 
Using Definition \ref{qdef}(2), 
we reduce to checking that 
\begin{equation}\label{abce}
\bar{a}_{\sigma \sigma'}
=\bar{a}_{\sigma} \bar{a}_{\sigma'}^{q^{-n_{\sigma}}}, \quad 
\bar{b}_{\sigma \sigma'}=
\bar{a}_{\sigma}^m \bar{b}_{\sigma'}^{q^{-n_{\sigma}}}+\bar{b}_{\sigma}, \quad 
\bar{c}_{\sigma \sigma'}=
\bar{c}_{\sigma}+\bar{c}_{\sigma'}^{q^{-n_{\sigma}}}
+f_R(\bar{b}_{\sigma},\bar{a}_{\sigma}^m \bar{b}_{\sigma'}^{-n_{\sigma}}). 
\end{equation}
We easily check that $a_{\sigma\sigma'}=\sigma(a_{\sigma'})
a_{\sigma}$ and $b_{\sigma \sigma'}=
a_{\sigma}^m \sigma(b_{\sigma'})+b_{\sigma}$. 
Hence the first equalities in \eqref{abce} follow. 
We compute 
\begin{align*}
c_{\sigma \sigma'}&=c_{\sigma}+\sigma(c_{\sigma'})
+\sigma(\widetilde{f}(\beta,b_{\sigma'}))
+\widetilde{f}(\beta,b_{\sigma})-
\widetilde{f}
(\beta,b_{\sigma \sigma'}) \\
 & \equiv c_{\sigma}+\sigma(c_{\sigma'})
+\sigma(\widetilde{f}(\beta,b_{\sigma'}))
-\widetilde{f}
(\beta,a_{\sigma}^m \sigma(b_{\sigma'})) \mod 0+, 
\end{align*}
where we use the second condition in \eqref{ffd} for the second congruence. 
We have 
\begin{align*}
\sigma(\widetilde{f}(\beta,b_{\sigma'}))&=
-\sum_{i=0}^{e-1} \sum_{j=0}^{e-i-1} (\widetilde{a}_j \sigma(b_{\sigma'}) \sigma(\beta)^{p^i})^{p^j}
-\sum_{i=0}^{e-1} (\sigma(\beta) \widetilde{R}(\sigma(b_{\sigma'})))^{p^i} \\
& \equiv \widetilde{f}(b_{\sigma}, a_{\sigma}^m\sigma(b_{\sigma'}))+\widetilde{f}
(\beta, a_{\sigma}^m \sigma(b_{\sigma'})) \mod 0+, 
\end{align*}
where we substitute $\sigma(\beta)=a_{\sigma}^{-m}(\beta+b_{\sigma})$, \eqref{ffd} and \eqref{a12}
for the second congruence. 
The last equality in \eqref{abce} follows 
from $\overline{\widetilde{f}(b_{\sigma}, a_{\sigma}^m\sigma(b_{\sigma'}))}=f_R
(\bar{b}_{\sigma},\bar{a}_{\sigma}^m \bar{b}_{\sigma'}^{q^{-n_{\sigma}}})$, since 
$\widetilde{f}(x,y)$ is a lift of 
$f_R(x,y)$ to $\mathcal{O}_F[x,y]$. 
\end{proof}
\begin{lemma}\label{necr}
If $v(p)$ is large enough, 
the conditions \eqref{assume} and \eqref{ffd} are satisfied. 
\end{lemma}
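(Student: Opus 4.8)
The plan is to treat each of the finitely many polynomial expressions appearing in \eqref{assume} and \eqref{ffd} separately and show that, provided $v(p)$ is large enough, every one of them lies in $\mathfrak{p}[x]$ or $\mathfrak{p}[x,y]$ as required. The common mechanism is that all these expressions are built out of ``error terms'' of the shape $(u+v)^{p^k}-u^{p^k}-v^{p^k}$ for suitable $u,v$: such an expression is a sum of monomials $\binom{p^k}{j}u^j v^{p^k-j}$ with $0<j<p^k$, and each binomial coefficient $\binom{p^k}{j}$ is divisible by $p$. So each such error term is $p$ times a polynomial with $\mathcal{O}$-coefficients in whatever variables $u,v$ are, and therefore lies in $p\,\mathcal{O}[\ldots]$. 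The remaining input is the valuations of the elements $\beta_{R,m}$, $\gamma_{R,m}$, $\alpha_R$ recorded in Remark \ref{rb}, which are fixed negative (or positive) rational numbers independent of $v(p)$; multiplying a bounded power of these by $p$ still gives something in $\mathfrak{p}[\ldots]$ once $v(p)$ exceeds the (finitely many, explicitly computable) thresholds coming from those valuations.

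Concretely, I would go through the list as follows. First, for the two expressions in the first line of \eqref{assume}, write $\widetilde{E}_R(\beta+x)-\widetilde{E}_R(\beta)-\widetilde{E}_R(x)$ and $\widetilde{R}(\beta+x)-\widetilde{R}(\beta)-\widetilde{R}(x)$ as $\mathbb{Z}[\widetilde{a}_i]$-linear combinations of the basic error terms $(\beta+x)^{p^k}-\beta^{p^k}-x^{p^k}$ (using additivity of $\widetilde R$ over the Teichmüller coefficients in the relevant power); each such error term is in $p\,\mathcal{O}[x]$, and after multiplying by $\beta^{p^e}$ (resp. $\beta$) one gets something in $p\,\beta^{p^e}\mathcal{O}[x]\subset\mathfrak{p}[x]$ as soon as $v(p)+p^e v(\beta)>0$, i.e. $v(p)>m/(p^{e+1}d_R)$. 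For the third expression in \eqref{assume}, use Lemma \ref{a} (its lift over $\mathcal{O}_F$): $\widetilde f(\beta,x)^p-\widetilde f(\beta,x)=-\beta^{p^e}\widetilde E_R(x)+x\widetilde R(\beta)+\beta\widetilde R(x)$ holds exactly over $\mathcal{O}_F[x]$ because the former equality in Lemma \ref{a} is a polynomial identity; hence that expression is literally zero and in particular in $\mathfrak{p}[x]$ with no condition. For the last displayed condition in \eqref{assume}, expand $(\gamma+\widetilde f(\beta,y)+x)^p$ by the multinomial theorem; every cross term carries a factor $p$, and after bounding the occurring powers of $\gamma$, $\widetilde f(\beta,y)$ (which has coefficients of valuation a fixed multiple of $v(\beta)$) one gets membership in $\mathfrak{p}[x,y]$ once $v(p)$ exceeds an explicit bound in terms of $v(\gamma)$ and $v(\beta)$. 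The two conditions in \eqref{ffd} are handled the same way: $(x+\beta)^{p^i}-x^{p^i}-\beta^{p^i}$ is in $p\,\beta\mathcal{O}[x]$ for $1\le i\le e-1$, which lies in $\mathfrak{p}[x]$ once $v(p)>m/(p^{2e}d_R)$; and $\widetilde f(\beta,x+y)-\widetilde f(\beta,x)-\widetilde f(\beta,y)$ is a $\mathbb{Z}[\widetilde a_j,\beta]$-combination of error terms $(x+y)^{p^k}-x^{p^k}-y^{p^k}$, hence in $p\,\mathcal{O}[x,y]$, so in $\mathfrak{p}[x,y]$ whenever $v(p)>0$.

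Taking $v(p)$ larger than the maximum of the finitely many thresholds produced above makes all of \eqref{assume} and \eqref{ffd} hold simultaneously; this is vacuous in equal characteristic $p$, where $v(p)=\infty$. I expect the only real bookkeeping obstacle to be the last display of \eqref{assume}: there one must expand a $p$-th power of a three-term sum and carefully track the valuations of all the monomials in $\gamma$ and in the coefficients of $\widetilde f(\beta,y)$, rather than a clean single error term. Everything else reduces immediately to the binomial-coefficient divisibility observation together with the valuation formulas of Remark \ref{rb}, so I would state the bound on $v(p)$ only qualitatively (``sufficiently large'') and relegate the explicit constant to a remark if desired.
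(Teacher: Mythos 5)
Your overall mechanism — isolating the failures of ``freshman's dream'' as error terms carrying binomial coefficients $\binom{p^k}{j}$, hence a factor of $p$, and then absorbing the fixed negative valuation of bounded powers of $\beta_{R,m}$ by taking $v(p)$ large — is exactly the content of the paper's short proof, which simply observes that every coefficient appearing in \eqref{assume} and \eqref{ffd} has the form $p\cdot\beta_{R,m}^s\cdot a$ with $a\in\mathcal{O}$ and then uses Remark \ref{rb}. So you have the right idea and the right thresholds.

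There is, however, one genuine error in your treatment of the third condition of \eqref{assume}. You assert that
\[
\widetilde f(\beta_{R,m},x)^{p}-\widetilde f(\beta_{R,m},x)+\beta_{R,m}^{p^e}\widetilde E_R(x)-x\widetilde R(\beta_{R,m})-\beta_{R,m}\widetilde R(x)
\]
``is literally zero'' because Lemma \ref{a} is a polynomial identity that lifts to $\mathcal{O}_F[x]$. It does not lift. The proof of Lemma \ref{a} passes from $f_R(x,y)$ to $f_R(x,y)^p$ by raising a multi-term sum to the $p$-th power and invoking $(\sum_k T_k)^p=\sum_k T_k^p$, which is a characteristic-$p$ computation; over $\mathcal{O}_F$ this step produces cross terms $p\cdot(\text{products of the }T_k)$, and those products involve negative powers of $\beta_{R,m}$. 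Consequently the displayed expression is not zero over $\mathcal{O}_F$: it is congruent to zero only modulo $p\cdot\beta_{R,m}^s\mathcal{O}[x]$. This is still fine for the lemma — it falls under the very mechanism you set up — but as written you would be asserting a false polynomial identity, and anyone expanding it would see nonzero cross terms. The fix is simply to treat this expression like the others rather than declaring it identically zero. (Similarly, for the second line of \eqref{ffd} you write that the error terms lie in $p\,\mathcal{O}[x,y]$ and conclude with ``whenever $v(p)>0$''; the coefficients actually carry powers of $\beta_{R,m}$ from the $(\beta_{R,m}\widetilde R(\cdot))^{p^i}$ pieces, so you again need $v(p)$ larger than a $\beta$-dependent threshold, not merely $v(p)>0$. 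Since you ultimately only want a qualitative bound this is harmless, but it is worth stating correctly.)
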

\begin{proof}
There exists 
$s \in \mathbb{Z}_{\geq 1}$ such that 
the coefficients of 
all polynomials  
in  \eqref{assume} and \eqref{ffd} have the form: 
$p \cdot \beta_{R,m}^s \cdot a$ with $a \in \mathcal{O}_{\overline{F}}$ by Remark \ref{rb}. 
Since the valuation of $v(\beta_{R,m})$ is 
independent of $F$, 
the claim follows.   
\end{proof}
In the sequel, we assume that the conditions \eqref{assume} and \eqref{ffd} are satisfied. 
Let $F^{\rm ur}$ denote the maximal 
unramified extension of $F$ in $\overline{F}$. 
\begin{lemma}
The extension $F^{\rm ur}(\alpha_R^m,\beta_{R,m},\gamma_{R,m})/F$ is Galois. 
\end{lemma}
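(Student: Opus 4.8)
The goal is to show that $F^{\mathrm{ur}}(\alpha_R^m,\beta_{R,m},\gamma_{R,m})/F$ is Galois. My plan is to show that this field is stable under $W_F$ (equivalently under $G_F = \Gal(\overline{F}/F)$), and then that it is a separable extension of $F$; since it is also finite over $F$ (each generator is algebraic: $\alpha_R^m$ satisfies $(\alpha_R^m)^{d_R/\gcd(d_R,m)} = \varpi^{m/\gcd(d_R,m)}$ up to a root of unity, $\beta_{R,m}$ is a root of $\widetilde{E}_R(x) = \alpha_R^{-m}$, and $\gamma_{R,m}$ is a root of an Artin–Schreier polynomial over $F(\alpha_R^m,\beta_{R,m})$), a finite separable $G_F$-stable subextension of $\overline{F}$ is Galois. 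So the real content is $W_F$-stability.

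First I would fix $\sigma \in W_F$ and show $\sigma$ maps each of $\alpha_R^m, \beta_{R,m}, \gamma_{R,m}$ into the field $L := F^{\mathrm{ur}}(\alpha_R^m,\beta_{R,m},\gamma_{R,m})$. For $\alpha_R^m$: by the definition in \eqref{abcr}, $\sigma(\alpha_R) = a_\sigma \alpha_R$ with $a_\sigma \in \mu_{d_R}(\overline{F})$, hence $\sigma(\alpha_R^m) = a_\sigma^m \alpha_R^m$; since $\mu_{d_R}(\overline{F}) \subset F^{\mathrm{ur}}$ (roots of unity of order prime to $p$ are unramified — here $d_R \mid p^i+1$ so $d_R$ is prime to $p$), we get $\sigma(\alpha_R^m) \in L$. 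For $\beta_{R,m}$: again from \eqref{abcr}, $a_\sigma^m \sigma(\beta_{R,m}) = \beta_{R,m} + b_\sigma$, so $\sigma(\beta_{R,m}) = a_\sigma^{-m}(\beta_{R,m} + b_\sigma)$; by Lemma \ref{abbc}, $b_\sigma \in \mathcal{O}$ with $\bar b_\sigma \in V_R \subset \mathbb{F}$, so $b_\sigma$ is a sum of a Teichmüller-type element of $F^{\mathrm{ur}}$ and something in $\mathfrak{p}$ — but I must be careful: a priori $b_\sigma$ is just a unit, not obviously in $L$. The cleaner route is to argue that the \emph{set} of all $W_F$-conjugates of the three generators lands in $L$ because those conjugates are governed by $\Theta_{R,m,\varpi}$, whose target $Q_R \rtimes \mathbb{Z}$ is finite-by-$\mathbb{Z}$; concretely, $\sigma(\beta_{R,m})$ differs from $\beta_{R,m}$ by an element of $\mathcal{O}$ whose reduction lies in $V_R$, and the corresponding lift inside $L$ can be pinned down because $\beta_{R,m}$ generates over $F^{\mathrm{ur}}$ a totally ramified extension in which $\mathcal{O}$-level perturbations with reduction in $V_R$ are realized by the finite group $H_R$ acting through \eqref{fa0}. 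I would make this precise by noting that $F^{\mathrm{ur}}(\alpha_R^m, \beta_{R,m})/F^{\mathrm{ur}}(\alpha_R^m)$ is already Galois: it is obtained by adjoining all roots of $\widetilde{E}_R(x) = \alpha_R^{-m}$ (an additive-polynomial equation, so the difference of any two roots is a root of the separable additive polynomial $\widetilde{E}_R$, i.e. lies in the splitting field of $\widetilde{E}_R$ over $F$, which is contained in $F^{\mathrm{ur}}$ after adjoining one root — indeed $V_R \subset \mathbb{F}$ and $\widetilde{E}_R$ has all roots of the form $\beta_{R,m} + (\text{lift of }V_R)$, up to the $\mathfrak{p}$-corrections controlled by \eqref{assume}).

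Then for $\gamma_{R,m}$: from \eqref{abcr}, $\sigma(\gamma_{R,m}) = \gamma_{R,m} + c_\sigma + \widetilde{f}(\beta_{R,m}, b_\sigma)$, where by Lemma \ref{abbc} $c_\sigma \in \mathcal{O}$ with $\bar c_\sigma^p - \bar c_\sigma = \bar b_\sigma R(\bar b_\sigma)$, and $\widetilde{f}(\beta_{R,m}, b_\sigma) \in F^{\mathrm{ur}}(\alpha_R^m, \beta_{R,m})$ once we know $b_\sigma$ lies there. The Artin–Schreier extension $F^{\mathrm{ur}}(\alpha_R^m,\beta_{R,m},\gamma_{R,m})/F^{\mathrm{ur}}(\alpha_R^m,\beta_{R,m})$ is automatically Galois (degree $1$ or $p$), so once I know $\sigma$ preserves the base $F^{\mathrm{ur}}(\alpha_R^m,\beta_{R,m})$, it preserves the top field. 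Assembling: $L$ is a tower of Galois steps $F \subset F^{\mathrm{ur}}(\alpha_R^m) \subset F^{\mathrm{ur}}(\alpha_R^m,\beta_{R,m}) \subset L$, each step Galois over the previous and each intermediate field $W_F$-stable, hence $L/F$ is Galois. The main obstacle is the middle step — verifying that $F^{\mathrm{ur}}(\alpha_R^m, \beta_{R,m})/F^{\mathrm{ur}}(\alpha_R^m)$ is Galois, i.e. that adjoining one root $\beta_{R,m}$ of $\widetilde{E}_R(x) = \alpha_R^{-m}$ already contains all roots; this requires knowing that the full root set is $\{\,\beta_{R,m} + \beta' : \beta' \in \widetilde{V}_R\,\}$ for an appropriate $\mathcal{O}$-lift $\widetilde{V}_R$ of $V_R$ cut out by the congruence \eqref{assume}, which in turn uses $V_R \subset \mathbb{F}_q^{\mathrm{alg}} \subset F^{\mathrm{ur}}$ together with Hensel's lemma to lift. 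Everything else is bookkeeping with \eqref{abcr} and Lemma \ref{abbc}.
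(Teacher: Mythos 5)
Your overall plan --- reduce to $G_F$-stability (then invoke separability) and use the relations \eqref{abcr} together with Lemma~\ref{abbc} and Hensel's lemma to control $\sigma(\beta_{R,m})$ and $\sigma(\gamma_{R,m})$ --- is the same as the paper's. But the way you handle the $\gamma$-step has a genuine logical gap. You assemble $L_0 := F^{\rm ur}(\alpha_R^m,\beta_{R,m},\gamma_{R,m})$ as a tower of Galois steps $F \subset F^{\rm ur}(\alpha_R^m) \subset F^{\rm ur}(\alpha_R^m,\beta_{R,m}) \subset L_0$ and then assert: ``once I know $\sigma$ preserves the base $F^{\rm ur}(\alpha_R^m,\beta_{R,m})$, it preserves the top field,'' because the top step is ``Artin--Schreier, automatically Galois.'' That inference is false: $\sigma(L_2)=L_2$ together with $L_0/L_2$ Galois does \emph{not} give $\sigma(L_0)=L_0$ (the standard counterexample is $\mathbb{Q}\subset\mathbb{Q}(\sqrt2)\subset\mathbb{Q}(\sqrt[4]2)$, a tower of quadratic extensions that is not Galois over $\mathbb{Q}$). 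What actually happens when $\sigma$ does not fix $\beta_{R,m}$ is that $\sigma(\gamma_{R,m})$ solves a \emph{different} Artin--Schreier-type equation $x^p-x=\sigma(\beta_{R,m})\widetilde{R}(\sigma(\beta_{R,m}))$, so ``$L_0/L_2$ Galois'' gives you nothing about $\sigma(\gamma_{R,m})$. Moreover, in mixed characteristic $x^p-x-y$ is not an Artin--Schreier polynomial, so even over $L_2$ the extension $L_0/L_2$ is not ``automatically Galois''; this again requires a Hensel argument and the hypothesis \eqref{assume}.

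The paper avoids the tower entirely: for any $\sigma\in G_F$ it shows directly that $b_\sigma,c_\sigma\in L_0$, and then reads off $\sigma(\beta_{R,m})=a_\sigma^{-m}(\beta_{R,m}+b_\sigma)$ and $\sigma(\gamma_{R,m})=\gamma_{R,m}+c_\sigma+\widetilde{f}(\beta_{R,m},b_\sigma)$ from \eqref{abcr}. The point you gesture at for $b_\sigma$ --- Hensel's lemma applied to $E(x):=\widetilde E_R(x)+\Delta(x)\in\mathcal{O}_{L_0}[x]$, whose reduction mod $0+$ has $p^{2e}$ distinct roots --- is exactly right, but the same argument must be run again for $c_\sigma$: it is a root of $f(x)=x^p-x+\Delta_1(x)-y$ with $\Delta_1\in\mathfrak{p}[x]$ (courtesy of \eqref{assume}) and $y\in\mathcal{O}_{L_0}$, whose reduction $x^p-x-\bar y$ has $p$ distinct roots. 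The one technical device you omit, and the paper spells out, is that Hensel's lemma is applied in the completion $L=\widehat{L_0}$; the roots then land in $L\cap\overline F=L_0$. If you replace your ``automatically Galois'' shortcut for the $\gamma$-step by this second Hensel lifting for $c_\sigma$, your argument becomes correct and coincides with the paper's. (A minor slip: you call $L_0/F$ a ``finite separable $G_F$-stable subextension,'' but $L_0\supset F^{\rm ur}$ is infinite over $F$; the conclusion ``$G_F$-stable $+$ separable $\Rightarrow$ Galois'' does not need finiteness, so this is harmless.)
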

\begin{proof}
Let $L_0:=F^{\rm ur}(\alpha^m,\beta,\gamma)$ and $L:=\widehat{L}_0$ be the completion of $L_0$. 
Let $\sigma \in G_F$. 
We note that $a_{\sigma} \in 
\mu_{d_R}(\overline{F}) \subset F^{\rm ur}$ by $p \nmid d_R$. 
Hence $\sigma(\alpha^m)=a_{\sigma}^m \alpha^m \in L_0$. 
We show 
$\sigma(\beta), \sigma(\gamma) \in L_0$. 
It suffices to prove
\[
b_{\sigma}, c_{\sigma} \in L_0, 
\]
since 
\[
\sigma(\beta)=\frac{b_{\sigma}+\beta}{a_{\sigma}^m}, \quad 
\sigma(\gamma)=\gamma+c_{\sigma}+
\widetilde{f}(\beta,b_{\sigma})
\]
by \eqref{abcr}. 
As in the proof of Lemma \ref{abbc}, we have 
$(\widetilde{E}_R+\Delta)(b_{\sigma})=0$,  $E(x):=(\widetilde{E}_R+\Delta)(x) \in \mathcal{O}_{L_0}[x]$ and 
$\deg E(x)=p^{2e}$. 
The equation $E(x) \equiv 0 \mod 0+$
has $p^{2e}$ different roots. 
Hence by Hensel's lemma, 
$E(x)=0$ has $p^{2e}$ different roots 
in $\mathcal{O}_L$. 
Hence we have $b_{\sigma} \in L \cap \overline{F}
=L_0$. 
As in the proof of Lemma \ref{abbc}, 
we have 
\[
f(c_{\sigma}):=c_{\sigma}^p-c_{\sigma}+
\Delta_1(c_{\sigma})-y=0\ \textrm{with $y \in \mathcal{O}_{L_0}$},  
\]
where $f(x) \in \mathcal{O}_{L_0}[x]$ with $\deg f(x)=p$.  We have $y \equiv b_{\sigma} \widetilde{R}(b_{\sigma})
\mod 0+$.  
The equation $f(x) \equiv x^p-x- y \equiv x^p-x-b_{\sigma} \widetilde{R}(b_{\sigma}) \equiv 0 \mod 0+$ 
has $p$ different roots. 
By Hensel's lemma, $f(x)=0$ has $p$ different roots in 
$\mathcal{O}_L$. Hence we have $c_{\sigma} \in L \cap \overline{F}=L_0$.
\end{proof}
\begin{definition}
Let
\[
d_{R,m}:=\frac{d_R}{\mathrm{gcd}(d_R,m)}, \quad 
Q_{R,m}:=\{(\alpha,\beta,\gamma) \in Q_R \mid \alpha \in 
\mu_{d_{R,m}}\}. 
\] 
\end{definition}
\begin{lemma}\label{3.8}
We have the isomorphism 
\begin{equation}\label{weil}
W(F^{\rm ur}(\alpha_R^m,\beta_{R,m},\gamma_{R,m})/F) \xrightarrow{\sim}
Q_{R,m} \rtimes \mathbb{Z};\ \sigma \mapsto ((\bar{a}_{\sigma}^m,\bar{b}_{\sigma},\bar{c}_{\sigma}),n_{\sigma}). 
\end{equation}
\end{lemma}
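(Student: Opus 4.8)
The map in \eqref{weil} is the one induced by the homomorphism $\Theta_{R,m,\varpi}\colon W_F\to Q_R\rtimes\mathbb Z$ of \eqref{tmr}, so the plan is to show that $\Theta_{R,m,\varpi}$ factors through $W(L_0/F)$, where $L_0:=F^{\rm ur}(\alpha_R^m,\beta_{R,m},\gamma_{R,m})$, and that the induced map is an isomorphism onto $Q_{R,m}\rtimes\mathbb Z$. First I would observe that $\Theta_{R,m,\varpi}(\sigma)=((\bar a_\sigma^m,\bar b_\sigma,\bar c_\sigma),n_\sigma)$ depends only on $\sigma|_{L_0}$: by \eqref{abcr} the quantities $a_\sigma^m=\sigma(\alpha_R^m)/\alpha_R^m$, $b_\sigma$, $c_\sigma$ are expressed through $\sigma(\alpha_R^m),\sigma(\beta_{R,m}),\sigma(\gamma_{R,m})$, and $n_\sigma$ is read off $\sigma|_{F^{\rm ur}}$. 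Since $W_F$ surjects onto $W(L_0/F)$, this yields $\overline\Theta\colon W(L_0/F)\to Q_R\rtimes\mathbb Z$, whose first coordinate lies in the image of the $m$-th power map on $\mu_{d_R}$, namely $\mu_{d_{R,m}}$; hence $\overline\Theta$ takes values in $Q_{R,m}\rtimes\mathbb Z$. Moreover $\overline\Theta$ fits into a commutative ladder of short exact sequences, $1\to\Gal(L_0/F^{\rm ur})\to W(L_0/F)\to\mathbb Z\to1$ above $1\to Q_{R,m}\to Q_{R,m}\rtimes\mathbb Z\to\mathbb Z\to1$, the map on quotients being $\mathrm{id}_{\mathbb Z}$ (since $\sigma\mapsto n_\sigma$ is the defining isomorphism $W(F^{\rm ur}/F)\xrightarrow{\sim}\mathbb Z$). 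By the five lemma it therefore suffices to prove that $\overline\Theta$ restricts to an isomorphism $\Gal(L_0/F^{\rm ur})\xrightarrow{\sim}Q_{R,m}$, and I will obtain this from injectivity together with the inequality $[L_0:F^{\rm ur}]\ge|Q_{R,m}|$.

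For injectivity, let $\sigma\in\Gal(L_0/F^{\rm ur})$ with $(\bar a_\sigma^m,\bar b_\sigma,\bar c_\sigma)=(1,0,0)$. Since $d_R$ is prime to $p$, reduction is injective on $\mu_{d_R}(\overline F)$, so $a_\sigma^m=1$, whence $\sigma(\alpha_R^m)=\alpha_R^m$ and $b_\sigma=\sigma(\beta_{R,m})-\beta_{R,m}$. As in the proof of Lemma \ref{abbc}, $b_\sigma\in\mathcal O$ is a root of $E(x):=\widetilde E_R(x)+\Delta(x)$, where $\Delta\in\mathfrak p[x]$ by \eqref{assume}, $\deg E=p^{2e}$, $E(0)=0$, and $E(x)\equiv E_R(x)\bmod\mathfrak p$; as $E_R$ is separable with root set $V_R$ (Definition \ref{qdef}(1)), Hensel's lemma shows that $0$ is the only root of $E$ reducing to $0$, so $b_\sigma=0$ and $\sigma(\beta_{R,m})=\beta_{R,m}$. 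Then $c_\sigma=\sigma(\gamma_{R,m})-\gamma_{R,m}$ is, again as in the proof of Lemma \ref{abbc}, a root in $\mathcal O$ of $x^p-x+\delta(x)$ with $\delta\in\mathfrak p[x]$ (the last line of \eqref{assume}) and $\delta(0)=0$, so Hensel's lemma forces $c_\sigma=0$ and $\sigma(\gamma_{R,m})=\gamma_{R,m}$. Hence $\sigma$ acts trivially on $L_0$.

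For the inequality I would argue by ramification. Since $F^{\rm ur}$ is Henselian with algebraically closed residue field and value group $\mathbb Z$, the extension $L_0/F^{\rm ur}$ is totally ramified, $[L_0:F^{\rm ur}]=e(L_0/F^{\rm ur})$, and any $x\in L_0^\times$ whose valuation has reduced denominator $d$ satisfies $d\mid e(L_0/F^{\rm ur})$. By Remark \ref{rb}, $v(\alpha_R^m)=m/d_R$ has reduced denominator $d_{R,m}=d_R/\gcd(d_R,m)$, and $v(\gamma_{R,m})=-m(p^e+1)/(p^{2e+1}d_R)$ has reduced denominator $p^{2e+1}$ — here one uses that $m$, $p^e+1$ and $d_R$ are prime to $p$, and that $d_R\mid p^e+1$, so the factor $d_R$ of the denominator is cancelled by $p^e+1$ in the numerator. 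As $\gcd(p^{2e+1},d_{R,m})=1$, it follows that $p^{2e+1}d_{R,m}=|Q_{R,m}|$ divides $[L_0:F^{\rm ur}]$. Combined with the injection $\Gal(L_0/F^{\rm ur})\hookrightarrow Q_{R,m}$ and the equality $|\Gal(L_0/F^{\rm ur})|=[L_0:F^{\rm ur}]$ (as $L_0/F^{\rm ur}$ is finite Galois), this yields $[L_0:F^{\rm ur}]=|Q_{R,m}|$, and hence the sought isomorphism.

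The step requiring the most care is this last ramification bookkeeping: one must check that the tower $F^{\rm ur}\subset F^{\rm ur}(\alpha_R^m)\subset F^{\rm ur}(\alpha_R^m,\beta_{R,m})\subset L_0$ does not collapse at any stage, which is exactly what the coprimality relations among $m$, $p$, $d_R$ together with $d_R\mid p^e+1$ guarantee via the valuations of Remark \ref{rb}. Everything else — that $\Theta_{R,m,\varpi}$ is a well-defined homomorphism landing in $Q_R\rtimes\mathbb Z$, and the Hensel-lemma verifications above — is already in the excerpt or routine given the congruences \eqref{assume} and \eqref{ffd}.
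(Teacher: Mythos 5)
Your proof is correct and takes essentially the same route as the paper: restrict $\Theta_{R,m,\varpi}$ to inertia, prove injectivity by the Hensel-type argument from Lemma \ref{abbc}, verify the degree of $L_0/F^{\rm ur}$ equals $|Q_{R,m}|=d_{R,m}p^{2e+1}$, and then conclude via a diagram chase (five lemma in your write-up, snake lemma in the paper). The only notable difference is that where the paper merely asserts ``we easily check'' that $[L_0:F^{\rm ur}]=d_{R,m}p^{2e+1}$, you supply a clean ramification-theoretic lower bound by reading off the reduced denominators of $v(\alpha_R^m)$ and $v(\gamma_{R,m})$ and using their coprimality together with injectivity for the upper bound.
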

\begin{proof}
We use the notation in the proof of Lemma \ref{abbc}. 
Let 
\[
I:=\Gal(F^{\rm ur}(\alpha^m,\beta,\gamma)/F^{\rm ur})
\]
and $\Theta \colon I \to Q_{R, m}$
be the restriction of \eqref{weil}. 
First, we show that $\Theta$ is injective.  
Assume $\Theta(\sigma)=1$ for $\sigma \in I$. 
We will show $\sigma=1$. 
By the assumption, $\bar{a}_{\sigma}^m=1$, 
$\bar{b}_{\sigma}=0$ and $\bar{c}_{\sigma}=0$. 
We have a natural isomorphism 
$\mu_r(\overline{F}) \xrightarrow{\sim} 
\mu_r$ for an integer $r$ prime to $p$. 
By $\bar{a}_{\sigma}^m=1$ and 
$a_{\sigma} \in \mu_{d_R}(\overline{F})$
in \eqref{abcr}, we have $a_{\sigma}^m=1$ and $\sigma(\alpha^m)=\alpha^m$. 
We recall the equality $\widetilde{E}_R(b_{\sigma})+\Delta(b_{\sigma})=0$ in the proof of Lemma \ref{abbc}, where $\Delta(x) \in \mathfrak{p}[x]$ has no constant 
coefficient. 
We have $v(b_{\sigma})>0$ by $\bar{b}_{\sigma}=0$. 
Assume that $b_{\sigma} \neq 0$. 
Then $v(b_{\sigma})=v(\widetilde{E}_R(b_{\sigma})+\Delta(b_{\sigma}))$ by $E'_R(0) \neq 0$ and $v(b_{\sigma})>0$. This implies that
$v(b_{\sigma})=\infty$, which is a contradiction. 
Hence $b_{\sigma}=0$ and $\sigma(\beta)=\beta$. 
By the last condition in \eqref{assume}
with $y=0$, 
\[
\Lambda(x):=(\gamma+x)^p-\gamma^p-x^p \in \mathfrak{p}[x]. 
\]
We have $\sigma(\gamma)^p-\sigma(\gamma)=\gamma^p-\gamma$ by Definition \ref{3d}. 
Hence $(\gamma+c_{\sigma})^p-\gamma^p=c_{\sigma}$ and 
$c_{\sigma}^p+\Lambda(c_{\sigma})=c_{\sigma}$. 
Since $\Lambda(x) \in \mathfrak{p}[x]$ has no constant coefficient, if $0<v(c_{\sigma})<\infty$, 
we have $v(c_{\sigma}^p+\Lambda(c_{\sigma}))>v(c_{\sigma})$, which 
can not occur. 
Hence $c_{\sigma}=0$ and $\sigma(\gamma)=\gamma$. 
We obtain $\sigma = 1$. 
Hence $\Theta$ is injective. 
We easily check that $F^{\rm ur}(\alpha^m,\beta,\gamma)/F^{\rm ur}$ is a 
totally ramified extension of degree $d_{R,m} p^{2e+1}$.
Hence $\Theta$ is an isomorphism.  
By the snake lemma, \eqref{weil} is an isomorphism. 
\end{proof}
\subsection{Galois representations associated to
additive polynomials}
\label{3.2}
\subsubsection{Construction of Galois representation}
We assume that \eqref{assume} and \eqref{ffd} are satisfied. If the characteristic of 
$F$ is positive, these are unconditional. If the characteristic of $F$ is 
zero, these conditions are satisfied if the absolute ramification index of $F$ is large enough as in Lemma \ref{necr}. 
\begin{definition}
Let $\psi \in \mathbb{F}_{p}^{\vee} \setminus 
\{1\}$. We define $\tau_{\psi,R,m,\varpi}$ to be the $W_F$-representation which is the inflation of the irreducible $Q_R \rtimes \mathbb{Z}$
-representation $H_{\rm c}^1
(C_{R,\mathbb{F}},\overline{\mathbb{Q}}_{\ell})[\psi]$ by $\Theta_{R,m,\varpi}$ in \eqref{tmr}.
For simplicity, we write  
$\tau_{\psi,R,m}$ for $\tau_{\psi,R,m,\varpi}$. 
\end{definition} 
For a non-archimedean local field $K$, 
let $I_K$ denote the inertia subgroup of $K$. 
Then $\Ker \tau_{\psi,R,m}$ contains the open compact subgroup 
$I_{F(\alpha_R^m,\beta_{R,m},\gamma_{R,m})}$. 
Hence the representation $\tau_{\psi,R,m}$ is a smooth irreducible representation of $W_F$ by Lemma \ref{fa}(1).

Let $G_F:=\Gal(\overline{F}/F)$. 
We consider a general setting in the following 
lemma. 
\begin{lemma}\label{bel}
Let $\widetilde{\tau}$ be a continuous 
representation of $G_F$ over $\overline{\mathbb{Q}}_{\ell}$ such that 
there exists an unramified continuous 
character $\phi$ of 
$G_F$ such that $(\widetilde{\tau} \otimes \phi)(G_F)$ is finite. 
Assume that $\tau:=\widetilde{\tau}|_{W_F}$ is irreducible. 
Then $\widetilde{\tau} \otimes \phi$ is primitive if and only if $\tau$ is primitive.
\end{lemma}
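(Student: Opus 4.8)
The plan is to relate primitivity of a representation to that of any unramified twist, using the fact that twisting by an unramified character is almost invisible from the point of view of induction. First I would unwind the definitions: $\widetilde\tau$ is primitive means there is no proper subgroup $H\subsetneq G_F$ open (of finite index — this is automatic here since the image is finite after twisting, so $\ker\widetilde\tau$ is open) and no representation $\sigma$ of $H$ with $\widetilde\tau\simeq\Ind_H^{G_F}\sigma$; similarly for $\tau$ as a $W_F$-representation. Since $\phi$ is unramified, restriction of $\phi$ to any closed subgroup $H$ with $H\supset I_F$ (equivalently, $H$ surjects onto $\Gal(F^{\mathrm{ur}}/F)$) is still unramified, and $\Ind_H^{G_F}(\sigma\otimes\phi|_H)\simeq(\Ind_H^{G_F}\sigma)\otimes\phi$ by the projection formula. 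So if $\tau\simeq\Ind_{H\cap W_F}^{W_F}\sigma$ is an imprimitivity decomposition, twisting gives $\widetilde\tau\otimes\phi\simeq\Ind_{?}^{G_F}(\sigma\otimes\phi)$ for the corresponding subgroup of $G_F$, and conversely.

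The key point to nail down is the passage between subgroups of $W_F$ and subgroups of $G_F$, and the compatibility of induction under the inclusion $W_F\hookrightarrow G_F$. Concretely: because $\tau=\widetilde\tau|_{W_F}$ is irreducible and $\widetilde\tau$ is (up to the unramified twist $\phi$) trivial on an open subgroup, the subgroups of $G_F$ over which $\widetilde\tau$ could be induced are exactly the open subgroups $H$ of finite index, and such $H$ must contain $I_{F'}$ for the finite extension $F'/F$ cut out by $\ker(\widetilde\tau\otimes\phi)$; in particular $H$ is of the form (closed subgroup) and $H\cap W_F$ is dense in $H$ and of finite index in $W_F$, corresponding to a finite extension $F'/F$. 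Since both $\widetilde\tau$ and $\Ind$ are inflated from the finite quotient $\Gal(F''/F)$ for a suitable finite Galois $F''/F$ (after twisting by $\phi$), the whole question reduces to a statement about the finite group $\Gal(F''/F)$ and its index-$[F''{:}F^{\mathrm{ur}}\cap F'']$... — more cleanly, one uses that for finite-index open $H\le G_F$ with $H\cap W_F=W_{F'}$, the natural map $\Ind_{W_{F'}}^{W_F}(\rho|_{W_{F'}})\to(\Ind_H^{G_F}\rho)|_{W_F}$ is an isomorphism for any smooth $\rho$ (this is Mackey/transitivity of induction applied along $W_F\subset G_F$, valid because $G_F=W_F\cdot H$ since $H$ surjects onto $\widehat{\mathbb Z}/\mathbb Z$'s complement... actually because $G_F/W_F$ is "pro-$\widehat{\mathbb Z}$-completion"-like and $H$ already surjects onto $\Gal(F^{\mathrm{ur}}/F)$). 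So a $G_F$-imprimitivity decomposition of $\widetilde\tau\otimes\phi$ restricts to a $W_F$-imprimitivity decomposition of $\tau$.

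I would organize the argument as: (i) reduce to the case $(\widetilde\tau\otimes\phi)(G_F)$ finite by replacing $\widetilde\tau$ with $\widetilde\tau\otimes\phi$ and $\tau$ with $\tau\otimes\phi|_{W_F}$, noting $\tau\otimes\phi|_{W_F}$ is irreducible iff $\tau$ is and twisting commutes with induction; (ii) observe that open subgroups $H\le G_F$ with $[G_F{:}H]<\infty$ are in inclusion-preserving bijection with finite separable extensions $F'/F$ inside $\overline F$, under which $H\cap W_F=W_{F'}$, and that $G_F=W_F\cdot H$; (iii) prove the induction-compatibility isomorphism $(\Ind_{H}^{G_F}\rho)|_{W_F}\simeq\Ind_{W_{F'}}^{W_F}(\rho|_{W_{F'}})$ for smooth finite-dimensional $\rho$, which follows from the second isomorphism $G_F=W_F H$ and standard Mackey theory; (iv) combine with the projection formula $\Ind_H^{G_F}(\rho\otimes\phi|_H)\simeq(\Ind_H^{G_F}\rho)\otimes\phi$ to get both implications. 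The main obstacle I anticipate is being careful in step (iii)/(iv) with the topological subtleties — one must check the induced $G_F$-representation one writes down is genuinely continuous (equivalently, trivial on an open subgroup), and that the imprimitivity datum $\sigma$ on the Weil side actually extends to a continuous $G_{F'}$-representation; this is where the hypothesis that $\widetilde\tau$ (up to unramified twist) has finite image is essential, since it forces any such $\sigma$ to also have, up to an unramified twist, finite image, hence extend. Everything else is formal manipulation with induction and unramified twists.
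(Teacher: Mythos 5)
Your proposal is essentially correct and follows the same route as the paper: twist by $\phi$ to reduce to the finite-image case, identify finite-index open subgroups of $G_F$ (resp.\ $W_F$) with finite extensions $F'/F$, use the compatibility $(\Ind_{G_{F'}}^{G_F}\rho)|_{W_F}\simeq\Ind_{W_{F'}}^{W_F}(\rho|_{W_{F'}})$, and in the converse direction extend the Weil-side datum $\sigma$ to a smooth $G_{F'}$-representation using finiteness of its image (the paper cites \cite[Proposition 28.6]{BH} for this extension).

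One step in the converse direction is glossed over, though. After you extend $\sigma'$ to a smooth $G_{F'}$-representation $\widetilde\sigma$ and form $\Ind_{G_{F'}}^{G_F}\widetilde\sigma$, steps (iii)--(iv) only tell you that its restriction to $W_F$ agrees with $\widetilde\tau'|_{W_F}$. To conclude that $\widetilde\tau'\simeq\Ind_{G_{F'}}^{G_F}\widetilde\sigma$ as $G_F$-representations you still need a uniqueness statement: a smooth irreducible $G_F$-representation is determined by its restriction to $W_F$. The paper gets this from \cite[Lemma 28.6.2(2)]{BH} (one first notes the induced representation is irreducible because its restriction $\tau'$ is). Your ``reduces to a statement about the finite group $\Gal(F''/F)$'' remark contains the germ of an alternative argument --- $W_F$ surjects onto every finite quotient of $G_F$, so two representations factoring through a common finite quotient with the same restriction to $W_F$ must coincide --- but this needs to be said explicitly rather than filed under ``formal manipulation''; without it the converse implication does not close.
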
 
\begin{proof}
Let 
$\widetilde{\tau}':=\widetilde{\tau} \otimes \phi$ and 
$\tau':=\tau \otimes \phi|_{W_F}$. 
The subgroup $\Ker \widetilde{\tau}'$ is open by $|G_F/\Ker \widetilde{\tau}'|<\infty$. 
Hence $\Ker \tau' \subset W_F$ is
open. Therefore   
$\tau'$ is smooth. 
Hence so is $\tau$. 
Since $\tau$ is irreducible and smooth, we have $\dim \tau<\infty$. 
We will show that 
$\widetilde{\tau}'$ is imprimitive if and only if 
$\tau$ is imprimitive. 

First, assume an isomorphism 
$\widetilde{\tau}' \simeq \Ind_{H}^{G_F} \eta'$
with a proper subgroup $H$.
We can check $\Ker \widetilde{\tau}' \subset H$. 
Hence $H$ is open.  
Hence we can write  $H=G_{F'}$ with a finite extension $F'/F$. 
Hence we obtain an isomorphism 
$\tau \simeq \Ind_{W_{F'}}^{W_F} (\eta'|_{W_{F'}} \otimes \phi^{-1}|_{W_{F'}})$.

To the contrary, assume $\tau \simeq \Ind_H^{W_F} \sigma$. 
In the same manner as above with replacing 
$G_F$ by $W_F$, the subgroup $H$ is 
an open subgroup of $W_F$ of finite index
by $\dim \tau<\infty$. 
Hence we can write $H=W_{F'}$ with a finite extension $F'/F$. Let $\sigma':=\sigma \otimes \phi|_{W_{F'}}$. 
We have $\tau' \simeq 
\Ind_{W_{F'}}^{W_F} \sigma'$.  
Frobenius reciprocity implies that 
$\sigma'(W_{F'})
\subset \tau'(W_F)$. 
By the assumption, the image $\sigma'(W_{F'})$ is finite. Hence the smooth $W_{F'}$-representation $\sigma'$ extends to a smooth representation of $G_{F'}$,
for which we write  $\widetilde{\sigma}$ (\cite[Proposition 28.6]{BH}).  
The restriction of $\Ind_{G_{F'}}^{G_F} \widetilde{\sigma}$ to $W_F$ is 
isomorphic to 
$\Ind_{W_{F'}}^{W_F} \sigma' \simeq \tau'$. 
Both of  $\Ind_{G_{F'}}^{G_F} \widetilde{\sigma}$
and $\widetilde{\tau}'$ are smooth 
irreducible $G_F$-representations whose restrictions to $W_F$ are 
isomorphic to $\tau'$. 
Hence we obtain an isomorphism $\widetilde{\tau}' \simeq \Ind_{G_{F'}}^{G_F} \widetilde{\sigma}$ as $G_F$-representations by \cite[Lemma 28.6.2(2)]{BH}. 
\end{proof}

\begin{lemma}\label{rem1}
The eigenvalues of $\mathrm{Fr}_q^\ast$ on $H_{\rm c}^1(C_{R,\mathbb{F}},\overline{\mathbb{Q}}_{\ell})[\psi]$
have the forms $\zeta \sqrt{q}$ with roots of unity 
$\zeta$. The automorphism $\mathrm{Fr}_q^\ast$
is semi-simple. 
\end{lemma}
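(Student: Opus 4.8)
The plan is to reduce the assertion to the Frobenius action on the first cohomology of the smooth projective curve $\overline{C}_R$ and then to exploit the supersingularity recorded in Proposition \ref{super}. First I would observe that $\mathrm{Fr}_q^\ast$ preserves the isotypic decomposition of $H_{\rm c}^1(C_{R,\mathbb{F}},\overline{\mathbb{Q}}_{\ell})$ for the $Z(H_R)$-action: since the chosen action of $\mathbb{Z}$ on $Q_R$ fixes $Z(Q_R)=Z(H_R)\simeq\mathbb{F}_p$ pointwise, the operator $\mathrm{Fr}_q^\ast$ commutes with the $Z(H_R)$-action, hence stabilizes every $H_{\rm c}^1(C_{R,\mathbb{F}},\overline{\mathbb{Q}}_{\ell})[\chi]$, and in particular $H_{\rm c}^1(C_{R,\mathbb{F}},\overline{\mathbb{Q}}_{\ell})[\psi]$. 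Therefore it suffices to prove both statements for $\mathrm{Fr}_q^\ast$ on the whole space $H_{\rm c}^1(C_{R,\mathbb{F}},\overline{\mathbb{Q}}_{\ell})$: an eigenvalue on a stable direct summand is an eigenvalue on the ambient space, and the restriction of a semisimple endomorphism to an invariant subspace is again semisimple. By Proposition \ref{super} the natural map $H_{\rm c}^1(C_{R,\mathbb{F}},\overline{\mathbb{Q}}_{\ell})\to H^1(\overline{C}_{R,\mathbb{F}},\overline{\mathbb{Q}}_{\ell})$ is a $\mathrm{Fr}_q^\ast$-equivariant isomorphism, so I may work with $\overline{C}_R$.

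Next I would pass to the Jacobian. Set $J:=\mathrm{Jac}(\overline{C}_R)$, an abelian variety over $\mathbb{F}_q$ of dimension $g=\dim J>0$, and let $\pi_J$ be its Frobenius endomorphism over $\mathbb{F}_q$. Under the canonical identification of $H^1(\overline{C}_{R,\mathbb{F}},\overline{\mathbb{Q}}_{\ell})$ with the dual of $V_\ell(J)\otimes_{\mathbb{Q}_\ell}\overline{\mathbb{Q}}_{\ell}$, where $V_\ell(J)=(\varprojlim_n J[\ell^n])\otimes\mathbb{Q}_\ell$, the operator $\mathrm{Fr}_q^\ast$ corresponds to $\pi_J$ acting on $V_\ell(J)$; in particular the two have the same characteristic polynomial and the same semisimplicity behaviour. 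By Proposition \ref{super}, $\overline{C}_R$ is supersingular, so $J_{\mathbb{F}}$ is isogenous over $\mathbb{F}$ to a product of supersingular elliptic curves. Choose $N\in\mathbb{Z}_{\geq1}$ with $fN$ even and large enough that $J_{\mathbb{F}_{q^N}}$ is already isogenous over $\mathbb{F}_{q^N}$ to such a product $\prod_i E_i$; then $\pi_J^{N}$ is the Frobenius endomorphism of $J$ over $\mathbb{F}_{q^N}$, and on $V_\ell(J)=V_\ell(J_{\mathbb{F}_{q^N}})$ it has the same characteristic polynomial as the Frobenius of $\prod_i E_i$ over $\mathbb{F}_{q^N}$. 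For each supersingular $E_i$ over $\mathbb{F}_{q^N}$ the Frobenius endomorphism is $q^{N/2}$ times a root of unity (its square divided by $q^N$ is an algebraic unit all of whose archimedean absolute values are $1$, hence a root of unity by Kronecker), and it acts as a scalar on $V_\ell(E_i)$ because it lies in the centre of the division algebra $\mathrm{End}^0(E_i)$.

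Consequently $\pi_J^{N}$ is semisimple on $V_\ell(J)$ with all eigenvalues of the form $\zeta' q^{N/2}$ for roots of unity $\zeta'$; therefore each eigenvalue $\alpha$ of $\pi_J$ satisfies $\alpha^{N}=\zeta' q^{N/2}$, so $\alpha=\zeta\sqrt{q}$ for a root of unity $\zeta$, and on each $\pi_J^{N}$-eigenspace $\pi_J$ annihilates the separable polynomial $T^{N}-\zeta' q^{N/2}$, hence is semisimple; so $\pi_J$ is semisimple on $V_\ell(J)$. Transporting back through the identifications of the first two paragraphs yields both assertions for $\mathrm{Fr}_q^\ast$ on $H_{\rm c}^1(C_{R,\mathbb{F}},\overline{\mathbb{Q}}_{\ell})[\psi]$. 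The only genuinely delicate point is the passage from "supersingular over $\mathbb{F}$" to a statement about $\mathrm{Fr}_q$ over the ground field $\mathbb{F}_q$, which is exactly what the base change to $\mathbb{F}_{q^N}$ handles; the remaining ingredients are formal, or are the classical facts on abelian varieties over finite fields (and the semisimplicity could alternatively be cited directly from Weil and Tate, or read off from the explicit $L$-polynomial of $C_R$ in \cite{GV}).
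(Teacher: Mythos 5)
Your argument is correct and takes the same route as the paper: the paper's entire proof of this lemma is the one-line remark that the claims follow from Proposition~\ref{super}, i.e., from the supersingularity of $\overline{C}_R$ together with the comparison isomorphism $H_{\rm c}^1(C_{R,\mathbb{F}},\overline{\mathbb{Q}}_{\ell})\simeq H^1(\overline{C}_{R,\mathbb{F}},\overline{\mathbb{Q}}_{\ell})$. You have simply unwound the standard but nontrivial chain of deductions (passage to the Jacobian, isogeny over a sufficiently large $\mathbb{F}_{q^N}$ to a product of supersingular elliptic curves, centrality of Frobenius, semisimplicity of the $N$-th root) that the paper leaves implicit.
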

\begin{proof}
The claims follow from Proposition \ref{super}.   
\end{proof}
The cohomology group $H_{\rm c}^1
(C_{R,\mathbb{F}},\overline{\mathbb{Q}}_{\ell})[\psi]$ is regarded as a representation of 
$Q_{R,m} \rtimes \widehat{\mathbb{Z}}$. 
By inflating this by a natural map 
$G_F \to Q_{R,m} \rtimes \widehat{\mathbb{Z}}$ extending $\Theta_{R,m,\varpi}$, we obtain a continuous representation of 
$G_F$. We denote this representation by $\widetilde{\tau}_{\psi,R,m}$. 
Let 
$\phi \colon G_F \to \overline{\mathbb{Q}}_{\ell}^{\times}$ be the unramified 
character sending a geometric Frobenius 
to $\sqrt{q}^{-1}$.  
The image of 
$G_F$ by the twist $\widetilde{\tau}':=\widetilde{\tau}_{\psi,R,m} \otimes \phi$ is finite by Lemma \ref{rem1}. 
By fixing an isomorphism 
$\overline{\mathbb{Q}}_{\ell} \simeq 
\mathbb{C}$, we obtain a continuous 
representation $\widetilde{\tau}'_{\mathbb{C}}$ of $G_F$ over $\mathbb{C}$ by $\widetilde{\tau}'$.  
Then 
$\widetilde{\tau}'_{\mathbb{C}}$ is primitive if and only if 
$\widetilde{\tau}_{\psi,R,m}$ is primitive. 
\begin{corollary}\label{belc}
The $W_F$-representation  
$\tau_{\psi,R,m}$ is primitive if and only if 
the continuous $G_F$-representation 
$\widetilde{\tau}'_{\mathbb{C}}$ is primitive. 
\end{corollary}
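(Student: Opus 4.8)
The plan is to deduce Corollary \ref{belc} directly from Lemma \ref{bel} together with the two observations already assembled just before the statement. First I would recall the setup: $\widetilde{\tau}_{\psi,R,m}$ is the continuous $G_F$-representation obtained by inflating $H_{\rm c}^1(C_{R,\mathbb F},\overline{\mathbb Q}_\ell)[\psi]$ along the extension of $\Theta_{R,m,\varpi}$ to $G_F \to Q_{R,m}\rtimes\widehat{\mathbb Z}$, that $\phi$ is the unramified character sending a geometric Frobenius to $\sqrt q^{-1}$, and that $\widetilde{\tau}':=\widetilde{\tau}_{\psi,R,m}\otimes\phi$ has finite image by Lemma \ref{rem1} (the Frobenius eigenvalues being $\zeta\sqrt q$). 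By construction $\tau_{\psi,R,m}=\widetilde{\tau}_{\psi,R,m}|_{W_F}$, and this is irreducible by Lemma \ref{fa}(1) (the $H_R$-representation $\rho_\psi$ is irreducible, hence so is its inflation to $W_F$). So the hypotheses of Lemma \ref{bel} are met with $\widetilde{\tau}=\widetilde{\tau}_{\psi,R,m}$, and Lemma \ref{bel} yields that $\widetilde{\tau}'=\widetilde{\tau}_{\psi,R,m}\otimes\phi$ is primitive if and only if $\tau_{\psi,R,m}$ is primitive.

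Next I would transport primitivity across the fixed field isomorphism $\overline{\mathbb Q}_\ell\simeq\mathbb C$. Since $\widetilde{\tau}'$ has finite image, it takes values in $\mathrm{GL}_n(E)$ for a number field (or at least a subfield of $\overline{\mathbb Q}_\ell$ on which the chosen isomorphism is defined), and the notion of being induced from a proper subgroup is purely group-theoretic and field-of-definition independent: an isomorphism of coefficient fields carries a decomposition $\widetilde{\tau}'\simeq\Ind_{G_{F'}}^{G_F}\widetilde{\sigma}$ to the analogous decomposition of $\widetilde{\tau}'_{\mathbb C}$, and conversely. Hence $\widetilde{\tau}'_{\mathbb C}$ is primitive if and only if $\widetilde{\tau}'$ is primitive. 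This is exactly the sentence stated immediately before the corollary, so here I would simply invoke it.

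Combining the two equivalences gives the claim: $\tau_{\psi,R,m}$ is primitive $\iff$ $\widetilde{\tau}'$ is primitive $\iff$ $\widetilde{\tau}'_{\mathbb C}$ is primitive. I would write this as a two-line chain of iff's referencing Lemma \ref{bel} for the first step and the preceding remark for the second.

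The only genuinely delicate point is the one already dealt with inside the proof of Lemma \ref{bel}: passing an induced-representation decomposition between the Weil group $W_F$ and the full Galois group $G_F$. That argument uses that $\widetilde{\tau}'$ has finite image so that a $W_{F'}$-subrepresentation $\sigma'$ appearing via Frobenius reciprocity extends to $G_{F'}$ (\cite[Proposition 28.6]{BH}), and then that two smooth irreducible $G_F$-representations with isomorphic restrictions to $W_F$ are isomorphic (\cite[Lemma 28.6.2(2)]{BH}); but since Lemma \ref{bel} is already proved, the corollary itself requires no further work beyond checking that its hypotheses apply, which they do because $\tau_{\psi,R,m}$ is irreducible and $\widetilde{\tau}_{\psi,R,m}\otimes\phi$ has finite image.
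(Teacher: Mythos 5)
Your proof is correct and follows exactly the same route as the paper: verify the hypotheses of Lemma \ref{bel} (finite image of $\widetilde{\tau}'$ via Lemma \ref{rem1}, irreducibility of $\tau_{\psi,R,m}$), apply Lemma \ref{bel}, and transport primitivity across $\overline{\mathbb{Q}}_\ell\simeq\mathbb{C}$. No difference in substance; the paper simply states both steps more tersely.
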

\begin{proof}
Clearly $\widetilde{\tau}'_{\mathbb{C}}$ is primitive if and only if 
$\widetilde{\tau}'$ is primitive.
We obtain the claim  by applying Lemma \ref{bel} with 
$\widetilde{\tau}=\widetilde{\tau}_{\psi,R,m}$ and  
$\tau=\tau_{\psi,R,m}$. 
\end{proof}
\subsubsection{Swan conductor exponent}\label{322}
In the sequel, we compute the Swan conductor exponent $\mathrm{Sw}(\tau_{\psi,R,m})$. 
 
We simply write  $\alpha,\beta,\gamma$
for $\alpha_R,\beta_{R,m},\gamma_{R,m}$ in
Definition \ref{3d}, respectively. 
We consider the unramified field extension $F_r/F$  
of degree $r$ such that 
$N:=F_r(\alpha,\beta,\gamma)$ is Galois over $F$. 
Let $T:=F_r(\alpha)$ and $M:=T(\beta)$. 
Then we have 
\[
F \subset F_r \subset T \subset M
\subset N. 
\]

Let $L/K$ be a Galois extension of non-archimedean local fields with Galois group 
$G$. 
Let $\left\{G^i\right\}_{i \geq -1}$ denote the upper numbering ramification 
groups of $G$ in \cite[IV\S3]{Se}. 
Let $\psi_{L/K}$ denote the Herbrand function of $L/K$. 
\begin{lemma}\label{her}
Let $G:=\Gal(N/F)$. 
Then we have  
\[
\psi_{N/F}(t)=
\begin{cases}
t & \textrm{if $t \leq 0$}, \\
d_R t & \textrm{if $0<t \leq \frac{m}{d_R}$}, \\
p^{2e} d_R t-(p^{2e}-1)m & \textrm{if 
$\frac{m}{d_R}<t \leq \frac{p^e+1}{p^e} \frac{m}{d_R}$}, \\
p^{2e+1} d_R t-(p^e+1)(p^{e+1}-1)m & \textrm{otherwise} 
\end{cases}
\]
and 
\[
G^i=\begin{cases}
G & \textrm{if $i =-1$}, \\
\Gal(N/F_r) & \textrm{if $-1<i \leq 0$}, \\
\Gal(N/T) & \textrm{if $0<i \leq \frac{m}{d_R}$}, \\
\Gal(N/M) & \textrm{if $\frac{m}{d_R} <i \leq \frac{p^e+1}{p^e }\frac{ m}{d_R}$}, \\
\{1\} & \textrm{otherwise}. 
\end{cases}
\]
\end{lemma}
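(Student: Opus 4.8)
The plan is to realise $N/F$ through the tower $F\subset F_r\subset T\subset M\subset N$, compute the ramification of each layer separately, and then splice the Herbrand functions. First I would pin down the Galois groups. Since $F_r/F$ is unramified, $N/F_r$ is Galois with $\Gal(N/F_r)$ the inertia subgroup, and arguing exactly as in the proof of Lemma \ref{3.8} (Hensel's lemma applied to the defining equations, $\deg\widetilde E_R=p^{2e}$, and Lemma \ref{abbc}) the map $\sigma\mapsto(\bar a_\sigma,\bar b_\sigma,\bar c_\sigma)$ identifies $\Gal(N/F_r)$ with $Q_R$, $\Gal(N/T)$ with $H_R$, and $\Gal(N/M)$ with $Z(H_R)\cong\mathbb F_p$; in particular $[N:F_r]=d_Rp^{2e+1}$ is realised layer by layer as $[T:F_r]=d_R$, $[M:T]=p^{2e}$, $[N:M]=p$. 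Because $H_R$ is the normal $p$-Sylow subgroup of $Q_R$ and $Z(H_R)=Z(Q_R)$, all of $T/F_r$, $M/T$, $N/M$, $N/T$ are Galois, with $T/F_r$ tamely totally ramified of degree $d_R$ (as $p\nmid d_R$, $\alpha_R^{d_R}=\varpi$), $\Gal(M/T)\cong H_R/Z(H_R)\cong V_R$ elementary abelian of order $p^{2e}$, and $N/M$ the Artin--Schreier extension $\gamma_{R,m}^{p}-\gamma_{R,m}=\beta_{R,m}\widetilde R(\beta_{R,m})$.

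Next I would compute the single ramification break of each layer, using the valuations of Remark \ref{rb}. For $T/F_r$ the break is at $0$. For $M/T$: the coefficients of $\widetilde E_R$ are Teichm\"uller units, so the Newton polygon of $\widetilde E_R(x)-\alpha_R^{-m}$ over $T$ is a single segment and $v_M(\beta_{R,m})=-m$ with $\gcd(m,p^{2e})=1$, whence $M/T$ is totally ramified of degree $p^{2e}$; moreover every nonzero root of $\widetilde E_R$ is a unit (again by the Newton polygon), so a nontrivial $\sigma\in\Gal(M/T)$ sends $\beta_{R,m}$ to $\beta_{R,m}+\delta_\sigma$ with $\delta_\sigma$ a nonzero unit, and choosing a monomial uniformizer $\pi_M=\beta_{R,m}^{a}\alpha_R^{b}$ of $M$ (with $-am+bp^{2e}=1$, so $p\nmid a$) one finds $v_M(\sigma\pi_M-\pi_M)=m+1$ for all $\sigma\ne1$; thus $\Gal(M/T)$ has a single break at $m$ in the lower numbering. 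For $N/M$: here $b_\sigma=0$, so $\sigma\gamma_{R,m}-\gamma_{R,m}=c_\sigma$ is a unit by Lemma \ref{abbc}, and since $v_M(\beta_{R,m}\widetilde R(\beta_{R,m}))=-m(p^e+1)$ is prime to $p$, the same monomial-uniformizer computation in $N$ gives a single break at $m(p^e+1)$ for $\Gal(N/M)$. Translating, for $t\ge0$,
\[
\psi_{T/F_r}(t)=d_Rt,\quad
\psi_{M/T}(t)=\begin{cases}t,& t\le m,\\ p^{2e}t-(p^{2e}-1)m,& t\ge m,\end{cases}\quad
\psi_{N/M}(t)=\begin{cases}t,& t\le m(p^e+1),\\ pt-(p-1)m(p^e+1),& t\ge m(p^e+1),\end{cases}
\]
and each is the identity for $t\le0$.

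Then I would finish by two standard manipulations. For the Herbrand function, transitivity gives $\psi_{N/F}=\psi_{N/M}\circ\psi_{M/T}\circ\psi_{T/F_r}\circ\psi_{F_r/F}$ with $\psi_{F_r/F}=\mathrm{id}$; composing $\psi_{M/T}\circ\psi_{T/F_r}$ produces slope $d_R$ for $t\le m/d_R$ and slope $p^{2e}d_R$ afterwards, and the extra factor $p$ from $\psi_{N/M}$ switches on exactly when $p^{2e}d_Rt-(p^{2e}-1)m=m(p^e+1)$, i.e.\ at $t=\frac{p^e+1}{p^e}\frac{m}{d_R}$; a direct simplification of the constant terms, using $p(p^{2e}-1)+(p-1)(p^e+1)=(p^e+1)(p^{e+1}-1)$, then yields the four-piece formula claimed for $\psi_{N/F}$. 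For the ramification groups of $G=\Gal(N/F)$: since lower numbering is compatible with subgroups and $G_0=\Gal(N/F_r)$, while $G_1$ is the wild inertia $=\Gal(N/T)$, one has $G_i=\Gal(N/T)_i$ for $i\ge1$; computing $\psi_{N/T}=\psi_{N/M}\circ\psi_{M/T}$ shows $\Gal(N/T)$ has lower breaks $m$ and $m(p^e+1)$, and the group on the middle interval, being a normal subgroup of the extra-special group $H_R$ of order $p$, must equal $Z(H_R)=\Gal(N/M)$. Hence $G_i=\Gal(N/T)$ for $1\le i\le m$, $G_i=\Gal(N/M)$ for $m<i\le m(p^e+1)$, and $G_i=1$ beyond; passing to the upper numbering via $G^{\varphi_{N/F}(i)}=G_i$ (so the breaks land at $0$, $m/d_R$, $\tfrac{p^e+1}{p^e}\tfrac{m}{d_R}$) gives the stated description of $G^i$.

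The main obstacle is the wild layer $M/T$: because $v_M(\beta_{R,m})<0$, the natural generator $\beta_{R,m}$ is not in $\mathcal O_M$ and one cannot read the different off from $\widetilde E_R'(\beta_{R,m})$; instead one must work with the explicit monomial uniformizer and control the expansion $(1+\delta_\sigma\beta_{R,m}^{-1})^a-1$, using that every nonzero root of $\widetilde E_R$ is a unit and that $a$ is prime to $p$. It is precisely to avoid the analogous difficulty for the composite extension — where $\sigma(\gamma_{R,m})$ acquires the correction term $\widetilde f(\beta_{R,m},b_\sigma)$ — that I treat the layers one at a time, inside $M$ and inside $N$ (where $b_\sigma=0$), rather than computing $i_G(\sigma)$ directly in $N$. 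Once these valuation estimates are in hand, the remainder is the routine composition of piecewise-linear functions.
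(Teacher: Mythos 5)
Your proof follows the same tower decomposition $F\subset F_r\subset T\subset M\subset N$ and the same ``compute each layer, then compose Herbrand functions'' strategy as the paper, and the valuation estimates, break locations, and constant-term simplifications are all correct. There are two places where your argument is routed a bit differently but equivalently. For the wild layers, the paper simply points to the unit-ness of $b_\sigma$ and $c_\sigma$ (already established in the proofs of Lemmas \ref{abbc} and \ref{3.8}) and asserts $v_M(\sigma\varpi_M-\varpi_M)=m+1$, $v_N(\sigma\varpi_N-\varpi_N)=(p^e+1)m+1$; you instead re-derive the unit-ness via Newton polygons and make the monomial-uniformizer computation explicit, which is a legitimate and more self-contained way to justify those two equalities. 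For the lower-numbered groups $G_u$, the paper invokes Serre's Propositions 12(c), 13(c), 15 (IV\S3) to propagate the breaks up the tower, whereas you determine the order of the middle filtration subgroup from the slope of $\psi_{N/T}$ and then pin it down as $\Gal(N/M)$ by the fact that $Z(H_R)$ is the unique normal subgroup of order $p$ in the extra-special group $H_R\simeq\Gal(N/T)$ — a clean alternative that avoids one more appeal to Serre's quotient formula. One small imprecision: you say $\sigma\mapsto(\bar a_\sigma,\bar b_\sigma,\bar c_\sigma)$ identifies $\Gal(N/F_r)$ with $Q_R$, but the group law is only respected with the $m$-th power $\bar a_\sigma^m$ in the first slot (cf.\ \eqref{abce}); in general $\Gal(N/F_r)$ is an $m$-twisted version of $Q_R$. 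This does not affect the argument, which only uses the degree $[N:F_r]=d_Rp^{2e+1}$, the tower structure, and the fact that $\Gal(N/T)\simeq H_R$.
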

\begin{proof}
We easily have  
\[
\psi_{T/F}(t)=\begin{cases}
t & \textrm{if $t \leq 0$}, \\
d_R t & \textrm{otherwise}. 
\end{cases}
\]
For a finite Galois extension $L/K$, 
let $\{\Gal(L/K)_u\}_{u \geq -1}$ be the lower numbering ramification subgroups. 
Let $1 \neq \sigma \in \Gal(M/T)$. 
Let $b_{\sigma}=\sigma(\beta)-\beta$
as before. 
We have $\widetilde{E}_R(\beta+b_{\sigma})
=\widetilde{E}_R(\beta)$ by the proof 
of Lemma \ref{abbc}. 
If $v(b_{\sigma})>0$, we obtain 
$b_{\sigma}=0$ by the same argument in the proof of 
Lemma \ref{3.8}. This implies that $\sigma=1$. 
By \eqref{assume}, 
we have $v(b_{\sigma})=0$. 
By $v_M(\beta)=-m$, we have 
$v_M(\sigma(\varpi_M)-\varpi_M)=m+1$.  
Hence 
\[
\Gal(M/T)_u=\begin{cases}
\Gal(M/T) &\textrm{if $u \leq m$}, \\
\{1\} & \textrm{otherwise}, 
\end{cases} \quad 
\psi_{M/T}(t)=\begin{cases}
t & \textrm{if $t \leq m$}, \\
p^{2e}t-(p^{2e}-1)m & \textrm{otherwise}. 
\end{cases}
\]
Let $1 \neq \sigma \in \mathrm{Gal}(N/M)$.  
If $v_N(\sigma(\gamma)-\gamma)>0$, 
we obtain $\sigma(\gamma)=\gamma$
in the same way as the proof of Lemma \ref{3.8}. 
This implies that $\sigma=1$.  
Hence $v_N(\sigma(\gamma)-\gamma)=0$. Let $\varpi_N$ be a uniformizer of $N$. 
By $v_N(\gamma^{-1})=(p^e+1)m$, we have 
$v_N(\sigma(\varpi_N)-\varpi_N)=(p^e+1)m+1$. 
Thus 
\begin{align*}
\Gal(N/M)_u&=\begin{cases}
\Gal(N/M) &\textrm{if $u \leq (p^e+1)m$}, \\
\{1\} & \textrm{otherwise}, 
\end{cases} \\ 
\psi_{N/M}(t)&=
\begin{cases}
t & \textrm{if $t \leq (p^e+1)m$}, \\
p t-(p-1)(p^e+1)m & \textrm{otherwise}. 
\end{cases}
\end{align*}
Hence the former claim follows
from $\psi_{N/F}=\psi_{N/M} \circ \psi_{M/T}
\circ \psi_{T/F}$. 

We can check 
\begin{align*}
G_u
=\begin{cases}
G & \textrm{if $u =-1$}, \\
\Gal(N/F_r) & \textrm{if $-1<u \leq 0$}, \\
\Gal(N/T) & \textrm{if $0<u \leq m$}, \\
\mathrm{Gal}(N/M) & \textrm{if $m<u \leq (p^e+1)m$}, \\
\{1\} & \textrm{otherwise} 
\end{cases}
\end{align*}
 by using the former claim and 
\cite[Propositions 12(c), 13(c) and 15 in IV\S3]{Se}. 
Hence the latter claim follows from
$G^i=G_{\psi_{N/F}(i)}$. 
\end{proof}
\begin{corollary}\label{indd}
We have $\mathrm{Sw}(\tau_{\psi,R,m})=
m(p^e+1)/d_R$. 
\end{corollary}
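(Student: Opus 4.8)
The plan is to compute the Swan conductor exponent directly from the ramification filtration established in Lemma \ref{her}, using the formula
\[
\mathrm{Sw}(\tau_{\psi,R,m})=\sum_{i>0}\frac{1}{[G^0:G^i]}\dim\bigl(\tau_{\psi,R,m}/\tau_{\psi,R,m}^{G^i}\bigr),
\]
where $G=\Gal(N/F)$ and the sum is taken in the upper-numbering convention, i.e. really $\int_0^\infty \dim(\tau/\tau^{G^i})\,di$. Since $\tau_{\psi,R,m}$ is inflated via $\Theta_{R,m,\varpi}$ from the $Q_R\rtimes\mathbb{Z}$-representation $H^1_{\rm c}(C_{R,\mathbb{F}},\overline{\mathbb{Q}}_\ell)[\psi]$, and the wild inertia of $G$ maps onto the subgroup corresponding to $H_R$ (more precisely to the image of $H_{R,m}$), the key input is that $H^1_{\rm c}(C_{R,\mathbb{F}},\overline{\mathbb{Q}}_\ell)[\psi]\cong\rho_\psi$ as $H_R$-representations by Lemma \ref{fa}(2), that $\dim\rho_\psi=p^e$, and that $\rho_\psi|_{Z(H_R)}$ is a multiple of the nontrivial character $\psi$, so $\rho_\psi$ has no nonzero vectors fixed by any subgroup of $H_R$ containing $Z(H_R)$ nontrivially—in particular no vectors fixed by the images of $\Gal(N/T)$ or $\Gal(N/M)$.

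First I would read off from Lemma \ref{her} the three relevant breaks: $G^i=\Gal(N/F_r)$ has no effect (it is the tame/unramified part, contributing $0$ to Swan); $G^i=\Gal(N/T)$ for $0<i\le m/d_R$; and $G^i=\Gal(N/M)$ for $m/d_R<i\le\frac{p^e+1}{p^e}\cdot\frac{m}{d_R}$; and $G^i=\{1\}$ beyond that. For $i$ in the first wild range, the subgroup $\Gal(N/T)$ acts through its image in $Q_R$, which contains $Z(H_R)$ (indeed its image is all of $H_{R,m}$ modulo the $\beta$-part restriction), so $\tau^{G^i}=0$ and the integrand is $\dim\tau=p^e$. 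For $i$ in the second wild range the acting group is $\Gal(N/M)$, whose image in $Q_R$ is precisely $Z(H_R)=\mathbb{F}_p$ acting by $\psi$ (this is the "$\gamma$-layer"); since $\psi\neq 1$ we again get $\tau^{G^i}=0$ and integrand $p^e$. Beyond $\frac{p^e+1}{p^e}\cdot\frac{m}{d_R}$ the group is trivial and the integrand is $0$.

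Therefore
\[
\mathrm{Sw}(\tau_{\psi,R,m})=\int_0^{m/d_R}p^e\,di+\int_{m/d_R}^{\frac{p^e+1}{p^e}\cdot\frac{m}{d_R}}p^e\,di
=p^e\cdot\frac{m}{d_R}+p^e\cdot\frac{1}{p^e}\cdot\frac{m}{d_R}=\frac{(p^e+1)m}{d_R},
\]
which is the claimed value. (Equivalently, one can work in lower numbering with the filtration $G_u$ from the proof of Lemma \ref{her}, using $\mathrm{Sw}=\sum_{u\ge 1}\frac{|G_u|}{|G_0|}\dim(\tau/\tau^{G_u})$; the breaks at $u=m$ and $u=(p^e+1)m$ together with $[G_0:G_u]=d_R$ resp. $p^{2e}d_R$ give $\frac{m}{d_R}\cdot p^e+\frac{p^ee m}{p^{2e}d_R}\cdot p^e$—wait, better to keep the upper-numbering computation above, which is cleaner.) The only genuine point requiring care is verifying the two vanishing statements $\tau^{\Gal(N/T)}=\tau^{\Gal(N/M)}=0$: this is where one must trace through $\Theta_{R,m,\varpi}$ and Lemma \ref{3.8} to see that the images of these decreasing subgroups in $Q_{R,m}$ contain $Z(H_R)$ on which $\tau$ restricts to a sum of copies of the nontrivial $\psi$; once that is granted, the conductor computation is the short integral above.
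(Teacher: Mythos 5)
Your proposal is correct and follows essentially the same route as the paper: Lemma \ref{her} supplies the ramification filtration, and the Swan conductor is then read off from the standard formula (which the paper cites from Henniart/Serre rather than integrating explicitly). The one cosmetic difference is that the paper first twists by the unramified character $\phi$ to land in a finite Galois group before invoking the formula; you skip this, which is fine since the Swan conductor depends only on the inertia action, and your explicit verification that $\tau^{G^i}=0$ on the wild range via $\psi\neq 1$ on $Z(H_R)$ is exactly the content hidden in the paper's citation.
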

\begin{proof}
Recall that the twist $\tau_{\psi,R,m} \otimes \phi$
factors through a finite group 
$Q_R \rtimes 
(\mathbb{Z}/r \mathbb{Z}) \simeq 
\Gal(F_r(\alpha,\beta,\gamma)/F)$ 
with a certain integer $r$. 
Since $\phi$ is unramified, we have 
$\mathrm{Sw}(\tau_{\psi,R,m})=\mathrm{Sw}(\tau_{\psi,R,m} \otimes \phi)$. 
We have 
$\mathrm{Sw}(\tau_{\psi,R,m} \otimes \phi)
=m(p^e+1)/d_R$ by 
 Lemma \ref{her} and \cite[Th\'eor\`eme 7.7]{He} (\cite[Exercise 2 in \S2VI]{Se}). 
\end{proof}
\subsection{Symplectic module associated to 
Galois representation}\label{3.3}
Let $\rho \colon W_F \to \mathrm{PGL}_{p^e}(\overline{\mathbb{Q}}_{\ell})$ denote the composite of $\tau_{\psi,R,m} \colon W_F \to \mathrm{GL}_{p^e}(\overline{\mathbb{Q}}_{\ell})$ with the natural map 
$\mathrm{GL}_{p^e}(\overline{\mathbb{Q}}_{\ell}) \to \mathrm{PGL}_{p^e}(\overline{\mathbb{Q}}_{\ell})$. 

Let $\rho'$ be the projective representation 
associated to $\widetilde{\tau}'=\widetilde{\tau}_{\psi,R,m} \otimes \phi$. 
\begin{lemma}
We have $\rho(W_F)=\rho'(G_F)$, which is finite. 
\end{lemma}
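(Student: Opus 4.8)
The plan is to observe that $\rho$ is nothing but the restriction of $\rho'$ to $W_F$, and then to exploit the density of $W_F$ in $G_F$ together with the finiteness of the image of $\widetilde{\tau}'$.

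First I would note that passing from a representation to its associated projective representation kills twists by a character: since $\phi$ takes values in the scalars $\overline{\mathbb{Q}}_{\ell}^{\times} \subset \mathrm{GL}_{p^e}(\overline{\mathbb{Q}}_{\ell})$, the composite of $\widetilde{\tau}' = \widetilde{\tau}_{\psi,R,m} \otimes \phi$ with $\mathrm{GL}_{p^e}(\overline{\mathbb{Q}}_{\ell}) \to \mathrm{PGL}_{p^e}(\overline{\mathbb{Q}}_{\ell})$ coincides with the composite of $\widetilde{\tau}_{\psi,R,m}$ with the same map. Hence $\rho'$ is the projective representation attached to $\widetilde{\tau}_{\psi,R,m}$ itself. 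Since projectivization is compatible with restriction and $\tau_{\psi,R,m} = \widetilde{\tau}_{\psi,R,m}|_{W_F}$, this gives $\rho'|_{W_F} = \rho$, and in particular $\rho(W_F) = \rho'(W_F) \subset \rho'(G_F)$.

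Next I would use that $\widetilde{\tau}'(G_F)$ is finite, which was recorded just before the statement as a consequence of Lemma \ref{rem1}; therefore $\rho'(G_F)$, being a quotient of $\widetilde{\tau}'(G_F)$, is finite as well. As $\widetilde{\tau}'$ is continuous and $\mathrm{GL}_{p^e}(\overline{\mathbb{Q}}_{\ell}) \to \mathrm{PGL}_{p^e}(\overline{\mathbb{Q}}_{\ell})$ is continuous, $\rho'$ is continuous, and its finite image carries the discrete topology. The Weil group $W_F$ is dense in $G_F$, so $\rho'(W_F)$ is dense in $\rho'(G_F)$; being a subgroup of a finite discrete group, it must equal $\rho'(G_F)$. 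Combined with the previous step, this yields $\rho(W_F) = \rho'(W_F) = \rho'(G_F)$, which is finite.

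The argument is essentially formal, so I do not anticipate a genuine obstacle; the only points requiring a little care are the twist-invariance of projectivization together with its compatibility with restriction to $W_F$ (so that the two projective representations really agree there), and the fact that finiteness of $\rho'(G_F)$ is legitimately inherited from the finiteness of $\widetilde{\tau}'(G_F)$ via Lemma \ref{rem1}.
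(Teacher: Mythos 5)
Your proof is correct and follows essentially the same route as the paper: both exploit the finiteness of $\widetilde{\tau}'(G_F)$ and the scalar-twist invariance of projectivization. The only cosmetic difference is that you argue directly from the density of $W_F$ in $G_F$ together with continuity, whereas the paper packages the same fact by citing \cite[the proof of Lemma 2 in 28.6]{BH} (smooth irreducible $G_F$-representations have open kernel, so $\widetilde{\tau}'(G_F)=\widetilde{\tau}'(W_F)$).
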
 
\begin{proof}
Since $\widetilde{\tau}'$ is a  smooth irreducible $G_F$-representation, 
we have 
$\widetilde{\tau}'(G_F)
=(\tau_{\psi,R,m} \otimes \phi)(W_F)$ (\cite[the proof of Lemma 2 in 28.6]{BH}). This implies the claim.  
\end{proof}
Let $F_{\rho}$ denote the kernel field 
of $\rho$ and $T_{\rho}$ the maximal tamely ramified extension of $F$ in $F_{\rho}$. Let 
\[
H:=\mathrm{Gal}(F_{\rho}/T_{\rho}) \subset G:=\mathrm{Gal}(F_{\rho}/F).  
\] 
The homomorphism $\rho$ induces
an injection  
$\bar{\rho} \colon G \to \mathrm{PGL}_{p^e}(\overline{\mathbb{Q}}_{\ell})$. 
Let $V_R$ be as in Lemma \ref{ab}. 

Let $\tau$ denote the composite   
\[
Q_{R,m} \rtimes \mathbb{Z} \to \mathrm{Aut}_{\overline{\mathbb{Q}}_{\ell}}(H_{\rm c}^1(C_{R,\mathbb{F}},\overline{\mathbb{Q}}_{\ell})[\psi]) \to \mathrm{Aut}_{\overline{\mathbb{Q}}_{\ell}}(H_{\rm c}^1(C_{R,\mathbb{F}},\overline{\mathbb{Q}}_{\ell})[\psi])/\overline{\mathbb{Q}}_{\ell}^{\times}. 
\]
\begin{lemma}\label{asso}
We have an isomorphism
$\bar{\rho}(H)\simeq V_R$.
\end{lemma}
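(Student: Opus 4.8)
The plan is to unwind all the definitions and reduce the claim to the structure of the homomorphism $\Theta_{R,m,\varpi}$ together with Lemma \ref{inj} and the identification of $H_R/Z(H_R)$ with $V_R$. First I would recall that $F_\rho$ is the kernel field of $\rho$, where $\rho$ is the projective representation attached to $\tau_{\psi,R,m}$, i.e. the composite $W_F \xrightarrow{\Theta_{R,m,\varpi}} Q_{R,m}\rtimes\mathbb{Z} \xrightarrow{\tau} \mathrm{PGL}_{p^e}(\overline{\mathbb{Q}}_\ell)$. Here I use that $\tau_{\psi,R,m}$ factors through $Q_{R,m}\rtimes\mathbb{Z}$ via $\Theta_{R,m,\varpi}$, and that by Lemma \ref{3.8} the induced map $W_F \to Q_{R,m}\rtimes\mathbb{Z}$ has image identified with $W(F^{\mathrm{ur}}(\alpha_R^m,\beta_{R,m},\gamma_{R,m})/F)$ under \eqref{weil}. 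Since $\rho$ itself has finite image (by the lemma just proved, $\rho(W_F)=\rho'(G_F)$ is finite), $F_\rho/F$ is a finite Galois extension and $G=\mathrm{Gal}(F_\rho/F)$ is the image of $\rho$, which is the image of $Q_{R,m}\rtimes(\mathbb{Z}/r\mathbb{Z})$ under the projective representation $\tau$ modulo its kernel.

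Next I would identify $H=\mathrm{Gal}(F_\rho/T_\rho)$, the wild inertia part, intrinsically. The maximal tamely ramified subextension $T_\rho$ corresponds under \eqref{weil} to the part of $Q_{R,m}\rtimes\mathbb{Z}$ killing the wild ramification; concretely, from the ramification filtration computed in Lemma \ref{her} (applied after passing to the unramified base change), the wild inertia subgroup of $W(F^{\mathrm{ur}}(\alpha_R^m,\beta_{R,m},\gamma_{R,m})/F)$ maps onto the wild $p$-part of $Q_{R,m}$, which is exactly $H_R=\{(\alpha,\beta,\gamma)\in Q_R\mid\alpha=1\}$ (recall $H_R$ is the $p$-Sylow, the extra-special part, while the $\mu_{d_{R,m}}$-factor and the $\mathbb{Z}$-factor are tame). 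Thus, passing through $\tau$ and taking the image modulo the kernel of the projective representation, $\bar\rho(H)$ is identified with the image of $H_R$ in $\mathrm{PGL}_{p^e}(\overline{\mathbb{Q}}_\ell)$ under the projective representation $\bar\rho_\psi$ of Lemma \ref{fa}. But by Lemma \ref{fa}(2), the $H_R$-representation $H_{\mathrm c}^1(C_{R,\mathbb F},\overline{\mathbb Q}_\ell)[\psi]$ is $\rho_\psi$, whose central character is $\psi$, so the kernel of $\bar\rho_\psi$ on $H_R$ is precisely $Z(H_R)$ by Lemma \ref{inj}. Therefore $\bar\rho(H)\cong H_R/Z(H_R)\cong V_R$, where the last isomorphism is Lemma \ref{ab}(1).

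The main obstacle I anticipate is matching up the various "kernel fields" and group images precisely — i.e., checking that $H=\mathrm{Gal}(F_\rho/T_\rho)$ really corresponds to $H_R$ and not to something slightly larger or smaller. There are two subtleties: first, $\rho$ is defined on $W_F$ whereas the clean group-theoretic picture lives on the Galois side $G_F$ (or on the finite quotient $Q_{R,m}\rtimes(\mathbb{Z}/r\mathbb{Z})$ after twisting by $\phi$ as in the earlier lemmas); since $\rho(W_F)=\rho'(G_F)$ is finite, these agree, but I would state this carefully. Second, one must ensure that the tame quotient $G/H$ really is the full image of $\mu_{d_{R,m}}\times(\mathbb{Z}/r\mathbb{Z})$ under $\bar\rho$ and that no part of $H_R$ becomes tame after applying $\bar\rho$ — this is where Lemma \ref{inj} (injectivity of $\bar\rho$ on $H_R/Z(H_R)$) is essential, since it guarantees the wild part maps injectively to a $p$-group in $\mathrm{PGL}_{p^e}$, hence lands in wild inertia and nothing collapses. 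Once these identifications are in place, the isomorphism $\bar\rho(H)\simeq V_R$ is immediate from the chain $H\twoheadrightarrow H_R \twoheadrightarrow H_R/Z(H_R) \xrightarrow{\sim} V_R$ with the middle surjection having kernel exactly matching the kernel of $\bar\rho$ on the wild part.
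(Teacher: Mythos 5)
Your proposal is correct and follows essentially the same path as the paper: identify $W(L/F)\simeq Q_{R,m}\rtimes\mathbb{Z}$ and $W(L/K)\simeq H_R$ via Lemma \ref{3.8}, observe that $K=F^{\mathrm{ur}}(\alpha_R^m)$ is the maximal tamely ramified extension of $F$ in $L=F^{\mathrm{ur}}(\alpha_R^m,\beta_{R,m},\gamma_{R,m})$ so that $T_\rho=F_\rho\cap K$ and hence $H\simeq H_R/(H_R\cap\Ker\tau)$, then apply Lemma \ref{inj} to get $H_R\cap\Ker\tau=Z(H_R)$ and conclude via $H_R/Z(H_R)\simeq V_R$ from Lemma \ref{ab}(1). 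The only cosmetic difference is that you reach for the ramification filtration of Lemma \ref{her} to recognize $H_R$ as the wild part, whereas the paper uses the more direct observation that $L/K$ is a $p$-extension and $K/F^{\mathrm{ur}}$ is tame (degree $d_{R,m}$, prime to $p$), so $K$ is automatically the maximal tame subextension; your route works but is heavier machinery than needed.
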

\begin{proof}
Let 
 $L:=F^{\rm ur}(\alpha_R^m,\beta_{R,m},\gamma_{R,m})$ and $K:=F^{\rm ur}(\alpha_R^m)$. 
By \eqref{weil}, we have  
\[
W(L/F) \simeq Q_{R,m} \rtimes \mathbb{Z}, \quad  
W(L/K) \simeq H_R. 
\]
The subfield $K$ is the maximal tamely ramified 
extension of $F$ in $L$. 
We have $F_{\rho} \subset L$ and 
$T_{\rho}=F_{\rho} \cap K$.  
We have isomorphisms $G=W(F_{\rho}/F) \simeq W_F/\Ker \rho \simeq (Q_{R,m} \rtimes \mathbb{Z})/\Ker \tau$ 
and  
$H=\mathrm{Gal}(F_{\rho}/T_{\rho}) \simeq H_R/(H_R \cap \Ker \tau)$. 
By  Lemma \ref{inj}, 
we have $H_R \cap \Ker \tau=Z(H_R)$. 
Hence the claim follows from the isomorphism 
$V_R \xrightarrow{\sim} H_R/Z(H_R);\ \beta \mapsto (1,\beta,0)$. 
 \end{proof}
Let 
 \[
 \mathscr{H}_0:=G/H=\mathrm{Gal}(T_{\rho}/F). 
 \]
Let $\sigma \in \mathscr{H}_0$. 
We take a lifting $\widetilde{\sigma} \in G \twoheadrightarrow \mathscr{H}_0$ of $\sigma$.
Let $\mathscr{H}_0$ act on $H$ by  
 $\sigma \cdot \sigma':=\widetilde{\sigma} \sigma'\widetilde{\sigma}^{-1}$ for $\sigma' \in H$. 
This is well-defined because
$H$ is abelian by Lemma \ref{asso}. 
We regard $H \simeq V_R$ as an 
$\mathbb{F}_p[\mathscr{H}_0]$-module. 

By Lemma \ref{rem1}, we can take a
positive integer $r$ such that $r \mathbb{Z} \subset \Ker \tau$ and $x^{q^{r}}=x$ for $x \in \mu_{d_{R,m}}$. 
Let $\mathbb{Z}/r \mathbb{Z}$
act on $\mu_{d_{R,m}}$ by $1 \cdot x=x^{q^{-1}}$. 
We take a generator $\alpha \in \mu_{d_{R,m}}$. 
Let 
\begin{equation}\label{hr}
\mathscr{H}:=\mu_{d_{R,m}} \rtimes (\mathbb{Z}/r \mathbb{Z})\xrightarrow{\sim} \left\langle \sigma,\tau \mid \sigma^r=1, \   
\tau^{d_{R,m}}=1, \  
\sigma \tau \sigma^{-1}=\tau^q \right\rangle,  
\end{equation}
where the isomorphism is given by 
$(\alpha,0) \mapsto \tau$ and $(1,-1) \mapsto 
\sigma$. The groups $\mathscr{H}_0$
and $\mathscr{H}$ are supersolvable. 
We consider the commutative diagram 
\[
\xymatrix{
Q_{R,m} \rtimes \mathbb{Z} \ar[r]\ar[d] & (Q_{R,m} \rtimes \mathbb{Z})/\Ker \tau\simeq G
\ar[d] \\
\mathscr{H} \simeq 
(Q_{R,m} \rtimes \mathbb{Z})/(H_R \rtimes r \mathbb{Z})
 \ar[r]^-{\varphi} & (Q_{R,m} \rtimes \mathbb{Z})/(\Ker \tau \cdot H_R)
 \simeq \mathscr{H}_0,}  
\]
where every map is canonical and surjective. 
\begin{lemma}\label{sym0}
The elements $\varphi(\alpha,0)$
and $\varphi(1,-1)$ in $\mathscr{H}_0$ 
act on $H \simeq V_R$ by $x \mapsto \alpha x$ and $x \mapsto x^q$ for $x \in V_R$, respectively. 
\end{lemma}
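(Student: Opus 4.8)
The plan is to reduce the statement to an explicit conjugation computation inside $Q_{R,m}\rtimes\mathbb{Z}$, using the commutative diagram preceding the lemma and the identification $H\simeq H_R/Z(H_R)\simeq V_R$ supplied by Lemmas \ref{ab}(1) and \ref{asso}, under which $x=\beta\in V_R$ corresponds to the class of $(1,\beta,0)\in H_R$. First I would note that the action of $\sigma\in\mathscr{H}_0$ on $H$ is by lifting $\sigma$ to $G$ and conjugating; since $G$ and $\mathscr{H}_0$ are both quotients of $Q_{R,m}\rtimes\mathbb{Z}$ in a way compatible with $\varphi\colon\mathscr{H}\to\mathscr{H}_0$, this conjugation may be performed one level up, i.e.\ conjugating $H_R$ inside $Q_{R,m}\rtimes\mathbb{Z}$ and then projecting. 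Concretely, under the quotient map $((\alpha',\beta',\gamma'),n)\mapsto(\alpha',n\bmod r)$ the element $(\alpha,0)\in\mathscr{H}$ lifts to $((\alpha,0,0),0)$ and $(1,-1)\in\mathscr{H}$ lifts to $((1,0,0),-1)$.

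For the first generator I would compute the conjugate $((\alpha,0,0),0)\cdot((1,\beta,0),0)\cdot((\alpha,0,0),0)^{-1}$ in $Q_R\rtimes\mathbb{Z}$: the $\mathbb{Z}$-component is trivial, so this is conjugation in $Q_R$, and the group law of Definition \ref{qdef}(2) together with the linearity $f_R(0,\cdot)=f_R(\cdot,0)=0$ gives $(\alpha,0,0)^{-1}=(\alpha^{-1},0,0)$ and $(\alpha,0,0)\cdot(1,\beta,0)\cdot(\alpha^{-1},0,0)=(1,\alpha\beta,0)$. Hence $\varphi(\alpha,0)$ acts on $V_R$ by $x\mapsto\alpha x$.

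For the second generator I would use the semidirect-product structure, recalling that $n\in\mathbb{Z}$ acts on $Q_R$ by $(\alpha',\beta',\gamma')\mapsto(\alpha'^{q^{-n}},\beta'^{q^{-n}},\gamma'^{q^{-n}})$ (the same convention that makes $\Theta_{R,m,\varpi}$ a homomorphism, cf.\ \eqref{abce}). Then $((1,0,0),-1)^{-1}=((1,0,0),1)$, and a short computation, again using that $f_R$ vanishes when one argument is $0$, yields $((1,0,0),-1)\cdot((1,\beta,0),0)\cdot((1,0,0),1)=((1,\beta^q,0),0)$, so $\varphi(1,-1)$ acts on $V_R$ by $x\mapsto x^q$.

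The only real difficulty here is bookkeeping: keeping the semidirect-product convention and the direction of the Frobenius twist on the coefficients consistent, and observing that the conjugation action on $H\simeq V_R$ is insensitive to the center $Z(H_R)$ so that the computations carried out in $H_R$ descend cleanly to $V_R$. Once these conventions are fixed, each of the two computations is only a couple of lines.
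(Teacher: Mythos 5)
Your proposal is correct and is exactly the ``direct check'' the paper alludes to: lift $(\alpha,0)$ and $(1,-1)$ to $((\alpha,0,0),0)$ and $((1,0,0),-1)$ in $Q_{R,m}\rtimes\mathbb{Z}$, conjugate a representative $(1,\beta,\gamma)$ of the class $\beta\in V_R\simeq H_R/Z(H_R)$, and read off the $\beta$-component; the group law of Definition \ref{qdef}(2) together with $f_R(0,\cdot)=f_R(\cdot,0)=0$ and the $q^{-1}$-power convention for the $\mathbb{Z}$-action gives $(1,\alpha\beta,\ast)$ and $(1,\beta^q,\ast)$, respectively, which descend mod $Z(H_R)$ to the claimed actions. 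The only cosmetic point is that $(1,\beta,0)$ need not itself lie in $H_R$ unless $\beta R(\beta)=0$; one should compute with $(1,\beta,\gamma)$ for some admissible $\gamma$, but since the third coordinate only gets shifted by central terms this changes nothing.
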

\begin{proof}
These are directly checked. 
\end{proof}
We can regard $V_R$ as an $\mathbb{F}_p[\mathscr{H}]$-module via $\varphi$. 
 Let $\omega_R$ be as in Lemma \ref{ab}(2). 
 \begin{lemma}\label{sym}
We have $\omega_R(hx,hx')=\omega_R(x,x')$
for $h \in \mathscr{H}$.  
 \end{lemma}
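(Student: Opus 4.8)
The plan is to reduce the identity to the two generators of $\mathscr{H}$. By \eqref{hr} the group $\mathscr{H}$ is generated by $\sigma$ and $\tau$, and by Lemma \ref{sym0} these act on $V_R$ (via $\varphi$) by $x \mapsto x^q$ and $x \mapsto \alpha x$ respectively, where $\alpha$ is the chosen generator of $\mu_{d_{R,m}}$. Since $\omega_R$ is bilinear and a product of isometries is an isometry, it suffices to check that $\omega_R(\alpha x, \alpha x') = \omega_R(x, x')$ for every $\alpha \in \mu_{d_{R,m}}$ and that $\omega_R(x^q, (x')^q) = \omega_R(x, x')$ for all $x, x' \in V_R$.

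For the contribution of $\tau$ I would argue as follows. Since $d_{R,m} \mid d_R$ we have $\mu_{d_{R,m}} \subseteq \mu_{d_R}$, so \eqref{a-1} gives $f_R(\alpha x, \alpha x') = f_R(x, x')$ for $\alpha \in \mu_{d_{R,m}}$. Subtracting the same identity with $x$ and $x'$ interchanged yields $\omega_R(\alpha x, \alpha x') = \omega_R(x, x')$.

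For the contribution of $\sigma$, the point is a Frobenius-equivariance of $f_R$. The defining formula for $f_R$ shows that every monomial of $f_R(x,y)$ has the shape $c\, x^{p^a} y^{p^b}$ with $c \in \mathbb{F}_q$ (it is additive separately in $x$ and in $y$), so raising to the $q$-th power in characteristic $p$ and using $c^q = c$ gives $f_R(x,y)^q = f_R(x^q, y^q)$. Hence
$\omega_R(x^q, (x')^q) = f_R(x^q,(x')^q) - f_R((x')^q, x^q) = \bigl(f_R(x,x') - f_R(x',x)\bigr)^q = \omega_R(x,x')^q$,
where the middle equality also uses $a^q - b^q = (a-b)^q$ in characteristic $p$. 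Since $\omega_R$ takes values in $\mathbb{F}_p$ by Lemma \ref{h1}, we have $\omega_R(x,x')^q = \omega_R(x,x')$, which is the desired invariance. Combining the two cases gives $\omega_R(hx,hx') = \omega_R(x,x')$ for all $h \in \mathscr{H}$.

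The computation is routine; the only subtlety worth flagging is the Frobenius-equivariance $f_R(x^q,y^q) = f_R(x,y)^q$ together with the $\mathbb{F}_p$-valuedness of $\omega_R$, which is precisely what makes the $\mathbb{Z}/r\mathbb{Z}$-part of $\mathscr{H}$ act by isometries.
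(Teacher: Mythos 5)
Your proof is correct and takes essentially the same approach as the paper: check invariance on the two generators, use \eqref{a-1} for the $\mu_{d_{R,m}}$-action, and use the $\mathbb{F}_q$-rationality of $f_R$ together with the $\mathbb{F}_p$-valuedness of $\omega_R$ (from Lemma \ref{h1}) for the $q$-power Frobenius. You simply make the Frobenius-equivariance $f_R(x^q,y^q)=f_R(x,y)^q$ explicit where the paper leaves it tacit.
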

 \begin{proof}
 The claim for $h=\alpha$ 
 follows from 
 \eqref{a-1}. 
 For $h=(1,-1)$, the claim follows 
 from
 $\omega_R(x^q,x'^q)=(f_R(x,x')-f_R(x',x))^q
 =f_R(x,x')-f_R(x',x)=\omega_R(x,x')$. 
 \end{proof}
 \begin{definition}
 Let $G$ be a finite group. 
 Let $V$ be an $\mathbb{F}_p[G]$-module with 
 $\dim_{\mathbb{F}_p} V <\infty$. 
 Let $\omega \colon V \times V \to \mathbb{F}_p$
 be a symplectic form. 
 We say that the pair 
 $(V,\omega)$ is \textit{symplectic} if $\omega$ is non-degenerate and satisfies  
 $\omega(gv,gv')=\omega(v,v')$ for $g \in G$ and $v,v' \in V$.  
 \end{definition}
\begin{lemma}
The $\mathbb{F}_p[\mathscr{H}]$-module 
$(V_R,\omega_R)$ is symplectic. 
\end{lemma}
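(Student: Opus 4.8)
The statement asserts that $(V_R, \omega_R)$ is a symplectic $\mathbb{F}_p[\mathscr{H}]$-module, which by the definition just given unpacks into three assertions: (i) $\omega_R$ is a symplectic form on $V_R$; (ii) $\omega_R$ is non-degenerate; and (iii) $\omega_R$ is $\mathscr{H}$-invariant, i.e. $\omega_R(hv, hv') = \omega_R(v,v')$ for all $h \in \mathscr{H}$. All three of these have essentially already been established in the excerpt, so the proof is a matter of assembling the pieces. First I would invoke Lemma \ref{ab}(2), which states precisely that $\omega_R \colon V_R \times V_R \to \mathbb{F}_p$ is a non-degenerate symplectic form; this disposes of (i) and (ii) at once. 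Then I would recall that $V_R$ carries an $\mathbb{F}_p[\mathscr{H}]$-module structure via the homomorphism $\varphi \colon \mathscr{H} \to \mathscr{H}_0$ together with the action of $\mathscr{H}_0$ on $H \simeq V_R$ described in Lemma \ref{sym0}, namely $\varphi(\alpha, 0)$ acts by $x \mapsto \alpha x$ and $\varphi(1,-1)$ acts by $x \mapsto x^q$.

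For the invariance condition (iii), the key observation is that $\mathscr{H}$ is generated by the two elements $(\alpha, 0)$ and $(1,-1)$ appearing in \eqref{hr}, so it suffices to check $\omega_R$-invariance for these two generators. But this is exactly the content of Lemma \ref{sym}, which verifies $\omega_R(hx, hx') = \omega_R(x,x')$ for $h = \alpha$ (using \eqref{a-1}) and for $h = (1,-1)$ (using that $\omega_R$ takes values in $\mathbb{F}_p$, which is fixed by the $q$-power Frobenius). Since the set of $h \in \mathscr{H}$ satisfying the invariance identity is a subgroup, and it contains the generators, it is all of $\mathscr{H}$. Combining this with (i) and (ii) yields that $(V_R, \omega_R)$ is symplectic in the stated sense.

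**Main obstacle.** There is no serious obstacle here: every ingredient — non-degeneracy and the symplectic property from Lemma \ref{ab}(2), the module structure from Lemma \ref{sym0}, and the generator-wise invariance from Lemma \ref{sym} — has been prepared in advance, and the only genuine step is the trivial remark that invariance under a generating set implies invariance under the whole group. If anything requires a word of care, it is making explicit that the action of $\mathscr{H}$ on $V_R$ is the one induced through $\varphi$ (so that Lemma \ref{sym0} applies verbatim) and that $(\alpha,0)$, $(1,-1)$ do generate $\mathscr{H}$, which is immediate from the presentation in \eqref{hr}.
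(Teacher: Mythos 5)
Your proof is correct and follows essentially the same route as the paper: the paper's proof consists of citing Lemma \ref{ab}(2) for non-degeneracy and the symplectic property, and Lemma \ref{sym} for $\mathscr{H}$-invariance. The additional remark that invariance under the generators $(\alpha,0)$ and $(1,-1)$ extends to all of $\mathscr{H}$ is implicitly what the proof of Lemma \ref{sym} does, so you are not adding a genuinely new idea, just making explicit a step the paper leaves tacit.
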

\begin{proof}
The claim follows from 
Lemma \ref{ab}(2) and Lemma \ref{sym}. 
\end{proof}
\begin{definition}
The $\mathbb{F}_p[\mathscr{H}_0]$-module 
$(V_R,\omega_R)$ is called a symplectic module associated to 
$\tau_{\psi,R,m}$. 
\end{definition}
\begin{definition}\label{eled}
Let $\sigma \colon \mathbb{F} \to \mathbb{F};\ 
x \mapsto x^q$. 
For $f(x)=\sum_{i=0}^n a_i x^i \in \mathbb{F}[x]$, we set 
$f^{\sigma}(x):=\sum_{i=0}^n \sigma(a_i) x^i$. 
\end{definition}
Let $k$ be a field. 
We say that a polynomial $f(x) \in k[x]$ is 
reduced if the ring $k[x]/(f(x))$ is reduced. 
 \begin{lemma}\label{ele}
 Let $E(x) \in \mathscr{A}_{\mathbb{F}}$ be 
 a reduced polynomial. 
 Let $V:=\{\beta \in \mathbb{F} \mid E(\beta)=0\}$. 
\begin{itemize}
\item[{\rm (1)}] 
Assume that $E(x)$ is monic and 
$V$ is stable under $\sigma$.  
Then we have $E(x) \in \mathbb{F}_q[x]$.  
\item[{\rm (2)}]  
Let $r$ be a positive integer. 
 Assume that $V$ is stable under $\mu_r$-multiplication. Then we have $E(\alpha x)=\alpha E(x)$ for $\alpha \in \mu_r$.     
 \end{itemize}
 \end{lemma}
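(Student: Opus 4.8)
The plan is to reduce both parts to one elementary fact: two polynomials over $\mathbb{F}$ of the same degree with the same set of simple roots agree up to a nonzero scalar. First I would record the preliminaries common to both parts. We may assume $E\neq 0$, the statements being trivial otherwise. Write $E(x)=\sum_{i=0}^{d}e_i x^{p^{i}}$ with $e_d\neq 0$; then $E'(x)=e_0$, so $E$ is separable precisely when $e_0\neq 0$. If $e_0=0$ then, since $\mathbb{F}$ is perfect, $E=G^{p}$ for a nonconstant $G\in\mathscr{A}_{\mathbb{F}}$, so $G^{2}\mid E$ and $\mathbb{F}[x]/(E)$ is not reduced. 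Hence the hypothesis that $E$ is reduced forces $e_0\neq 0$, so $E$ is separable: it has $\deg E=p^{d}=|V|$ distinct roots, namely the elements of $V$, and $E(x)=e_d\prod_{\beta\in V}(x-\beta)$.

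For (1): the $q$-power map $\sigma$ is a field automorphism of $\mathbb{F}$, so applying it to the coefficients of $E$ gives $E^{\sigma}(x)=e_d^{q}\prod_{\beta\in V}\bigl(x-\sigma(\beta)\bigr)$, whose root set is $\sigma(V)$. Since $\sigma$ is injective and $V$ is finite and $\sigma$-stable, $\sigma(V)=V$. As $E$ is monic we have $e_d=1$, so $E$ and $E^{\sigma}$ are both monic with the same simple root set $V$, hence $E^{\sigma}=E$; this says $e_i^{q}=e_i$ for all $i$, i.e.\ $E(x)\in\mathbb{F}_q[x]$. Note that the hypothesis ``monic'' enters only at this last point, to pin down the leading coefficient; for a general leading coefficient the same argument would only give $E/e_d\in\mathbb{F}_q[x]$.

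For (2): fix $\alpha\in\mu_r$. The root set of $E(\alpha x)$ is $\alpha^{-1}V$, which equals $V$ because $V$ is stable under multiplication by the elements $\alpha,\alpha^{-1}\in\mu_r$. Hence $E(\alpha x)$ and $E(x)$ have the same degree and the same simple roots, so $E(\alpha x)=\lambda E(x)$, where $\lambda=\alpha^{\deg E}$ is the ratio of the two leading coefficients. To force $\lambda=\alpha$ I would compare the coefficients of $x$: since $E$ is additive, $E(\alpha x)=\sum_i e_i\alpha^{p^{i}}x^{p^{i}}$ has $e_0\alpha$ as its coefficient of $x$, while $\lambda E(x)$ has $\lambda e_0$; as $e_0\neq 0$ this gives $\lambda=\alpha$, and therefore $E(\alpha x)=\alpha E(x)$.

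I do not anticipate a genuine obstacle. The only point requiring care is the implication ``reduced $\Rightarrow$ separable'' for additive polynomials over the perfect field $\mathbb{F}$ (a non-separable additive polynomial is a $p$-th power), and, in (2), the observation that additivity is essential: it is precisely what makes the $x$-coefficient scale by $\alpha$ under $x\mapsto\alpha x$, which collapses the a priori proportionality constant $\alpha^{\deg E}$ to $\alpha$.
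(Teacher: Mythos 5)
Your proof is correct and takes essentially the same route as the paper: establish that ``reduced'' forces separability (i.e.\ $E'(0)\neq 0$), observe that in each case $E^{\sigma}$ (resp.\ $E(\alpha x)$) has the same simple root set as $E$ and is therefore a scalar multiple, and then pin down the scalar by the leading coefficient in (1) and by the linear coefficient (equivalently $E'(0)$, which is what the paper compares) in (2).
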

 \begin{proof}
Recall that 
$f(x) \in \mathscr{A}_{\mathbb{F}}$ is reduced if and only if $f'(0) \neq 0$. 

We show (1). 
By the assumption, we have $E^{\sigma}(\beta)=(E(\beta^{q^{-1}}))^q=0$ for any $\beta \in V$. Since $E(x)$ is separable,
there exists $\alpha \in \mathbb{F}^{\times}$
such that $E^{\sigma}(x)=\alpha E(x)$. 
Since $E(x)$ is monic, we have $\alpha=1$. Hence we have the claim.

 We show (2). 
 Let $\alpha \in \mu_r$. 
 By the assumption, 
 $E(\alpha \beta)=0$ for any $\beta \in V$. Since $E(x)$ is separable, we have $E(\alpha x)=c E(x)$
 with a constant $c \in \mathbb{F}^{\times}$. By considering the derivatives of 
$E(\alpha x)$, $cE(x)$ and substituting $x=0$, 
 we have $\alpha=c$ by $E'(0) \neq 0$. Hence the claim follows. 
  \end{proof}
  \begin{definition}\label{24}
  Let $f(x) \in \mathscr{A}_q$. 
  \begin{itemize}
\item[{\rm (1)}] A decomposition $f(x)=f_1(f_2(x))$ with $f_i(x) \in \mathscr{A}_q$ is said to be \textit{non-trivial} if 
 $\deg f_i>1$ for $i \in \{1,2\}$. 
\item[{\rm (2)}] We say that 
$f(x) \in \mathscr{A}_q$ is \textit{prime} if 
it does not admit a non-trivial decomposition $f(x)=f_1(f_2(x))$ with 
$f_i(x) \in \mathscr{A}_q$. 
\end{itemize}
\end{definition}
\begin{definition}
Let $(V,\omega)$ be a symplectic $\mathbb{F}_p[\mathscr{H}]$-module. 
Then $(V,\omega)$ is said to be \textit{completely anisotropic} if 
 $V$ does not admit a non-zero totally isotropic $\mathbb{F}_p[\mathscr{H}]$-submodule. 
\end{definition}
For an $\mathbb{F}_p$-subspace $W \subset V$, let 
 $W^{\perp}:=\{v \in V \mid  \textrm{$\omega(v,w)=0$ for all $w \in W$}\}$. 
 \begin{proposition}\label{symp}
 The symplectic $\mathbb{F}_p[\mathscr{H}]$-module 
 $(V_R,\omega_R)$ is completely anisotropic if and only if there does not exist a non-trivial decomposition 
 $E_R(x)=f_1(f_2(x))$ with $f_i(x) \in 
 \mathscr{A}_q$ such that $f_2(\alpha x)=\alpha f_2(x)$ for $\alpha \in \mu_{d_{R,m}}$ and $V_{f_2}:=\{\beta \in \mathbb{F} \mid 
 f_2(\beta)=0\}$ satisfies $V_{f_2} \subset V_{f_2}^{\perp}$. 
  \end{proposition}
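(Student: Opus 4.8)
The plan is to translate the geometric/polynomial statement into a statement about the $\mathbb{F}_p[\mathscr{H}]$-module $(V_R,\omega_R)$ and its submodules, using the correspondence between additive subgroups of $V_R$ that are closed under multiplication by $\mu_{d_{R,m}}$ and decompositions $E_R(x)=f_1(f_2(x))$. Concretely, given a non-zero totally isotropic $\mathbb{F}_p[\mathscr{H}]$-submodule $W\subset V_R$, I would set $f_2(x):=\prod_{w\in W}(x-w)$, the separable additive polynomial with kernel exactly $W$; since $W$ is a subgroup and $\dim_{\mathbb{F}_p}W<\infty$, $f_2(x)\in\mathscr{A}_{\mathbb{F}}$ is monic, reduced, and $E_R(x)=f_1(f_2(x))$ for some $f_1\in\mathscr{A}_{\mathbb{F}}$ (division of additive polynomials: since $W=\Ker f_2\subset\Ker E_R=V_R$, the map $E_R$ factors through $f_2$). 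Conversely, a non-trivial decomposition $E_R=f_1(f_2(\cdot))$ gives $W:=V_{f_2}=\Ker f_2\subset V_R$, a non-zero proper $\mathbb{F}_p$-subspace of $V_R$, and the conditions imposed on $f_2$ in the statement are designed to make $W$ an $\mathbb{F}_p[\mathscr{H}]$-submodule that is totally isotropic.

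**Next I would** pin down exactly which module-theoretic conditions on $W$ correspond to the polynomial conditions. Recall $\mathscr{H}=\mu_{d_{R,m}}\rtimes(\mathbb{Z}/r\mathbb{Z})$ acts on $V_R$ via $\varphi$: the $\mu_{d_{R,m}}$-part acts by $x\mapsto\alpha x$ (Lemma \ref{sym0}) and the $\mathbb{Z}/r\mathbb{Z}$-part by $x\mapsto x^q$. So $W$ being an $\mathbb{F}_p[\mathscr{H}]$-submodule is equivalent to: $W$ is stable under $\mu_{d_{R,m}}$-multiplication \emph{and} stable under $\sigma\colon x\mapsto x^q$. By Lemma \ref{ele}(1), $\sigma$-stability of $W$ (with $f_2$ monic) is equivalent to $f_2(x)\in\mathbb{F}_q[x]=\mathscr{A}_q$; and by Lemma \ref{ele}(2), $\mu_{d_{R,m}}$-stability of $W$ is equivalent to $f_2(\alpha x)=\alpha f_2(x)$ for $\alpha\in\mu_{d_{R,m}}$. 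The remaining condition, $W\subset W^{\perp}$ in the module statement, is literally $V_{f_2}\subset V_{f_2}^{\perp}$, i.e. total isotropy with respect to $\omega_R$. And the non-triviality: $W\neq 0$ corresponds to $\deg f_2>1$ (using $\deg f_2=p^{\dim_{\mathbb{F}_p}W}$), and $W\subsetneq V_R$ corresponds to $\deg f_1>1$ (since $\deg f_1\cdot\deg f_2=\deg E_R=p^{2e}$ and $\deg f_2<p^{2e}$ exactly when $W\neq V_R$); note $W=V_R$ is excluded anyway because $V_R\not\subset V_R^{\perp}$ by non-degeneracy of $\omega_R$ (Lemma \ref{ab}(2)), so in the totally isotropic case $W$ is automatically proper. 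Assembling these equivalences in both directions gives the biconditional: $(V_R,\omega_R)$ has a non-zero totally isotropic $\mathbb{F}_p[\mathscr{H}]$-submodule $\iff$ $E_R$ admits such a decomposition.

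**The main obstacle** I anticipate is the careful bookkeeping in the division step for additive polynomials and the monicity normalization: one must check that when $W=\Ker f_2\subset\Ker E_R$ for separable additive $f_2$, there genuinely exists $f_1\in\mathscr{A}_{\mathbb{F}}$ with $E_R=f_1\circ f_2$, and that one may take $f_2$ monic so that Lemma \ref{ele}(1) applies verbatim. This is standard (the additive polynomials over $\mathbb{F}$ with a fixed separable kernel containment form a left-divisibility lattice in the twisted polynomial ring), but it needs to be invoked cleanly. A secondary subtlety is verifying that $E_R(x)$ is itself reduced so that Lemma \ref{ele} is applicable — this follows from $E_R'(0)=a_0^{p^e}+a_0\neq 0$ when $a_0\neq 0$, and more generally from the fact that $V_R$ has the full expected dimension $2e$ (Definition \ref{qdef}(1)), which forces $E_R$ separable; one should record this at the outset. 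Once these structural points are in place, the proof is a matter of matching up the four conditions (subgroup/$\mathbb{F}_q$-rationality, $\mu_{d_{R,m}}$-equivariance, isotropy, non-triviality) on each side via Lemma \ref{ele}, Lemma \ref{sym0}, and the degree count.
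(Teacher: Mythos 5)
Your approach is essentially the paper's: in both directions you set up the bijection between non-zero totally isotropic $\mathbb{F}_p[\mathscr{H}]$-submodules $W\subset V_R$ and separable additive $f_2$ with $\Ker f_2 = W$ (via Ore), translate $\sigma$-stability and $\mu_{d_{R,m}}$-stability through Lemma \ref{ele}(1),(2) and Lemma \ref{sym0}, and use additive division for the factorization $E_R = f_1\circ f_2$. One point you should firm up: you only produce $f_1\in\mathscr{A}_{\mathbb{F}}$, but the proposition asserts $f_1\in\mathscr{A}_q$; the paper handles this by first establishing $f_2\in\mathscr{A}_q$ and then running Euclidean division for additive polynomials over $\mathbb{F}_q$ (Ore, Theorem 1), writing $E_R = f_1\circ f_2 + r$ with $f_1,r\in\mathscr{A}_q$ and $\deg r < \deg f_2$, and concluding $r\equiv 0$ since $f_2$ is separable and vanishes on $W$ where $r$ also vanishes. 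Your abstract invocation of the divisibility lattice in $\mathscr{A}_{\mathbb{F}}$ gets the factorization but not the $\mathbb{F}_q$-rationality of $f_1$ without an extra uniqueness-of-division remark. (Also, your formula $E_R'(0)=a_0^{p^e}+a_0$ is off — for $e\ge 1$ the linear coefficient of $E_R$ is $a_e$ — but your fallback appeal to $\dim_{\mathbb{F}_p}V_R = 2e$ correctly gives separability.)
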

 \begin{proof}
 Assume that there exists such a decomposition
 $E_R(x)=f_1(f_2(x))$. Since the decomposition is non-trivial, we have $V_{f_2} \neq \{0\}$. 
Hence $V_{f_2}$ is a non-zero totally isotropic $\mathbb{F}_p[\mathscr{H}]$-submodule of $V_R$.  Hence $V_R$ is not completely anisotropic. 

Assume that $V_R$ is not completely anisotropic. We take a non-zero totally isotropic $\mathbb{F}_p[\mathscr{H}]$-submodule $V' \subset V_R$. By \cite[4 in Chapter 1]{Ore}, there exists a monic reduced polynomial 
$f(x) \in \mathscr{A}_{\mathbb{F}}$
such that $V'=\{\beta \in \mathbb{F}\mid f(\beta)=0\}$. Since $V'$ is stable by $\sigma$, we have $f(x) \in \mathbb{F}_q[x]$ by Lemma \ref{ele}(1). Since $V'$ is stable by $\tau$, we have 
$f(\alpha x)=\alpha f(x)$ for $\alpha \in 
\mu_{d_{R,m}}$ by Lemma \ref{sym0} and Lemma \ref{ele}(2). 
There exist  
$f_1(x),r(x) \in \mathscr{A}_q$ such that 
$E_R(x)=f_1(f(x))+r(x)$ and $\deg r(x)<\deg f(x)$
by \cite[Theorem 1]{Ore}. For any root $\beta \in V'$
of $f(x)$, we have $r(\beta)=0$ by $E_R(\beta)=0$. Since $f(x)$ is separable, $r(x)$ is divisible by
$f(x)$. Hence $r(x) \equiv 0$ by $\deg r(x)<\deg f(x)$.  
By definition, 
we have $V' \subset V'^{\perp}$. Hence the 
converse is shown. 
 \end{proof} 
 \begin{corollary}\label{mc}
 \begin{itemize}
 \item[{\rm (1)}] 
 The $W_F$-representation $\tau_{\psi,R,m}$ is primitive if and only if the symplectic $\mathbb{F}_p[\mathscr{H}]$-module 
 $(V_R,\omega_R)$ is completely anisotropic. 
\item[{\rm (2)}] The $W_F$-representation $\tau_{\psi,R,m}$ is primitive if and only if there does not exist a non-trivial decomposition 
 $E_R(x)=f_1(f_2(x))$ with $f_i(x) \in 
 \mathscr{A}_q$ such that 
 $f_2(\alpha x)=\alpha f_2(x)$ for 
 $\alpha \in \mu_{d_{R,m}}$ and $V_{f_2}:=\{\beta \in \mathbb{F} \mid 
 f_2(\beta)=0\}$ satisfies $V_{f_2} \subset V_{f_2}^{\perp}$.
 \item[{\rm (3)}] 
 If $E_R(x) \in \mathscr{A}_q$ is prime,  
 the $W_F$-representation $\tau_{\psi,R,m}$ is primitive. 
 \item[{\rm (4)}] 
 If $R(x)=a_e x^{p^e}$ and 
 $\mathbb{F}_p(\mu_{d_{R,m}})=\mathbb{F}_{p^{2e}}$, the $\mathbb{F}_p[\mathscr{H}]$-module 
 $V_R$ is irreducible. 
 The $W_F$-representation $\tau_{\psi,R,m}$ is primitive. If $\mathrm{gcd}(p^e+1,m)=1$,
 the condition $\mathbb{F}_p(\mu_{d_{R,m}})=\mathbb{F}_{p^{2e}}$ is satisfied. 
 \end{itemize}  
 \end{corollary}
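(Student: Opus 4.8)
The plan is to obtain all four parts from Corollary \ref{belc} together with the classical Clifford-theoretic criterion for primitivity of a representation whose restriction to a normal $p$-subgroup is irreducible, feeding in the symplectic module already computed in \S\ref{3.3}. So the work is really concentrated in part (1); parts (2)--(4) should follow from it more or less formally, using Proposition \ref{symp} for (2)--(3) and a direct analysis of $V_R$ for (4).

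For (1), I would first invoke Corollary \ref{belc} to replace $\tau_{\psi,R,m}$ by the continuous $G_F$-representation $\widetilde\tau'_{\mathbb{C}}$, which has finite image by Lemma \ref{rem1}; it therefore factors through a finite group $\widetilde G$, and $\widetilde G$ modulo its central cyclic subgroup of scalars is $G=\Gal(F_\rho/F)$. By Lemma \ref{fa} the restriction of $\widetilde\tau'_{\mathbb{C}}$ to the preimage of $H=\Gal(F_\rho/T_\rho)$ is, up to inflation, the irreducible Heisenberg representation $\rho_\psi$ of $H_R$, and by construction $G/H=\mathscr{H}_0$ is supersolvable. I would then apply the structure theorem for such representations (the description of primitivity in terms of symplectic modules, as in \cite{K} and \cite{BH}): $\widetilde\tau'_{\mathbb{C}}$ is imprimitive if and only if the symplectic $\mathbb{F}_p[G/H]$-module afforded by $H$ with its commutator pairing admits a nonzero totally isotropic submodule. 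By Lemmas \ref{asso}, \ref{sym0} and \ref{sym} this symplectic module is exactly $(V_R,\omega_R)$ with the action induced by $\varphi\colon\mathscr{H}\to\mathscr{H}_0$, and since $\varphi$ is surjective an $\mathbb{F}_p[\mathscr{H}]$-submodule of $V_R$ is the same as an $\mathbb{F}_p[\mathscr{H}_0]$-submodule; hence the criterion becomes complete anisotropy of the $\mathbb{F}_p[\mathscr{H}]$-module $(V_R,\omega_R)$, which is (1). The main obstacle I anticipate is making the cited criterion precisely applicable in this setting: the bookkeeping among $\widetilde\tau'_{\mathbb{C}}$, the finite group $\widetilde G$ it factors through (with its scalars), and $G$, together with the verification of the hypotheses (normal $p$-subgroup with irreducible restriction, supersolvable quotient).

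Statements (2) and (3) are then formal. For (2) I would simply combine (1) with Proposition \ref{symp}. For (3), if $E_R(x)\in\mathscr{A}_q$ is prime it admits no non-trivial decomposition $E_R(x)=f_1(f_2(x))$ with $f_i\in\mathscr{A}_q$ at all, so the obstruction described in (2) cannot occur and $\tau_{\psi,R,m}$ is primitive.

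For (4), assume $R(x)=a_e x^{p^e}$; I treat $e\geq 1$, the case $e=0$ giving a character and hence being primitive. A direct computation gives $E_R(x)=a_e^{p^e}x^{p^{2e}}+a_e x$, so its nonzero roots form a single coset of $\mathbb{F}_{p^{2e}}^{\times}$ and $V_R=\beta_0\mathbb{F}_{p^{2e}}$ for any nonzero root $\beta_0$, a one-dimensional $\mathbb{F}_{p^{2e}}$-vector space. By Lemma \ref{sym0} a generator of $\mathscr{H}$ acts on $V_R$ by multiplication by a generator $\alpha$ of $\mu_{d_{R,m}}$, so the $\mathbb{F}_p$-subalgebra of $\End_{\mathbb{F}_p}(V_R)$ generated by the $\mathscr{H}$-action contains $\mathbb{F}_p(\mu_{d_{R,m}})$ acting by scalar multiplication; under the hypothesis $\mathbb{F}_p(\mu_{d_{R,m}})=\mathbb{F}_{p^{2e}}$ and since $\dim_{\mathbb{F}_p}V_R=2e$, this forces $V_R$ to be a simple $\mathbb{F}_{p^{2e}}$-module, hence a simple $\mathbb{F}_p[\mathscr{H}]$-module. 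A simple $\mathbb{F}_p[\mathscr{H}]$-module of $\mathbb{F}_p$-dimension $\geq 2$ carrying the non-degenerate form $\omega_R$ has no nonzero totally isotropic submodule (its only submodules are $0$ and itself, and it is not totally isotropic), so $(V_R,\omega_R)$ is completely anisotropic and $\tau_{\psi,R,m}$ is primitive by (1). Finally, for a monomial one has $d_R=p^e+1$, so $\gcd(p^e+1,m)=1$ forces $d_{R,m}=p^e+1$; from $p^e\equiv-1\pmod{p^e+1}$ the multiplicative order of $p$ modulo $p^e+1$ divides $2e$, and it cannot equal a proper divisor $g$ of $2e$, since any such $g$ satisfies $g\leq e$, whence $p^g-1<p^e+1$ and $g=0$, a contradiction; so the order is exactly $2e$ and $\mathbb{F}_p(\mu_{p^e+1})=\mathbb{F}_{p^{2e}}$, which verifies the last assertion.
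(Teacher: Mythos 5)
Your proposal is correct and follows essentially the same route as the paper: (1) via Corollary \ref{belc}, Lemma \ref{asso}, and Koch's criterion [K, Theorem 4.1]; (2) from (1) and Proposition \ref{symp}; (3) as an immediate consequence; and (4) by observing that $\mu_{d_{R,m}}$-multiplication generates $\mathbb{F}_{p^{2e}}$-scalar action on $V_R$, forcing irreducibility. You supply some details the paper leaves implicit (the reduction from $\mathscr{H}_0$ to $\mathscr{H}$ via surjectivity of $\varphi$, the explicit order computation for $p \bmod (p^e+1)$, and the observation that an irreducible module with nondegenerate form has no nonzero isotropic submodule), but the underlying argument is identical.
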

 \begin{proof}
 The claim (1) follows from Corollary \ref{belc}, 
 Lemma \ref{asso}, 
  and \cite[Theorem 4.1]{K}.

 The claim (2) follows from the claim (1) and Proposition 
 \ref{symp}. 
The claim (3) follows from (2) immediately. 

We show (4). 
We assume that there exists a non-zero 
$\mathbb{F}_p[\mathscr{H}]$-submodule 
$W \subset V_R=\{\beta \in \mathbb{F} \mid (a_e x^{p^e})^{p^e}+a_ex=0 \}$. 
We take a non-zero element
$\beta \in W$. By $\mathbb{F}_p(\mu_{d_{R,m}})=\mathbb{F}_{p^{2e}}$, 
we have $\mathbb{F}_{p^{2e}} \beta=\mathbb{F}_p(\mu_{d_{R,m}})\beta \subset W$. 
 Since $V_R$ is the set of the roots of a separable  polynomial $E_R(x)$ of degree $p^{2e}$, 
 we have $|V_R|=p^{2e}$. 
Hence $W=V_R=\mathbb{F}_{p^{2e}} \beta$. Thus the first claim follows. 
The second claim follows 
from the first one and \cite[Theorem 4.1]{K}. 
If $\mathrm{gcd}(p^e+1,m)=1$, we have 
$d_{R,m}=d_R=p^e+1$. 
Hence the third claim follows from 
$\mathbb{F}_p(\mu_{p^e+1})=
\mathbb{F}_{p^{2e}}$. 
 \end{proof}
 \begin{example}\label{ae}
 For a positive integer $s$, 
we consider the set 
\[
\mathcal{A}_{q,s}:=
\left\{\varphi(x) \in \mathbb{F}_q[x]\ \Big|\ \varphi(x)=\sum_{i=0}^n c_i x^{p^{si}}\right\}, 
\]
which is regarded as a ring with 
multiplication $\varphi_1 \circ \varphi_2:=\varphi_1(\varphi_2(x))$ for $\varphi_1,\varphi_2 \in \mathcal{A}_{q,s}$.

In the following, we give examples such that 
$E_R(x)$ is prime. 
We write  $d_R=p^t+1$ with $t \geq 0$. 
Then we have $E_R \in \mathcal{A}_{q,t}$. 
We write $q=p^f$. 
 Assume $f \mid t$. We have 
 \begin{equation}\label{ee}
 E_R(x)=\sum_{i=0}^e a_i x^{p^{e+i}}
 +\sum_{i=0}^e a_i x^{p^{e-i}}. 
 \end{equation}
By $f \mid  t$, we have the ring isomorphism 
$\Phi \colon \mathcal{A}_{q,t} \xrightarrow{\sim} \mathbb{F}_q[y];\ \sum_{i=0}^r c_i x^{p^{ti}} \mapsto \sum_{i=0}^r c_i y^i$, where 
$\mathbb{F}_q[y]$ is a usual polynomial ring. The polynomial $E_R(x) \in \mathscr{A}_q$ is prime if and only if $\Phi(E_R(x))$ is irreducible in $\mathbb{F}_q[y]$ in a usual sense. 
Recall that a polynomial $\sum_{i=0}^r 
c_i y^i \in \mathbb{F}_q[y]$ is said to be 
\textit{reciprocal} if $c_i=c_{r-i}$ for $0 \leq i \leq r$. 
By \eqref{ee}, we know that $\Phi(E_R(x))$ is a reciprocal polynomial. 
The number of the monic irreducible reciprocal 
polynomials is calculated in \cite[Theorems 2 and 3]{Ca1}. 

In general, we do not know a necessary and sufficient condition on $R(x)$
for $E_R(x)$ to be prime. 
 The number of prime elements in $\mathcal{A}_{q,s}$ is calculated in \cite{CHM} and \cite{O}. 
 \end{example}
 \begin{proposition}\label{last}
Assume $d_{R,m} \in \{1,2\}$. 
 There exists an unramified finite 
 extension $F'/F$ such that 
 $\tau_{\psi,R,m}|_{W_{F'}}$ is imprimitive. 
 \end{proposition}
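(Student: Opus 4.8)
The plan is to use the criterion in Corollary \ref{mc}(1): $\tau_{\psi,R,m}|_{W_{F'}}$ is primitive if and only if the associated symplectic $\mathbb{F}_p[\mathscr{H}']$-module $(V_R,\omega_R)$ is completely anisotropic, where $\mathscr{H}'$ is the group attached to $F'$ in place of $\mathscr{H}$. Since $d_{R,m}\in\{1,2\}$, the tame part $\mu_{d_{R,m}}$ acts on $V_R$ through at most a sign, so the $\mathbb{F}_p[\mathscr{H}]$-module structure on $V_R$ essentially reduces to the action of the cyclic quotient $\mathbb{Z}/r\mathbb{Z}$ generated by Frobenius $\sigma\colon x\mapsto x^q$; indeed when $p\neq 2$ the $\mu_2$-action is already diagonalizable over $\mathbb{F}_p$ and commutes with $\sigma$. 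So the real content is: find an unramified $F'/F$ so that, after enlarging the residue field (which only shrinks the Frobenius-generated cyclic group $\mathbb{Z}/r\mathbb{Z}$ and refines the module structure), the module $V_R$ acquires a non-zero totally isotropic $\mathbb{F}_p[\mathscr{H}']$-submodule.

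First I would pass to $F'=F_{p^{2e}}$, or more precisely to the unramified extension of $F$ whose residue field contains $\mathbb{F}_{q^N}$ with $N$ a multiple of $2e$ and of $f$. Over such a field the Frobenius $\mathrm{Fr}_{q^N}^{\ast}$ acts on $V_R$ with all roots of $E_R$ now rational over the residue field, so the cyclic group acting on $V_R$ becomes trivial on the relevant quotient, i.e. every $\mathbb{F}_p$-subspace of $V_R$ that is $\mu_{d_{R,m}}$-stable is automatically an $\mathbb{F}_p[\mathscr{H}']$-submodule. Then I need only produce a non-zero totally isotropic $\mu_{d_{R,m}}$-stable $\mathbb{F}_p$-subspace of $(V_R,\omega_R)$. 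Since $\omega_R$ is a non-degenerate symplectic form on the $2e$-dimensional space $V_R$, a totally isotropic subspace of dimension $e$ (a Lagrangian) always exists; the only thing to arrange is $\mu_{d_{R,m}}$-stability. When $d_{R,m}=1$ there is nothing to check. When $d_{R,m}=2$, the involution $x\mapsto -x$ is linear, and since $p\neq 2$ it acts as $-1$ on all of $V_R$, so every subspace is stable; hence any Lagrangian works. This gives a non-zero totally isotropic $\mathbb{F}_p[\mathscr{H}']$-submodule, so by Proposition \ref{symp} and Corollary \ref{mc}(1) the restriction $\tau_{\psi,R,m}|_{W_{F'}}$ is imprimitive.

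The step I expect to need the most care is the bookkeeping that passing to an unramified extension $F'/F$ genuinely replaces $\mathscr{H}$ by a group $\mathscr{H}'$ whose cyclic (Frobenius) part is a subgroup of finite index, and that all the constructions of \S\ref{3.1}--\S\ref{3.3} — in particular the isomorphism of Lemma \ref{asso}, the module structure of Lemma \ref{sym0}, and the symplectic form of Lemma \ref{sym} — base-change compatibly: one takes the same uniformizer $\varpi$ and the same elements $\alpha_R,\beta_{R,m},\gamma_{R,m}$, notes that $F'^{\mathrm{ur}}=F^{\mathrm{ur}}$ so the field $L=F^{\mathrm{ur}}(\alpha_R^m,\beta_{R,m},\gamma_{R,m})$ is unchanged, and that $W(L/F')\subset W(L/F)$ is the preimage of $r'\mathbb{Z}\subset\mathbb{Z}$ for the appropriate $r'$. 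Hence $V_R$ and $\omega_R$ are literally the same, only the acting group has shrunk, which can only create more submodules, never fewer. Once that is set up, the existence of a Lagrangian and the triviality of the $\mu_{d_{R,m}}$-action for $d_{R,m}\in\{1,2\}$ (using $p\neq2$, which is in force throughout) finish the argument.
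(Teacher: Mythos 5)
Your proposal is correct in its essential strategy (restrict the acting group $\mathscr{H}$ by passing to an unramified extension, then observe that for $d_{R,m}\in\{1,2\}$ stability under $\mu_{d_{R,m}}$ is automatic and so a totally isotropic submodule exists), and it matches the paper's approach at that level. However, you use a somewhat heavier hammer than necessary, and the paper's proof is both shorter and avoids a small imprecision in yours. The paper simply picks a single nonzero $\beta\in V_R$, lets $t$ be the degree of $\mathbb{F}_q(\beta)$ over $\mathbb{F}_q$, and takes $F'=F_t$: then $\mathbb{F}_p\beta$ is a $1$-dimensional $\mathbb{F}_p[\mathscr{H}_t]$-submodule, automatically totally isotropic because the symplectic form is alternating ($\omega_R(\beta,\beta)=0$), and automatically $\mu_{d_{R,m}}$-stable since $\mu_{d_{R,m}}\subset\{\pm1\}\subset\mathbb{F}_p^\times$. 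By contrast, you insist on producing a Lagrangian (an $e$-dimensional isotropic subspace), which exists but is unnecessary, and you pass to an extension large enough that Frobenius acts trivially on all of $V_R$. Your specific prescription for that extension (residue field containing $\mathbb{F}_{q^N}$ with $N$ a multiple of $2e$ and $f$) is not quite right: the splitting field of $E_R$ over $\mathbb{F}_q$ need not be $\mathbb{F}_{p^{2e}}$, so your $N$ would need to be chosen in terms of the actual splitting field, not $2e$. This is easily repaired by just saying ``take $N$ large enough that all roots of $E_R$ lie in $\mathbb{F}_{q^N}$,'' but the paper's choice of $F_t$ sidesteps the issue entirely by only needing one element of $V_R$ to be Frobenius-fixed. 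Your bookkeeping about how $\mathscr{H}$, $V_R$, $\omega_R$ behave under unramified base change is correct and is indeed the (implicit) step the paper relies on.
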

 \begin{proof}
 For a positive integer $r$, let $F_r$ be the 
 unramified extension of $F$ of degree $r$. 
  We take a non-zero element $\beta \in V_R$. 
  Let $t$ be the positive integer such that 
 $\mathbb{F}_{q^t}=\mathbb{F}_q(\beta)$. 
 Let $\mathscr{H}_t \subset \mathscr{H}$
 be the subgroup generated by $\sigma^t,\tau$. 
 The subspace $W_R:=
 \mathbb{F}_p \beta \subset V_R$ is 
 a totally isotropic $\mathbb{F}_p[\mathscr{H}_t]$-submodule because of $d_{R,m} \leq 2$. Hence 
 $V_R$ is not a completely anisotropic   
 $\mathbb{F}_p[\mathscr{H}_t]$-module. Thus $\tau_{\psi,R,m}|_{W_{F_t}}$ is imprimitive by \cite[Theorem 4.1]{K}. 
 \end{proof}
  \begin{lemma}
 The $W_{T_{\rho}}$-representation 
 $\tau_{\psi,R,m}|_{W_{T_{\rho}}}$ is imprimitive. 
 \end{lemma}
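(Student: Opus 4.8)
The plan is to invoke the primitivity criterion \cite[Theorem 4.1]{K} in the same way as in Corollary \ref{mc}(1) and Proposition \ref{last}, but now relative to the base field $T_{\rho}$ rather than $F$. Over $T_{\rho}$ the tame quotient group collapses to $\{1\}$, so the associated symplectic module is simply $(V_R,\omega_R)$ carrying no group action, and such a module is never completely anisotropic once $\dim_{\mathbb{F}_p} V_R\ge 2$.

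First I would check that $\tau_{\psi,R,m}|_{W_{T_{\rho}}}$ is still irreducible. With $K:=F^{\mathrm{ur}}(\alpha_R^m)$ one has $T_{\rho}=F_{\rho}\cap K\subseteq K$, hence $W_K\subseteq W_{T_{\rho}}$; by the proof of Lemma \ref{asso}, $\tau_{\psi,R,m}|_{W_K}$ factors through $W(L/K)\simeq H_R$ and equals, by Lemma \ref{fa}, the irreducible $H_R$-representation $\rho_{\psi}$, so a fortiori $\tau_{\psi,R,m}|_{W_{T_{\rho}}}$ is irreducible. Its projectivization is $\bar{\rho}|_{\Gal(F_{\rho}/T_{\rho})}$, with kernel field again $F_{\rho}$ and image $\bar{\rho}(H)\simeq V_R$ by Lemma \ref{asso}. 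Moreover $T_{\rho}$ is its own maximal tamely ramified subextension inside $F_{\rho}$, since any tamely ramified extension of $T_{\rho}$ inside $F_{\rho}$ is tamely ramified over $F$, hence contained in $T_{\rho}$. Hence, running the framework of \cite[Theorem 4.1]{K} over $T_{\rho}$ (passing from $W_{T_{\rho}}$ to $G_{T_{\rho}}$ by the argument of Lemma \ref{bel} and Corollary \ref{belc}), the group in the role of $\mathscr{H}_0$ is $\Gal(T_{\rho}/T_{\rho})=\{1\}$ and the associated symplectic module is $(V_R,\omega_R)$ regarded merely as an $\mathbb{F}_p$-vector space equipped with the alternating form $\omega_R$.

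It then remains to observe that $(V_R,\omega_R)$ is not completely anisotropic over the trivial group: since $e\ge 1$, we have $\dim_{\mathbb{F}_p} V_R=2e\ge 2$, so for any $\beta\in V_R\setminus\{0\}$ the line $\mathbb{F}_p\beta$ is a nonzero totally isotropic subspace, and with no group action it is automatically a submodule. Thus \cite[Theorem 4.1]{K} yields that $\tau_{\psi,R,m}|_{W_{T_{\rho}}}$ is imprimitive. (For $e=0$ one has $V_R=0$ and $\tau_{\psi,R,m}$ is a character, so the statement is to be read under the standing assumption $e\ge 1$.)

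The only delicate point is the bookkeeping in transporting the machinery of \cite[Theorem 4.1]{K} from the base field $F$ to $T_{\rho}$: one must confirm that the kernel field of the projectivization is unchanged, that its maximal tamely ramified subextension over $T_{\rho}$ is $T_{\rho}$, and that the resulting symplectic module is genuinely $(V_R,\omega_R)$ with trivial action --- all of which follow from Lemma \ref{asso}, its proof, and the fact that $H$ is abelian. This is the same mechanism as in Proposition \ref{last}; the new feature is that, once the base field is pushed up to $T_{\rho}$, no group action survives, so every nonzero isotropic subspace is a submodule and no hypothesis on $d_{R,m}$ is required.
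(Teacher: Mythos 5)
Your proof is correct and follows essentially the same approach as the paper: after passing to the base $T_{\rho}$, the tame quotient $\mathscr{H}_0$ collapses to the trivial group, so any line $\mathbb{F}_p\beta\subset V_R$ (which exists since $e\ge 1$) is a nonzero totally isotropic $\mathbb{F}_p[\{1\}]$-submodule, and Corollary \ref{mc}(1) (Koch's criterion) gives imprimitivity. The paper's proof is much terser but rests on the same observation; the extra verifications you supply (irreducibility of $\tau_{\psi,R,m}|_{W_{T_\rho}}$, that $T_\rho$ is its own tame kernel field) are left implicit there.
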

 \begin{proof}
 We take a non-zero element $\beta \in V_R$. Then $\mathbb{F}_p \beta$ 
 is a totally isotropic symplectic submodule of the symplectic module $V_R$ associated to  
 $\tau_{\psi,R,m}|_{W_{T_{\rho}}}$. 
 Hence $\tau_{\psi,R,m}|_{W_{T_{\rho}}}$ is imprimitive by Corollary \ref{mc}(1). 
 \end{proof}
 \subsection{Root system associated 
 to irreducible $\mathbb{F}_p[\mathscr{H}]$-module}\label{root}
 A root system associated to  
 an irreducible $\mathbb{F}_p[\mathscr{H}]$-module is defined in \cite{K}. 
 We determine the root system associated to  
 $V_R$ in the situation of Corollary \ref{mc}(3). 
 
 We recall the definition of a root system. 
 \begin{definition}(\cite[\S7]{K})
 \begin{itemize}
\item[{\rm (1)}] Let $\Phi$ be the group of the automorphisms of the torus $ (\mathbb{F}^{\times})^2$ generated by the automorphisms 
 $\theta \colon (\alpha,\beta) \mapsto (\alpha^p,\beta^p)$ and 
 $\sigma \colon (\alpha,\beta)\mapsto (\alpha^{q^{-1}},\beta)$. 
 A $\Phi$-orbit of $ (\mathbb{F}^{\times})^2$ is called a \textit{root system}. 
 \item[{\rm (2)}] Let $W=\Phi(\alpha,\beta)$ 
 be a root system.  
 Let 
 \begin{align*}
& \textrm{$a=a(W)$ be the minimal positive integer 
with $\alpha^{q^{a}}=\alpha$}, \\
& \textrm{$b=b(W)$ the minimal positive integer 
with $\alpha^{p^{b}}=\alpha^{q^x}$, 
$\beta^{p^{b}}=\beta$ with $x \in \mathbb{Z}$, and } \\
& \textrm{$c=c(W)$ the minimal non-negative integer 
with $\alpha^{p^b}=\alpha^{q^{c}}$}. 
 \end{align*}
  Let $e'=e'(W)$ and $f'=f'(W)$ be the orders of $\alpha$ and $\beta$, respectively. 
  These integers are independent of $(\alpha,\beta)$ in $W$. 
 \item[{\rm (3)}] Let $\mathscr{H}_{d,r}:=\left\langle\sigma,\tau \mid \tau^d=1,\ \sigma^r=1,\ \sigma\tau\sigma^{-1}=\tau^q\right\rangle$ with $q^r \equiv 1 \pmod d$. 
\item[{\rm (4)}] 
We say that a root system $W$ \textit{belongs to 
 $\mathscr{H}_{d,r}$} if $e' \mid d$ and 
 $a f' \mid r$. 
 \item[{\rm (5)}] 
Let $W=\Phi (\alpha,\beta)$ be a root system which belongs to $\mathscr{H}_{d,r}$. 
Let $\overline{M(W)}$ be the $\mathbb{F}$-module with the basis 
\[
\{\theta^i \sigma^j m \mid 0 \leq i \leq b-1,\ 0 \leq j \leq a-1\}
\] 
and with the action of $\mathscr{H}$ by
\[
\tau m=\alpha m, \quad \sigma^a m =\beta m, \quad 
\theta^b m=\sigma^{-c} m. 
\]
 \end{itemize}
  \end{definition}
   \begin{theorem}(\cite[Theorems 7.1 and 7.2]{K})\label{at}
 \begin{itemize}
\item[{\rm (1)}] 
There exists an irreducible $\mathbb{F}_p[\mathscr{H}_{d,r}]$-module $M(W)$ 
such that $M(W) \otimes_{\mathbb{F}_p}\mathbb{F}$ is isomorphic to 
$\overline{M(W)}$ as $\mathbb{F}[\mathscr{H}_{d,r}]$-modules. 
\item[{\rm (2)}]
 The map $W \mapsto M(W)$ defines a one-to-one correspondence between the set of root systems belonging to $\mathscr{H}_{d,r}$
 and the set of isomorphism classes of irreducible 
 $\mathbb{F}_p[\mathscr{H}_{d,r}]$-modules. 
 \end{itemize}
 \end{theorem}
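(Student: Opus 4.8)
\emph{Plan.} The plan is to classify the simple $\mathbb{F}[\mathscr{H}_{d,r}]$-modules by Clifford theory relative to the normal cyclic subgroup $\langle\tau\rangle$, and then to pass between $\mathbb{F}$ and $\mathbb{F}_p$ by Galois descent, invoking the Noether--Deuring theorem to decide when two $\mathbb{F}_p[\mathscr{H}_{d,r}]$-modules that become isomorphic over $\mathbb{F}$ are already isomorphic over $\mathbb{F}_p$. For part (1), first I would check that the prescription in the definition of $\overline{M(W)}$ really yields an $\mathbb{F}[\mathscr{H}_{d,r}]$-module of dimension $ab$: since $\sigma\tau\sigma^{-1}=\tau^q$, the vector $\theta^i\sigma^j m$ is a $\tau$-eigenvector with eigenvalue $\alpha^{q^{-j}p^i}$, so the relations $\tau^d=1$ and $\sigma^r=1$ reduce to $e'\mid d$ and to $\beta^{r/a}=1$ (equivalently $f'\mid r/a$), i.e. to $W$ belonging to $\mathscr{H}_{d,r}$. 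Then I would equip $\overline{M(W)}$ with the $p$-semilinear automorphism $\theta$ defined on the basis by $\theta^i\sigma^j m\mapsto\theta^{i+1}\sigma^j m$ for $i<b-1$ and by the wrap-around $\theta^{b}m:=\sigma^{-c}m$ (reducing $\sigma$-exponents modulo $a$ via $\sigma^a m=\beta m$); that $\theta$ is $p$-semilinear and commutes with $\tau$ and $\sigma$ reduces exactly to the two defining identities $\alpha^{p^b}=\alpha^{q^c}$ and $\beta^{p^b}=\beta$ of the integers $c$ and $b$. As $H^1(\Gal(\mathbb{F}/\mathbb{F}_p),\mathrm{GL}_{ab}(\mathbb{F}))$ is trivial (Lang's theorem), the $\theta$-fixed subspace $M(W):=\{v\in\overline{M(W)}\mid\theta v=v\}$ is an $\mathbb{F}_p$-form with $M(W)\otimes_{\mathbb{F}_p}\mathbb{F}\cong\overline{M(W)}$ and a descended $\mathscr{H}_{d,r}$-action, which gives (1) once irreducibility is shown. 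For irreducibility, note that the $\mathbb{F}$-span $N$ of $m,\sigma m,\dots,\sigma^{a-1}m$ is a simple $\mathbb{F}[\mathscr{H}_{d,r}]$-submodule — the $a$ eigenvalues $\alpha^{q^{-j}}$ are distinct by minimality of $a$, so any submodule is spanned by some of the $\sigma^j m$ and, being $\sigma$-stable, is $0$ or all of $N$ — that $\overline{M(W)}=\bigoplus_{i=0}^{b-1}\theta^i N$ with the $\theta^i N$ pairwise non-isomorphic by minimality of $b$, and that $\theta$ permutes these $b$ summands cyclically; hence $\overline{M(W)}$ has no proper nonzero $\theta$-stable $\mathbb{F}[\mathscr{H}_{d,r}]$-submodule, so by Galois descent $M(W)$ is a simple $\mathbb{F}_p[\mathscr{H}_{d,r}]$-module.

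\emph{Part (2).} For injectivity: if $M(W)\cong M(W')$ over $\mathbb{F}_p$ then $\overline{M(W)}\cong\overline{M(W')}$ over $\mathbb{F}$, so they have the same simple $\mathbb{F}[\mathscr{H}_{d,r}]$-composition factors; by Clifford theory for the cyclic quotient $\mathscr{H}_{d,r}/\langle\tau\rangle$, a simple $\mathbb{F}[\mathscr{H}_{d,r}]$-module is induced from a one-dimensional module of $\langle\tau,\sigma^a\rangle$ and hence is determined by the $\langle q\rangle$-orbit of a $\tau$-eigenvalue $\alpha$ together with the scalar $\beta$ by which $\sigma^a$ acts on the $\alpha$-eigenline, and running over the $b$ composition factors $\theta^i N$ of $\overline{M(W)}$ recovers precisely the $\Phi$-orbit $\{(\alpha^{q^x p^i},\beta^{p^i})\}=W$, so $W=W'$. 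For surjectivity: given a simple $\mathbb{F}_p[\mathscr{H}_{d,r}]$-module $M$, the division ring $\End_{\mathbb{F}_p[\mathscr{H}_{d,r}]}(M)$ is a finite field $\mathbb{F}_{p^b}$, so $\overline{M}=M\otimes_{\mathbb{F}_p}\mathbb{F}$ is a multiplicity-free sum of $b$ Galois-conjugate simple modules; taking a $\tau$-eigenvalue $\alpha$ of one of them, the size $a$ of its $\langle q\rangle$-orbit, and the scalar $\beta$ of $\sigma^a$, the pair $(\alpha,\beta)$ spans a root system $W$ that belongs to $\mathscr{H}_{d,r}$ (since $\mathrm{ord}(\alpha)\mid d$, and $\sigma^r=1$ forces $\beta^{r/a}=1$), and comparison of composition factors gives $\overline{M}\cong\overline{M(W)}$, whence $M\cong M(W)$ by Noether--Deuring. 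Together injectivity and surjectivity give (2).

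\emph{Expected main obstacle.} The delicate point is the consistency bookkeeping in Part (1): one must see that the wrap-around rule $\theta^b m=\sigma^{-c}m$ is simultaneously \emph{forced} — $\theta^b m$ must have $\tau$-eigenvalue $\alpha^{p^b}$, which equals $\alpha^{q^c}$ — and \emph{compatible} with $\tau m=\alpha m$ and $\sigma^a m=\beta m$ (this is where $\beta^{p^b}=\beta$ enters), which is exactly where the somewhat unintuitive integers $b(W)$ and $c(W)$ get pinned down. Together with the Clifford-theoretic dimension count (a simple $\mathbb{F}[\mathscr{H}_{d,r}]$-module has dimension equal to the relevant eigenvalue-orbit size, with $\beta$ the only further parameter), this is the heart of the matter; the remaining steps are formal Galois descent and Noether--Deuring.
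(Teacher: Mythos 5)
The paper does not prove Theorem \ref{at}: it is stated verbatim as a citation to \cite[Theorems 7.1 and 7.2]{K}, so there is no in-paper argument to compare yours against. That said, your Clifford-theory/Galois-descent proof is sound and is almost certainly the route Koch himself takes, since the very shape of the statement ($\overline{M(W)}$ over $\mathbb{F}$ with an explicit Frobenius-type operator $\theta$, descending to $M(W)$ over $\mathbb{F}_p$) is built for exactly this argument. Two small points worth tightening. For the descent in part (1), rather than invoking Lang's theorem or profinite cohomology, note that $\theta^{b}=\sigma^{-c}$ as operators on $\overline{M(W)}$, hence $\theta^{br}=\mathrm{id}$, and that the matrix of $\theta$ in the given basis has entries in a finite subfield $\mathbb{F}_{p^N}$; then the fixed-point construction is just Hilbert~90 for the single finite cyclic extension $\mathbb{F}_{p^N}/\mathbb{F}_p$. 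For the injectivity in part (2), the assertion that the summands $\theta^i N$ are pairwise non-isomorphic for $0\le i\le b-1$ deserves a line: if the $\sigma$-orbits of $(\alpha^{p^i},\beta^{p^i})$ and $(\alpha^{p^j},\beta^{p^j})$ coincided for some $0\le i<j\le b-1$, one could invert $p$ modulo the orders of $\alpha$ and $\beta$ to deduce that $j-i<b$ already satisfies the defining conditions of $b$, contradicting minimality. With those details filled in, the proof is complete and matches the expected reference argument.
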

We go back to the original situation. 
Assume that $R(x)=a_e x^{p^e}$ and 
 $\mathbb{F}_p(\mu_{d_{R,m}})=
 \mathbb{F}_{p^{2e}}$. 
   Let $\mathscr{H}$ be 
   as in \eqref{hr}. In the above notation, we have 
   $\mathscr{H}=\mathscr{H}_{d_{R,m},r}$. 
 As in Corollary \ref{mc}(3),
 the $\mathbb{F}_p[\mathscr{H}]$-module 
 $V_R$ is irreducible. 
  \begin{proposition}\label{wpri}
We write  $q=p^f$. 
Let $e_1:=\mathrm{gcd}(f,2e)$ and $\beta:=\Nr_{q/p^{e_1}}(-a_e^{-(p^e-1)})$. 
Let $\alpha \in \mu_{d_{R,m}}$ be a primitive $d_{R,m}$-th root of unity.  We consider the root system $W:=\Phi(\alpha,\beta)$. 
\begin{itemize}
\item[{\rm (1)}] We have 
$
a(W)=2e/e_1$ and $b(W)=e_1. $
Further, 
$c(W)$ is the minimal non-negative integer 
such that $f c(W) \equiv e_1 \pmod{2e}$. 
\item[{\rm (2)}]  
The root system $W$ belongs to $\mathscr{H}$. 
\item[{\rm (3)}] 
We have an isomorphism $V_R \simeq M(W)$ as 
$\mathbb{F}_p[\mathscr{H}]$-modules. 
\end{itemize}
\end{proposition}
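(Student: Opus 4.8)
\emph{Setup and Part (1).} Write $c_0:=-a_e^{-(p^e-1)}\in\mathbb{F}_q^{\times}$. For $R(x)=a_ex^{p^e}$ one has $E_R(x)=a_e^{p^e}x^{p^{2e}}+a_ex$, which is separable, and $V_R\setminus\{0\}=\{x\in\mathbb{F}\mid x^{p^{2e}-1}=c_0\}$; fix $v_0$ in this set, so $v_0^{p^{2e}}=c_0v_0$ and $V_R=\mathbb{F}_{p^{2e}}v_0$ with $\mathbb{F}_{p^{2e}}=\mathbb{F}_p(\alpha)=\mathbb{F}_p(\mu_{d_{R,m}})$ by hypothesis. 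By Lemma \ref{sym0}, $\tau$ acts on $V_R$ by $x\mapsto\alpha x$ and $\sigma$ by $x\mapsto x^q$; moreover $\beta:=\Nr_{q/p^{e_1}}(c_0)\in\mathbb{F}_{p^{e_1}}^{\times}$ (recall $e_1\mid f$), so $W=\Phi(\alpha,\beta)$ is a genuine root system, and we repeatedly use that $e_1=\gcd(f,2e)$ forces $\gcd(f/e_1,2e/e_1)=1$. Now $\alpha$ has order $d_{R,m}$ and, by assumption, $p$ has order $2e$ modulo $d_{R,m}$; hence $\alpha^{q^a}=\alpha$ iff $2e\mid fa$, whose least positive solution is $2e/e_1$, giving $a(W)=2e/e_1$. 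Next, $\alpha^{p^b}=\alpha^{q^x}$ is solvable in $x$ iff $b$ lies in the subgroup $\langle f\rangle=e_1\mathbb{Z}/2e\mathbb{Z}$ of $\mathbb{Z}/2e\mathbb{Z}$, i.e. iff $e_1\mid b$; since $\beta\in\mathbb{F}_{p^{e_1}}$ the condition $\beta^{p^b}=\beta$ then holds automatically, so $b(W)=e_1$. Finally $c(W)$ is the least $c\ge 0$ with $\alpha^{p^{e_1}}=\alpha^{q^c}$, i.e. with $fc\equiv e_1\pmod{2e}$, which exists because $e_1\in\langle f\rangle$.

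\emph{Part (2).} We must check $e'(W)\mid d_{R,m}$ and $a(W)f'(W)\mid r$. The first is clear since $e'(W)=\mathrm{ord}(\alpha)=d_{R,m}$. For the second, iterating $v_0^{p^{2e}}=c_0v_0$ and using $fa(W)=2e(f/e_1)$ gives $v_0^{q^{a(W)}}=\bigl(\prod_{i=0}^{f/e_1-1}c_0^{p^{2ei}}\bigr)v_0=\Nr_{q/p^{e_1}}(c_0)\,v_0=\beta v_0$, where the middle equality holds because $c_0\in\mathbb{F}_{p^f}$ and $\{2ei\bmod f\}_{0\le i<f/e_1}=\{e_1j\bmod f\}_{0\le j<f/e_1}$, a bijection since $\gcd(f/e_1,2e/e_1)=1$. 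As $\mathbb{F}_{p^{2e}}^{\times}\subset\mathbb{F}_{q^{a(W)}}^{\times}$, it follows that $\sigma^{a(W)}$ acts on $V_R=\mathbb{F}_{p^{2e}}v_0$ as multiplication by $\beta$, so $\sigma|_{V_R}$ has order exactly $a(W)\,\mathrm{ord}(\beta)=a(W)f'(W)$. Since $\sigma^r=1$ in $\mathscr{H}$ by \eqref{hr} and $V_R$ is an $\mathbb{F}_p[\mathscr{H}]$-module, $\sigma^r$ acts trivially on $V_R$, whence $a(W)f'(W)\mid r$; thus $W$ belongs to $\mathscr{H}=\mathscr{H}_{d_{R,m},r}$.

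\emph{Part (3).} By Corollary \ref{mc}(4) the $\mathbb{F}_p[\mathscr{H}]$-module $V_R$ is irreducible, so by Theorem \ref{at}(2) there is a unique root system $W'$ belonging to $\mathscr{H}$ with $V_R\simeq M(W')$, and it remains to show $W'=W$. Since $\tau$ acts on $V_R=\mathbb{F}_{p^{2e}}v_0$ by multiplication by $\alpha$ and $\mathbb{F}_p(\alpha)=\mathbb{F}_{p^{2e}}$, on $\overline{V_R}:=V_R\otimes_{\mathbb{F}_p}\mathbb{F}\cong\prod_{j=0}^{2e-1}\mathbb{F}$ (the product over the $2e$ embeddings of $\mathbb{F}_{p^{2e}}$ into $\mathbb{F}$) the operator $\tau$ acts on the $j$-th factor by the distinct scalar $\alpha^{p^j}$; in particular its $\alpha$-eigenspace is one-dimensional, spanned by the vector $m'$ supported on the $j=0$ factor. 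Comparing with the basis $\{\theta^i\sigma^jm\}$ of $\overline{M(W')}$, whose $\tau$-eigenvalues are the $p$-power images $(\alpha')^{p^iq^{-j}}$ of the first coordinate $\alpha'$ of a generator of $W'$, shows $\alpha'$ is $\mathbb{F}_p$-conjugate to $\alpha$; replacing the generator by a $\theta$-translate we may assume $\alpha'=\alpha$, so $W'=\Phi(\alpha,\beta')$ and the distinguished vector $m$ also spans that one-dimensional eigenspace, i.e. $m=cm'$ with $c\in\mathbb{F}^{\times}$. Now $a(W')=2e/e_1=a(W)$, and writing $\lambda:=v_0^{q-1}\in\mathbb{F}_{p^{2e}}^{\times}$ (note $\lambda^{p^{2e}-1}=c_0^{q-1}=1$) and observing that $\sigma$ on $\prod_j\mathbb{F}$ cyclically permutes the factors by $f$ and scales the $j$-th by $\lambda^{p^j}$, while $\{-lf\bmod 2e\}_{1\le l\le a(W)}$ runs once over $e_1\mathbb{Z}/2e\mathbb{Z}$, a direct computation yields $\sigma^{a(W)}m'=\Nr_{\mathbb{F}_{p^{2e}}/\mathbb{F}_{p^{e_1}}}(\lambda)\,m'$; since $\lambda=v_0^{q-1}$ and $c_0=v_0^{p^{2e}-1}$, the identity of exponents $(p^f-1)\tfrac{p^{2e}-1}{p^{e_1}-1}=(p^{2e}-1)\tfrac{p^f-1}{p^{e_1}-1}$ gives $\Nr_{\mathbb{F}_{p^{2e}}/\mathbb{F}_{p^{e_1}}}(\lambda)=\Nr_{q/p^{e_1}}(c_0)=\beta$. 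Hence $\sigma^{a(W')}m=\beta m$, so $\beta'=\beta$, $W'=\Phi(\alpha,\beta)=W$, and $V_R\simeq M(W)$.

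\emph{Main obstacle.} The delicate point is the bookkeeping in Part (3): describing the $\mathscr{H}$-action, and in particular $\sigma$, on $\overline{V_R}\cong\prod_j\mathbb{F}$ precisely enough to read off the scalar by which $\sigma^{a(W)}$ acts on the $\alpha$-eigenvector. Everything hinges on the two elementary facts $\gcd(f/e_1,2e/e_1)=1$ and $(p^f-1)\tfrac{p^{2e}-1}{p^{e_1}-1}=(p^{2e}-1)\tfrac{p^f-1}{p^{e_1}-1}$, which are exactly what force the norm $\Nr_{q/p^{e_1}}(-a_e^{-(p^e-1)})$ to appear and make the comparison of scalars come out right.
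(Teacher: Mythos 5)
Your proof of parts (1) and (2) follows the same route as the paper. For (1), both you and the author translate the conditions on $a(W),b(W),c(W)$ into congruences modulo $2e$, using $\mathbb{F}_p(\alpha)=\mathbb{F}_{p^{2e}}$ and $\beta\in\mathbb{F}_{p^{e_1}}^{\times}$. For (2), your norm computation $v_0^{q^{a(W)}}=\Nr_{q/p^{e_1}}(c_0)v_0$ and the re-indexing argument $\{2ei\bmod f\}=\{e_1 j\bmod f\}$ is exactly the computation \eqref{frgd} and the paper's observation that the restriction $\Gal(\mathbb{F}_{q^a}/\mathbb{F}_{p^{2e}})\to\Gal(\mathbb{F}_q/\mathbb{F}_{p^{e_1}})$ is an isomorphism. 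You then deduce $a(W)f'(W)\mid r$ by bounding the order of $\sigma|_{V_R}$; the paper argues via $\eta^{q^r}=\beta^{r/a}\eta=\eta$, forcing $\beta^{r/a}=1$. These are equivalent, though strictly your claim that the order is \emph{exactly} $a(W)f'(W)$ needs the extra remark (which you do not spell out) that $\sigma^k|_{V_R}=1$ forces $\mathbb{F}_{p^{2e}}\subset\mathbb{F}_{q^k}$, i.e.\ $a(W)\mid k$; fortunately the weaker implication ``$\sigma^r|_{V_R}=1\Rightarrow a(W)f'(W)\mid r$'' is all you use and holds by the paper's argument.

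For part (3) your approach is genuinely different. The paper observes $\tau\eta=\alpha\eta$ and $\sigma^{a(W)}\eta=\beta\eta$ for all $\eta\in V_R$ and then invokes \cite[Lemma 7.3]{K} to conclude $M(W)$ embeds as a nonzero submodule of $V_R$, hence equals it by irreducibility — a two-line argument delegating the work to Koch. You instead use the bijection of Theorem \ref{at}(2) to produce a unique $W'$ with $V_R\simeq M(W')$, base-change to $\overline{V_R}\cong\prod_{j=0}^{2e-1}\mathbb{F}$, locate the one-dimensional $\tau$-eigenspace for eigenvalue $\alpha$, and compute the action of $\sigma^{a(W)}$ on it explicitly (tracking the cyclic permutation of the factors by $f$ and the scalars $\lambda^{p^j}$) to read off $\beta'=\beta$. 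This is in effect an explicit re-proof of the content of Koch's lemma in this specific situation. Your route is more self-contained and makes the role of the norm visible, at the cost of length and some bookkeeping; the paper's route is shorter because it outsources the matching of scalars to the cited lemma. Both are correct.

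One small notational slip: in the setup you identify ``$\beta$'' and ``$\Nr_{q/p^{e_1}}(c_0)$'' while also using $\beta$ for elements of $V_R$ in your citation of Lemma \ref{sym0}; since the statement itself already overloads $\beta$, it would be safer to keep writing $c_0$ and $\Nr_{q/p^{e_1}}(c_0)$ throughout the verification of (2) and (3), as you mostly do.
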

\begin{proof}
We show (1). 
We simply write  $a,b,c$ for $a(W), b(W),c(W)$, respectively. 
By definition, 
$a$ is the minimal natural integer 
such that $\alpha^{q^{a}}=\alpha$. 
By $\mathbb{F}_p(\alpha)=\mathbb{F}_{p^{2e}}$, $a$ is the minimal positive integer satisfying $f a \equiv 0 \pmod{2e}$. Thus we obtain $a
=2e/e_1$. 

By definition, $b$ is the minimal natural integer 
such that $\alpha^{p^b}=\alpha^{q^x}$ with some integer $x$ and $\beta^{p^b}=\beta$. 
The first condition implies that 
$fx \equiv b \pmod{2e}$. Hence 
$b$ is divisible by $e_1$. 
By $\beta \in \mathbb{F}_{p^{e_1}}^{\times}$,  
we have $\beta^{p^b}=\beta$ if $e_1 \mid b$. 
 Hence $b=e_1$. 
 
 By definition, $c$ is the minimal non-negative 
 integer such that $\alpha^{p^b}=\alpha^{q^c}$. 
This is equivalent to $e_1=b \equiv fc \pmod{2e}$.

 We show (2). The order $e'$ of $\alpha$ equals 
 $d_{R,m}$. 
 Let $f'$ be the order of $\beta$. It suffices to show $a f' \mid r$.  
 By the choice of $r$, we have $\alpha^{q^r}=\alpha$. 
 Hence $2e \mid fr$ by 
 $\mathbb{F}_{p^{2e}}=\mathbb{F}_p(\alpha)$, 
 and $a \mid r$. 
These imply that $\mathbb{F}_{p^{2e}} \subset \mathbb{F}_{q^a} \subset \mathbb{F}_{q^r}$.  

 Let $\eta \in V_R \setminus \{0\}$.  
By $\eta^{p^{2e}}=-a_e^{-(p^e-1)} \eta$, 
$a_e \in \mathbb{F}_q^{\times}$ and $2e \mid fr$, we compute 
\begin{equation}\label{frgd}
\eta^{q^r}=(\eta^{p^{2e}-1})^{\frac{q^r-1}{p^{2e}-1}}\eta
=\Nr_{q^r/p^{2e}}(-a_e^{-(p^e-1)}) \eta=\Nr_{{q^a}/{p^{2e}}}(-a_e^{-(p^e-1)})^{r/a} \eta. 
\end{equation}
The restriction map  
$\mathrm{Gal}(\mathbb{F}_{q^a}/\mathbb{F}_{p^{2e}})\to \mathrm{Gal}(\mathbb{F}_{q}/\mathbb{F}_{p^{e_1}})$ is an isomorphism because of $a=2e/e_1$. 
By $a_e \in \mathbb{F}_q^{\times}$, we have $\Nr_{{q^a}/{p^{2e}}}
(-a_e^{-(p^e-1)})=\beta$. 
Hence 
$\eta^{q^r}=\beta^{r/a} \eta$ by \eqref{frgd}. 
Since $\eta^{q^r}=\eta$ by Lemma \ref{sym0}, we obtain $\beta^{r/a}=1$. 
 Hence 
$f' \mid (r/a)$. 

We show (3).  
Let $\eta \in V_R \setminus \{0\}$. 
Similarly to \eqref{frgd}, we have 
$\sigma^a \eta=\eta^{q^a}=\beta \eta$. By definition and Lemma \ref{sym0}, we have 
$\tau \eta=\alpha \eta$. 
The $\mathbb{F}_p[\mathscr{H}]$-module $V_R$ satisfies the assumption in \cite[Lemma 7.3]{K} by (2). 
Hence $\{0\} \neq M(W) \subset V_R$
 by \cite[Lemma 7.3]{K}. 
By the irreducibility of $V_R$ in Corollary \ref{mc}(4), we obtain $M(W)=V_R$. 
\end{proof}
A necessary and sufficient condition for an irreducible $\mathbb{F}_p[\mathscr{H}]$-module to have a symplectic form is determined in 
\cite[Theorem 8.1]{K}. 
We recall the result. 
\begin{theorem}(\cite[Theorem 8.1]{K})\label{8.1}
Let $W=\Phi(\alpha,\beta)$ be a root system. 
The irreducible $\mathbb{F}_p[\mathscr{H}]$-module $M(W)$ has a symplectic form if and only if 
\begin{itemize}
\item[{\rm (A)}] $a(W) \equiv 0 \pmod{2}$, $\alpha \in \mu_{q^{a(W)/2}+1}$ and $\beta=-1$, 
\item[{\rm (B)}] $b(W),c(W) \equiv 0 \pmod{2}$, $\alpha \in 
\mu_{p^{b(W)/2}+q^{c(W)/2}}$ and $\beta \in \mu_{p^{b(W)/2}+1}$, or 
\item[{\rm (C)}] $b(W) \equiv 0 \pmod{2}$, $c(W) \equiv a(W) \pmod{2}$, $\alpha \in \mu_{p^{b(W)/2}+q^{(a(W)+c(W))/2}}$
and $\beta \in \mu_{p^{b(W)/2}+1}$. 
\end{itemize}
There are two isomorphism classes of symplectic 
structures on $M(W)$ in the case A, $p \neq 2$
and one in all other cases. 
\end{theorem}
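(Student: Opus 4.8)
Since this is \cite[Theorem 8.1]{K}, the plan is to recall the strategy of its proof rather than reproduce every computation. A non-degenerate $\mathscr{H}$-invariant alternating $\mathbb{F}_p$-form on $M(W)$ is the same datum as a non-degenerate $\mathscr{H}$-invariant $\mathbb{F}$-bilinear form $\overline{\omega}$ on $\overline{M(W)}=M(W)\otimes_{\mathbb{F}_p}\mathbb{F}$ which is alternating and compatible with the $\mathbb{F}_p$-descent datum $\theta$, i.e.\ $\overline{\omega}(\theta v,\theta w)=\overline{\omega}(v,w)^p$ for all $v,w$. First I would exploit $\tau$-invariance: on the basis $e_{i,j}:=\theta^i\sigma^j m$ ($0\le i\le b(W)-1$, $0\le j\le a(W)-1$) one computes $\tau e_{i,j}=\alpha^{p^iq^{-j}}e_{i,j}$, so $\overline{\omega}(e_{i,j},e_{k,l})=0$ unless $\alpha^{p^iq^{-j}}\cdot\alpha^{p^kq^{-l}}=1$. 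Hence a non-degenerate invariant form can exist only if the character $\alpha^{-1}$ occurs among the weights $\alpha^{p^iq^{-j}}$ of $\overline{M(W)}$, equivalently only if $(\alpha^{-1},\beta^{-1})\in W=\Phi(\alpha,\beta)$; by Theorem \ref{at}(2) this says precisely that $M(W)$ is self-dual, its contragredient being the module attached to $\Phi(\alpha^{-1},\beta^{-1})$.

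The second step is to unwind the self-duality relation $(\alpha^{-1},\beta^{-1})=\theta^i\sigma^j(\alpha,\beta)=(\alpha^{p^iq^{-j}},\beta^{p^i})$ into closed form. Reducing the exponent $i$ modulo $b(W)$ using $\theta^{b(W)}m=\sigma^{-c(W)}m$ and reducing $j$ modulo $a(W)$ using $\sigma^{a(W)}m=\beta m$, the solutions split according to the parities of $a(W),b(W),c(W)$. In the case $i\equiv0$ one needs $a(W)$ even and obtains, after clearing $q$-powers, $\alpha\in\mu_{q^{a(W)/2}+1}$ and $\beta^2=1$; in the case $i\equiv b(W)/2$ one needs $b(W)$ even and, depending on whether $j$ is taken to be $c(W)/2$ or $(a(W)+c(W))/2$, one gets $\alpha\in\mu_{p^{b(W)/2}+q^{c(W)/2}}$ (requiring $c(W)$ even) or $\alpha\in\mu_{p^{b(W)/2}+q^{(a(W)+c(W))/2}}$ (requiring $c(W)\equiv a(W)$), together with $\beta\in\mu_{p^{b(W)/2}+1}$. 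These are exactly the three families (A), (B), (C); one checks, using the minimality in the definition of $a(W),b(W),c(W)$, that every solution falls into one of them.

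It remains to decide in each family whether the self-duality really yields an \emph{alternating} rather than symmetric form, and to count equivalence classes. Once the pairing of weight spaces is fixed as above, writing $\overline{\omega}$ out on the basis $e_{i,j}$ determines it up to a factor in $\End_{\mathscr{H}}(M(W))^{\times}$; the descent relation $\overline{\omega}(\theta v,\theta w)=\overline{\omega}(v,w)^p$ pins down the relevant sign, and comparing $\overline{\omega}(e_{0,0},e_{i_0,j_0})$ with $\overline{\omega}(e_{i_0,j_0},e_{0,0})$ for the index $(i_0,j_0)$ carrying the weight $\alpha^{-1}$ shows the form is alternating exactly when $\beta=-1$ in case (A) and when $\beta\in\mu_{p^{b(W)/2}+1}$ in cases (B), (C) (here one uses that $p^{b(W)/2}+1$ is even when $p$ is odd). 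Finally, two invariant symplectic forms are $\mathscr{H}$-isometric iff they differ by a similitude in $\Aut_{\mathscr{H}}(M(W))=\End_{\mathscr{H}}(M(W))^{\times}$, so the number of isomorphism classes of symplectic structures is the index of the image of the similitude character in $\mathbb{F}_p^{\times}$, which is $2$ precisely in case (A) with $p\neq2$ and $1$ otherwise. The main obstacle is this last block of bookkeeping: making the reduction of $\theta^i\sigma^j$ genuinely exhaustive and essentially disjoint, and tracking signs through the $\mathbb{F}_p$-descent to read off the symmetry type in each case.
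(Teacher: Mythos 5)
The paper does not prove this statement at all: Theorem \ref{8.1} is quoted verbatim from Koch \cite[Theorem 8.1]{K} as background for the classification of completely anisotropic symplectic modules, so there is no in-paper argument to compare yours against. Judged on its own, your outline is a faithful reconstruction of the standard (Koch's) proof: extending scalars to $\mathbb{F}$, diagonalizing $\tau$ on the basis $\theta^i\sigma^j m$ with weights $\alpha^{p^iq^{-j}}$, observing that a non-degenerate invariant form forces self-duality, i.e.\ $(\alpha^{-1},\beta^{-1})\in\Phi(\alpha,\beta)$, and then solving $\theta^i\sigma^j(\alpha,\beta)=(\alpha^{-1},\beta^{-1})$ with $i\equiv 0$ or $i\equiv b(W)/2$ and $2j\equiv c(W)\pmod{a(W)}$ to land exactly on the three families (A), (B), (C). Two caveats, acceptable for a quoted result but worth flagging: (i) the case split has degenerate sub-cases you pass over (e.g.\ $i\equiv 0$, $j\equiv 0$ forces $\alpha^2=1$, hence $a(W)=1$, and must be absorbed correctly into one of the families rather than discarded); and (ii) the two genuinely delicate points --- deciding alternating versus symmetric through the descent relation $\overline{\omega}(\theta v,\theta w)=\overline{\omega}(v,w)^p$, and counting isometry classes via the norm/similitude action of $\End_{\mathscr{H}}(M(W))^{\times}$ --- are named but not executed, and these are precisely where the asymmetry between case A ($p\neq 2$, two classes) and the other cases is decided. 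As a plan for recalling the cited proof, it is sound.
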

\begin{lemma}\label{ggf}
Let $W$ be as in Proposition \ref{wpri}. 
Let $v_2(\cdot)$ denote the $2$-adic valuation on $\mathbb{Q}$. 
\begin{itemize}
\item[{\rm (1)}] 
Assume $v_2(e) \geq v_2(f)$.  
Then the module $M(W)$ is of type A in Theorem \ref{8.1}. 
\item[{\rm (2)}]
Assume $v_2(e)<v_2(f)$. Then we have $a(W) \equiv 1 \pmod{2}$, $b(W) \equiv 0 \pmod{2}$ and 
$(b(W)/2) \mid e$. 
Hence we have $\beta \in \mu_{p^{b(W)/2}+1}$. 
\begin{itemize}
\item[{\rm (i)}] If $c(W) \equiv 0 \pmod{2}$, 
the module $M(W)$ is of type B in Theorem \ref{8.1}. 
\item[{\rm (ii)}]  If $c(W) \equiv 1 \pmod{2}$, 
the module $M(W)$ is of type C in Theorem \ref{8.1}. 
\end{itemize}
\end{itemize}
\end{lemma}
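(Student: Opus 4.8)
The plan is to feed the explicit data of Proposition~\ref{wpri}---namely $a(W)=2e/e_1$, $b(W)=e_1$, the value of $\beta$, and the identification $V_R\simeq M(W)$---into Theorem~\ref{8.1}. Since $(V_R,\omega_R)$ is a symplectic $\mathbb{F}_p[\mathscr{H}]$-module and $V_R\simeq M(W)$, Theorem~\ref{8.1} already tells us that at least one of the conditions (A), (B), (C) holds; the content of the lemma is to pin down which one, from the parities of $a(W),b(W),c(W)$ and the positions of $\alpha$ and $\beta$.

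First I would compute the relevant $2$-adic valuations. From $e_1=\gcd(f,2e)$ we have $v_2(e_1)=\min(v_2(f),v_2(e)+1)$, hence $v_2(a(W))=v_2(e)+1-v_2(e_1)$ and $v_2(b(W))=v_2(e_1)$. In case~(1), $v_2(e)\ge v_2(f)$ gives $v_2(e_1)=v_2(f)$, so $a(W)$ is even; then $f/e_1\in\mathbb Z$ (as $e_1\mid f$) with $v_2(f/e_1)=0$ is odd, and $e/e_1=a(W)/2\in\mathbb Z$ (so $e_1\mid e$). In case~(2), $v_2(e)<v_2(f)$ gives $v_2(e_1)=v_2(e)+1$, so $a(W)$ is odd and $b(W)$ is even; moreover $b(W)/2=e_1/2$ divides $e$ since $e/(e_1/2)=a(W)\in\mathbb Z$.

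Next the arithmetic of $\beta$ and $\alpha$. As $R=a_ex^{p^e}$ is a monomial, $d_R=p^e+1$ and therefore $d_{R,m}\mid p^e+1$, while $\beta=\Nr_{q/p^{e_1}}(-a_e^{-(p^e-1)})\in\mathbb F_{p^{e_1}}^{\times}$. In case~(1): because $e_1\mid e$ we have $p^{e_1}-1\mid p^e-1$, so $\Nr_{q/p^{e_1}}(a_e)^{-(p^e-1)}=1$ and hence $\beta=\Nr_{q/p^{e_1}}(-1)=(-1)^{(p^f-1)/(p^{e_1}-1)}$; the exponent is a sum of $f/e_1$ odd terms and $f/e_1$ is odd, so $\beta=-1$ (for $p=2$ this reads $\beta=1=-1$). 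Also $f/e_1$ odd gives $p^e+1\mid p^{(f/e_1)e}+1=q^{a(W)/2}+1$, so $\alpha\in\mu_{q^{a(W)/2}+1}$; thus all three conditions of (A) hold and $M(W)$ is of type~A. In case~(2): by transitivity of the norm $\beta^{p^{e_1/2}+1}=\Nr_{q/p^{e_1/2}}(-a_e^{-(p^e-1)})$, and since $e_1/2\mid e$ the $a_e$-term contributes $1$ while $\Nr_{q/p^{e_1/2}}(-1)=(-1)^{2f/e_1}=1$; hence $\beta\in\mu_{p^{e_1/2}+1}=\mu_{p^{b(W)/2}+1}$.

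To conclude case~(2): since $a(W)$ is odd, condition (A) fails, so by Theorem~\ref{8.1} one of (B), (C) must hold. If $c(W)\equiv 0\pmod 2$ then (C)---which forces $c(W)\equiv a(W)\equiv 1\pmod 2$---is excluded, so $M(W)$ is of type~B; if $c(W)\equiv 1\pmod 2$ then (B)---which forces $c(W)$ even---is excluded, so $M(W)$ is of type~C. I expect the main obstacle to be the careful $2$-adic bookkeeping for $\gcd(f,2e)$ and $2e/\gcd(f,2e)$, and keeping the nested-norm computation that produces $\beta\in\mu_{p^{b(W)/2}+1}$ straight; once those are settled, the type is read off directly from Theorem~\ref{8.1}.
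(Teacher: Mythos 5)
Your proof is correct. For part (1) and for the parity bookkeeping and the derivation of $\beta\in\mu_{p^{b(W)/2}+1}$ in part (2), your argument is essentially the same as the paper's: in (1) you compute $v_2(e_1)=v_2(f)$ so that $f/e_1$ is odd, obtain $\beta=\Nr_{q/p^{e_1}}(-1)\cdot 1=-1$ since $e_1\mid e$, and deduce $\alpha\in\mu_{q^{a(W)/2}+1}$ from $p^e+1\mid p^{(f/e_1)e}+1$; in (2) you establish $v_2(e_1)=v_2(e)+1$, $(e_1/2)\mid e$, and $\beta^{p^{e_1/2}+1}=\Nr_{q/p^{e_1/2}}(-a_e^{-(p^e-1)})=1$ by norm transitivity, all matching the paper.

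Where you genuinely diverge is in concluding (2)(i) and (2)(ii). The paper verifies the condition $\alpha\in\mu_{p^{b(W)/2}+q^{c(W)/2}}$ (resp. $\alpha\in\mu_{p^{b(W)/2}+q^{(a(W)+c(W))/2}}$) directly: it writes $(c(W)/2)f=(e_1/2)+le$, observes that $l$ is odd because $e_1=\gcd(f,2e)$, and then uses $(p^e+1)\mid (p^{le}+1)$. You instead invoke the full strength of Theorem~\ref{8.1} as an \emph{iff}: since $V_R\simeq M(W)$ carries the symplectic form $\omega_R$, at least one of (A), (B), (C) must hold; (A) is excluded because $a(W)$ is odd, and the parity requirements on $c(W)$ in (B) vs.\ (C) (namely $c(W)$ even vs.\ $c(W)\equiv a(W)\equiv 1\pmod 2$) are mutually exclusive, so the parity of $c(W)$ alone picks out the type. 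This elimination argument is shorter and avoids the explicit congruence manipulation, at the cost of relying on the prior identification $V_R\simeq M(W)$ (Proposition~\ref{wpri}(3)) to guarantee the existence of a symplectic form on $M(W)$; the paper's route is more computational but self-contained in that it verifies every item of the relevant condition directly. Both are valid.
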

\begin{proof}
We show (1). 
Recall that 
$e_1=\mathrm{gcd}(f,2e)$ and $\beta=\Nr_{q/p_1}(-a_e^{-(p^e-1)})$. 
We have $a(W)=2e/e_1 \equiv 0 \pmod{2}$. 
We have $e_1 \mid e$ and $f/e_1 \equiv 1 
\pmod{2}$. By $(p^{e_1}-1) \mid (p^e-1)$, 
\[
\beta=(-1)^{\frac{f}{e_1}}\bigl(a_e^{-\frac{p^e-1}{p^{e_1}-1}}\bigr)^{q-1}=-1, 
\] 
where we use $a_e \in \mathbb{F}_q^{\times}$
for the last equality. 
By $f a(W)/2=fe/e_1$ and $q=p^f$, we have 
$\alpha^{q^{a(W)/2}+1}=\alpha^{p^{fe/e_1}+1}$. Since $fe/e_1$ is divisible by
$e$ and $f/e_1$ is odd,  
$d_{R,m} \mid (p^e+1) \mid (p^{fe/e_1}+1)$. Hence  
we obtain $\alpha \in \mu_{q^{a(W)/2}+1}$. 
Thus the claim follows. 

We show (2). 
Recall $b(W)=e_1$.  
The former claims are 
clear. 
By $(e_1/2) \mid e$, we have 
$(p^{e_1/2}-1) \mid (p^e-1)$. 
By definition of $\beta$ and $a_e \in \mathbb{F}_q^{\times}$, we obtain  
\[
\beta^{p^{\frac{e_1}{2}}+1}=
\bigl(a_e^{-\frac{p^e-1}{p^{e_1/2}-1}}\bigr)^{q-1}=1. 
\]
Hence $\beta \in \mu_{p^{b(W)/2}+1}$. 
Assume that $c(W)$ is even. We write $(c(W)/2) f=(e_1/2)+l e$ with 
$l \in \mathbb{Z}$ by Proposition \ref{wpri}(1). 
Then $l$ is odd by $e_1=\mathrm{gcd}(f,2e)$. 
Hence $(p^e+1) \mid (p^{le}+1)$. This implies 
$\alpha \in \mu_{p^{b(W)/2}+q^{c(W)/2}}$. Hence 
we obtain (2)(i). 
The remaining claim is shown similarly. 
\end{proof}
\subsubsection{K\"unneth formula and primary module}
\paragraph{Classification results in \cite{K}}
We recall classification results on
completely anisotropic symplectic 
modules given in \cite{K} restricted to the case 
$p \neq 2$.  
\begin{theorem}(\cite[Theorem 9.1]{K})
Let 
$(V,\omega)
=\bigoplus_{i=1}^n(V_i,\omega_i)$ be a direct 
sum of irreducible symplectic 
$\mathbb{F}_p[\mathscr{H}]$-modules. 
Assume that $p \neq 2$. 
Then $(V,\omega)$ is completely anisotropic if and 
only if, for each isomorphism class, the modules of 
type B or C occur at most once and of type A
at most twice among $V_1,\ldots,V_n$.  
\end{theorem}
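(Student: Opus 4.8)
The plan is to reduce the assertion, through the theory of Hermitian forms over the endomorphism fields of the $V_i$, to classical facts about forms over finite fields.

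\emph{Step 1: reduction to a single isotypic class.} Each $(V_i,\omega_i)$ being symplectic, $V_i$ is self-dual. An $\mathscr{H}$-invariant bilinear pairing between irreducible modules is either zero or a perfect duality, so $\omega(V_i,V_j)=0$ whenever $V_i\not\cong V_j$. Grouping the $V_i$ by isomorphism class $[U]$ therefore gives an orthogonal decomposition $(V,\omega)=\perp_{[U]}(V^{[U]},\omega^{[U]})$ with $V^{[U]}\cong U^{\oplus n_{[U]}}$ and each $\omega^{[U]}$ non-degenerate. Since $V$ is a semisimple $\mathbb{F}_p[\mathscr{H}]$-module, any submodule — in particular any totally isotropic one — splits compatibly with this decomposition; hence $(V,\omega)$ is completely anisotropic if and only if every $(V^{[U]},\omega^{[U]})$ is. By Theorem~\ref{8.1} the classes $[U]$ that occur are exactly those of type A, B or C.

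\emph{Step 2: transfer to a form over $E=\End_{\mathbb{F}_p[\mathscr{H}]}(U)$.} Fix such a $[U]$; by Wedderburn $E$ is a finite field. Fix a symplectic form $\omega_U$ on $U$ and let $\iota$ be its adjoint involution on $E$. The standard Morita-type transfer for sesquilinear forms identifies $\mathscr{H}$-invariant symplectic forms on $U^{\oplus n}$ with non-degenerate $\iota$-Hermitian forms $h$ on $E^{n}$ (the alternating signs of $\omega$ and of $\omega_U$ cancel), under a dictionary sending submodules of $U^{\oplus n}$ to $E$-subspaces of $E^{n}$ and totally isotropic submodules to totally isotropic subspaces. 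Thus $(V^{[U]},\omega^{[U]})$ is completely anisotropic if and only if the $\iota$-Hermitian space $(E^{n_{[U]}},h)$ over the finite field $E$ is anisotropic.

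\emph{Step 3: identify the form and count.} Reading off $\iota$ from the parameters in Theorem~\ref{8.1}: in cases B and C the constraints on $(\alpha,\beta)$ force $\iota$ to be the non-trivial automorphism of $E$ over its fixed field, so $h$ is a genuine Hermitian form over a quadratic extension of finite fields; such forms are classified by rank and are anisotropic precisely in rank $\le 1$, so a type-B or type-C class occurs at most once (matching the ``one symplectic structure'' in Theorem~\ref{8.1}). In case A the condition $\beta=-1$ forces $\iota=\mathrm{id}$, so $h$ is a symmetric bilinear form over the finite field $E$ of odd order; these are classified by rank and discriminant (two square classes — matching the ``two symplectic structures''), and are anisotropic precisely in rank $\le 2$ (rank $\ge 3$ is isotropic by Chevalley–Warning; the anisotropic binary form is the norm form of the quadratic extension of $E$). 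Hence a type-A class occurs at most twice. If instead some type-A class occurred three times or a type-B/C class twice, one would find a nonzero totally isotropic $E$-subspace of the relevant $(E^{n},h)$, hence a nonzero totally isotropic submodule, contradicting complete anisotropy; conversely the stated multiplicity bounds make each $(E^{n},h)$ anisotropic.

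\emph{Main obstacle.} The conceptual heart is Step~2 — making the transfer dictionary precise, in particular its compatibility with the symplectic condition and with totally isotropic submodules, and uniform even when $\mathbb{F}_p[\mathscr{H}]$ itself fails to be semisimple (possible when $p\mid r$, though $V$ stays semisimple). The remaining technical point is genuinely arithmetic and confined to a type-A class of multiplicity two: the binary form $\langle e_1,e_2\rangle$ over $E$ is anisotropic exactly when $-e_1e_2\notin(E^{\times})^{2}$, so one must know the two summands carry the two non-isomorphic symplectic structures of Theorem~\ref{8.1}, which is precisely what lets the bound ``twice'' be attained. Pinning down $\iota$ explicitly in each of cases A, B, C from the model $\overline{M(W)}$ is the computational ingredient that feeds Step~3.
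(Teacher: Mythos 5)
This theorem is quoted from Koch \cite[Theorem 9.1]{K}; the paper does not prove it, so there is no internal argument to compare against. Your outline — Schur-orthogonal isotypic decomposition, Morita transfer of the $\mathscr{H}$-invariant alternating form to an $\iota$-Hermitian or symmetric form over the finite endomorphism field $E=\End_{\mathbb{F}_p[\mathscr{H}]}(U)$, then the classical isotropy thresholds (rank $\le 1$ for nondegenerate Hermitian, rank $\le 2$ for symmetric over a finite field of odd order) — is the standard and surely intended route, and Steps 1 and 2 are sound in spirit (you correctly note that although $\mathbb{F}_p[\mathscr{H}]$ need not be semisimple, $V$ is, and that $\iota=\mathrm{id}$ in case A and $\iota\neq\mathrm{id}$ in cases B, C, matching the two-versus-one count of symplectic structures in Theorem \ref{8.1}).

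The genuine gap is in the ``if'' half of Step 3 for a type-A class of multiplicity exactly two, and it is deeper than your ``Main obstacle'' paragraph suggests. Transferring to $E^2$ you get a symmetric form $\langle e_1,e_2\rangle$, anisotropic iff $-e_1e_2\notin(E^\times)^2$. Whether this holds depends on two independent things: whether $e_1,e_2$ lie in the same or different square classes, \emph{and} whether $-1$ is a square in $E$. If $-1\in(E^\times)^2$ the anisotropic configuration is $e_1,e_2$ in \emph{different} classes; if $-1\notin(E^\times)^2$ it is $e_1,e_2$ in the \emph{same} class. So ``the underlying module occurs at most twice'' does not by itself force anisotropy, and even your tentative fix (``the two summands carry the two non-isomorphic symplectic structures'') is only correct in one of the two cases. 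To close the gap you would either have to compute $|E|$ for a type-A root system precisely enough to decide whether $-1$ is a square (which is not uniform across root systems), or, more plausibly, recognize that the paper's paraphrase of Koch is being read against the convention of Theorem 8.2, where the multiplicity-two type-A piece is normalized to be the distinguished anisotropic object $(M(W),2)$. As written, your Step 3 establishes the ``only if'' direction and the ``if'' direction only under an unstated normalization of the $\omega_i$; making that normalization explicit (and justifying it) is exactly the missing arithmetic ingredient.
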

Assume that $p \neq 2$. 
Let $(M(W),0)$ denote the unique 
symplectic module
on $M(W)$ which is of type B or C by Theorem \ref{8.1}.  
Let $(M(W),0)$, $(M(W),1)$ denote the 
two symplectic modules on $M(W)$ 
 in the case where $p \neq 2$ and 
$M(W)$ is of type A.
We denote by $(M(W),2)$ the completely anisotropic symplectic module on $M(W) \oplus M(W)$, where $M(W)$ is of type A. 
\begin{theorem}(\cite[Theorem 8.2]{K})
Each completely anisotropic symplectic 
$\mathbb{F}_p[\mathscr{H}]$-module is 
isomorphic to one and only one symplectic 
module of the form
\[
\bigoplus_{i=1}^n(M(W_i),\nu_i), 
\]
where $W_1,\ldots, W_n$ are mutually 
different root systems belonging to $\mathscr{H}$. 
\end{theorem}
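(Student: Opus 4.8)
The plan is to route both conditions through the symplectic $\mathbb{F}_p[\mathscr{H}]$-module $(V_R,\omega_R)$ of \S\ref{3.3}. By Corollary \ref{mc}(1), condition (2) is equivalent to $(V_R,\omega_R)$ failing to be completely anisotropic, i.e.\ to the existence of a non-zero totally isotropic $\mathbb{F}_p[\mathscr{H}]$-submodule $V'\subseteq V_R$; by Proposition \ref{symp} this is in turn the same as a non-trivial decomposition $E_R=f_1\circ f_2$ with $f_i\in\mathscr{A}_q$, $f_2(\alpha x)=\alpha f_2(x)$ for $\alpha\in\mu_{d_{R,m}}$, and $V_{f_2}\subseteq V_{f_2}^{\perp}$. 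So the whole theorem reduces to a dictionary matching the additive polynomial $r$ of (1) with such a submodule via $V'=\{\beta\in\mathbb{F}\mid r(\beta)=0\}$ (so $r$ is, up to a scalar, the polynomial $f_2$), which I will set up in both directions.

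For (1) $\Rightarrow$ (2), I would start from the morphism $(a,x)\mapsto(a-\Delta(x),r(x))$ and compare the defining equations of $C_R$ and $C_{R_1}$, obtaining the polynomial identity $xR(x)=r(x)R_1(r(x))+\Delta(x)^p-\Delta(x)$ in $\mathbb{F}_q[x]$. Fix $\beta\in V':=\ker r$ and set $g_\beta(x):=\Delta(x+\beta)-\Delta(x)$. Subtracting the identity from its translate by $x\mapsto x+\beta$, and using $r(x+\beta)=r(x)$ together with additivity of $R$, gives $g_\beta(x)^p-g_\beta(x)=xR(\beta)+\beta R(x)+\beta R(\beta)$, which is precisely the condition for $(a,x)\mapsto(a+g_\beta(x),x+\beta)$ to be an automorphism of $C_R$. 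Comparing with Lemma \ref{a} shows that $g_\beta(x)-f_R(x,\beta)$ is (up to a constant) an additive polynomial $h$ with $h(x)^p-h(x)=x^{p^e}E_R(\beta)$; comparing coefficients from the bottom forces $h=0$ and $E_R(\beta)=0$, hence $V'\subseteq V_R$ and $g_\beta(x)=f_R(x,\beta)+\gamma_\beta$ with $\gamma_\beta^p-\gamma_\beta=\beta R(\beta)$. Thus $\beta\mapsto(1,\beta,\gamma_\beta)$ embeds $V'$ into $H_R$, and these elements are exactly the deck group of the (now Galois) cover $C_R\to C_{R_1}$, so this subgroup is abelian; by Lemma \ref{h1} this means $\omega_R|_{V'}=0$. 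Finally $r\in\mathbb{F}_q[x]$ makes $V'$ stable under $x\mapsto x^q$ and $r(\alpha x)=\alpha r(x)$ makes it stable under $\mu_{d_{R,m}}$-multiplication, so by Lemma \ref{sym0} $V'$ is a non-zero totally isotropic $\mathbb{F}_p[\mathscr{H}]$-submodule, giving (2).

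For (2) $\Rightarrow$ (1), I would take a non-zero totally isotropic $\mathbb{F}_p[\mathscr{H}]$-submodule $V'\subseteq V_R$, let $r(x)$ be the monic additive polynomial with kernel $V'$, and use Lemma \ref{ele} to get $r\in\mathbb{F}_q[x]$ and $r(\alpha x)=\alpha r(x)$ for $\alpha\in\mu_{d_{R,m}}$. Since $p\neq 2$, the extra-special group $H_R$ has exponent $p$, so the preimage $W'$ of $V'$ in $H_R$ is elementary abelian (abelian because $V'$ is totally isotropic, by Lemma \ref{h1}) and admits a complement $W''$ to $Z(H_R)$ mapping isomorphically onto $V'$; a normal-basis/cohomology-vanishing argument produces the auxiliary polynomial $\Delta$ solving $\Delta(x+\beta)-\Delta(x)=f_R(x,\beta)+\gamma_\beta$, normalized by $\Delta(0)=0$, and a Frobenius-descent argument takes $W''$ and $\Delta$ over $\mathbb{F}_q$. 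The group $W''$ acts freely on the affine curve $C_R$ via \eqref{fa0}, and I set $C_{R_1}:=C_R/W''$ with quotient map $(a,x)\mapsto(a-\Delta(x),r(x))$, which is finite étale and non-trivial because $\deg r=\#V'>1$. One then checks that $xR(x)-\Delta(x)^p+\Delta(x)$ is $W''$-invariant, hence a polynomial $P(r(x))$, and that $P(x_1)=x_1R_1(x_1)$ with $R_1$ an additive polynomial over $\mathbb{F}_q$, using the structure of the curves $C_R$ from \cite{GV} and \cite{BHMSSV}. Finally the $\mu_{d_{R,m}}$-action on $C_R$ normalizes $W''$ (as $V'$ is $\tau$-stable), descends to $C_{R_1}$ as $x_1\mapsto\alpha^{-1}x_1$, and therefore forces $\alpha R_1(\alpha x_1)=R_1(x_1)$ for $\alpha\in\mu_{d_{R,m}}$, i.e.\ $d_{R,m}\mid d_{R_1}$; this yields the morphism of (1).

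I expect the main obstacle to be the identification of the quotient $C_R/W''$ with a curve $C_{R_1}$ of exactly the same shape: proving that the $W''$-invariant function $xR(x)-\Delta(x)^p+\Delta(x)$ becomes $x_1R_1(x_1)$ with $R_1$ again an additive polynomial over $\mathbb{F}_q$. This is where the additive-polynomial structure and the geometric results of \cite{GV} and \cite{BHMSSV} are genuinely needed, and where the bookkeeping of the descent data over $\mathbb{F}_q$ compatibly with the $\mu_{d_{R,m}}$-action is most delicate. A secondary point that needs care is making rigorous the coefficient comparison in the (1) $\Rightarrow$ (2) direction that simultaneously pins down $E_R(\beta)=0$ and $g_\beta(x)=f_R(x,\beta)+\mathrm{const}$.
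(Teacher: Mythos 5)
Your proposal does not address the statement at hand. The statement to be proved is Koch's classification theorem (\cite[Theorem 8.2]{K}): every completely anisotropic symplectic $\mathbb{F}_p[\mathscr{H}]$-module decomposes uniquely as $\bigoplus_{i=1}^n(M(W_i),\nu_i)$ with mutually different root systems $W_i$ belonging to $\mathscr{H}$. A proof of this would have to work entirely inside the representation theory of the supersolvable group $\mathscr{H}=\mathscr{H}_{d,r}$: decompose the module into isotypic components indexed by root systems (via Theorem \ref{at}), show that distinct isotypic components are orthogonal under the symplectic form, and then invoke the multiplicity bounds of \cite[Theorem 9.1]{K} (type B or C at most once, type A at most twice) together with the definitions of the primary modules $(M(W),0)$, $(M(W),1)$, $(M(W),2)$ to pin down each summand up to isomorphism and establish uniqueness. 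In the paper this result is simply quoted from \cite{K} without proof, as it is a purely module-theoretic classification independent of the curves $C_R$.

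What you have written instead is a proof sketch of Theorem \ref{main4} (equivalently Theorem \ref{4mm}): the equivalence between the existence of a non-trivial finite \'etale morphism $C_R\to C_{R_1}$ of the prescribed shape and the imprimitivity of $\tau_{\psi,R,m}$, routed through the failure of complete anisotropy of $(V_R,\omega_R)$. That is a different theorem with different content — it concerns one specific symplectic module attached to $R$ and its geometric incarnation, not the classification of all completely anisotropic modules over $\mathbb{F}_p[\mathscr{H}]$. Nothing in your argument produces the direct sum decomposition into primary modules, the indexing by mutually different root systems, or the uniqueness assertion, so as a proof of the stated theorem it is vacuous. You should either redirect your argument to the actual classification (following \S\S 7--9 of \cite{K}) or recognize that this statement is an imported result that the paper cites rather than proves.
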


Let $k$ be a positive integer. 
Let $R:=\{R_i\}_{1 \leq i \leq k}$ 
with $R_i \in \mathscr{A}_q$. 
We consider the $k$-dimensional affine smooth variety $X_R$ defined by 
\[
a^p-a=\sum_{i=1}^k x_iR_i(x_i)
\]
in $\mathbb{A}_{\mathbb{F}_q}^{k+1}$. 
The product group $Q_R:=Q_{R_1} \times \cdots \times  Q_{R_k}$ 
acts on $X_R$ naturally similarly as \eqref{fa0}. 
Let $\mathbb{Z}$ act on $Q_R$ naturally.  
Let $\psi \in \mathbb{F}_p^{\vee} \setminus \{1\}$. 
We regard $H_{\rm c}^k(X_{R,\mathbb{F}},\overline{\mathbb{Q}}_{\ell})[\psi]$ as a $Q_R \rtimes \mathbb{Z}$-representation. 
Let the notation be as in \eqref{abcr}. 
Let $m=\{m_i\}_{1 \leq i \leq k}$, where $m_i$ is a positive integer. 
We have the homomorphism  
\begin{equation}\label{kkt}
\Theta_{R,m} \colon W_F \to Q_R \rtimes \mathbb{Z};\ \sigma \mapsto
((a_{R_i,\sigma}^{m_i},b_{R_i,\sigma},c_{R_i,\sigma})_{1 \leq i \leq k}, n_{\sigma}).  
\end{equation}
\begin{definition}
We define a smooth $W_F$-representation
 $\tau_{\psi,R,m}$ to be the 
inflation of the $Q_R \rtimes \mathbb{Z}$-representation $H_{\rm c}^k(X_{R,\mathbb{F}},\overline{\mathbb{Q}}_{\ell})[\psi]$ by 
$\Theta_{R,m}$. 
\end{definition}
\begin{lemma}\label{kk}
We have an isomorphism 
$\tau_{\psi,R,m} \simeq \bigotimes_{i=1}^k 
\tau_{\psi,R_i,m_i}$ as $W_F$-representations. 
\end{lemma}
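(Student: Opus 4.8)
The plan is to reduce the identity to a statement about the compactly supported cohomology of $X_R$ and then prove that statement with the Künneth formula. First, $\Theta_{R,m}$ of \eqref{kkt} is the composite of the map $W_F\to\prod_{i=1}^k(Q_{R_i}\rtimes\mathbb{Z})$, $\sigma\mapsto(\Theta_{R_i,m_i}(\sigma))_{1\le i\le k}$, with the natural inclusion $Q_R\rtimes\mathbb{Z}=(\prod_iQ_{R_i})\rtimes\mathbb{Z}\hookrightarrow\prod_i(Q_{R_i}\rtimes\mathbb{Z})$ of the subgroup where all the $\mathbb{Z}$-coordinates agree; indeed $\Theta_{R_i,m_i}(\sigma)=((a_{R_i,\sigma}^{m_i},b_{R_i,\sigma},c_{R_i,\sigma}),n_\sigma)$ and all of these carry the same $n_\sigma$. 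Hence $\bigotimes_{i=1}^k\tau_{\psi,R_i,m_i}$ is the inflation along $\Theta_{R,m}$ of the $Q_R\rtimes\mathbb{Z}$-representation obtained by restricting the external tensor product $\boxtimes_{i=1}^kH^{1}_{\mathrm c}(C_{R_i,\mathbb{F}},\overline{\mathbb{Q}}_\ell)[\psi]$ from $\prod_i(Q_{R_i}\rtimes\mathbb{Z})$ to $Q_R\rtimes\mathbb{Z}$. So it suffices to produce a $Q_R\rtimes\mathbb{Z}$-equivariant isomorphism
\[
H^{k}_{\mathrm c}(X_{R,\mathbb{F}},\overline{\mathbb{Q}}_\ell)[\psi]\;\simeq\;\bigotimes_{i=1}^kH^{1}_{\mathrm c}(C_{R_i,\mathbb{F}},\overline{\mathbb{Q}}_\ell)[\psi],
\]
where the right-hand side carries the factorwise $Q_R=\prod_iQ_{R_i}$-action and the diagonal $\mathrm{Fr}_q^\ast$-action.

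To this end I would introduce the morphism $\pi\colon Y:=\prod_{i=1}^kC_{R_i}\to X_R$ over $\mathbb{F}_q$ given by $(a_i,x_i)_{1\le i\le k}\mapsto(\sum_ia_i,(x_i)_i)$, which is well defined since $(\sum_ia_i)^p-\sum_ia_i=\sum_i(a_i^p-a_i)=\sum_ix_iR_i(x_i)$. Identify $Z(Q_R)=\prod_iZ(Q_{R_i})$ with $\mathbb{F}_p^k$ and write $\Sigma\colon\mathbb{F}_p^k\to\mathbb{F}_p$ for the sum. Over $\mathbb{F}$, the map $\pi$ is a finite étale Galois cover with group $N:=\ker\Sigma\cong\mathbb{F}_p^{k-1}$: both $Y$ and $X_R$ are étale over $\mathbb{A}^k_{(x_i)}$ via their Artin--Schreier presentations, a morphism of étale $\mathbb{A}^k$-schemes is étale, and the deck transformations are exactly the translations of the $a_i$ by a tuple summing to $0$. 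Moreover $\pi$ is $Q_R\rtimes\mathbb{Z}$-equivariant: it commutes with $\mathrm{Fr}_q$ because it is defined over $\mathbb{F}_q$, and under the action \eqref{fa0} of an element $(\alpha_i,\beta_i,\gamma_i)_{1\le i\le k}$ the contributions $f_{R_i}(x_i,\beta_i)+\gamma_i$ to the $i$-th Artin--Schreier coordinate on $Y$ add up to the single contribution $\sum_i(f_{R_i}(x_i,\beta_i)+\gamma_i)$ to the coordinate $a$ on $X_R$, while the $x_i$ are transformed identically on both sides.

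Finally, the Künneth formula gives a $\prod_i(Q_{R_i}\rtimes\mathbb{Z})$-equivariant isomorphism $H^{*}_{\mathrm c}(Y_{\mathbb{F}},\overline{\mathbb{Q}}_\ell)\simeq\bigotimes_{i=1}^kH^{*}_{\mathrm c}(C_{R_i,\mathbb{F}},\overline{\mathbb{Q}}_\ell)$. Each $C_{R_i,\mathbb{F}}$ is a geometrically connected smooth affine curve, so $H^{0}_{\mathrm c}=0$ and $Z(Q_{R_i})$ acts trivially on $H^{2}_{\mathrm c}$; therefore the $\psi$-isotypic part of $H^{*}_{\mathrm c}(C_{R_i,\mathbb{F}},\overline{\mathbb{Q}}_\ell)$ is $H^{1}_{\mathrm c}(C_{R_i,\mathbb{F}},\overline{\mathbb{Q}}_\ell)[\psi]$ in degree $1$, and the $\psi^{\boxtimes k}$-isotypic part of $H^{*}_{\mathrm c}(Y_{\mathbb{F}},\overline{\mathbb{Q}}_\ell)$ under $\mathbb{F}_p^k=\prod_iZ(Q_{R_i})$ is $\bigotimes_iH^{1}_{\mathrm c}(C_{R_i,\mathbb{F}},\overline{\mathbb{Q}}_\ell)[\psi]$, concentrated in degree $k$. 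Now $\psi^{\boxtimes k}=\psi\circ\Sigma$, so $N=\ker\Sigma$ acts trivially on this space. Since $\pi$ is an $N$-torsor and $|N|=p^{k-1}$ is invertible in $\overline{\mathbb{Q}}_\ell$, one has $H^{*}_{\mathrm c}(X_{R,\mathbb{F}},\overline{\mathbb{Q}}_\ell)=H^{*}_{\mathrm c}(Y_{\mathbb{F}},\overline{\mathbb{Q}}_\ell)^N$, and forming $N$-invariants commutes with passing to $\mathbb{F}_p^k$-isotypic parts. As $\mathbb{F}_p^k=Z(Q_R)$ acts on $X_R$ through $\Sigma$ (each $(1,0,\gamma_i)\in Z(Q_{R_i})$ acts by $a\mapsto a+\gamma_i$), the $\psi$-isotypic part $H^{k}_{\mathrm c}(X_{R,\mathbb{F}},\overline{\mathbb{Q}}_\ell)[\psi]$ equals $(H^{k}_{\mathrm c}(Y_{\mathbb{F}},\overline{\mathbb{Q}}_\ell)[\psi^{\boxtimes k}])^N=\bigotimes_{i=1}^kH^{1}_{\mathrm c}(C_{R_i,\mathbb{F}},\overline{\mathbb{Q}}_\ell)[\psi]$, all identifications being $Q_R\rtimes\mathbb{Z}$-equivariant. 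This yields the displayed isomorphism, hence the lemma. I expect the chief obstacle to be the bookkeeping: checking that $\pi$ is equivariant for the whole group $Q_R$ rather than just its center (matching the $f_{R_i}$-cocycle terms), and identifying the Artin--Schreier character $\psi$ on $X_R$ with $\psi^{\boxtimes k}=\psi\circ\Sigma$ on $Z(Q_R)$ so that the $N$-invariants isolate exactly the tensor product.
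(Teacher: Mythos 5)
Your proposal is correct and follows essentially the same route as the paper: reduce to a $Q_R\rtimes\mathbb{Z}$-equivariant identification $H^{k}_{\mathrm c}(X_{R,\mathbb{F}},\overline{\mathbb{Q}}_\ell)[\psi]\simeq\bigotimes_i H^{1}_{\mathrm c}(C_{R_i,\mathbb{F}},\overline{\mathbb{Q}}_\ell)[\psi]$ via the K\"unneth formula (your $\delta'$-style inclusion into $\prod_i(Q_{R_i}\rtimes\mathbb{Z})$ is exactly the paper's $\delta'$, and your observation about $\Theta_{R,m}$ factoring through it is the paper's closing commutative diagram). The only difference is that the paper simply cites ``the K\"unneth formula'' for that identification, while $X_R$ is not literally the product $\prod_i C_{R_i}$; your detour through the $\ker(\Sigma)\cong\mathbb{F}_p^{k-1}$-torsor $\prod_i C_{R_i}\to X_R$ and its equivariance supplies precisely the justification that the paper leaves implicit.
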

\begin{proof}
Let $Q_{R_i,\mathbb{Z}}:
=Q_{R_i} \rtimes \mathbb{Z}$ and 
$\Theta_{R_i,m_i} \colon W_F \to Q_{R_i,\mathbb{Z}}$ be as in \eqref{tmr}. 
Let 
\[
\delta' \colon Q_R \rtimes \mathbb{Z} \to 
Q_{R_1,\mathbb{Z}} \times \cdots \times Q_{R_k,\mathbb{Z}};\  
((g_i)_{1 \leq i \leq k},n) \mapsto 
(g_i,n)_{1 \leq i \leq k}. 
\]
Each $H_{\rm c}^1(C_{R_i,\mathbb{F}},\overline{\mathbb{Q}}_{\ell})[\psi]$ is regarded as 
a $Q_{R_i,\mathbb{Z}}$-representation. 
By the K\"unneth formula, we have an isomorphism 
$H_{\rm c}^k(X_{R,\mathbb{F}},\overline{\mathbb{Q}}_{\ell})[\psi] \simeq 
\bigotimes_{i=1}^k (H_{\rm c}^1(C_{R_i,\mathbb{F}},\overline{\mathbb{Q}}_{\ell})[\psi])$ as 
$Q_R \rtimes \mathbb{Z}$-representations, 
where the right hand side is regarded as a 
$Q_R \rtimes \mathbb{Z}$-representation via $\delta'$. 
We consider the commutative diagram 
\[
\xymatrix{
W_F \ar[d]_{\Theta_{R,m}}\ar[r]^{\delta} & W_F^k \ar[d]^{\Theta_{R_1,m_1} \times  \cdots \times \Theta_{R_k,m_k}}\\
Q_R \rtimes \mathbb{Z} \ar[r]^-{\delta'} & Q_{R_1,\mathbb{Z}} \times \cdots \times Q_{R_k,\mathbb{Z}}, 
}
\]
where $\delta$ is the diagonal map. 
Hence the claim 
follows. 
\end{proof}
\begin{remark}
Let $+ \colon \prod_{i=1}^k Z(Q_{R_i}) \to \mathbb{F}_p;\ 
(1,0,\gamma_i)_{1 \leq i \leq k} \mapsto \sum_{i=1}^k\gamma_i$
and $\overline{Q}_R:=Q_R/\Ker +$. 
The action of $Q_R \rtimes \mathbb{Z}$
on $H_{\rm c}^k(X_{R,\mathbb{F}},\overline{\mathbb{Q}}_{\ell})$
factors through $\overline{Q}_R \rtimes 
\mathbb{Z}$. 
Let $\overline{H}_R$ denote the image of 
$H_{R_1} \times \cdots \times H_{R_k}$ 
under $Q_R \to \overline{Q}_R$.  
The group $\overline{H}_R$ is an extra-special $p$-group. 
The quotient $\overline{H}_R/Z(\overline{H}_R)$
is isomorphic to $\bigoplus_{i=1}^k V_{R_i}$. 
Moreover, $\overline{Q}_R/\overline{H}_R$ is 
supersolvable. 
\end{remark}
\begin{lemma}
The $W_F$-representation 
$\tau_{\psi,R,m}$ is irreducible.
\end{lemma}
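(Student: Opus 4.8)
The plan is to restrict $\tau_{\psi,R,m}$ to the extra-special $p$-subgroup $\overline{H}_R$ introduced in the Remark and to show that $H_{\mathrm c}^k(X_{R,\mathbb{F}},\overline{\mathbb{Q}}_{\ell})[\psi]$ is, as an $\overline{H}_R$-representation, the unique irreducible one with central character $\psi$. First I would identify this $\overline{H}_R$-representation. By the K\"unneth formula, used exactly as in the proof of Lemma \ref{kk}, together with the vanishing of $H_{\mathrm c}^0(C_{R_i,\mathbb{F}},\overline{\mathbb{Q}}_{\ell})[\psi]$ and $H_{\mathrm c}^2(C_{R_i,\mathbb{F}},\overline{\mathbb{Q}}_{\ell})[\psi]$ for $\psi \neq 1$, we have $H_{\mathrm c}^k(X_{R,\mathbb{F}},\overline{\mathbb{Q}}_{\ell})[\psi] \simeq \bigotimes_{i=1}^k H_{\mathrm c}^1(C_{R_i,\mathbb{F}},\overline{\mathbb{Q}}_{\ell})[\psi]$, which by Lemma \ref{fa} is $\bigotimes_{i=1}^k \rho_\psi^{(i)}$ with $\rho_\psi^{(i)}$ the unique irreducible $H_{R_i}$-representation with central character $\psi$, of degree $p^{e_i}$. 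The product $\prod_{i=1}^k Z(H_{R_i})$ acts on this tensor product through $(z_i)_i \mapsto \prod_i \psi(z_i) = \psi(\sum_i z_i)$, hence through the homomorphism $+$ of the Remark, so the representation descends to $\overline{H}_R$ and the center $Z(\overline{H}_R)\simeq \mathbb{F}_p$ acts on it by $\psi$. Its degree is $\prod_i p^{e_i} = p^{\sum_i e_i}$, and since $\overline{H}_R/Z(\overline{H}_R) \simeq \bigoplus_i V_{R_i}$ has $\mathbb{F}_p$-dimension $2\sum_i e_i$, this degree equals $p^{(\dim_{\mathbb{F}_p}\overline{H}_R/Z(\overline{H}_R))/2}$. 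By \cite[16.14(2) Satz]{H}, applied as in Lemma \ref{fa}(1), it follows that $H_{\mathrm c}^k(X_{R,\mathbb{F}},\overline{\mathbb{Q}}_{\ell})[\psi]|_{\overline{H}_R}$ is the unique irreducible representation of $\overline{H}_R$ with central character $\psi$; in particular it is irreducible over $\overline{H}_R$.

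Let $\overline{\Theta}_{R,m}\colon W_F \to \overline{Q}_R \rtimes \mathbb{Z}$ be the composite of $\Theta_{R,m}$ with $Q_R \rtimes \mathbb{Z} \to \overline{Q}_R \rtimes \mathbb{Z}$. The second step is to prove that $\overline{\Theta}_{R,m}(P) = \overline{H}_R$, where $P \subseteq I_F$ is the wild inertia subgroup. For each index $i$, Lemma \ref{3.8} identifies the image of $W_F$ under $\Theta_{R_i,m_i}$ with $Q_{R_i,m_i}\rtimes\mathbb{Z}$; passing to the inertia subgroup and then to its maximal pro-$p$ subgroup shows $\Theta_{R_i,m_i}(P) = H_{R_i}$, since $H_{R_i}$ is the unique $p$-Sylow subgroup of $Q_{R_i,m_i}$ and the quotient $Q_{R_i,m_i}/H_{R_i}$ is prime to $p$. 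Hence the image of $\overline{\Theta}_{R,m}(P)$ under $\overline{H}_R \twoheadrightarrow \bigoplus_i V_{R_i}$ surjects onto each summand $V_{R_i}$. To see that it is all of $\bigoplus_i V_{R_i}$, I would use that the extensions $L_i := F^{\mathrm{ur}}(\alpha_{R_i}^{m_i},\beta_{R_i,m_i},\gamma_{R_i,m_i})$ are linearly disjoint over $F^{\mathrm{ur}}$, so that $\mathrm{Gal}(L_1\cdots L_k/F^{\mathrm{ur}}) \simeq \prod_i \mathrm{Gal}(L_i/F^{\mathrm{ur}})$; this disjointness is visible from the ramification filtration of Lemma \ref{her}, whose breaks for $L_i/F^{\mathrm{ur}}$ sit at $m_i/d_{R_i}$ and $\frac{p^{e_i}+1}{p^{e_i}}\cdot\frac{m_i}{d_{R_i}}$ and are determined by $(R_i,m_i)$ separately. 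Once the image in $\bigoplus_i V_{R_i}$ is everything, the commutator subgroup of $\overline{\Theta}_{R,m}(P)$ exhausts $Z(\overline{H}_R)$ because $\overline{H}_R$ is extra-special, and therefore $\overline{\Theta}_{R,m}(P) = \overline{H}_R$.

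Combining the two steps, the restriction of $\tau_{\psi,R,m}$ to $\overline{\Theta}_{R,m}^{-1}(\overline{H}_R)$ is already irreducible, hence $\tau_{\psi,R,m}$ is an irreducible $W_F$-representation. I expect the delicate point to be the linear-disjointness claim in the second step: one must make sure that the image of wild inertia hits the full direct sum $\bigoplus_i V_{R_i}$ rather than a proper submodule. If two pairs $(R_i,m_i)$ have coinciding ramification data, the fields $L_i$ need not be disjoint, so this is precisely where the geometric and ramification input of Lemma \ref{her} has to enter; alternatively, when the $L_i$ fail to be disjoint, one would instead have to argue that the tame inertia together with the Frobenius action on $\overline{Q}_R$ mixes the $\overline{\Theta}_{R,m}(P)$-isotypic components sufficiently, which is a strictly more delicate bookkeeping problem.
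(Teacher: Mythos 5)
Your first step is, in effect, the paper's entire proof: the paper only observes that $H_{\rm c}^k(X_{R,\mathbb{F}},\overline{\mathbb{Q}}_{\ell})[\psi]$ is irreducible over $\overline{H}_R$ by \cite[16.14(2)~Satz]{H} and then asserts ``the claim follows from this.'' Your second step is an honest attempt to supply what the paper leaves tacit: for the inflation by $\Theta_{R,m}$ to inherit irreducibility from $\overline{H}_R$, the image of $W_F$ in $\overline{Q}_R\rtimes\mathbb{Z}$ must contain $\overline{H}_R$, and surjectivity of each $\Theta_{R_i,m_i}$ onto $Q_{R_i,m_i}\rtimes\mathbb{Z}$ does not by itself give this.

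You are right to be uneasy, and the delicate point you flag is a genuine gap. Without a hypothesis on the tuples $(R_i,m_i)$ the statement is in fact false: take $k=2$ and $(R_1,m_1)=(R_2,m_2)$ with $e_1\geq 1$. Then $\Theta_{R_1,m_1}=\Theta_{R_2,m_2}$, and by Lemma~\ref{kk} one has $\tau_{\psi,R,m}\simeq\tau_{\psi,R_1,m_1}\otimes\tau_{\psi,R_1,m_1}$, which is reducible over $\overline{\mathbb{Q}}_{\ell}$ (it splits into its symmetric and exterior squares, both nonzero since the degree is $p^{e_1}\ge p$). Concretely, $\overline{\Theta}_{R,m}(P)$ is then only the image of the diagonal copy of $H_{R_1}$ inside $\overline{H}_R$, far from all of $\overline{H}_R$. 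The mechanism you propose --- linear disjointness of the $L_i/F^{\mathrm{ur}}$ detected via the ramification breaks of Lemma~\ref{her}, followed by the extra-special commutator trick to pick up $Z(\overline{H}_R)$ --- is exactly how the paper itself argues in Proposition~\ref{pree}, and there the hypothesis $m_1\neq m_2$ is imposed precisely so that the upper-numbering breaks $n_1\neq n_2$ force $E_1\cap E_2=T$. So your proposal identifies a real omission in the lemma as stated: some condition on the $(R_i,m_i)$ guaranteeing disjointness of the $L_i$ (e.g.\ pairwise distinct breaks) must be added for the conclusion to hold, and your ``strictly more delicate bookkeeping'' fallback cannot rescue the degenerate coincident case, since there the representation genuinely fails to be irreducible.
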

\begin{proof}
The $\overline{H}_R$-representation 
$H_{\rm c}^k(X_{R,\mathbb{F}},\overline{\mathbb{Q}}_{\ell})[\psi]$ is 
irreducible by \cite[16.14(2) Satz]{H}. The claim follows from this. 
\end{proof}
Let 
$\rho_{\psi,R_i,m_i}$ denote the projective representation associated to 
$\tau_{\psi,R_i,m_i}$. 
Let $F_i$ denote the kernel field of $\rho_{\psi,R_i,m_i}$ and $T_i$ the 
maximal tamely ramified extension of $F$ in $F_i$. The field $T_i$ is called the tame kernel field 
of $\rho_{\psi,R_i,m_i}$.   
Let $F_R:=F_1\cdots F_k$.
\begin{lemma}
Let 
$\rho_{\psi,R,m}$ be the projective representation associated to 
$\tau_{\psi,R,m}$. 
The kernel field of 
$\rho_{\psi,R,m}$ is $F_R$. 
\end{lemma}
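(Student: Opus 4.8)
The plan is to combine Lemma \ref{kk} with an elementary fact about scalar operators on tensor products, and then translate the resulting equality of kernels into an equality of fixed fields. First I would recall, from Lemma \ref{kk}, that there is a $W_F$-equivariant isomorphism $\tau_{\psi,R,m} \simeq \bigotimes_{i=1}^k \tau_{\psi,R_i,m_i}$; concretely, this identifies, for every $\sigma \in W_F$, the operator $\tau_{\psi,R,m}(\sigma)$ with $\bigotimes_{i=1}^k \tau_{\psi,R_i,m_i}(\sigma)$ acting on the tensor product of the underlying $\overline{\mathbb{Q}}_{\ell}$-vector spaces. Since $\rho_{\psi,R,m}(\sigma)=1$ in $\mathrm{PGL}$ precisely when $\tau_{\psi,R,m}(\sigma)$ is a scalar operator, and likewise for each $\rho_{\psi,R_i,m_i}$, it suffices to understand when a tensor product of operators is scalar.

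The key (and only non-formal) step is the linear-algebra fact: for linear automorphisms $A_i$ of finite-dimensional $\overline{\mathbb{Q}}_{\ell}$-vector spaces $V_i$ $(1 \leq i \leq k)$, the operator $A_1 \otimes \cdots \otimes A_k$ on $V_1 \otimes \cdots \otimes V_k$ is scalar if and only if each $A_i$ is scalar. The ``if'' direction is immediate, and ``only if'' follows by induction on $k$: in the two-factor case, if $A_1 \otimes A_2 = \lambda\,\mathrm{id}$ then $(A_1 v_1) \otimes (A_2 v_2) = \lambda\, v_1 \otimes v_2$ for all $v_1 \in V_1$, $v_2 \in V_2$, and evaluating on rank-one tensors with $A_2 v_2 \neq 0$ forces $A_1 v_1 \in \overline{\mathbb{Q}}_{\ell}\, v_1$ for every $v_1$, so $A_1$ is scalar; symmetrically $A_2$ is scalar, and the general case follows by grouping the last $k-1$ factors.

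Applying this with $A_i = \tau_{\psi,R_i,m_i}(\sigma)$, I conclude that $\tau_{\psi,R,m}(\sigma)$ is scalar if and only if $\tau_{\psi,R_i,m_i}(\sigma)$ is scalar for every $i$, hence $\Ker \rho_{\psi,R,m} = \bigcap_{i=1}^k \Ker \rho_{\psi,R_i,m_i}$ as subgroups of $W_F$. Each $\rho_{\psi,R_i,m_i}$ has finite image, so $\Ker \rho_{\psi,R_i,m_i}$ is open of finite index in $W_F$ and corresponds to the finite extension $F_i/F$; likewise $\Ker \rho_{\psi,R,m}$ corresponds to the kernel field of $\rho_{\psi,R,m}$. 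Since the fixed field of an intersection of closed finite-index subgroups is the compositum of their fixed fields, the kernel field of $\rho_{\psi,R,m}$ is $F_1 \cdots F_k = F_R$, as claimed.

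The computation is therefore essentially formal: the only point requiring an argument is the scalarity-of-tensor-products lemma above, and even that is routine; the remaining work is bookkeeping with the Galois correspondence for Weil groups with open kernels, for which the finiteness of the images of the $\rho_{\psi,R_i,m_i}$ (already recorded in the excerpt) is exactly what is needed.
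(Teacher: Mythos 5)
Your proof is correct and follows the same route as the paper: the paper's proof is just the one-line statement that Lemma \ref{kk} gives $\Ker \rho_{\psi,R,m}=\bigcap_{i=1}^k \Ker \rho_{\psi,R_i,m_i}$, from which the claim follows. You have simply supplied the two routine ingredients the paper leaves implicit, namely the fact that a tensor product of invertible operators is scalar iff each factor is scalar, and the identification $\bigcap_i W_{F_i}=W_{F_1\cdots F_k}$.
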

\begin{proof}
By Lemma \ref{kk}, we can check $\Ker \rho_{\psi,R,m}=\bigcap_{i=1}^k \Ker \rho_{\psi,R_i,m_i}$. 
The claim follows from this. 
\end{proof}
Let $T_R$ be the maximal tamely
ramified extension of $F$ in $F_R$. 
We have the restriction map 
$V_R \hookrightarrow \prod_{i=1}^k \Gal(F_i/T_i)
\simeq \bigoplus_{i=1}^k V_{R_i}$. 
Then $V_R:=\Gal(F_R/T_R)$ has a bilinear form stable under the action of $\mathbb{F}_p[\Gal(T_R/F)]$ 
(\cite[\S4]{K}). The form on $V_R$ is given by
$\omega_R:=\sum_{i=1}^k \omega_{R_i}$.

Let $\omega_{R_i}$ be the form on 
$V_{R_i}$ in Lemma \ref{ab}(2). 
We give a recipe to make an example of $(M(W),2)$ below. 
\begin{proposition}\label{pree}
Assume $k=2$. 
Let $R_i(x)=a_{e,i}x^{p^e} \neq 0$ for 
$i \in \{1,2\}$.
Assume 
\[
m_1 \neq m_2, \quad  d:=d_{R_1,m_1}=d_{R_2,m_2}. 
\]
\begin{itemize}
\item[{\rm (1)}] We have an isomorphism 
$V_R \simeq V_{R_1} \oplus V_{R_2}$.  
\item[{\rm (2)}] We have $T_R=T_1 \cdot T_2$. 
\item[{\rm (3)}] Assume that $p \neq 2$, $v_2(e) \geq v_2(f)$ and $\mathbb{F}_p(\mu_d)=\mathbb{F}_{p^{2e}}$. 
If $(V_R,\omega_R)$ is completely anisotropic as a symplectic $\mathbb{F}_p
[\Gal(T_R/F)]$-module, 
$V_R$ is isomorphic to a primary module $(M(W),2)$ with a root system $W$. 
\end{itemize}
\end{proposition}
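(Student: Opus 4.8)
The plan is to establish (1)--(3) in that order, everything flowing from an explicit description of the kernel fields $F_1,F_2$ plus one ramification--break computation. Since $R_i(x)=a_{e,i}x^{p^e}$ is a monomial, $d_{R_i}=p^e+1$, so the hypothesis $d:=d_{R_1,m_1}=d_{R_2,m_2}$ forces $g:=\gcd(p^e+1,m_1)=\gcd(p^e+1,m_2)$; writing $m_i=gm_i'$ gives $\gcd(m_i',d)=1$ and, crucially, $m_1\ne m_2\iff m_1'\ne m_2'$. From Lemma \ref{asso} and its proof, $F_i\subset F^{\rm ur}(\alpha_{R_i}^{m_i},\beta_{R_i,m_i},\gamma_{R_i,m_i})$, $T_i=F_i\cap F^{\rm ur}(\alpha_{R_i}^{m_i})$ is tame over $F$ with $[T_i:T_i\cap F^{\rm ur}]=d$ (as $\alpha_{R_i}^{m_i}=\varpi^{m_i/(p^e+1)}$ generates the same ramified subextension as $\varpi^{1/d}$), and $F_i$ is obtained from $T_i$ by adjoining $\beta_{R_i,m_i}$. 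Using $\widetilde{E}_{R_i}(x)=\widetilde{a}_{e,i}^{p^e}x^{p^{2e}}+\widetilde{a}_{e,i}x$ and $\widetilde{E}_{R_i}(\beta_{R_i,m_i})=\alpha_{R_i}^{-m_i}$ one finds $v_{T_i}(\beta_{R_i,m_i})=-m_i'/p^{2e}$; passing to a finite unramified extension $F_s$ of $T_1T_2$ containing the roots of $\widetilde{E}_{R_i}$, the extension $F_iF_s/T_1T_2F_s$ is Galois with group $V_{R_i}$, totally wildly ramified, with a single ramification break in upper numbering equal to $m_i'$ (equivalently, this break is Lemma \ref{her}'s first jump $m_i/(p^e+1)$ of $\psi_{N/F}$ transported through the tame subextension $T_i/F$, unramified base change leaving it unchanged).

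Next I prove (1) and (2). Since $m_1'\ne m_2'$, the extensions $F_1F_s/T_1T_2F_s$ and $F_2F_s/T_1T_2F_s$ have no common subextension of $T_1T_2F_s$ beyond the base (a nontrivial one would be a quotient of $V_{R_1}$, with break $m_1'$, and also a quotient of $V_{R_2}$, with break $m_2'$), so they are linearly disjoint over $T_1T_2F_s$. Using $F_i\cap T_1T_2=T_i$ (whence $\Gal(F_iT_1T_2/T_1T_2)\cong V_{R_i}$) and the resulting injection $\Gal(F_R/T_1T_2)\hookrightarrow V_{R_1}\times V_{R_2}$, this linear disjointness forces $[F_R:T_1T_2]=p^{4e}$ and $\Gal(F_R/T_1T_2)\xrightarrow{\sim}V_{R_1}\times V_{R_2}$. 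The inertia subgroup of $\Gal(F_R/T_1T_2)$ then contains the product of the two factors' wild inertia subgroups, i.e. (both breaks being positive) all of $V_{R_1}\times V_{R_2}$, so $F_R/T_1T_2$ is totally wildly ramified; since $T_1T_2$ is tame over $F$ (a compositum of tame extensions) and $F_R/T_1T_2$ has no nontrivial tame subextension, $T_R=T_1T_2$. This is (2), and then $\Gal(F_R/T_R)=\Gal(F_R/T_1T_2)\cong V_{R_1}\times V_{R_2}$, the identification being $\mathbb{F}_p[\Gal(T_R/F)]$-linear because the conjugation action respects the two factors and on each one factors through $\Gal(T_i/F)$ as in the formalism of \cite[\S4]{K}; this is (1).

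For (3): by (1), $V_R\cong V_{R_1}\oplus V_{R_2}$ as $\mathbb{F}_p[\mathscr{H}]$-modules with $\mathscr{H}=\Gal(T_R/F)$ of the form $\mathscr{H}_{d,r}$. By the assumption $\mathbb{F}_p(\mu_d)=\mathbb{F}_{p^{2e}}$ together with Corollary \ref{mc}(4) and Proposition \ref{wpri}, each $V_{R_i}$ is the irreducible module $M(W_i)$ with $W_i=\Phi(\alpha,\beta_i)$, $\beta_i=\Nr_{q/p^{e_1}}(-a_{e,i}^{-(p^e-1)})$ and $e_1=\gcd(f,2e)$. Here $v_2(e)\ge v_2(f)$ is decisive: by the computation in the proof of Lemma \ref{ggf}(1), $f/e_1$ is odd and $a_{e,i}\in\mathbb{F}_q^{\times}$, so $\beta_i=-1$ for both $i$ irrespective of $a_{e,i}$; hence $W_1=W_2=\Phi(\alpha,-1)=:W$, $M(W)$ is of type A, and $V_R\cong M(W)\oplus M(W)$ with $\omega_R=\omega_{R_1}\oplus\omega_{R_2}$ an orthogonal sum of type-A symplectic structures on $M(W)$. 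Assuming $(V_R,\omega_R)$ completely anisotropic, \cite[Theorem 8.2]{K} writes it uniquely as $\bigoplus_{i=1}^{n}(M(W_i'),\nu_i')$ with $W_1',\dots,W_n'$ pairwise distinct root systems; since distinct root systems give non-isomorphic irreducibles (Theorem \ref{at}(2)) and the underlying module is $M(W)^{\oplus 2}$, necessarily $n=1$, $W_1'=W$, $\nu_1'=2$, i.e. $V_R\cong(M(W),2)$.

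The step I expect to be the real obstacle is the break computation in the first paragraph: the argument for (1) and (2) hinges on the single upper-numbering break of $F_iF_s/T_1T_2F_s$ being \emph{exactly} $m_i'$ (it is not enough that the two extensions merely have different depths), and making this precise — controlling the mixed-characteristic error terms allowed by \eqref{assume} and transporting breaks correctly through the tame subextension $T_i/F$ — takes some care. Once (1) is in hand, part (3) is bookkeeping with the classification of \cite{K}, the one genuinely substantive point being the $a_{e,i}$-independence of $\beta_i=-1$ forced by $v_2(e)\ge v_2(f)$.
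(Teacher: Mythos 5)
Your proof follows essentially the same route as the paper's: for (1) and (2) both arguments hinge on the fact that $m_1\neq m_2$ gives distinct upper-numbering ramification breaks $n_i=m_i/\gcd(m_i,p^e+1)$ for the two wild extensions over the common tame base (the paper sets $T=E(\varpi^{1/d})$, $E_i=T(\beta_{R_i,m_i})$, invokes \cite[Proposition 14, IV\S3]{Se} to get $E_1\cap E_2=T$, then descends via two commutative diagrams, where you phrase the same content as linear disjointness over $T_1T_2F_s$), and for (3) the chain Proposition \ref{wpri}(3) $\Rightarrow$ Lemma \ref{ggf}(1) $\Rightarrow$ $\beta_i=-1$ $\Rightarrow$ $W_1=W_2$ of type A $\Rightarrow$ uniqueness in \cite[Theorem 8.2]{K} is exactly the paper's. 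The one place you are a bit looser than the paper is in passing from the auxiliary unramified extension back to $F_R/T_1T_2$ (the paper's two diagram chases make the claims $T_R=F_R\cap T$, $T_R=T_1T_2$ and the $\mathbb{F}_p[\mathscr{H}]$-equivariance explicit), but this is a matter of exposition, not of method.
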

\begin{proof}
By Lemma \ref{inj} and Lemma \ref{rem1}, 
there exists an unramified finite extension $E$
of $F$ such that $F_i \subset E(\alpha_{R_i}^{m_i},\beta_{R_i,m_i})$ for $i=1,2$ and 
$E(\alpha_{R_i}^{m_i},\beta_{R_i,m_i})/E$ is Galois.  
We put $T:=E(\alpha_{R_i}^{m_i})=E(\varpi^{1/d})$ and 
$E_i:=T(\beta_{R_i,m_i})$ for 
$i=1,2$.  
Let $n_i:=m_i d/d_R=m_i/\mathrm{gcd}(d_{R},m_i)$. 
Let $\{\Gal(E_i/T)^v\}_{v \geq -1}$ be the upper numbering ramification subgroups of $\Gal(E_i/T)$. 
Similarly as the proof of Lemma \ref{her}, we have  
\[
\Gal(E_i/T)^v=\begin{cases}
\Gal(E_i/T) & \textrm{if $v \leq n_i$}, \\
\{1\} & \textrm{if $v > n_i$}. 
\end{cases}
\] 
Let $H:=E_1\cap E_2$. 
Since $E_i/T$ is Galois, so is $H/T$. 
By \cite[Proposition 14 in IV\S3]{Se}, the subgroup $\Gal(H/T)^v$ equals $\Gal(H/T)$ if 
$v \leq n_i$ and $\{1\}$ if $v>n_i$. 
Hence we conclude $\Gal(H/T)=\{1\}$ by $n_1 \neq n_2$. 
We obtain $H=T$. Hence we have an isomorphism
$\Gal(E_1E_2/T) \simeq \Gal(E_1/T) \times 
\Gal(E_2/T) \simeq V_{R_1} \oplus V_{R_2}$. 
The extension $E_1E_2/T$ is totally ramified 
and 
the degree is $p$-power. Hence, $T$ is the maximal tamely ramified extension of $E$ in $E_1 \cdot E_2$. Therefore, $T_R=F_R \cap T$. 
We have the commutative diagram 
\[
\xymatrix{
\Gal(E_1 E_2/T) \ar[d]\ar[r]^-{\simeq} & \Gal(E_1/T) \times 
\Gal(E_2/T) \ar[d]^{\simeq} \\
\Gal(F_R/T_R) \ar[r]^-g & \Gal(F_1/T_1) \times 
\Gal(F_2/T_2),}  
\] 
where every map is the restriction map. The right vertical isomorphism follows from Lemma \ref{asso}. 
Clearly $g$ is injective. 
By the commutative diagram, 
$g$ is bijective. 
Hence we obtain (1). 

By the commutative diagram 
\[
\xymatrix
{1 \ar[r] & \Gal(F_R/T_R) \ar[r]\ar[d]^{\simeq} & \Gal(F_R/F) \ar[r]\ar[d]^{g_1} & \Gal(T_R/F) \ar[r]\ar[d]^{g_2} & 1\\ 
1 \ar[r]& V_{R_1}\oplus V_{R_2}  \ar[r] & \Gal(F_1/F) \times \Gal(F_2/F) \ar[r] & \Gal(T_1/F) \times \Gal(T_2/F) \ar[r] & 1}
\]
and injectivity of $g_1$, 
the map $g_2$ is injective. Hence 
$T_R=T_1 T_2$.

We show (3). 
Let $r:=[E:F]$ and  
$\mathscr{H}_{d,r}:=\mu_d \rtimes 
(\mathbb{Z}/r\mathbb{Z})$ as in \eqref{hr}.  
We identify $\Gal(T/F)$ with $\mathscr{H}_{d,r}$. 
By $T_R \subset T$, the 
$V_R$, $V_{R_i}$ are naturally regarded as 
$\mathbb{F}_p[\mathscr{H}_{d,r}]$-modules. 
Let $\alpha$ be a primitive $d$-th root of unity. Let $W:=\Phi(\alpha,-1)$. 
Then we have an isomorphism 
$V_{R_i} \simeq M(W)$ as 
$\mathbb{F}_p[\mathscr{H}_{d,r}]$-modules
and know that $V_{R_i}$ is of type A
by Proposition \ref{wpri}(3), Lemma \ref{ggf}(1) and $d_{R_1,m_1}=d_{R_2,m_2}$. 
This implies an isomorphism 
$V_{R_1} \simeq V_{R_2}$ as 
$\mathbb{F}_p[\mathscr{H}_{d,r}]$-modules. 
Hence the claim follows from the assumption that  $(V_R,\omega_R)$ is completely anisotropic
and the definition of $(M(W),2)$. 
\end{proof}
\begin{example}
Assume $p \neq 2$. 
Let $e=f=1$, $R_1(x)=x^p$ and 
$R_2(x)=a x^p \in \mathbb{F}_p[x] \setminus \{0\}$. 
We assume that 
$m_1 \neq m_2$ and $d_{R_1,m_1}
=d_{R_2,m_2}=p+1$. 
We have $V_{R_i}=\{x \in \mathbb{F} \mid 
x^{p^2}+x=0\}$ for $i=1,2$. 

Let $W \subset V_{R_1} \oplus V_{R_2}$
be a totally isotropic $\mathbb{F}_p[\Gal(T_R/F)]$-subspace. Assume $W \neq \{0\}$. We take a non-zero element $(x_1,x_2) \in W$. We have 
$f_{R_1}(x,y)=-xy^p$, $f_{R_2}(x,y)=-a xy^p$ and 
hence 
$\omega_R((x_1,x_2),(\xi x_1,\xi x_2))=
(x_1^{p+1}+a x_2^{p+1})(\xi-\xi^p)=0$ for 
any $\xi \in \mu_{p+1}$. Hence
$x_1^{p+1}+a x_2^{p+1}=0$ and $x_2 \neq 0$. There exists 
$\eta \in \mathbb{F}$ such that $\eta^{p+1}=-a$
and $x_1=\eta x_2$. 
By $\mathbb{F}_{p^2}=\mathbb{F}_p(\mu_{p+1})$,  we have $W_1:=\{(\eta x,x) \mid x \in V_{R_2}\}
\subset W$. We also have 
$W_2:=\{(\eta^p x, x) \mid x \in V_{R_2}\} \subset W$. 
Let $\bigl(\frac{\cdot}{p}\bigr)$ be the Legendre symbol. 
If $W_1 \cap W_2 \neq \{0\}$, we have $\eta \in \mathbb{F}_p$ and 
$\eta^2=-a$. 
This implies $\bigl(\frac{-a}{p}\bigr)=1$. 

Assume $\bigl(\frac{-a}{p}\bigr)=-1$. Then 
$W=W_1 \oplus W_2=V_{R_1} \oplus V_{R_2}$
 by $W_1 \cap W_2=\{0\}$. 
This is a contradiction. 
Hence $V_{R_1} \oplus V_{R_2}$ is 
completely anisotropic if $\bigl(\frac{-a}{p}\bigr)=-1$. 

If $\bigl(\frac{-a}{p}\bigr)=1$, we have $W_1=W_2$, which is the unique non-zero 
totally isotropic $\mathbb{F}_p[\mathscr{H}]$-subspace. Hence $V_{R_1} \oplus V_{R_2}$
is not completely anisotropic. 
\end{example}
 \section{Geometric interpretation of imprimitivity}\label{4}
 Through this section, we always assume $p \neq 2$. 
 Our aim in this section is to show Theorem \ref{4mm}. 
To show the theorem, 
we use the explicit understanding of the automorphism 
group of $C_R$ and the mechanism of 
taking quotients of $C_R$ by certain abelian groups, which are developed in \cite{BHMSSV} and \cite{GV}. 
\subsection{Quotient of $C_R$ and 
description of $\tau_{\psi,R,m}$}\label{4.1}
 Let $C_R$ be as in \eqref{cr}. 
In this subsection, we always assume that 
there exists a finite \'etale morphism 
\begin{equation}\label{aphi}
\phi \colon C_R \to C_{R_1};\ (a,x) \mapsto (a-\Delta(x),r(x)), 
\end{equation}
where $\Delta(x) \in \mathbb{F}_q[x]$ and 
$r(x), R_1(x) \in \mathscr{A}_q$ satisfy 
$d_{R,m} \mid d_{R_1}$ and 
$r(\alpha x)=\alpha r(x)$ for 
$\alpha \in \mu_{d_{R,m}}$. 
Since $\phi$ is \'etale, 
 $r(x)$ is a reduced polynomial. 
 Hence $r'(0)\neq 0$. 
By the above assumption, 
\begin{align}\label{(a)}
& xR(x)=r(x)R_1(r(x))
 +\Delta(x)^p-\Delta(x)
\\ \label{(b)} 
& r'(0)\neq 0, \quad  
d_{R,m} \mid d_{R_1},\quad 
r(\alpha x)=\alpha r(x) \quad \textrm{for 
 $\alpha \in \mu_{d_{R,m}}$}.
 \end{align} 
 Let 
$e'$ be a non-negative integer such that 
 $\deg R_1(x)=p^{e'}$ and $e' \leq e$. 
 Then $\deg r(x)=p^{e-e'}$ by \eqref{(a)}.  
 
 We have $\alpha R_1(\alpha x)=R_1(x)$
 for $\alpha \in \mu_{d_{R,m}}$ by 
 $d_{R,m} \mid d_{R_1}$ and \eqref{12}. 
 Hence $\Delta(\alpha x)-\Delta(x) \in \mathbb{F}_p$ for 
 $\alpha \in \mu_{d_{R,m}}$ by \eqref{(a)}.  
 We have $\Delta(\alpha x)=\Delta(x)$, since 
 the constant coefficient of $\Delta(\alpha x)-\Delta(x)$ is zero.  
 
 \begin{lemma}\label{triv}
 Let $\phi \colon C_R \to C_R;\ 
 (x,a) \mapsto (x+c,a+g(x))$ be the automorphism 
 with $g(x) \in \mathbb{F}_q[x]$ and 
 $c \in \mathbb{F}$. Then we have $E_R(c)=0$. 
 \end{lemma}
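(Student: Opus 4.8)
The plan is to convert the condition that $\phi$ is an automorphism of $C_R$ into a polynomial identity, and then read off the constraint $E_R(c)=0$ from a Frobenius/degree argument.

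First I would spell out what it means for $(x,a)\mapsto(x+c,a+g(x))$ to preserve $C_R$. Pulling back $a^p-a-xR(x)$ and using that $R$ is additive, so $R(x+c)=R(x)+R(c)$, one needs
\[
(a+g(x))^p-(a+g(x))=(x+c)R(x+c)
\]
modulo $a^p-a-xR(x)$. Expanding the left-hand side as $(a^p-a)+(g(x)^p-g(x))$ and the right-hand side as $xR(x)+xR(c)+cR(x)+cR(c)$, and comparing the terms free of $a$ (the ideal generated by the $a$-monic polynomial $a^p-a-xR(x)$ meets $\mathbb{F}[x]$ in $0$), this reduces to the identity $g(x)^p-g(x)=xR(c)+cR(x)+cR(c)$ in $\mathbb{F}[x]$.

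Next I would invoke Lemma \ref{a} at $y=c$, namely $f_R(x,c)^p-f_R(x,c)=-x^{p^e}E_R(c)+xR(c)+cR(x)$, to eliminate the term $xR(c)+cR(x)$ above. Since $u\mapsto u^p-u$ is additive on $\mathbb{F}[x]$, setting $h(x):=g(x)-f_R(x,c)\in\mathbb{F}[x]$ the previous identity collapses to
\[
h(x)^p-h(x)=x^{p^e}E_R(c)+cR(c).
\]
So everything comes down to showing that this Artin--Schreier type equation in $\mathbb{F}[x]$ cannot hold unless $E_R(c)=0$. Suppose $E_R(c)\ne0$ and write $h(x)=\sum_{i\ge0}b_ix^i$. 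If $b_i\ne0$ for some $i\ge1$, let $j_0$ be the least such index. The polynomial $h(x)^p$ has only exponents of the form $pi$ with $b_i\ne0$, and $j_0/p<j_0$ is not in the support of $h$, so $h(x)^p$ has no $x^{j_0}$-term; hence the coefficient of $x^{j_0}$ in $h^p-h$ is $-b_{j_0}\ne0$, which forces $j_0=p^e$. Thus $h(x)=b_0+\sum_{i\ge p^e}b_ix^i$ with $b_{p^e}\ne0$. Now for every $k\ge1$ the coefficient of $x^{p^{e+k}}$ is $0$ on the right-hand side while on the left it equals $b_{p^{e+k-1}}^{p}-b_{p^{e+k}}$, so inductively $b_{p^{e+k}}=b_{p^{e+k-1}}^{p}\ne0$; hence $h$ would have infinitely many nonzero coefficients, a contradiction. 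Therefore $b_i=0$ for all $i\ge1$, so $h$ is constant, and comparing the coefficient of $x^{p^e}$ in $h^p-h=cR(c)$ yields $E_R(c)=0$.

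I expect this last step — the unsolvability of $h^p-h=x^{p^e}E_R(c)+cR(c)$ for $E_R(c)\ne0$ — to be the only genuine obstacle; everything before it is formal manipulation. A slightly cleaner alternative for that step is to differentiate: $(h^p)'=0$ in characteristic $p$, so for $e\ge1$ one gets $h'=0$, hence $h\in\mathbb{F}[x^p]$, and the substitution $u=x^p$ turns the equation into the same shape with $e$ lowered by one; iterating reduces to the case $e=0$, where differentiation gives $h'=-E_R(c)$ and comparison of the $x^p$-coefficients forces $E_R(c)=0$.
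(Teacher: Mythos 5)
Your proof is correct and takes essentially the same route as the paper: both start from the compatibility identity $g(x)^p-g(x)=cR(x)+xR(c)+cR(c)$ and identify $E_R(c)$ as the obstruction to solving this Artin--Schreier equation in $\mathbb{F}[x]$. The paper's proof reduces the right-hand side modulo $\{f^p-f : f\in\mathbb{F}[x]\}$ directly to $E_R(c)^{1/p^e}x$ and concludes; you instead subtract off the explicit coboundary $f_R(x,c)^p-f_R(x,c)$ from Lemma \ref{a} to reach $h^p-h=x^{p^e}E_R(c)+cR(c)$ and then finish by a coefficient/degree argument, which is the same obstruction read off at a different normal form.
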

 \begin{proof}
 We have $g(x)^p-g(x)=cR(x)+xR(c)+cR(c)$. 
 Let $\mathcal{P} \colon 
 \mathbb{F}[x] \to  \mathbb{F}[x];\ 
 f(x) \mapsto f(x)^p-f(x)$. 
By the definition of $E_R(x)$, we obtain $cR(x)+xR(c)+cR(c) \equiv E_R(c)^{1/p^e}x
 \mod \mathcal{P}(\mathbb{F}[x])$. 
 Thus we must have $E_R(c)=0$. 
 \end{proof}
 \begin{lemma}\label{bb2}
 We have $E_{R_1}(r(x)) \mid E_R(x)$. 
 \end{lemma}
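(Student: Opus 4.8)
The plan is to deduce the divisibility from a statement about roots. Since $r'(0)\neq 0$ and $p\neq 2$, the composite $E_{R_1}(r(x))$ is an additive polynomial over $\mathbb{F}_q$ whose linear coefficient is nonzero, hence separable. Consequently $E_{R_1}(r(x))\mid E_R(x)$ in $\mathbb{F}[x]$ — equivalently in $\mathbb{F}_q[x]$, since the divisor has leading coefficient a unit — will follow once I show that every $\beta\in\mathbb{F}$ with $E_{R_1}(r(\beta))=0$, i.e.\ with $r(\beta)\in V_{R_1}$, satisfies $E_R(\beta)=0$, i.e.\ $\beta\in V_R$.

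To prove this I would work modulo the $\mathbb{F}_p$-subspace $\mathcal{P}(\mathbb{F}[x])$ of $\mathbb{F}[x]$, where $\mathcal{P}(h):=h^p-h$ is the $\mathbb{F}_p$-linear operator used in the proof of Lemma~\ref{triv}, and record two elementary facts: (i) for $\mu\in\mathbb{F}$, telescoping the identities $\mathcal{P}(\nu^{1/p}x^{p^{k-1}})=\nu x^{p^k}-\nu^{1/p}x^{p^{k-1}}$ — using that $\mathbb{F}$ is perfect — gives $\mu x^{p^e}\equiv\mu^{1/p^e}x\pmod{\mathcal{P}(\mathbb{F}[x])}$; and (ii) $\lambda x\in\mathcal{P}(\mathbb{F}[x])$ forces $\lambda=0$, by a degree count. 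Now fix $\beta$ with $r(\beta)\in V_{R_1}$. Polarizing \eqref{(a)} — substituting $x\mapsto x+\beta$, expanding via additivity of $R$, $r$, $R_1$, and subtracting \eqref{(a)} evaluated at $x$ and at $\beta$ — gives
\[
xR(\beta)+\beta R(x)=r(x)R_1(r(\beta))+r(\beta)R_1(r(x))+D(x),
\]
where $D(x):=\big(\Delta(x+\beta)^p-\Delta(x+\beta)\big)-\big(\Delta(x)^p-\Delta(x)\big)-\big(\Delta(\beta)^p-\Delta(\beta)\big)$ lies in $\mathcal{P}(\mathbb{F}[x])$ (the last, constant, term being $\mathcal{P}$ of the constant polynomial $\Delta(\beta)$). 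By Lemma~\ref{a} applied to $R$ with $y=\beta$, the left side equals $x^{p^e}E_R(\beta)+\mathcal{P}\big(f_R(x,\beta)\big)$; by Lemma~\ref{a} applied to $R_1$ with $(x,y)$ replaced by $(r(x),r(\beta))$, the first two terms on the right equal $\mathcal{P}\big(f_{R_1}(r(x),r(\beta))\big)$, since the $E_{R_1}(r(\beta))$-term vanishes. Therefore $x^{p^e}E_R(\beta)=\mathcal{P}\big(f_{R_1}(r(x),r(\beta))\big)-\mathcal{P}\big(f_R(x,\beta)\big)+D(x)\in\mathcal{P}(\mathbb{F}[x])$. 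By (i) this gives $E_R(\beta)^{1/p^e}x\in\mathcal{P}(\mathbb{F}[x])$, and (ii) forces $E_R(\beta)=0$, as desired.

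The difficulty is entirely in the bookkeeping: the polarization of \eqref{(a)} and the two applications of Lemma~\ref{a} must be combined with all signs under control, and one must check that each remaining summand — the three $\Delta$-contributions, including the constant $\Delta(\beta)^p-\Delta(\beta)$, and the negated $\mathcal{P}(f_R(x,\beta))$ — indeed lies in $\mathcal{P}(\mathbb{F}[x])$, which is immediate from the $\mathbb{F}_p$-linearity of $\mathcal{P}$. No geometric input about $C_R$ is needed beyond the algebraic identities \eqref{(a)}, \eqref{(b)} and Lemmas~\ref{a} and~\ref{triv}.
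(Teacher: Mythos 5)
Your proof is correct. It is, in substance, the same argument as the paper's, but inlined: the paper constructs the explicit automorphism
\[
\phi(a,x)=\bigl(a+f_{R_1}(r(x),r(\beta))+\Delta(x+\beta)-\Delta(x)+\gamma,\ x+\beta\bigr)
\]
of $C_{R,\mathbb{F}}$ (whose well-definedness is checked by exactly the polarization of \eqref{(a)} plus the two uses of Lemma~\ref{a} that you perform), and then invokes Lemma~\ref{triv} to conclude $E_R(\beta)=0$; Lemma~\ref{triv} is itself proved by the same reduction modulo $\mathcal{P}(\mathbb{F}[x])$ and degree count that you carry out in your facts (i) and (ii). So you have not found a different route — you have carried out the verification directly in the polynomial ring without ever phrasing it as the construction of an automorphism. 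What your version buys is self-containedness and independence from the geometric language; what the paper's version buys is reuse of a lemma that is needed elsewhere (the proof of Lemma~\ref{q2}) and a conceptual interpretation: the existence of the translation $x\mapsto x+\beta$ as a curve automorphism is exactly the constraint $\beta\in V_R$. Both are sound.
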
 
 \begin{proof}
 Let $\beta \in \mathbb{F}$ be an element 
 such that $E_{R_1}(r(\beta))=0$. We take an element  
 $\gamma \in \mathbb{F}$  
 such that $\gamma^p-\gamma=r(\beta) R_1(r(\beta))$. 
 The curve $C_{R,\mathbb{F}}$ admits the automorphism $\phi$
 defined by
\[
\phi(a, x)=\left(a+f_{R_1}(r(x),r(\beta))+\Delta(x+\beta)-\Delta(x)+\gamma, x+\beta\right).  
\]
This is well-defined by Lemma \ref{a} and \eqref{(a)}. 
By Lemma \ref{triv}, 
we have $E_R(\beta)=0$. 
Since $E_{R_1}(r(x))$ is separable, 
the claim follows. 
 \end{proof}
 \begin{lemma}\label{bb}
 Let $\alpha,\alpha' \in \mu_{d_{R,m}}$. 
 Assume $E_{R_1}(r(\alpha y))=0$ for a 
 certain $y \in \mathbb{F}$. 
 Then we have the equality
 \[
 \Delta(\alpha' x+\alpha y)+f_{R_1}(r(\alpha' x),r(\alpha y))=
 \Delta(x)
 +\Delta(y)+f_R(\alpha' x,\alpha y). 
 \] 
 \end{lemma}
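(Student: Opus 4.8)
The plan is to collapse the three terms on the right of the asserted equality into a single polynomial $Z(u,v)\in\mathbb{F}_q[u,v]$ whose Artin--Schreier operator $Z^p-Z$ is a controlled Moore-type expression, and then to use the hypothesis together with Lemma~\ref{bb2} to make that expression vanish. Concretely, set
\[
Z(u,v):=f_R(u,v)-f_{R_1}(r(u),r(v))-\Delta(u+v)+\Delta(u)+\Delta(v).
\]
The asserted identity is exactly $Z(\alpha'x,\alpha y)=0$ once we invoke $\Delta(\alpha'x)=\Delta(x)$ and $\Delta(\alpha y)=\Delta(y)$, which hold because $\alpha,\alpha'\in\mu_{d_{R,m}}$ (recorded just before the lemma). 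The first and main step is the polynomial identity
\[
Z(u,v)^p-Z(u,v)=-u^{p^e}E_R(v)+r(u)^{p^{e'}}E_{R_1}(r(v)).
\]
To obtain it, apply $t\mapsto t^p-t$ to each piece of $Z$: Lemma~\ref{a} applied to $R$, and to $R_1$ with $e'$ in place of $e$, gives $f_R(u,v)^p-f_R(u,v)$ and $f_{R_1}(r(u),r(v))^p-f_{R_1}(r(u),r(v))$ up to the bilinear terms $uR(v)+vR(u)$ and $r(u)R_1(r(v))+r(v)R_1(r(u))$; meanwhile \eqref{(a)} evaluated at $u+v$, at $u$, and at $v$, together with additivity of $R$, $r$, $R_1$, identifies $(\Delta(u+v)-\Delta(u)-\Delta(v))^p-(\Delta(u+v)-\Delta(u)-\Delta(v))$ with $uR(v)+vR(u)-r(u)R_1(r(v))-r(v)R_1(r(u))$. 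All the mixed terms then cancel, leaving precisely the two Moore-type terms.

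Next I would specialize $v=\alpha y$, a fixed element with $E_{R_1}(r(\alpha y))=0$, keeping $u$ a variable. By Lemma~\ref{bb2} we have $E_{R_1}(r(x))\mid E_R(x)$ in $\mathbb{F}_q[x]$, so $E_{R_1}(r(\alpha y))=0$ forces $E_R(\alpha y)=0$, and the right-hand side of the displayed identity becomes the zero polynomial in $u$. Hence $Z(u,\alpha y)^p=Z(u,\alpha y)$ as polynomials in $u$; comparing degrees shows $Z(u,\alpha y)$ is a constant, and it satisfies $c^p=c$, so $c\in\mathbb{F}_p$. Evaluating at $u=0$ and using $f_R(0,\cdot)=0$, $f_{R_1}(0,\cdot)=0$, $r(0)=0$ gives $c=\Delta(0)$. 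Finally, one may assume $\Delta(0)=0$: by \eqref{(a)} at $x=0$ we have $\Delta(0)^p=\Delta(0)$, and replacing $\Delta$ by $\Delta-\Delta(0)$ leaves \eqref{(a)}, \eqref{(b)} and the cover \eqref{aphi} unchanged (the latter up to the automorphism $a\mapsto a+\Delta(0)$ of $C_{R_1}$, which is the action of an element of $Z(Q_{R_1})$). With $\Delta(0)=0$ we get $c=0$, hence $Z(\alpha'x,\alpha y)=0$, and rewriting $\Delta(\alpha'x)=\Delta(x)$, $\Delta(\alpha y)=\Delta(y)$ turns this into the claimed identity.

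The hard part is the bookkeeping in establishing $Z^p-Z=-u^{p^e}E_R(v)+r(u)^{p^{e'}}E_{R_1}(r(v))$: the signs coming from $f_R^p-f_R$, from $f_{R_1}(r,r)^p-f_{R_1}(r,r)$, and from the $\Delta$-difference must be kept consistent so that the $uR(v)+vR(u)$ and $r(u)R_1(r(v))+r(v)R_1(r(u))$ contributions really do cancel. Everything afterwards — reducing modulo $\mathbb{F}_p$, pinning down the constant by evaluating at $u=0$, and the harmless normalization $\Delta(0)=0$ — is routine.
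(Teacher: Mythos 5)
Your proof is correct and follows essentially the same route as the paper's: both compute the Artin--Schreier image via Lemma~\ref{a} and the relation \eqref{(a)}, invoke Lemma~\ref{bb2} to kill the $E_R$- and $E_{R_1}$-terms at $v=\alpha y$, and pin down the resulting $\mathbb{F}_p$-constant by evaluating at $0$. Your version packages the computation as a universal two-variable identity $Z^p-Z=-u^{p^e}E_R(v)+r(u)^{p^{e'}}E_{R_1}(r(v))$ before specializing, and explicitly records the harmless normalization $\Delta(0)=0$ which the paper's proof (in the step $\Delta(y)=\Delta_2(0)$) uses tacitly.
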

 \begin{proof}
By $\Delta(\alpha' x+\alpha y)
 =\Delta(x+(\alpha/\alpha') y)$ and \eqref{a-1},
  we may assume 
 $\alpha'=1$ by \eqref{(b)}. We have 
 $E_R(\alpha y)=0$ by Lemma \ref{bb2}. 
 Let $\Delta_1(x)$ and $\Delta_2(x)$
 denote the left and right hand sides of the required equality, respectively. We have $\Delta_1(0)=\Delta(\alpha y)=\Delta(y)=\Delta_2(0)$, since 
 $f_R(0,x') \equiv 0$ in $\mathbb{F}_q[x']$ by definition. 
Hence it suffices to show 
$\Delta_1(x)^p-\Delta_1(x)=\Delta_2(x)^p-\Delta_2(x)$.  
By Lemma \ref{bb2} and the assumption, 
$E_{R_1}(r(\alpha y))=E_R(\alpha y)=0$. 
Hence each $\Delta_i(x)^p-\Delta_i(x)$ for 
$i=1,2$ equals 
$(x+\alpha y)R(x+\alpha y)-r(y) R_1(r(y))-r(x) R_1(r(x))$ according to Lemma \ref{a}. 
 Hence the claim follows. 
 \end{proof}
 Let 
\[
U_R:=\{x \in \mathbb{F} \mid 
r(x)=0\} \subset V'_R:=\{x \in \mathbb{F}
\mid E_{R_1}(r(x))=0\}.  
\]
We have $V'_R \subset V_R$ by Lemma \ref{bb2}. 
Then $U_R$ and $V'_R$ are regarded as $\mathbb{F}_p[\mathscr{H}]$-modules by $r(x), R_1(x) \in \mathbb{F}_q[x]$ and \eqref{(b)}.  
\begin{lemma}\label{tiso}
We have $V'_R \subset 
U^{\perp}_R$. In particular, 
the $\mathbb{F}_p[\mathscr{H}]$-module $U_R$ is totally isotropic. 
\end{lemma}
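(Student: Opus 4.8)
The goal is to show $V'_R \subseteq U_R^{\perp}$; granting this, the ``in particular'' assertion is immediate, because $U_R \subseteq V'_R$ (if $r(u)=0$ then $E_{R_1}(r(u))=E_{R_1}(0)=0$) and $U_R$ is already an $\mathbb{F}_p[\mathscr{H}]$-submodule of $V_R$. The plan is to compare the symplectic form $\omega_R$ on $V_R$, restricted to $V'_R$, with the form $\omega_{R_1}$ on $V_{R_1}$ pulled back along $r$. Note that $r$ carries $V'_R$ into $V_{R_1}$, since $E_{R_1}(r(\beta))=0$ for $\beta\in V'_R$, so $\omega_{R_1}(r(x),r(y))$ makes sense for $x,y\in V'_R$.

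The key step is the identity
\[
\omega_R(x,y)=\omega_{R_1}(r(x),r(y))\qquad (x,y\in V'_R).
\]
To prove it I would invoke Lemma \ref{bb} with $\alpha=\alpha'=1$: for $y\in V'_R$ it gives $f_R(x,y)=\Delta(x+y)-\Delta(x)-\Delta(y)+f_{R_1}(r(x),r(y))$, valid for every $x$. The correction term $\Delta(x+y)-\Delta(x)-\Delta(y)$ is symmetric in $x$ and $y$, so subtracting from this the analogous identity with $x$ and $y$ interchanged (legitimate because both lie in $V'_R$) makes the $\Delta$-terms cancel and leaves $f_R(x,y)-f_R(y,x)=f_{R_1}(r(x),r(y))-f_{R_1}(r(y),r(x))$; by the definitions of $\omega_R$ and $\omega_{R_1}$ in Lemma \ref{ab}(2) this is exactly the claimed identity.

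To conclude, take $u\in U_R$ and $v\in V'_R$. Since $u\in U_R\subseteq V'_R$, the identity above applies and gives $\omega_R(u,v)=\omega_{R_1}(r(u),r(v))=\omega_{R_1}(0,r(v))=0$, using $r(u)=0$ and the bilinearity of $\omega_{R_1}$. Hence every $v\in V'_R$ lies in $U_R^{\perp}$, i.e. $V'_R\subseteq U_R^{\perp}$; combined with $U_R\subseteq V'_R$ this yields $U_R\subseteq V'_R\subseteq U_R^{\perp}$, so $\omega_R$ vanishes on $U_R$ and $U_R$ is totally isotropic.

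The argument has essentially no obstacle beyond recognizing that Lemma \ref{bb} should be used symmetrically and that its $\Delta$-correction is a symmetric function of the two variables; the rest is bilinearity bookkeeping together with $r(U_R)=\{0\}$. The one thing I would double-check is that Lemma \ref{bb} really yields an equality valid for arbitrary $x$ — it does, since it is proved there as an identity of polynomials in $x$ — and that $f_{R_1}$ vanishes when one of its arguments is $0$, which is clear from bilinearity.
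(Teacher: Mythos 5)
Your proof is correct and follows essentially the same route as the paper's: the paper applies Lemma \ref{bb} with $\alpha=\alpha'=1$ to a pair $\beta\in U_R$, $\beta'\in V'_R$ (both in $V'_R$ since $r(\beta)=0$ forces $E_{R_1}(r(\beta))=0$), uses $r(\beta)=0$ to kill the $f_{R_1}$ terms, and reads off $f_R(\beta,\beta')=f_R(\beta',\beta)=\Delta(\beta+\beta')-\Delta(\beta)-\Delta(\beta')$, hence $\omega_R(\beta,\beta')=0$. Your reformulation as the pull-back identity $\omega_R(x,y)=\omega_{R_1}(r(x),r(y))$ on $V'_R$ is a clean way to package the same symmetric application of Lemma \ref{bb}, but it is the same calculation.
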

\begin{proof}
Let $\beta \in U_R$ and $\beta' \in V'_R$. 
By Lemma \ref{bb}, 
$r(\beta)=0$ and  $E_{R_1}(r(\beta'))=0$, we have 
$f_R(\beta',\beta)=f_R(\beta,\beta')
=\Delta(\beta+\beta')-\Delta(\beta)-\Delta(\beta')$. Hence 
$\omega_R(\beta,\beta')=0$. 
\end{proof}
Let 
\[
Q'_{R,m}:=\{(\alpha,\beta,\gamma) \in Q_{R,m}
\mid \beta \in V'_R\}. 
\]
Then $Q'_{R,m}$ is a subgroup of $Q_{R,m}$ of index $p^{e-e'}$, because of \eqref{(b)} and $[V_R:V'_R]=p^{e-e'}$. 
We have the map
\[
\pi \colon Q'_{R,m} \to Q_{R_1,m};\ (\alpha,\beta,\gamma) \mapsto 
(\alpha,r(\beta),\gamma-\Delta(\beta)).
\]
 \begin{corollary}
 The map $\pi$ is a homomorphism. 
 \end{corollary}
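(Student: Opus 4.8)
The plan is to verify directly that $\pi$ is compatible with the group laws of $Q'_{R,m}$ and $Q_{R_1,m}$, the essential input being the functional equation of Lemma \ref{bb}.

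First I would confirm that $\pi$ is well defined, i.e.\ that $(\alpha, r(\beta), \gamma - \Delta(\beta)) \in Q_{R_1,m}$ whenever $(\alpha,\beta,\gamma) \in Q'_{R,m}$. The first coordinate is unchanged and lies in $\mu_{d_{R_1,m}}$; the relation $E_{R_1}(r(\beta)) = 0$ is precisely the defining condition $\beta \in V'_R$; and the Artin--Schreier relation $(\gamma - \Delta(\beta))^p - (\gamma - \Delta(\beta)) = r(\beta) R_1(r(\beta))$ follows from $\gamma^p - \gamma = \beta R(\beta)$ together with \eqref{(a)}, which rewrites as $\Delta(\beta)^p - \Delta(\beta) = \beta R(\beta) - r(\beta) R_1(r(\beta))$.

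For multiplicativity, let $g_i = (\alpha_i, \beta_i, \gamma_i) \in Q'_{R,m}$ for $i = 1, 2$. By Definition~\ref{qdef}(2), $g_1 g_2 = (\alpha_1\alpha_2,\ \beta_1 + \alpha_1\beta_2,\ \gamma_1 + \gamma_2 + f_R(\beta_1, \alpha_1\beta_2))$, so using additivity of $r$ and $r(\alpha_1 x) = \alpha_1 r(x)$ from \eqref{(b)},
\[
\pi(g_1 g_2) = \bigl(\alpha_1\alpha_2,\ r(\beta_1) + \alpha_1 r(\beta_2),\ \gamma_1 + \gamma_2 + f_R(\beta_1, \alpha_1\beta_2) - \Delta(\beta_1 + \alpha_1\beta_2)\bigr),
\]
whereas
\[
\pi(g_1)\pi(g_2) = \bigl(\alpha_1\alpha_2,\ r(\beta_1) + \alpha_1 r(\beta_2),\ (\gamma_1 - \Delta(\beta_1)) + (\gamma_2 - \Delta(\beta_2)) + f_{R_1}(r(\beta_1), \alpha_1 r(\beta_2))\bigr).
\]
The first two coordinates already agree, so the proof comes down to the identity
\[
\Delta(\beta_1 + \alpha_1\beta_2) + f_{R_1}(r(\beta_1), \alpha_1 r(\beta_2)) = \Delta(\beta_1) + \Delta(\beta_2) + f_R(\beta_1, \alpha_1\beta_2).
\]

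This last identity is exactly Lemma \ref{bb} with $\alpha' = 1$, $x = \beta_1$, $\alpha = \alpha_1$, $y = \beta_2$, after noting $r(\alpha_1\beta_2) = \alpha_1 r(\beta_2)$; its hypothesis $E_{R_1}(r(\alpha_1\beta_2)) = 0$ holds because $\alpha_1 \beta_2 \in V'_R$ ($V'_R$ being stable under $\mu_{d_{R,m}}$), so at that point I would simply invoke Lemma \ref{bb}. The only delicate point is lining up the arguments of $f_R$, $f_{R_1}$ and $\Delta$ correctly with the group laws of $Q_R$ and $Q_{R_1}$ and checking the hypothesis of Lemma \ref{bb} is met; once that bookkeeping is done the verification is mechanical, and I do not expect any genuine obstacle.
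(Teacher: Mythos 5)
Your verification is correct and matches the paper's (one-line) proof, which simply cites Lemma \ref{bb} and \eqref{(b)}: you correctly reduce multiplicativity to the identity of Lemma \ref{bb} with $\alpha'=1$, use $r(\alpha_1 x)=\alpha_1 r(x)$ from \eqref{(b)} to align the arguments, and check the hypothesis $E_{R_1}(r(\alpha_1\beta_2))=0$ via stability of $V'_R$ under $\mu_{d_{R,m}}$-multiplication (which follows from $E_{R_1}(\alpha y)=\alpha E_{R_1}(y)$ for $\alpha\in\mu_{d_{R_1}}$ and $r(\alpha x)=\alpha r(x)$). One caveat on your well-definedness remark: the assertion that $\alpha\in\mu_{d_{R,m}}$ already lies in $\mu_{d_{R_1,m}}$ is not automatic from $d_{R,m}\mid d_{R_1}$ alone (one does get $\alpha\in\mu_{d_{R_1}}$, hence membership in $Q_{R_1}$); this is a codomain bookkeeping issue already present in the paper's definition of $\pi$ and does not affect the homomorphism property you are asked to prove.
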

 \begin{proof}
 The claim follows from Lemma \ref{bb} and \eqref{(b)}. 
 \end{proof}
 We have 
 \begin{equation}\label{fol}
 U'_R:=\{(1,\beta,\Delta(\beta))\in Q'_{R,m} \mid \beta \in U_R\}=\Ker \pi.
 \end{equation} 
 The space $V'_R$ is stable by the 
 $q$-th power map. Hence we can consider 
 the semidirect product 
 $Q'_{R,m} \rtimes \mathbb{Z}$. 
 The map $\pi$ induces 
 $\pi' \colon Q'_{R,m} \rtimes \mathbb{Z}
 \to Q_{R_1,m} \rtimes \mathbb
{Z}$. 
 \paragraph{Quotient of $C_R$}
Let $\phi$ be as in \eqref{aphi}. 
 We can check that $\phi$ factors through 
 $C_{R,\mathbb{F}} \to C_{R,\mathbb{F}}/U'_R \xrightarrow{\bar{\phi}} C_{R_1,\mathbb{F}}$ by \eqref{fa0}. 
We obtain an isomorphism $\bar{\phi} \colon C_{R,\mathbb{F}}/U'_R \xrightarrow{\sim} C_{R_1,\mathbb{F}}$. 
\begin{lemma}\label{q}
We have $\phi((a,x) g)=\phi(a,x) \pi'(g)$
for $g \in Q'_{R,m} \rtimes \mathbb{Z}$. 
\end{lemma}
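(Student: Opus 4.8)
The plan is to check the asserted equivariance on a generating set of $Q'_{R,m}\rtimes\mathbb{Z}$ and then propagate it by multiplicativity. Since $\pi'$ extends the homomorphism $\pi$ (the Corollary preceding \eqref{fol}), if the identity $\phi((a,x)g_i)=\phi(a,x)\pi'(g_i)$ holds for all points $(a,x)$ for two elements $g_1,g_2$, then applying it first to $(a,x)$ with $g=g_1$ and then to $(a,x)g_1$ with $g=g_2$ yields the identity for $g_1g_2$; and the case $g=(1,1)\in\mathbb{Z}$ forces the case $g=(1,-1)$ because the action of $(1,1)$ is invertible. Hence it suffices to treat, first, $g=(\alpha,\beta,\gamma)\in Q'_{R,m}$, and second, $g$ equal to the distinguished generator $1\in\mathbb{Z}$.

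For $g=(\alpha,\beta,\gamma)\in Q'_{R,m}$ I would simply unwind both sides via the action formula \eqref{fa0} and its analogue for $C_{R_1}$. On the left, $(a,x)\cdot g=(a+f_R(x,\beta)+\gamma,\ \alpha^{-1}(x+\beta))$, and applying $\phi$ together with $\Delta(\alpha^{-1}y)=\Delta(y)$ (the paragraph before Lemma \ref{triv}), $r(\alpha^{-1}y)=\alpha^{-1}r(y)$ (from \eqref{(b)}), and additivity of $r$ gives
\[
\phi((a,x)\cdot g)=\bigl(a+f_R(x,\beta)+\gamma-\Delta(x+\beta),\ \alpha^{-1}(r(x)+r(\beta))\bigr).
\]
On the right, $\phi(a,x)\cdot\pi(g)=(a-\Delta(x),\ r(x))\cdot(\alpha,r(\beta),\gamma-\Delta(\beta))$, which \eqref{fa0} for $C_{R_1}$ evaluates to
\[
\bigl(a-\Delta(x)+f_{R_1}(r(x),r(\beta))+\gamma-\Delta(\beta),\ \alpha^{-1}(r(x)+r(\beta))\bigr).
\]
The second coordinates agree, and equality of the first coordinates is exactly
\[
\Delta(x+\beta)+f_{R_1}(r(x),r(\beta))=\Delta(x)+\Delta(\beta)+f_R(x,\beta),
\]
which is Lemma \ref{bb} with $\alpha'=1$, the role of ``$\alpha$'' played by $1$ and that of ``$y$'' by $\beta$; its hypothesis $E_{R_1}(r(\beta))=0$ holds because $\beta\in V'_R$ by the definition of $Q'_{R,m}$. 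For $g$ the distinguished generator of $\mathbb{Z}$, the left-hand side applies the Frobenius endomorphism $\mathrm{Fr}_q$ of $C_R$ over $\mathbb{F}_q$ and then $\phi$, while $\pi'(g)$ is again the generator of $\mathbb{Z}$, so the right-hand side applies $\phi$ and then $\mathrm{Fr}_q$ of $C_{R_1}$; since $\Delta(x),r(x)\in\mathbb{F}_q[x]$, the morphism $\phi$ is defined over $\mathbb{F}_q$ and therefore commutes with these Frobenius endomorphisms, giving the claim.

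The only real subtlety I anticipate is bookkeeping rather than mathematics: keeping the $\alpha^{-1}$-twists of \eqref{fa0} consistent on both curves, confirming that the $\mathbb{Z}$-factor of the semidirect action on $C_{R,\mathbb{F}}$ is the Frobenius endomorphism (so that $\mathbb{F}_q$-rationality of $\phi$ is what is needed), and checking the multiplicativity reduction against the fact that $\pi'$ is a homomorphism. All of the genuine algebraic content is already packaged in Lemma \ref{bb} (and, for the divisibility used to invoke it, Lemma \ref{bb2}).
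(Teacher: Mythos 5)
Your proof is correct and follows essentially the same route the paper intends: the equality of first coordinates after unwinding the action formula \eqref{fa0} on both curves is precisely Lemma \ref{bb} (specialized to $\alpha=\alpha'=1$, $y=\beta$, applicable because $\beta\in V'_R$), and the paper's terse ``follows from Lemma \ref{bb}'' is just leaving this computation, together with the Frobenius-equivariance of the $\mathbb{F}_q$-rational map $\phi$, implicit. Your additional care in reducing to generators via multiplicativity of a right action and the homomorphism property of $\pi'$ is a sound way to organize the same verification.
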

\begin{proof}
The claim follows from Lemma \ref{bb}. 
\end{proof}
 Let $\tau'_{\psi,R_1,m}$ denote the 
 $Q'_{R,m} \rtimes \mathbb{Z}$-representation 
 which is the inflation of 
 the $Q_{R_1,m} \rtimes \mathbb{Z}$-representation 
 $H_{\rm c}^1(C_{R_1,\mathbb{F}},\overline{\mathbb{Q}}_{\ell})[\psi]$ by 
 $\pi'$. 
 By \eqref{tmr}, we have the homomorphism 
 $\Theta_{R,m} \colon W_F \to Q_{R,m} \rtimes 
 \mathbb{Z}$. 
We define a $W_F$-representation $\tau''_{\psi,R_1,m}$
 to be the inflation of 
 $\mathrm{Ind}_{Q'_{R,m} \rtimes \mathbb{Z}}
^{Q_{R,m} \rtimes \mathbb{Z}} \tau'_{\psi,R_1,m}$
via $\Theta_{R,m}$. 
We have $\dim \tau''_{\psi,R_1,m}
=p^e$ by $[Q_{R,m}:Q'_{R,m}]=p^{e-e'}$ and
  $\dim \tau'_{\psi,R_1,m}=p^{e'}$.
\begin{proposition}\label{qq}
We have an isomorphism 
$\tau_{\psi,R,m} \simeq \tau''_{\psi,R_1,m}$ as $W_F$-representations. 
\end{proposition}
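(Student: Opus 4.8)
The plan is to establish first the analogous isomorphism of $Q_{R,m}\rtimes\mathbb{Z}$-representations
\[
H_{\rm c}^1(C_{R,\mathbb{F}},\overline{\mathbb{Q}}_\ell)[\psi]\simeq\mathrm{Ind}_{Q'_{R,m}\rtimes\mathbb{Z}}^{Q_{R,m}\rtimes\mathbb{Z}}\tau'_{\psi,R_1,m}
\]
and then to inflate it along $\Theta_{R,m}\colon W_F\to Q_{R,m}\rtimes\mathbb{Z}$, which gives $\tau_{\psi,R,m}\simeq\tau''_{\psi,R_1,m}$ directly from the definitions of $\tau_{\psi,R,m}$ and $\tau''_{\psi,R_1,m}$. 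For the displayed isomorphism I would begin with the cohomological meaning of $\phi$: since $\phi$ factors as $C_{R,\mathbb{F}}\to C_{R,\mathbb{F}}/U'_R$ followed by the isomorphism $\bar\phi\colon C_{R,\mathbb{F}}/U'_R\xrightarrow{\sim}C_{R_1,\mathbb{F}}$ and $\phi$ is finite étale, the $U'_R$-action on $C_{R,\mathbb{F}}$ is free and $C_{R,\mathbb{F}}\to C_{R_1,\mathbb{F}}$ is a $U'_R$-torsor; hence (using $\ell\neq p$) the pullback $\phi^\ast$ is injective and induces an isomorphism $H_{\rm c}^1(C_{R_1,\mathbb{F}},\overline{\mathbb{Q}}_\ell)\xrightarrow{\sim}H_{\rm c}^1(C_{R,\mathbb{F}},\overline{\mathbb{Q}}_\ell)^{U'_R}$, via the standard identification of the compactly supported cohomology of a quotient by a finite group of order prime to $\ell$.

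Next I would check that $\phi^\ast$ respects all the extra structure. It commutes with $\mathrm{Fr}_q^\ast$ because $\phi$ is defined over $\mathbb{F}_q$. It intertwines the $Z(Q_R)=\mathbb{F}_p$-actions $a\mapsto a+\zeta$ on the two curves — indeed $\phi((a,x)\cdot\zeta)=(a+\zeta-\Delta(x),r(x))=\phi(a,x)\cdot\zeta$ — so $\phi^\ast$ preserves $\psi$-isotypic parts, and being injective onto the $U'_R$-invariants it restricts to an isomorphism $H_{\rm c}^1(C_{R_1,\mathbb{F}},\overline{\mathbb{Q}}_\ell)[\psi]\xrightarrow{\sim}W:=H_{\rm c}^1(C_{R,\mathbb{F}},\overline{\mathbb{Q}}_\ell)[\psi]^{U'_R}$. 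Finally, by Lemma \ref{q} one has $\phi\circ g=\pi'(g)\circ\phi$ for all $g\in Q'_{R,m}\rtimes\mathbb{Z}$, so $\phi^\ast$ is $(Q'_{R,m}\rtimes\mathbb{Z})$-equivariant once the source $H_{\rm c}^1(C_{R_1,\mathbb{F}},\overline{\mathbb{Q}}_\ell)[\psi]$ carries the action inflated along $\pi'$, i.e.\ the structure of $\tau'_{\psi,R_1,m}$. Thus $W\simeq\tau'_{\psi,R_1,m}$ as $(Q'_{R,m}\rtimes\mathbb{Z})$-representations; in particular $\dim W=\dim\tau'_{\psi,R_1,m}=p^{e'}$ by Lemma \ref{fa} applied to $R_1$ (legitimate since $p\neq2$).

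It remains to identify $H_{\rm c}^1(C_{R,\mathbb{F}},\overline{\mathbb{Q}}_\ell)[\psi]$ with the induction of $W$. The inclusion $W\hookrightarrow H_{\rm c}^1(C_{R,\mathbb{F}},\overline{\mathbb{Q}}_\ell)[\psi]$ of $(Q'_{R,m}\rtimes\mathbb{Z})$-representations yields, by Frobenius reciprocity, a $(Q_{R,m}\rtimes\mathbb{Z})$-linear map
\[
\Psi\colon\mathrm{Ind}_{Q'_{R,m}\rtimes\mathbb{Z}}^{Q_{R,m}\rtimes\mathbb{Z}}W\longrightarrow H_{\rm c}^1(C_{R,\mathbb{F}},\overline{\mathbb{Q}}_\ell)[\psi]
\]
whose image is the $(Q_{R,m}\rtimes\mathbb{Z})$-subrepresentation generated by $W$; since $H_R\subset Q_{R,m}$ and $W\neq0$, this image is a nonzero $H_R$-submodule, hence all of $H_{\rm c}^1(C_{R,\mathbb{F}},\overline{\mathbb{Q}}_\ell)[\psi]$ because the latter is $H_R$-irreducible (isomorphic to $\rho_\psi$, Lemma \ref{fa}). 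So $\Psi$ is surjective, while $\dim(\mathrm{Ind}_{Q'_{R,m}\rtimes\mathbb{Z}}^{Q_{R,m}\rtimes\mathbb{Z}}W)=[Q_{R,m}:Q'_{R,m}]\cdot p^{e'}=p^{e-e'}\cdot p^{e'}=p^e=\dim H_{\rm c}^1(C_{R,\mathbb{F}},\overline{\mathbb{Q}}_\ell)[\psi]$, so $\Psi$ is an isomorphism. Combined with $W\simeq\tau'_{\psi,R_1,m}$ and inflation along $\Theta_{R,m}$, this gives $\tau_{\psi,R,m}\simeq\tau''_{\psi,R_1,m}$. I expect the main obstacle to be the first step: showing that $\phi^\ast$ is injective with image exactly the $U'_R$-invariants and simultaneously $(Q'_{R,m}\rtimes\mathbb{Z})$-equivariant for the $\pi'$-inflated action — this is where the torsor structure coming from $\bar\phi$, the hypothesis $\ell\neq p$, and Lemma \ref{q} are all used at once; the remaining reciprocity-and-dimension argument is then routine, modulo the bookkeeping $[Q_{R,m}\rtimes\mathbb{Z}:Q'_{R,m}\rtimes\mathbb{Z}]=[Q_{R,m}:Q'_{R,m}]=p^{e-e'}$.
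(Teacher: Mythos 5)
Your proposal is correct and follows essentially the same route as the paper: an injection $\phi^\ast$ of $Q'_{R,m}\rtimes\mathbb{Z}$-representations, Frobenius reciprocity to produce a nonzero map out of the induction, irreducibility of $H_{\rm c}^1(C_{R,\mathbb{F}},\overline{\mathbb{Q}}_\ell)[\psi]$ as an $H_R$-module to get surjectivity, a dimension count $p^{e-e'}\cdot p^{e'}=p^e$ to conclude it is an isomorphism, and finally inflation along $\Theta_{R,m}$. The only difference is that you fill in the injectivity of $\phi^\ast$ (and its image being the $U'_R$-invariants) via the $U'_R$-torsor structure and $\ell\neq p$, whereas the paper simply asserts the injection; this is a harmless extra justification, not a change of strategy.
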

\begin{proof}
By Lemma \ref{q}, we have the injection
\[
\tau'_{\psi,R_1,m}=H_{\rm c}^1(C_{R_1,\mathbb{F}},\overline{\mathbb{Q}}_{\ell})[\psi] \xrightarrow{\phi^\ast} H_{\rm c}^1(C_{R,\mathbb{F}},\overline{\mathbb{Q}}_{\ell})[\psi]
\] 
as $Q'_{R,m} \rtimes \mathbb{Z}$-representations. 
Hence we have a non-zero homomorphism 
\begin{equation}\label{indo}
\mathrm{Ind}_{Q'_{R,m} \rtimes \mathbb{Z}}
^{Q_{R,m} \rtimes \mathbb{Z}} \tau'_{\psi,R_1,m}
\to H_{\rm c}^1(C_{R,\mathbb{F}},\overline{\mathbb{Q}}_{\ell})[\psi]
\end{equation} 
as $Q_{R,m} \rtimes \mathbb{Z}$-representations 
by Frobenius reciprocity. 
Since the target is irreducible by Lemma \ref{fa}(2), 
the map \eqref{indo} is surjective. By comparing the dimensions, \eqref{indo} is an isomorphism. By inflating it by $\Theta_{R,m}$, we obtain the claim.  
\end{proof}
We consider the open subgroup 
$W':=\Theta_{R,m}^{-1}(Q'_{R,m} \rtimes \mathbb{Z}) \subset W_F$ of index $p^{e-e'}$. We can write 
$W'=W_{F'}$ with a finite field extension $F'/F$ of degree $p^{e-e'}$. 
Let 
\begin{equation}\label{taud}
\tau'_{\psi,R_1,m} \colon W_{F'} \xrightarrow{\Theta_{R,m}} Q'_{R,m} \rtimes \mathbb{Z} \xrightarrow{\pi'} Q_{R_1,m} \rtimes 
\mathbb{Z} \to \mathrm{Aut}_{\overline{\mathbb{Q}}_{\ell}}(H_{\rm c}^1(C_{R_1,\mathbb{F}},\overline{\mathbb{Q}}_{\ell})[\psi])
\end{equation}
 be the composite.

\begin{corollary}\label{mc4}
We have an isomorphism 
$\tau_{\psi,R,m} \simeq \Ind_{W_{F'}}^{W_F}\tau'_{\psi,R_1,m}$ as $W_F$-representations.  
If $e'<e$, the $W_F$-representation 
$\tau_{\psi,R,m}$ is imprimitive. 
\end{corollary}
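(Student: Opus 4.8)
The plan is to chain together Proposition~\ref{qq} with the standard compatibility of inflation and induction along a surjective homomorphism, and with the identification of the open subgroup $W'\subset W_F$ as a Weil group $W_{F'}$ that has already been recorded just above the statement. First I would invoke Proposition~\ref{qq}, which gives $\tau_{\psi,R,m}\simeq\tau''_{\psi,R_1,m}$, and recall that by definition $\tau''_{\psi,R_1,m}$ is the inflation along $\Theta_{R,m}\colon W_F\to Q_{R,m}\rtimes\mathbb{Z}$ of $\mathrm{Ind}_{Q'_{R,m}\rtimes\mathbb{Z}}^{Q_{R,m}\rtimes\mathbb{Z}}\tau'_{\psi,R_1,m}$.

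The key formal step is that $\Theta_{R,m}$ is surjective onto $Q_{R,m}\rtimes\mathbb{Z}$ (it factors through the isomorphism of Lemma~\ref{3.8}), so that for the subgroup $Q'_{R,m}\rtimes\mathbb{Z}$ its full preimage $W':=\Theta_{R,m}^{-1}(Q'_{R,m}\rtimes\mathbb{Z})$ contains $\Ker\Theta_{R,m}$ with $W'/\Ker\Theta_{R,m}\simeq Q'_{R,m}\rtimes\mathbb{Z}$, and inflation along $\Theta_{R,m}$ commutes with induction:
\[
\mathrm{Ind}_{W'}^{W_F}\bigl(\mathrm{inflation\ of\ }\tau'_{\psi,R_1,m}\mathrm{\ along\ }\Theta_{R,m}|_{W'}\bigr)\simeq\tau''_{\psi,R_1,m}.
\]
Unwinding the definition of $\pi'$ and of the $Q'_{R,m}\rtimes\mathbb{Z}$-representation $\tau'_{\psi,R_1,m}$, the inflation of $\tau'_{\psi,R_1,m}$ along $\Theta_{R,m}|_{W'}$ is exactly the composite \eqref{taud}, i.e. the $W_{F'}$-representation we again denote $\tau'_{\psi,R_1,m}$. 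Since $W'=W_{F'}$ with $[F':F]=[Q_{R,m}:Q'_{R,m}]=p^{e-e'}$ as noted before the statement, we obtain $\tau_{\psi,R,m}\simeq\Ind_{W_{F'}}^{W_F}\tau'_{\psi,R_1,m}$.

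For the last assertion, if $e'<e$ then $[F':F]=p^{e-e'}>1$, so $W_{F'}$ is a proper subgroup of $W_F$; as $\tau_{\psi,R,m}$ is irreducible (by the discussion in \S\ref{3.2}) and is isomorphic to an induction from this proper subgroup, it is imprimitive by definition. I expect no substantial obstacle here: everything reduces to formal properties of induction/inflation and to bookkeeping already carried out in \S\ref{4.1}. The only points needing a line of care are the surjectivity of $\Theta_{R,m}$ (so that $W'$ is genuinely the preimage with the right quotient) and the routine identification of the inflation of $\tau'_{\psi,R_1,m}$ along $\Theta_{R,m}|_{W'}$ with \eqref{taud}.
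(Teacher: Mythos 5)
Your proposal is correct and takes essentially the same route as the paper: the paper's proof is a one-liner ("The assertion follows from Proposition~\ref{qq}"), and your write-up just makes explicit the standard compatibility of inflation with induction along the preimage subgroup $W'=\Theta_{R,m}^{-1}(Q'_{R,m}\rtimes\mathbb{Z})=W_{F'}$, together with the observation that $[F':F]=p^{e-e'}>1$ when $e'<e$. The surjectivity of $\Theta_{R,m}$ onto $Q_{R,m}\rtimes\mathbb{Z}$ does indeed follow from Lemma~\ref{3.8}, so your auxiliary check is sound.
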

\begin{proof}
The assertion follows from Proposition \ref{qq}. 
\end{proof}
 \subsection{Totally isotropic subspace and geometry of $C_R$}\label{4.2}
Let $(1,\beta,\gamma) \in H_R$ so, as in Definition \ref{qdef}(2), we know that $\gamma^p-\gamma=\beta R(\beta)$.
We obtain  
$(f_R(\beta,\beta)-2\gamma)^p=f_R(\beta,\beta)-2 \gamma$ by definition of the pairing $\omega_R$ (Lemma \ref{ab}(2)). Assume 
\begin{equation}\label{ad}
\beta \neq 0, \quad 
\gamma=\frac{f_R(\beta,\beta)}{2}.
\end{equation} 
The following lemma is given in \cite[Propositions (9.1) and (13.5)]{GV} and \cite[Proposition 7.2]{BHMSSV}. This lemma gives an algorithm of taking quotients of $C_R$ by certain abelian groups. 
\begin{lemma}\label{q0}
Let $C_R$ be as in Definition \ref{cr}. Assume $e \geq 1$. 
\begin{itemize}
\item[{\rm (1)}] Let 
\begin{equation}\label{ch}
u:=x^p-\beta^{p-1}x, \quad 
v:=a+(x/\beta)(\gamma(x/\beta)-f_R(x,\beta)). 
\end{equation}
Then there exists 
$P_1(u) \in \mathscr{A}_{\mathbb{F}}$ of degree $p^{e-1}$ such that 
$v^p-v=u P_1(u)$. 
\item[{\rm (2)}] 
Let $U:=\{(1,\xi \beta,\xi^2 \gamma) \in 
H_R \mid \xi \in \mathbb{F}_p\}=\langle (1,\beta,\gamma)\rangle$. 
Then the quotient $C_{R,\mathbb{F}}/U$ is isomorphic to $C_{P_1,\mathbb{F}}$.  
\end{itemize}
\end{lemma}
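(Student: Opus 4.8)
The plan is to prove (1) by inserting the formulas \eqref{ch} for $u$ and $v$ into $v^{p}-v$ and simplifying with the relations valid under \eqref{ad}, and then to deduce (2) from (1) by a quotient argument. For (1), set $t:=x/\beta$ and $\phi:=f_{R}(x,\beta)$, so that $v=a+\gamma t^{2}-t\phi$ and, on $C_{R,\mathbb{F}}$,
\[
v^{p}-v=(a^{p}-a)+(\gamma^{p}t^{2p}-t^{p}\phi^{p})-(\gamma t^{2}-t\phi)=xR(x)+\gamma^{p}t^{2p}-t^{p}\phi^{p}-\gamma t^{2}+t\phi .
\]
I would then substitute the three available ``Frobenius'' relations: $\gamma^{p}=\gamma+\beta R(\beta)$ (since $(1,\beta,\gamma)\in H_{R}$, Definition \ref{qdef}(2)); $\phi^{p}=\phi+xR(\beta)+\beta R(x)$ (Lemma \ref{a}, using $E_{R}(\beta)=0$ because $\beta\in V_{R}$, Definition \ref{qdef}(1)); and $t^{p}=t+u/\beta^{p}$, hence $t^{2p}=t^{2}+2tu/\beta^{p}+u^{2}/\beta^{2p}$. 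Expanding, the terms not manifestly divisible by $u$ cancel in pairs: the $\gamma t^{2}$ terms, the $t\phi$ terms, and $xR(x)-t\beta R(x)=0$, $\beta R(\beta)t^{2}-txR(\beta)=0$ (the latter two because $t=x/\beta$). What is left is
\[
v^{p}-v=\frac{u}{\beta^{p}}\bigl(\gamma^{p}(t+t^{p})-\phi^{p}\bigr)=\frac{u}{\beta^{p}}\Bigl(\gamma^{p}\frac{x}{\beta}+\bigl(\gamma\tfrac{x}{\beta}-f_{R}(x,\beta)\bigr)^{p}\Bigr)=:u\cdot g(x),
\]
where the first equality uses $2t+u/\beta^{p}=t+t^{p}$ and the second uses $(\gamma t)^{p}-\phi^{p}=(\gamma t-\phi)^{p}$ in characteristic $p$.

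The next step is to extract the additive polynomial $P_{1}$ from $g$. Since $g$ is an $\mathbb{F}_{p}$-linear combination of $p$-th powers of the additive polynomials $x/\beta$ and $\gamma x/\beta-f_{R}(x,\beta)$, we have $g\in\mathscr{A}_{\mathbb{F}}$. Evaluating at $x=\beta$ and using the hypothesis \eqref{ad} that $f_{R}(\beta,\beta)=2\gamma$ together with $p\neq2$ gives $g(\beta)=\beta^{-p}\bigl(\gamma^{p}+(-\gamma)^{p}\bigr)=0$, so by additivity $g$ vanishes on all of $\mathbb{F}_{p}\beta$. Because $u=x^{p}-\beta^{p-1}x=\prod_{\xi\in\mathbb{F}_{p}}(x-\xi\beta)$ is precisely the monic additive polynomial with kernel $\mathbb{F}_{p}\beta$, the division algorithm for additive polynomials (\cite[Theorem 1]{Ore}) yields an additive $P_{1}\in\mathscr{A}_{\mathbb{F}}$ with $g=P_{1}\circ u$, and comparing degrees gives $\deg P_{1}=(\deg_{x}g)/p$. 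Since $\deg_{x}\bigl(\gamma x/\beta-f_{R}(x,\beta)\bigr)=p^{e-1}$ for $\beta\in V_{R}\setminus\{0\}$ and $e\geq1$ (i.e.\ the coefficient of $x^{p^{e-1}}$ in $f_{R}(x,\beta)$ does not vanish — this is part of the explicit computations in \cite{GV} and \cite{BHMSSV}), we get $\deg_{x}g=p^{e}$ and hence $\deg P_{1}=p^{e-1}$, proving (1).

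For (2), the group law of $Q_{R}$ together with \eqref{ad} gives $(1,\beta,\gamma)^{\xi}=(1,\xi\beta,\xi^{2}\gamma)$, so $U$ is cyclic of order $p$ (recall $\beta\neq0$), and by \eqref{fa0} it acts on $C_{R,\mathbb{F}}$ by $x\mapsto x+\xi\beta$, $a\mapsto a+\xi f_{R}(x,\beta)+\xi^{2}\gamma$; this action is free. A short computation, again using \eqref{ad} and $\xi^{p}=\xi$, shows that $u$ and $v$ are $U$-invariant, so $(a,x)\mapsto(u,v)$ defines a morphism $\mu\colon C_{R,\mathbb{F}}\to C_{P_{1},\mathbb{F}}$ — well defined by (1) — that factors as $C_{R,\mathbb{F}}\to C_{R,\mathbb{F}}/U\xrightarrow{\bar\mu}C_{P_{1},\mathbb{F}}$. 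Now $\mu$ is finite, since $x$ and $a$ are integral over $\mathbb{F}[u,v]$, and has degree $p$: indeed $[\mathbb{F}(C_{R}):\mathbb{F}(u)]=[\mathbb{F}(C_{R}):\mathbb{F}(x)]\,[\mathbb{F}(x):\mathbb{F}(u)]=p^{2}$, while $[\mathbb{F}(u,v):\mathbb{F}(u)]=p$ because $uP_{1}(u)$ has degree $p^{e-1}+1$, prime to $p$, so $v^{p}-v=uP_{1}(u)$ defines a non-trivial Artin--Schreier extension (alternatively, $C_{P_{1}}$ has positive genus by Proposition \ref{super}). Since $U$ acts freely of order $p$, the quotient map $C_{R,\mathbb{F}}\to C_{R,\mathbb{F}}/U$ has degree $p$, so $\bar\mu$ is finite of degree $1$; as $C_{P_{1},\mathbb{F}}$ is smooth, hence normal, and $C_{R,\mathbb{F}}/U$ is integral, $\bar\mu$ is an isomorphism, which is (2). (This recovers the quotient description of \cite[Propositions (9.1) and (13.5)]{GV} and \cite[Proposition 7.2]{BHMSSV}.)

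The step I expect to be the main obstacle is inside (1): pushing the cancellations in the expansion of $v^{p}-v$ through cleanly, and — more delicately — verifying that the leading coefficient of $f_{R}(x,\beta)$ (as a polynomial in $x$, for $\beta\in V_{R}\setminus\{0\}$) is non-zero, so that $\deg P_{1}$ equals $p^{e-1}$ exactly rather than something smaller. It is precisely this non-vanishing where one leans on the explicit structure of the curves $C_{R}$ developed in \cite{GV} and \cite{BHMSSV}.
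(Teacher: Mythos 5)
Your proof is essentially the paper's: you expand $v^p-v$ on $C_R$, substitute the three ``Frobenius'' relations, factor out $u$, observe that the cofactor $g$ is additive and vanishes on $\ker u=\mathbb{F}_p\beta$ (here is where \eqref{ad} and $p\neq2$ enter), apply Ore division to get $g=P_1\circ u$, and then for (2) pass to the quotient by the free degree-$p$ action of $U$. I verified your intermediate identity $v^p-v=\beta^{-p}u\bigl(\gamma^p(t+t^p)-\phi^p\bigr)$; it agrees with the paper's form of $P(x)$ after re-expanding $\gamma^p=\gamma+\beta R(\beta)$ and $\phi^p=\phi+xR(\beta)+\beta R(x)$.

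The one point you flag as delicate and defer to \cite{GV}/\cite{BHMSSV} — that the coefficient of $x^{p^{e-1}}$ in $f_R(x,\beta)$ is nonzero — is handled differently in the paper, and is in any case a one-line consequence of $\beta\in V_R$. The paper keeps $P(x)=\beta^{-p}\bigl(-\beta R(x)+\beta^{1-p}R(\beta)x^p+\gamma(x_1+x_1^p)-f_R(x,\beta)\bigr)$, whose leading term in $x$ is $-a_e\,x^{p^e}/\beta^{p-1}$; for $e\geq 2$ this degree-$p^e$ term is manifestly uncancelled by the other summands (of degree $\leq\max(p,p^{e-1})<p^e$), so $\deg P=p^e$ is immediate without any auxiliary nonvanishing. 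In your form the leading term sits inside $-\phi^p/\beta^p$, so you do need the claim; but it follows directly. Writing $c$ for the coefficient of $x^{p^{e-1}}$ in $f_R(x,\beta)$, one reads off from the definition of $f_R$ that $c=-\bigl(\sum_{i=0}^{e-1}(a_i\beta)^{p^{e-1-i}}+R(\beta)^{p^{e-1}}\bigr)$, hence
\[
c^p=-\Bigl(\sum_{i=0}^{e-1}(a_i\beta)^{p^{e-i}}+R(\beta)^{p^{e}}\Bigr)=a_e\beta\neq 0,
\]
the last equality being exactly $E_R(\beta)=0$ split off its $i=e$ term. (For $e=1$ your parenthetical is slightly misstated — there one must check nonvanishing of the full coefficient of $x$ in $\gamma x/\beta-f_R(x,\beta)$, since $\gamma x/\beta$ also has degree $p^{e-1}=1$; but the same $E_R(\beta)=0$ computation gives the answer.) With that one line inserted there is no gap, and your part (2) argument — finiteness, degree $p$ on each side, normality of the target — is a correct and slightly more explicit version of the paper's.
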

\begin{proof}
We show (1). 
Let $x_1:=x/\beta$ and 
$u_1:=u/\beta^p$. Then 
$u_1=x_1^p-x_1$. 
We compute 
\begin{align*}
v^p-v&=
xR(x)
+\gamma^p x_1^{2p}-\gamma x_1^2-x_1^p
{f_R(x,\beta)}^p+x_1 f_R(x,\beta) \\
&=xR(x)+\gamma(x_1^{2p}-x_1^2)+
\beta^{-2p+1} R(\beta) x^{2p}\\
&\ \ -u_1 f_R(x,\beta)-(x/\beta)^p(\beta R(x)+x R(\beta)) \\
&=u \beta^{-p} (-\beta R(x)+\beta^{-p+1}
R(\beta) x^p+\gamma (x_1^p+x_1)-f_R(x,\beta)), 
\end{align*}
where we use $\gamma^p-\gamma=\beta R(\beta)$ and Lemma \ref{a} for the second equality. 
Let $P(x):=\beta^{-p}(-\beta R(x)+\beta^{-p+1}
R(\beta) x^p+\gamma (x_1^p+x_1)-f_R(x,\beta))$. 
Since $P(x)$ is additive, there exists  
$P_1(u) \in \mathscr{A}_{\mathbb{F}}$ such that 
$P(x)=P_1(u)+\alpha x$ with a constant $\alpha$. By \eqref{ad}, we have $P(\beta)=\beta^{-p}(2\gamma-f_R(\beta,\beta))=0$. 
Hence $\alpha=0$. 
By $\deg P(x)=p^e$, we have $\deg P_1(u)=p^{e-1}$. 
Hence we obtain (1). 

We show (2). We easily check that the finite \'etale morphism of degree $p$:  
$C_{R,\mathbb{F}} \to C_{P_1,\mathbb{F}};\ (a,x) \mapsto 
(v,u)$ factors 
through $C_{R,\mathbb{F}} \to C_{R,\mathbb{F}}/U \to C_{P_1,\mathbb{F}}$. 
Hence we obtain the claim. Since $C_{R,\mathbb{F}} \to C_{R,\mathbb{F}}/U$ is a finite \'etale morphism of degree $p$, 
the claim follows. 
\end{proof}
Let 
\[
\Delta_0(x):=-(x/\beta)(\gamma(x/\beta)-
f_R(x,\beta)).
\] 
We have 
\begin{equation}\label{cd}
xR(x)=u P_1(u)+\Delta_0(x)^p-\Delta_0(x)
\end{equation}
 by Lemma \ref{q0}(1). We write $u(x)$ for $u$. 

Let $(1,\beta',\gamma') \in H_R$ 
be an element satisfying \eqref{ad}.
Assume  $\omega_R
(\beta,\beta')=0$. Then $(1,\beta,\gamma)$ 
 commutes with $(1,\beta',\gamma')$. Hence the action of $(1,\beta',\gamma')$ induces the automorphism of $C_{P_1,\mathbb{F}} \simeq C_{R,\mathbb{F}}/U$ (\eqref{fa0}). 
\begin{lemma}\label{q2}
Let $\pi(\beta',\gamma'):=(1,u(\beta'),\gamma'-\Delta_0(\beta'))$. 
\begin{itemize}
\item[{\rm (1)}] We have $\pi(\beta',\gamma') \in H_{P_1}$ and 
 $f_{P_1}(u(\beta'),u(\beta'))=2(\gamma'-\Delta_0(\beta'))$. 
 \item[{\rm (2)}] The action of 
$(1,\beta',\gamma')$ on $C_{R,\mathbb{F}}$ induces $\pi(\beta',\gamma')$ on  
$C_{P_1,\mathbb{F}}$.
\end{itemize}
\end{lemma}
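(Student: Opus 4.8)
The plan is to compute, in the coordinates $(u,v)$ of Lemma \ref{q0}, the automorphism of $C_{P_1,\mathbb{F}}\simeq C_{R,\mathbb{F}}/U$ induced by the $Q_R$-action of $(1,\beta',\gamma')$ on $C_{R,\mathbb{F}}$, and to recognize it as the $Q_{P_1}$-action of $\pi(\beta',\gamma')$. Since $\omega_R(\beta,\beta')=0$, Lemma \ref{h1} gives $[(1,\beta,\gamma),(1,\beta',\gamma')]=1$, so the automorphism $(a,x)\mapsto(a+f_R(x,\beta')+\gamma',\,x+\beta')$ of $C_{R,\mathbb{F}}$ from \eqref{fa0} commutes with the generator $(1,\beta,\gamma)$ of $U$ and hence descends to an automorphism $\bar\phi'$ of $C_{R,\mathbb{F}}/U\cong C_{P_1,\mathbb{F}}$. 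In the coordinates $u=u(x)=x^{p}-\beta^{p-1}x$ and $v=a-\Delta_0(x)$, the additivity of $x\mapsto x^{p}-\beta^{p-1}x$ yields at once $u\mapsto u+u(\beta')$, while $v\mapsto v+G(x)$ with $G(x):=f_R(x,\beta')+\gamma'+\Delta_0(x)-\Delta_0(x+\beta')$.

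First I would show $G(x)$ is a polynomial in $u(x)$ alone. Since $u(x)=\prod_{\xi\in\mathbb{F}_p}(x-\xi\beta)$, the subring of $\mathbb{F}[x]$ invariant under $x\mapsto x+\beta$ is exactly $\mathbb{F}[u(x)]$, so it suffices to check $G(x+\beta)=G(x)$. Using $\gamma=f_R(\beta,\beta)/2$ (condition \eqref{ad}) a short computation gives $\Delta_0(x+\beta)-\Delta_0(x)=\gamma+f_R(x,\beta)$, whence $G(x+\beta)-G(x)=f_R(\beta,\beta')-f_R(\beta',\beta)=\omega_R(\beta,\beta')=0$. So write $G(x)=g(u(x))$ with $g\in\mathbb{F}[u]$.

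Next, using Lemma \ref{a} for $R$ together with $E_R(\beta')=0$ and $\gamma'^{\,p}-\gamma'=\beta'R(\beta')$ (both because $(1,\beta',\gamma')\in H_R$), and applying \eqref{cd} at $x$ and at $x+\beta'$, the expression for $G(x)^{p}-G(x)$ collapses to the identity $g(u)^{p}-g(u)=u(\beta')P_1(u)+uP_1(u(\beta'))+u(\beta')P_1(u(\beta'))$ in $\mathbb{F}[u]$. This is precisely the identity appearing in the proof of Lemma \ref{triv} (with $R$ replaced by $P_1$, the variable by $u$, and $c$ by $u(\beta')$; the proof there is insensitive to replacing $\mathbb{F}_q$ by $\mathbb{F}$), so we conclude $E_{P_1}(u(\beta'))=0$, i.e. $u(\beta')\in V_{P_1}$. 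Then Lemma \ref{a} for $P_1$ gives $f_{P_1}(u,u(\beta'))^{p}-f_{P_1}(u,u(\beta'))=uP_1(u(\beta'))+u(\beta')P_1(u)$, and \eqref{cd} at $x=\beta'$ together with $\gamma'^{\,p}-\gamma'=\beta'R(\beta')$ gives $u(\beta')P_1(u(\beta'))=(\gamma'-\Delta_0(\beta'))^{p}-(\gamma'-\Delta_0(\beta'))$; subtracting, $g(u)-f_{P_1}(u,u(\beta'))-(\gamma'-\Delta_0(\beta'))$ has vanishing image under $f\mapsto f^{p}-f$ and hence lies in $\mathbb{F}_p$, and comparing constant terms (note $G(0)=\gamma'-\Delta_0(\beta')$) this constant is $0$. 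Therefore $G(x)=f_{P_1}(u(x),u(\beta'))+\gamma'-\Delta_0(\beta')$, so $\bar\phi'$ is exactly the action of $(1,u(\beta'),\gamma'-\Delta_0(\beta'))=\pi(\beta',\gamma')$ on $C_{P_1,\mathbb{F}}$ in the sense of \eqref{fa0} — this is (2) — and $\pi(\beta',\gamma')\in H_{P_1}$, its $\alpha$-component being $1$, its $\beta$-component $u(\beta')$ lying in $V_{P_1}$, and its $\gamma$-component satisfying the required Artin--Schreier relation just established. Finally, setting $x=\beta'$ in $G(x)=f_{P_1}(u(x),u(\beta'))+\gamma'-\Delta_0(\beta')$ and using $\Delta_0(2\beta')=4\Delta_0(\beta')$ (immediate from $\Delta_0(tx)=t^{2}\Delta_0(x)$ for $t\in\mathbb{F}_p$, legitimate since $p\neq2$) together with $\gamma'=f_R(\beta',\beta')/2$ yields $f_{P_1}(u(\beta'),u(\beta'))=2(\gamma'-\Delta_0(\beta'))$, completing (1).

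The main obstacle is pinning down the exact shape of $G$: one must first see that $G$ descends to a polynomial in $u$ (this rests squarely on $\omega_R(\beta,\beta')=0$ and on the quadratic scaling of $\Delta_0$), and then carry out the Artin--Schreier bookkeeping — applying $f\mapsto f^{p}-f$ and invoking Lemma \ref{a} for both $R$ and $P_1$, and reducing modulo the image of $f\mapsto f^{p}-f$ as in Lemma \ref{triv} — precisely enough to extract simultaneously $E_{P_1}(u(\beta'))=0$ and the vanishing of the constant term. Everything else (the commutation from Lemma \ref{h1}, the additivity of $u(x)$, and the identities for $\Delta_0$ coming from \eqref{ad}) is routine.
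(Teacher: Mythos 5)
Your argument is correct and follows essentially the same route as the paper: compute the descended automorphism in the $(u,v)$ coordinates from \eqref{ch}, show the correction term on $v$ is a polynomial $g(u)$ in $u=u(x)$ alone (you do this via invariance under $x\mapsto x+\beta$; the paper shows $\Delta_1(x)-\Delta_1(0)$ is additive and kills $\beta$ — same fact), deduce $u(\beta')\in V_{P_1}$ from Lemma \ref{triv} (which you unpack explicitly and correctly note applies with $\mathbb{F}$ in place of $\mathbb{F}_q$), and then pin down $g(u)=f_{P_1}(u,u(\beta'))+\gamma'-\Delta_0(\beta')$ by applying $f\mapsto f^p-f$ and comparing constant terms, which is exactly how the paper derives \eqref{cd2}. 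The only stylistic difference is that you recompute the Artin--Schreier identities in full where the paper cites Lemmas \ref{triv} and \ref{bb}; no gap.
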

\begin{proof}
Let $\Delta_1(x):=f_R(x,\beta')-\Delta_0(x+\beta')+\Delta_0(x)$. 
By \eqref{ch}, the action of $(1,\beta',\gamma')$ on $C_{R,\mathbb{F}}$ induces the automorphism of $C_{P_1,\mathbb{F}}$ given by  
$u \mapsto u+u(\beta')$
and $v \mapsto v+\Delta_1(x)+\gamma'$ on $C_{P_1,\mathbb{F}}$. 
We can easily 
check that $\Delta_1(x)-\Delta_1(0)$
is an additive polynomial such that $\Delta_1(\beta)-\Delta_1(0)=\omega_R(\beta,\beta')=0$. 
Hence there exists $g(u) \in \mathbb{F}_q[u]$
such that $\Delta_1(x)=g(u(x))+\Delta_1(0)$. 
Lemma \ref{triv} implies that $E_{P_1}(u(\beta'))=0$. Hence $u(\beta') \in V_{P_1}$.  
We show (1). 
The former claim follows from \eqref{cd}. 
By using $\Delta_0(0)=E_{P_1}(u(\beta'))=E_R(\beta')=0$ in the same way as Lemma \ref{bb}, we have 
\begin{equation}\label{cd2}
\Delta_0(x+\beta')+f_{P_1}(u(x),u(\beta'))
=\Delta_0(x)+\Delta_0(\beta')+f_R(x,\beta'). 
\end{equation}
Substituting $x=\beta'$, and using 
$\Delta_0(2 \beta')=4 \Delta_0(\beta')$ and \eqref{ad} for $(\beta',\gamma')$, we obtain the latter claim in (1). 

By \eqref{cd2}, we have 
\[
v+f_R(x,\beta')-\Delta_0(x+\beta')+\Delta_0(x)+\gamma'=v+f_{P_1}(u(x),u(\beta'))+\gamma'-\Delta_0(\beta'). 
\]
Hence the claim (2) follows from \eqref{fa0}. 
\end{proof}
Assume that $V_R$ is not completely anisotropic. 
Let $U_R$ be a non-zero totally isotropic 
$\mathbb{F}_p[\mathscr{H}]$-submodule in $V_R$. 
There exists a monic reduced polynomial 
$r(x) \in \mathscr{A}_{\mathbb{F}}$ such that $U_R=\{x \in\mathbb{F} \mid r(x)=0\}$ by \cite[Theorem 7]{Ore}. Since $U_R$ is an $\mathbb{F}_p[\mathscr{H}]$-module, we have 
\begin{equation}\label{r0r}
\textrm{$r(\alpha x)=\alpha r(x)$ for $\alpha \in \mu_{d_{R,m}}$ and $r(x) \in \mathbb{F}_q[x]$}
\end{equation}
  by Lemma \ref{ele}. 
We write $\deg r(x)=p^{e-e'}$ with a non-negative integer $0 \leq e' < e$.

We take a basis $\beta_1,\ldots,\beta_{e-e'}$
of $U_R$ over $\mathbb{F}_p$.  
Let $(1,\beta_i,\gamma_i) \in H_R$ 
be an element which satisfies \eqref{ad}. 
Let $U_i:=
\{(1,\xi \beta_i,\xi^2 \gamma_i) \mid \xi \in \mathbb{F}_p\} \subset H_R$, which is a subgroup. Since $U_R$ is totally isotropic, 
we have  $\omega_R(\beta_i,\beta_j)=0$.
Thus $g_i g_j=g_j g_i$ for any $g_i \in U_i$ and 
$g_j \in U_j$ by Lemma \ref{ab}(2).
Let 
\begin{equation}\label{fol1}
U'_R:=U_1\cdots U_{e-e'}
\subset H_R,  
\end{equation}
which is an abelian subgroup. 
\begin{proposition}\label{rp}
Assume that $V_R$ is not completely anisotropic. 
Then there exist 
$R_1(x) \in \mathscr{A}_{\mathbb{F}}$ of degree $p^{e'}$ and 
a polynomial $\Delta(x) \in \mathbb{F}[x]$ such that $\Delta(0)=0$ and 
the quotient $C_{R,\mathbb{F}}/U'_R$ is isomorphic to
the affine curve $C_{R_1,\mathbb{F}}$ and the 
isomorphism is induced by 
$\pi \colon C_{R,\mathbb{F}} \to C_{R_1,\mathbb{F}};\ 
(a,x) \mapsto (a-\Delta(x),r(x))$. 
In particular, we have 
$xR(x)=r(x) R_1(r(x))+\Delta(x)^p-\Delta(x)$. 
Furthermore, we have $d_{R,m} \mid d_{R_1}$. 
\end{proposition}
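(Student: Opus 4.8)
The plan is to prove this by induction on $e - e'$, the number of basis vectors $\beta_1, \dots, \beta_{e-e'}$, using Lemma \ref{q0} as the single-step quotient operation. The base case $e - e' = 0$ is the case $U'_R$ trivial, where there is nothing to prove. For the inductive step, I would take the basis vector $\beta_1 \in U_R$ with $(1, \beta_1, \gamma_1) \in H_R$ satisfying \eqref{ad}, form $U_1 = \langle (1, \beta_1, \gamma_1) \rangle$, and apply Lemma \ref{q0}(1)(2): there is an additive polynomial $P_1(u) \in \mathscr{A}_{\mathbb{F}}$ of degree $p^{e-1}$ with $v^p - v = u P_1(u)$, where $u = u(x) = x^p - \beta_1^{p-1} x$ and $v = a - \Delta_0(x)$ with $\Delta_0(x) = -(x/\beta_1)(\gamma_1 (x/\beta_1) - f_R(x, \beta_1))$, and the quotient $C_{R,\mathbb{F}}/U_1$ is isomorphic to $C_{P_1, \mathbb{F}}$ via $\pi_1 \colon (a, x) \mapsto (a - \Delta_0(x), u(x))$. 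Then by Lemma \ref{q2}, the images $\pi_1(U_i) =: \langle (1, u(\beta_i), \gamma_i - \Delta_0(\beta_i)) \rangle$ for $i = 2, \dots, e - e'$ sit inside $H_{P_1}$, each generator again satisfies the analogue of \eqref{ad} (this is exactly Lemma \ref{q2}(1)), and the group they generate is the image of $U'_R / U_1$. So I can feed $C_{P_1, \mathbb{F}}$ together with this smaller abelian subgroup back into the inductive hypothesis.

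The next point to check is that the images $u(\beta_2), \dots, u(\beta_{e-e'})$ remain $\mathbb{F}_p$-linearly independent in $V_{P_1}$ and span a totally isotropic $\mathbb{F}_p[\mathscr{H}]$-submodule, so that the inductive hypothesis genuinely applies — but independence follows because $u \colon U_R \to V_{P_1}$ has kernel exactly $\mathbb{F}_p \beta_1$ (the map $x \mapsto x^p - \beta_1^{p-1}x$ is additive with that kernel on $U_R$, comparing degrees), the $\mathscr{H}$-stability follows from \eqref{r0r} and the fact established before Lemma \ref{q0} that $\Delta_0$ is compatible with the $\mu_{d_{R,m}}$-action and the $q$-th power map, and total isotropy is inherited via Lemma \ref{q2}(1) and the commuting relations $g_i g_j = g_j g_i$ (Lemma \ref{ab}(2)). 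Composing the single-step map $\pi_1$ with the isomorphism $\pi' \colon C_{P_1,\mathbb{F}}/(\text{image of } U'_R/U_1) \xrightarrow{\sim} C_{R_1, \mathbb{F}}$ supplied by induction — whose second coordinate is $r_1(u)$ for a monic reduced $r_1$ with $r(x) = r_1(u(x))$ by the uniqueness in Ore's theorem — gives $\pi \colon (a,x) \mapsto (a - \Delta(x), r(x))$ with $\Delta(x) := \Delta_0(x) + (\text{the }\Delta\text{ from induction})(u(x))$; then $\Delta(0) = 0$, and the identity $xR(x) = r(x) R_1(r(x)) + \Delta(x)^p - \Delta(x)$ follows by substituting the inductive identity $u P_1(u) = r_1(u) R_1(r_1(u)) + \Delta_{\mathrm{ind}}(u)^p - \Delta_{\mathrm{ind}}(u)$ into \eqref{cd}.

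Finally, for $d_{R,m} \mid d_{R_1}$: I would argue that since $r(\alpha x) = \alpha r(x)$ for $\alpha \in \mu_{d_{R,m}}$ and $R_1$ arises from the quotient, the relation $xR(x) = r(x)R_1(r(x)) + \Delta(x)^p - \Delta(x)$ together with \eqref{12} for $R$ forces $R_1(\alpha y) \equiv \alpha^{-1}$-scaled behavior modulo $\mathcal{P}(\mathbb{F}[y])$; more directly, one checks that $V_{R_1} \supseteq r(V'_R)$ is stable under $\mu_{d_{R,m}}$-multiplication (because $V'_R$ is, by \eqref{(b)}-type reasoning and $\mathscr{H}$-stability), hence $E_{R_1}(\alpha x) = \alpha E_{R_1}(x)$ for $\alpha \in \mu_{d_{R,m}}$ by Lemma \ref{ele}(2), and since $d_{R_1} = \gcd\{p^i + 1 \mid a_{1,i} \neq 0\}$ is exactly characterized by $\mu_{d_{R_1}}$ being the largest group of roots of unity scaling $E_{R_1}$ this way, we get $d_{R,m} \mid d_{R_1}$.

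\medskip

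The main obstacle I anticipate is bookkeeping in the inductive step: verifying that after one quotient the data $(P_1; u(\beta_2), \dots, u(\beta_{e-e'}); \gamma_i - \Delta_0(\beta_i))$ satisfies \emph{all} the hypotheses needed to re-invoke the construction — in particular that each $\pi_1(U_i)$-generator still satisfies the normalization \eqref{ad} relative to the \emph{new} pairing $\omega_{P_1}$, and that the new basis vectors remain totally isotropic for $\omega_{P_1}$, not just for $\omega_R$. Lemma \ref{q2} is designed precisely to handle this, so the real work is assembling its conclusions into a clean inductive invariant and tracking how $\Delta$ accumulates; the verification of $d_{R,m} \mid d_{R_1}$ via Lemma \ref{ele}(2) should then be routine.
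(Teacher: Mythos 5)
Your iterative use of Lemmas \ref{q0} and \ref{q2} for the quotient construction is the same route the paper takes, and the bookkeeping (linear independence of $u(\beta_2),\dots,u(\beta_{e-e'})$, inheritance of the normalization \eqref{ad} and of total isotropy via Lemma \ref{q2}(1)) is in order. One caveat: the ``$\mathscr{H}$-stability at intermediate steps'' you invoke to re-apply the inductive hypothesis is not actually available, and you do not need it. Lemma \ref{q0} only gives $P_1 \in \mathscr{A}_{\mathbb{F}}$, so the intermediate curve need not be $\mathbb{F}_q$-rational and the $q$-power operator $\sigma$ need not act on the intermediate kernel; the paper simply iterates the two lemmas without maintaining an $\mathbb{F}_p[\mathscr{H}]$-module structure, and establishes $R_1,\Delta \in \mathbb{F}_q[x]$ only afterward in Corollary \ref{rpc}.

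The genuine gap is in your argument for $d_{R,m} \mid d_{R_1}$. You rely on the claim that $\mu_{d_{R_1}}$ is the largest group of roots of unity $\alpha$ with $E_{R_1}(\alpha x)=\alpha E_{R_1}(x)$, and this is false. Take $R_1(x)=x^p$, so $e'=1$, $d_{R_1}=p+1$, and $E_{R_1}(x)=x^{p^2}+x$; then $E_{R_1}(\alpha x)=\alpha E_{R_1}(x)$ for every $\alpha \in \mu_{p^2-1}$, which strictly contains $\mu_{p+1}$. In general, stability of $V_{R_1}$ under multiplication by $\alpha$ only forces $\alpha^{p^{e'+i}}=\alpha^{p^{e'-i}}=\alpha$ for the indices with $a_{1,i}\neq 0$, which is weaker than $\alpha^{p^i+1}=1$. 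The paper argues differently: $A=\{(\alpha,0,0):\alpha\in\mu_{d_{R,m}}\}$ normalizes $U'_R$, hence descends to automorphisms of $C_{R_1,\mathbb{F}}$ of the form $(b,y)\mapsto (b+g(y),\alpha^{-1}y)$ with $g$ a polynomial in the quotient coordinate $y$, and then cites the description of $\Aut(C_{R_1})$ from \cite[Theorems (4.1), (13.3)]{GV} or \cite[Theorem 4.3.2]{BHMSSV} to force $\alpha\in\mu_{d_{R_1}}$. The essential point your vaguer first sketch omits is that the congruence $\alpha^{-1}yR_1(\alpha^{-1}y) \equiv yR_1(y)$ must hold modulo $\mathcal{P}(\mathbb{F}[y])$ and not merely modulo $\mathcal{P}(\mathbb{F}[x])$, which is exactly what descent of the automorphism to the quotient buys; combined with the fact (valid for $p\neq 2$) that no nonzero $\sum_i c_i y^{p^i+1}$ lies in $\mathcal{P}(\mathbb{F}[y])$, this gives $\alpha^{p^i+1}=1$ for all $i$ with $a_{1,i}\neq 0$, i.e.\ $\alpha\in\mu_{d_{R_1}}$.
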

\begin{proof}
By applying Lemmas \ref{q0} and \ref{q2} successively, 
the quotient $C_{R,\mathbb{F}}/U'_R$ is isomorphic to 
the curve $C_{R_1,\mathbb{F}}$ with 
some $R_1(x) \in \mathscr{A}_{\mathbb{F}}$, and 
we obtain
$\pi \colon C_{R,\mathbb{F}} \to C_{R_1,\mathbb{F}};\ 
(a,x) \mapsto (a-\Delta(x),r(x))$. 
By \eqref{ch}, we have $\Delta(0)=0$. 
Since $U_R$ is an 
$\mathbb{F}_p[\mathscr{H}]$-module, 
the subgroup $A:=\{(\alpha,0,0) \in Q_{R,m} \mid \alpha \in\mu_{d_{R,m}}\}$ normalizes $U'_R$. 
Hence $A$ acts on the quotient $C_{R_1,\mathbb{F}}$. 
We recall that $b^p-b=y R_1(y)$ is the defining equation of $C_{R_1,\mathbb{F}}$. 
Then $A \ni (\alpha,0,0)$ acts on $C_{R_1,\mathbb{F}}$ is given by $b \mapsto b+\Delta(x)-\Delta(\alpha^{-1} x),\ y=r(x) \mapsto r(\alpha^{-1}x)=\alpha^{-1} y$ through 
the morphism $\pi$ by \eqref{r0r}. 
By \cite[Theorems (4.1) and (13.3)]{GV} or \cite[Theorem 4.3.2]{BHMSSV}, we must have $\alpha \in \mu_{d_{R_1}}$.  Hence the last claim follows. 
\end{proof}
\begin{corollary}\label{rpc}
Let the assumption be as in Proposition \ref{rp}. 
We have $\Delta(x), R_1(x) \in \mathbb{F}_q[x]$. 
\end{corollary}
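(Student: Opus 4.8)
The plan is to reduce the statement to an algebraic identity together with a uniqueness property of ``division by $r(x)$'' modulo Artin--Schreier. Write $\sigma$ for the coefficientwise $q$-power operator of Definition~\ref{eled}, and set $\wp(h):=h^p-h$ for a polynomial $h$. Proposition~\ref{rp} furnishes the identity $xR(x)=r(x)R_1(r(x))+\wp(\Delta)$ with $R\in\mathscr{A}_q$, $r\in\mathbb{F}_q[x]$ reduced, $R_1\in\mathscr{A}_{\mathbb{F}}$ and $\Delta\in\mathbb{F}[x]$ with $\Delta(0)=0$. Since $R^\sigma=R$ and $r^\sigma=r$, applying $\sigma$ yields the same identity with $(R_1,\Delta)$ replaced by $(R_1^\sigma,\Delta^\sigma)$; subtracting the two and using that $\wp$ is additive in characteristic $p$, I obtain
\[
\wp(g)=r(x)\,S(r(x)),\qquad g:=\Delta-\Delta^\sigma,\quad S:=R_1^\sigma-R_1\in\mathscr{A}_{\mathbb{F}},\quad g(0)=0 .
\]
Now $R_1\in\mathbb{F}_q[x]\iff S=0$ and $\Delta\in\mathbb{F}_q[x]\iff g=0$, and $\wp(g)=0$ with $g(0)=0$ already forces $g=0$. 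So the corollary will follow once I prove: if $\wp(g)=r(x)S(r(x))$ with $S$ additive, then $S=0$.

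To prove that implication, let $U:=\{\beta\in\mathbb{F}\mid r(\beta)=0\}$, a finite $\mathbb{F}_p$-vector space with $|U|=\deg r$ since $r$ is reduced, hence separable. For $\beta\in U$ one has $r(x+\beta)=r(x)$, so $\wp\bigl(g(x+\beta)-g(x)\bigr)=0$ and therefore $g(x+\beta)-g(x)=\lambda(\beta)$ for a unique $\lambda(\beta)\in\mathbb{F}_p$, with $\lambda\colon U\to\mathbb{F}_p$ being $\mathbb{F}_p$-linear. I would then pick an additive polynomial $\ell\in\mathscr{A}_{\mathbb{F}}$ of degree $<|U|$ with $\ell|_U=\lambda$ (such $\ell$ exists and is unique, since restriction to $U$ identifies the space of such $\ell$ with the space of $\mathbb{F}_p$-linear maps $U\to\mathbb{F}$ by a dimension count), so that $g-\ell$ is invariant under translation by $U$; using $\mathbb{F}[x]^{U}=\mathbb{F}[r(x)]$ this gives $g-\ell=G(r(x))$ for some $G\in\mathbb{F}[y]$. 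Moreover $\wp(\ell)$ is an additive polynomial vanishing on $U$ (because $\lambda$ is $\mathbb{F}_p$-valued), hence $\wp(\ell)=P(r(x))$ for some $P\in\mathscr{A}_{\mathbb{F}}$ by Ore's theory of additive polynomials \cite{Ore}. Substituting $g=\ell+G(r(x))$ into $\wp(g)=r(x)S(r(x))$ and reading the resulting equality inside the polynomial subring $\mathbb{F}[r(x)]\cong\mathbb{F}[y]$ gives
\[
yS(y)=P(y)+G(y)^p-G(y)\qquad\text{in }\mathbb{F}[y],
\]
so $yS(y)\equiv P(y)\pmod{\wp(\mathbb{F}[y])}$. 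Since every additive polynomial is $\wp$-equivalent to a scalar multiple of $y$ (reduce $\pi_i y^{p^i}$ to $\pi_i^{1/p^i}y$ step by step), one gets $yS(y)\equiv cy\pmod{\wp(\mathbb{F}[y])}$ for some $c\in\mathbb{F}$. But, because $p\neq 2$, every monomial occurring in $yS(y)-cy$ has exponent $\geq 1$ and prime to $p$, and a nonzero polynomial of that shape cannot lie in $\wp(\mathbb{F}[y])$ (its degree would be divisible by $p$); hence $yS(y)=cy$, and since $yS(y)$ has no linear term this forces $c=0$ and $S=0$.

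The main obstacle is exactly this implication ``$\wp(g)=r(x)S(r(x))\Rightarrow S=0$''; everything else is formal bookkeeping with Proposition~\ref{rp} and $\sigma$. Inside that step the delicate point is the passage to Artin--Schreier reduced form and the observation that $yS(y)$ has no linear term, which is precisely where $p\neq 2$ is used (for $p=2$ the monomial $y^{1+p^0}=y^2$ is divisible by $p$ and the argument collapses, consistent with the standing hypothesis of this section). An alternative, more geometric route to the same implication is available: $x\mapsto r(x)$ is a connected finite étale Galois cover $\mathbb{A}^1_{\mathbb{F}}\to\mathbb{A}^1_{\mathbb{F}}$ with group $U$, so the Artin--Schreier classes it trivializes are exactly the image of $\mathrm{Hom}(U,\mathbb{Z}/p)$, each represented by an additive polynomial, hence modulo $\wp$ by a scalar multiple of $y$; as $yS(y)$ is already in reduced form with no linear term, it must vanish.
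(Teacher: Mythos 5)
Your proof is correct, but it takes a genuinely different route from the paper after the common first step. Both arguments apply $\sigma$ to the identity $xR(x)=r(x)R_1(r(x))+\wp(\Delta)$ and reduce to showing that $\wp(g)=r(x)S(r(x))$ (with $g=\Delta-\Delta^\sigma$, $S=R_1^\sigma-R_1$) forces $S=0$. The paper proves this geometrically: if $S\neq 0$, the identity exhibits a nonconstant morphism $\mathbb{A}^1_{\mathbb{F}}\to C_{S,\mathbb{F}}$, which extends to a finite morphism $\mathbb{P}^1\to\overline{C}_{S,\mathbb{F}}$; Riemann--Hurwitz forces $\overline{C}_{S,\mathbb{F}}$ to have genus zero, contradicting the supersingularity result of Proposition~\ref{super}, and the condition $p\neq 2$ enters through the standing hypothesis \eqref{always}. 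You instead argue algebraically, never leaving the ring $\mathbb{F}[x]$: translation by $\ker r$ changes $g$ by an $\mathbb{F}_p$-valued linear functional, you subtract an additive interpolant $\ell$, invoke Ore's factorisation of additive polynomials to descend everything to $\mathbb{F}[y]$, and then use Artin--Schreier reduction together with the observation that the exponents $p^i+1$ occurring in $yS(y)$ are prime to $p$ (which is exactly where $p\neq 2$ is used, since $p^0+1=2$). Your version is more self-contained (it only needs Ore's theory, already cited elsewhere in the paper) and makes the role of $p\neq 2$ very transparent; the paper's version is shorter because it leverages Proposition~\ref{super} and the geometry already in place. One small remark: since $\deg\ell<\deg r$, your $P$ automatically has degree $\leq 1$, so the general Artin--Schreier reduction of $P$ to a scalar multiple of $y$ is not strictly needed, though it does no harm.
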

\begin{proof}
We use the same notation in Definition 
\ref{eled}. We consider the equality 
$xR(x)=r(x) R_1(r(x))+\Delta(x)^p-\Delta(x)$ in 
Proposition \ref{rp}. Let 
$S(x):=-R_1^{\sigma}(x)+R_1(x)$ and 
$\Pi(x):=\Delta^{\sigma}(x)-\Delta(x)$.
We have $S(x) \in \mathscr{A}_{\mathbb{F}}$.  
By $r(x), R(x) \in \mathbb{F}_q[x]$, 
\begin{equation}\label{Pi}
\Pi(x)^p-\Pi(x)=r(x) S(r(x)). 
\end{equation}
Assume $S(x) \neq 0$. 
We have the non-constant morphism 
$f \colon \mathbb{A}_{\mathbb{F}}^1 \to C_{S,\mathbb{F}};\ 
x \mapsto (\Pi(x),r(x))$ by $\deg r(x)>0$.
Let $\overline{C}_{S,\mathbb{F}}$
be the smooth compactification of $C_{S,\mathbb{F}}$. The morphism $f$ extends to a non-constant morphism 
$\mathbb{P}_{\mathbb{F}}^1
\to \overline{C}_{S,\mathbb{F}}$. Hence this is a finite morphism. 
By the Riemann--Hurwitz formula, we know that 
the genus of $\overline{C}_{S,\mathbb{F}}$
equals zero. 
This is a contradiction by Lemma \ref{super}. 
Hence $S(x) \equiv 0$ and $R_1(x) \in \mathbb{F}_q[x]$. 
We have $\Pi(x) \in \mathbb{F}_p$ by
\eqref{Pi}.  
We have $\Pi(0)=0$ by
$\Delta(0)=0$ as in Proposition \ref{rp}. 
Hence $\Pi(x) \equiv 0$. Thus the claim follows. 
\end{proof}

 \subsection{Theorem}
 Finally, we summarize the contents of  
\S \ref{4.1} and \S \ref{4.2} as a 
theorem. 
  \begin{theorem}\label{4mm}
 Assume $p\neq 2$. 
 The following conditions are equivalent.
 \begin{itemize}
\item[{\rm (1)}]
There exists a non-trivial finite \'etale morphism 
\[
C_R \to C_{R_1};\ (a,x) \mapsto (a-\Delta(x),r(x)), 
\]
where $\Delta(x) \in \mathbb{F}_q[x]$ and $r(x), R_1(x) \in \mathscr{A}_q$ satisfy 
$d_{R,m} \mid d_{R_1}$ and 
$r(\alpha x)=\alpha r(x)$ for $\alpha \in 
\mu_{d_{R,m}}$. 
 \item[{\rm (2)}] The $\mathbb{F}_p[\mathscr{H}]$-module $(V_R,\omega_R)$ is not completely anisotropic. 
 \item[{\rm (3)}] The $W_F$-representation 
 $\tau_{\psi,R,m}$ is imprimitive. 
 \end{itemize} 
 If the above equivalent conditions are satisfied,  the $W_F$-representation 
 $\tau_{\psi,R,m}$ is isomorphic to 
 $\Ind_{W_{F'}}^{W_F} \tau'_{\psi, R_1, m}$, where 
 $\tau'_{\psi, R_1, m}$ is given in \eqref{taud}. 
 \end{theorem}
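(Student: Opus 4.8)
The plan is to establish the cycle $(1)\Rightarrow(3)\Rightarrow(2)\Rightarrow(1)$ and then to read off the displayed isomorphism. The equivalence $(2)\Leftrightarrow(3)$ is already in hand: Corollary \ref{mc}(1) says that $\tau_{\psi,R,m}$ is primitive precisely when $(V_R,\omega_R)$ is completely anisotropic, so passing to contrapositives identifies $(2)$ with $(3)$. Hence the genuine content is the equivalence of these with $(1)$, together with the final assertion about $\mathrm{Ind}_{W_{F'}}^{W_F}\tau'_{\psi,R_1,m}$.

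For $(1)\Rightarrow(3)$ I would use the material of \S\ref{4.1}. Given a non-trivial finite \'etale morphism $\phi\colon C_R\to C_{R_1}$ of the form \eqref{aphi}, write $\deg R_1(x)=p^{e'}$; then $\deg r(x)=p^{e-e'}$, the morphism has degree $p^{e-e'}$, and non-triviality ($\deg\phi>1$) is equivalent to $e'<e$. Corollary \ref{mc4} then gives $\tau_{\psi,R,m}\simeq\mathrm{Ind}_{W_{F'}}^{W_F}\tau'_{\psi,R_1,m}$ with $[F':F]=p^{e-e'}>1$, so $\tau_{\psi,R,m}$ is imprimitive; this same corollary also supplies the final sentence of the theorem once $(1)$ is known to hold, with $R_1$ the polynomial produced in the next step and $\tau'_{\psi,R_1,m}$ as in \eqref{taud}.

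The substantial implication is $(2)\Rightarrow(1)$, where I would assemble \S\ref{4.2}. Assume $(V_R,\omega_R)$ is not completely anisotropic, choose a non-zero totally isotropic $\mathbb{F}_p[\mathscr{H}]$-submodule $U_R\subset V_R$, and let $r(x)\in\mathscr{A}_{\mathbb{F}}$ be the monic reduced additive polynomial with zero set $U_R$ (Ore); by Lemma \ref{ele} we have $r(x)\in\mathbb{F}_q[x]$ and $r(\alpha x)=\alpha r(x)$ for $\alpha\in\mu_{d_{R,m}}$, and $\deg r(x)=p^{e-e'}$ with $0\le e'<e$ since $U_R\ne\{0\}$. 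Proposition \ref{rp} (built from the iterated quotient construction of Lemmas \ref{q0} and \ref{q2}) then produces $R_1(x)\in\mathscr{A}_{\mathbb{F}}$ of degree $p^{e'}$ and $\Delta(x)$ with $\Delta(0)=0$ such that the composite quotient map $C_{R,\mathbb{F}}\to C_{R,\mathbb{F}}/U'_R\xrightarrow{\sim}C_{R_1,\mathbb{F}}$ is $(a,x)\mapsto(a-\Delta(x),r(x))$, with $xR(x)=r(x)R_1(r(x))+\Delta(x)^p-\Delta(x)$ and $d_{R,m}\mid d_{R_1}$; Corollary \ref{rpc} upgrades $\Delta(x),R_1(x)$ to lie in $\mathbb{F}_q[x]$, so this morphism descends to a morphism $C_R\to C_{R_1}$ over $\mathbb{F}_q$. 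Each quotient step of Lemma \ref{q0}(2) is finite \'etale of degree $p$, so the composite is finite \'etale over $\mathbb{F}$, hence over $\mathbb{F}_q$ by faithfully flat descent, and it is non-trivial because $e'<e$ makes its degree $p^{e-e'}>1$. This is exactly condition $(1)$.

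I expect no genuine obstacle to remain inside the theorem itself, since Proposition \ref{rp} and Corollary \ref{rpc} carry the weight; the only care needed is bookkeeping — matching the word ``non-trivial'' in $(1)$ with the inequality $e'<e$ in both directions, and checking that the quotient morphism, a priori defined only over $\mathbb{F}$, is a finite \'etale $\mathbb{F}_q$-morphism, which is precisely what Corollary \ref{rpc} together with descent of \'etaleness provides.
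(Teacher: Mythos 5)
Your proposal is correct and reaches all the required implications by citing the appropriate results from \S\ref{4}. The only structural difference from the paper is that you close the cycle as $(1)\Rightarrow(3)\Rightarrow(2)\Rightarrow(1)$, deducing $(1)\Rightarrow(3)$ directly from Corollary \ref{mc4}, whereas the paper proves $(1)\Rightarrow(2)$ by Lemma \ref{tiso} (the direct symplectic-module computation that $U_R$ is totally isotropic) and treats $(2)\Leftrightarrow(3)$ as an independent bridge via Corollary \ref{mc}(1). Both are valid; the paper's route keeps the implication $(1)\Rightarrow(2)$ purely at the level of the symplectic module and makes Lemma \ref{tiso} do that work, while your route leans on the representation-theoretic conclusion of Corollary \ref{mc4} (and thereby on Proposition \ref{qq}) to get imprimitivity directly, making Lemma \ref{tiso} unnecessary inside the theorem's proof. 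Your added remark about descent of the finite \'etale property from $\mathbb{F}$ to $\mathbb{F}_q$ is a useful explicit check that the paper leaves implicit.
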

\begin{proof}
Assume (1). 
Since $C_R \to C_{R_1}$ is non-trivial, we have 
$e'<e$, where $\deg r(x)=p^{e-e'}$. 
Then we have (2) by Lemma \ref{tiso}. 
Assume (2). We obtain (1) by \eqref{r0r}, Proposition \ref{rp} and  Corollary \ref{rpc}.

The equivalence of (2) and (3) 
follows from Corollary \ref{mc}(1). 

The last claim follows from 
Corollary \ref{mc4}. 
\end{proof}
\subsubsection*{Acknowledgements}
This work was supported by JSPS KAKENHI Grant Numbers 20K03529/21H00973.

Takahiro Tsushima\\  
Department of Mathematics and Informatics, 
Faculty of Science, Chiba University
1-33 Yayoi-cho, Inage, 
Chiba, 263-8522, Japan \\
tsushima@math.s.chiba-u.ac.jp
\end{document}